\documentclass[a4paper]{amsart}

\usepackage[english]{babel}
\usepackage[utf8]{inputenc}
\usepackage[T1]{fontenc}

\usepackage{amsmath}
\usepackage{amssymb}
\usepackage[dvipsnames]{xcolor}

\usepackage{enumerate}
\usepackage{enumitem} 

\usepackage[sort&compress,numbers]{natbib}
\usepackage{geometry}
\usepackage{scalerel} 

\usepackage{hyperref}
\usepackage{mathtools}
\usepackage{mathrsfs}

\usepackage{amsthm}
\newtheorem{theorem}{Theorem}
\numberwithin{theorem}{section}
\newtheorem{lemma}[theorem]{Lemma}
\newtheorem{proposition}[theorem]{Proposition}
\newtheorem{corollary}[theorem]{Corollary}
\theoremstyle{definition}
\newtheorem{definition}[theorem]{Definition}
\newtheorem{example}[theorem]{Example}
\newtheorem{remark}[theorem]{Remark}
\newtheorem{notation}[theorem]{Notation}
\newtheorem{convention}[theorem]{Convention}

\numberwithin{equation}{section}

\let\oldmarginpar\marginpar
\renewcommand\marginpar[1]{\-\oldmarginpar[\raggedleft\footnotesize #1]{\raggedright\footnotesize\color{red} #1}}
\setlength{\marginparwidth}{2.8cm}


\renewcommand{\epsilon}{\varepsilon}
\renewcommand{\subset}{\subseteq}
\renewcommand{\supset}{\supseteq}
\renewcommand{\P}{\mathbb{P}}

\newcommand{\fp}[1]{{\mathbb #1}}

\newcommand{\N}{\mathbb{N}}
\newcommand{\Q}{\mathbb{Q}}
\newcommand{\R}{\mathbb{R}}

\newcommand{\E}{\mathbb{E}}
\renewcommand{\P}{\mathbb{P}}

\newcommand{\A}{\mathcal{A}}

\newcommand{\prob}{\mathcal{P}}

\newcommand{\F}{\mathcal{F}}
\newcommand{\G}{\mathcal{G}}
\newcommand{\T}{\mathcal{T}}

\newcommand{\FP}{\mathcal{FP}}
\newcommand{\FR}{\mathcal{FR}}

\newcommand{\id}{\textup{id}}
\newcommand{\pr}{\textup{pr}}
\newcommand{\cont}{\textup{cont}}
\newcommand{\HK}{\textup{HK}}

\newcommand{\law}{\mathcal{L}}
\newcommand{\sfE}{\mathsf{E}}

\newcommand{\AF}{\textup{AF}}
\newcommand{\pp}{\textup{pp}}
\newcommand{\bpp}{{\textup{\textbf{pp}}}} 
\newcommand{\cadlag}{c\`adl\`ag}
\newcommand{\uFP}{\textup{FP}}
\newcommand{\uFPc}{\uFP_{\!\mathrm{cont}}}
\newcommand{\uFR}{\textup{FR}}
\newcommand{\rank}{\textup{rank}}
\newcommand{\graph}{\textup{graph}}

\newcommand{\inlaw}{\stackrel{d}{\to}}


\definecolor{construction}{rgb}{0.05,0.05,0.55}

\title[Representing general stochastic processes as martingale laws]{Representing general stochastic processes\\ as martingale laws}

\author{M.\ Beiglböck, G.\ Pammer, S.\ Schrott, and X.\ Zhang}

\thanks{M.B.\ gratefully acknowledges support through FWF projects Y00782 and P35197, St.S.\ gratefully acknowledges support through FWF projects P35197 and P34743. All authors thank Daniel Bartl for many helpful discussions and remarks.}

\subjclass[2020]{46N30, 60B10, 60G07, 60G44}

\begin{document}

\begin{abstract}
	Random variables $X^i, i=1,2$  are `probabilistically equivalent’ if they have the same law. Moreover, in any class of equivalent random variables it is easy to select canonical representatives. 
 
	The corresponding questions are  more involved 
	for processes $X^i$ on filtered stochastic bases $(\Omega^i, \F^i, \P^i, (\F^i_t)_{t\in [0,1]})$.
    Here equivalence in law does not capture relevant properties of processes such as the solutions to stochastic control or multistage decision problems. This  motivates Aldous to introduce the stronger notion of \emph{synonymity} based on prediction processes. 
    Stronger still, Hoover--Keisler formalize what it means that $X^i, i=1,2$ have \emph{the same probabilistic properties}.
    We establish that canonical representatives of the Hoover--Keisler equivalence classes are given precisely by the set of all Markov-martingale laws on a specific nested path space $\mathsf{M}_\infty$. 
    As a consequence we obtain that, modulo Hoover--Keisler equivalence, the class of stochastic processes forms a Polish space. 

    On this space, processes are topologically close iff they model similar probabilistic phenomena. In particular, this means that their laws as well as the information encoded in the respective filtrations are similar.  
    Importantly, compact sets of processes admit a Prohorov-type characterization.
    We also obtain that for every stochastic process defined on some abstract base space, there exists a process with identical probabilistic properties which is defined on a standard Borel space.
\end{abstract}
\maketitle

\keywords{\small \it keywords: equivalence of stochastic processes, synonymity, Prohorov's theorem, adapted distribution, fixed points}

\section{Introduction}

\subsection{Outline}

In this article we seek to identify a natural topological structure on the collection of all stochastic processes. 
To this end, we want to identify stochastic  processes  which model the same probabilistic phenomenon and we would like to consider processes as close  if they have similar probabilistic properties.
For random variables, this usually means that they are regarded as equivalent if their laws are equal and proximity is expressed in the weak topology. 

For stochastic processes, this approach is not fine enough, because we lose the information that the filtration carries, e.g.\ whether a process is a martingale or the values of a stochastic control problems for a certain process are not a property of the law of the process alone. 
We refer to the work of Aldous \cite[Section 11]{Al81} for a broader  discussion of this point. 

Building on the work of Hoover--Keisler \cite{HoKe84, Ho91} we define the \emph{adapted distribution} of a stochastic process (see Subsection \ref{ssec:adap_dist}) which captures the law of a stochastic process together with the information structure inherent in its filtration. The adapted distributions which arise in this way are the  martingale laws on a specific iterated path space $\mathsf{M}_\infty$ (see \eqref{eq:Minfty}) and provide  canonical representatives of stochastic processes. Stochastic processes then have similar properties if and only if their adapted distributions are close w.r.t.\ the weak topology on $\mathcal P (\mathsf{M}_\infty)$.
That is,  weak convergence of the adapted distributions induces an \emph{adapted weak topology} for stochastic processes. Notably, this topology satisfies 
an extended version of Prohorov's theorem: A family of stochastic processes is precompact if and only if the corresponding family of laws is tight. 
In particular, the set of all processes with \emph{one} fixed law is compact.

We note that in discrete time there is recently substantial interest in adapted weak topologies and adapted versions of Wasserstein distances from different communities, see Section 1.8 below. 
A main aim of this article is to provide the foundations for analogous developments also in continuous time. In particular, the setup we build here is used in the accompanying article \cite{BaBePaScZh23} to construct an adapted Wasserstein distance between stochastic processes. It is shown there that the space of all filtered processes is precisely the completion of the class of processes equipped with their natural filtration.  
Adapted Wasserstein distance yields stability of optimal stopping problems. 
In fact, on processes with natural filtration, the adapted weak topology is the weakest topology which makes optimal stopping continuous.

\subsection{The iterated prediction process}\label{sec:IntroInteratedPP}

As noted above, it is important for our purposes that we consider stochastic processes together with their  filtrations: 
\begin{definition}
A filtered process is a tuple 
\[
    \fp X = (\Omega,\F,\P, (\F_t)_{t \in [0,1]}, X)
\]
consisting of a probability space $(\Omega,\F,\P)$, a filtration $(\F_t)_{t \in [0,1]}$ that satisfies the usual conditions, and a process $X$ with paths in 
\[
D([0,1];\R^d)=: \mathsf{M}_0
\]
equipped with Skorohod's $J_1$-topology. The collection of all filtered processes is denoted by $\FP$. 
\end{definition}

The concept of prediction process goes back to Knight \cite{Kn75, Kn77} and is used by Aldous \cite{Al81} to encode information which  a filtration provides on a stochastic process. The prediction process $\pp^1(\fp X)$ of a filtered process $\fp X$ is the measure valued martingale
\[
\pp^1(\fp X )  {= (\pp_t^1(\fp X) )_{t \in [0,1]}}   =  (\law( X | \F_t  ) )_{t \in [0,1]} \in D([0,1];\prob(\mathsf{M}_0)) =:  \mathsf{M}_{1}.
\]
In words, $\pp_t^1(\fp X)$ describes the best available prediction about the evolution of the path of $X$ given the information at time $t\in [0,1]$. That $X$ is $(\F_t)_{t \in [0,1]}$-adapted is reflected by the fact that the evolution of $X$ up to time $t$ is fully determined by $\pp_t^1(\fp X)$.  Precisely, when writing $e_s(f) = f(s)$ for the evaluation of paths at time $s$ and $\#$ for the pushforward of measures, we have for  $s \le t$ 
\begin{align}\label{eq:ppp1}
    {e_s}_\# \pp^1_t(\fp X) = \law(X_s | \F_t) = \delta_{X_s} 
\end{align}
and in particular $\pp^1_1(\fp X) = \delta_X$.

Aldous called processes $\fp X, \fp Y$ \emph{synonymous} if $\law(\pp^1(\fp X)) = \law(\pp^1(\fp Y))$. This relation is capable of capturing  important probabilistic properties such as being Markov or being a martingale, i.e.\ if $\fp X$ and $\fp Y$ are synonymous $\fp X$ is a martingale if and only if $\fp Y$ is a martingale etc. Yet, this equivalence relation turns out to be not strong enough to preserve other properties of interest. For instance,  there are synonymous processes $\fp X, \fp Y$ that yield different values in optimal stopping problems (see \cite[Section 7]{BaBePa21}).  

In order to capture all probabilistic properties, Hoover--Keisler repeat  the Aldous--Knight construction and    introduce \emph{iterated prediction processes}
\[
\pp^{r+1}(\fp X )  =     (\law( \pp^r(\fp X) | \F_t  ) )_{t \in [0,1]} \in 
D([0,1];\prob(\mathsf{M}_r)) =:\mathsf{M}_{r+1}.
\]
Following the approach of Hoover--Keisler, in the iterative definition of the spaces $\mathsf{M}_r$ for $r>0$, the space of \cadlag-paths is equipped with the Meyer--Zheng topology and not with Skorohod's $J_1$-topology. This choice of the topology is motivated by the compactness results for martingale measure in this topology, see \cite{MeZh84}.

Prediction processes of higher rank contain increasingly more information. Indeed, the iterative definition implies that for every $r\ge 1$
\begin{align}\label{eq:ppp2}
\pp^{r+1}_1(\fp X) = \law(\pp^r(\fp X) |\F_1 ) = \delta_{\pp^r(\fp X) }.  
\end{align}
In general, the prediction process of rank $r+1$ contains strictly more information than the prediction process of rank $r$. Therefore, it is natural to consider the prediction process of rank $\infty$, which is the vector valued process that contains all finite-rank prediction processes, i.e.\ 
\begin{align}\label{eq:Minfty}
\pp^\infty(\fp X) := (\pp_t^1(\fp X), \pp^2_t(\fp X), \dots)_{t \in [0,1]}\in   D\bigg([0,1];\prod_{n=0}^\infty \prob(\mathsf{M}_n) \bigg) =: \mathsf{M}_\infty.
\end{align}
The process $\pp^\infty(\fp X)$ is a martingale in the sense that all of its components are measure-valued martingales. We will see in Subsection~\ref{sec:1.4} below the prediction process of rank $\infty$ contains all relevant information about the process $\fp X$ in a specific sense.


\subsection{Adapted distribution}\label{ssec:adap_dist}
We call $\law(\pp^\infty(\fp X))$ the \emph{adapted distribution} of the filtered process $\fp X$. 
All adapted distributions are martingale measures which terminate consistently in the sense of \eqref{eq:ppp1} and \eqref{eq:ppp2}. Conversely every such probability arises as the adapted distribution of a filtered process. 

To formalize this, we write $Z = (Z_t)_{t \in [0,1]}$  for the canonical process on $\mathsf{M}_\infty$. That is,  for every $t \in [0,1]$, $Z_t$ is a (countable) vector of measures 
\[
Z_t = (Z^r_t)_{r\geq 1},\quad   Z^r_t \in \prob(\mathsf{M}_{r-1}).  
\]
We call $\mu \in \prob(\mathsf{M}_\infty)$  martingale measure if under $\mu$ for every $r \geq 1$, the component process $Z^r := (Z_t^r)_{t \in [0,1]}$ is a $\prob(\mathsf{M}_{r-1})$-valued martingale  w.r.t.\ the filtration generated by the  vector-valued process $Z$. 

We call a martingale measure $\mu \in \prob(\mathsf{M}_\infty)$ consistently terminating if under $\mu$
\begin{itemize}
    \item ${e_t}_\# Z^1_t$ is a.s.\ a Dirac measure for every $t \in [0,1]$, 
    \item $Z^{r+1}_1 = \delta_{Z^r}$ a.s.\ for every $r \ge 1$.
\end{itemize}
Note that these two properties precisely reflect the properties of the prediction proess stated in \eqref{eq:ppp1}  and \eqref{eq:ppp2}.

\begin{theorem}\label{thm:introAdaptedDistrChar}
	A probability $\mu \in \prob(\mathsf{M}_\infty)$ is an adapted distribution if and only if it is a consistently terminating martingale measure.
\end{theorem}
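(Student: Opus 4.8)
The plan is to prove the two implications separately. The forward direction — every adapted distribution $\law(\pp^\infty(\fp X))$ is a consistently terminating martingale measure — is essentially already done in the two bullet points of Subsection \ref{ssec:adap_dist}: the martingale property of each component $\pp^n(\fp X) = (\law(\pp^{n-1}(\fp X)\mid \F_t))_t$ is the tower property of conditional expectations, and consistent termination is exactly the computation \eqref{eq:terminaterCond}. The only genuine point to nail down here is that the filtration generated by $Z = \pp^\infty(\fp X)$ under $\law(\pp^\infty(\fp X))$ is coarse enough that the martingale property, which a priori holds with respect to $(\F_t)$, is inherited by the smaller filtration $(\F^Z_t)$; this follows because each $\pp^n_t(\fp X)$ is $\F_t$-measurable, so $\F^Z_t \subseteq \F_t$ (up to the usual completion/augmentation adjustments, which one should address since the usual conditions are in force), and conditional expectations contract under taking a sub-filtration for processes adapted to the smaller one.

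The substantive direction is the converse: given a consistently terminating martingale measure $\mu \in \prob(\mathsf{M}_\infty)$, construct a filtered process $\fp X$ with $\law(\pp^\infty(\fp X)) = \mu$. The natural candidate is the canonical one: take $(\Omega,\F,\P) = (\mathsf{M}_\infty, \mathcal B(\mathsf{M}_\infty), \mu)$, let $(\F_t)$ be the ($\mu$-augmented, right-continuous) filtration generated by $Z$, and define $X := e_1 \circ Z^1 \in \mathsf{M}_0$ — i.e. read off the underlying $\R^d$-path from the terminal value of the first prediction coordinate. The heart of the proof is then to show by induction on $n$ that $\pp^n(\fp X) = Z^{n}$ (as processes, $\P$-a.s., possibly after matching up \cadlag\ versions), which immediately gives $\pp^\infty(\fp X) = Z$ and hence $\law(\pp^\infty(\fp X)) = \mu$.

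For the induction, the base case $n=1$ asks that $\law(X \mid \F_t) = Z^1_t$. Consistent termination at level $1$ gives ${e_t}_\# Z^1_t = \delta_{Z^0_t}$, but one needs the stronger statement identifying the whole conditional law; the key input is the martingale property of $Z^1$ together with the fact that $Z^1_1$ determines $X = e_1(Z^1_1)$, so $Z^1_t = \E_\mu[Z^1_1 \mid \F_t]$ should be pushed forward appropriately and compared with $\law(X\mid\F_t)$ — here one must check that conditioning and the evaluation maps interact correctly, using that $Z^1$ is $(\F_t)$-adapted and a martingale. The inductive step is the same argument one level up: assuming $\pp^{n-1}(\fp X) = Z^{n-1}$, the martingale property of $Z^n$ gives $Z^n_t = \E_\mu[Z^n_1 \mid \F_t]$, consistent termination pins down $Z^n_1$ in terms of $Z^{n-1}_1 = \pp^{n-1}_1(\fp X)$, and one concludes $Z^n_t = \law(\pp^{n-1}(\fp X)\mid \F_t) = \pp^n_t(\fp X)$.

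The main obstacle I anticipate is \emph{measurability and regularity bookkeeping rather than conceptual difficulty}: ensuring that the \cadlag\ versions of the measure-valued martingales $Z^n$ exist and are the ones realizing the prediction processes, that $\F^Z_t$ (with the usual conditions imposed) really coincides with the filtration for which the $Z^n$ are martingales, and that the regular conditional distributions $\law(\pp^{n-1}(\fp X)\mid\F_t)$ are well-defined and a.s. equal to $Z^n_t$ for \emph{all} $t$ simultaneously (not just a.e.\ $t$), which typically requires a right-continuity/separability argument to upgrade an a.e.-in-$t$ identity to an everywhere identity. A secondary subtlety is that $X$ as defined must have paths in $\mathsf{M}_0 = D([0,1];\R^d)$ $\P$-a.s.; this should be forced by consistent termination, since $t \mapsto Z^0_t = e_t(Z^1_t \text{ via }\delta)$ must itself be a \cadlag\ $\R^d$-path by definition of $\mathsf{M}_\infty$, but one should confirm the identification $X_t = Z^0_t$ is clean.
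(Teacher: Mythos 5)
Your route is the paper's: the forward implication is the computation \eqref{eq:terminaterCond} plus the observation that the martingale property passes to the filtration generated by $Z$, and the converse is the canonical construction on $\mathsf{M}_\infty$ with the augmented right\--continuous filtration generated by $Z$, followed by an induction identifying $\pp^k(\fp X)$ with $Z^k$; this is exactly Theorem~\ref{thm:CharPpLaw} and Proposition~\ref{prop:canonRep} combined with the adaptedness bookkeeping of Section~\ref{sec:adapted}, and the regularity issues you anticipate (passing from $\G_t$ to $\G_{t+}$, \cadlag\ versions, a.e.-in-$t$ versus all $t$) are the ones resolved there via Lemma~\ref{lem:MVMconv} and Remark~\ref{rem:TermMVM}.

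There is, however, one genuine gap, and it sits at the hinge of your induction. What the inductive step actually needs is the \emph{path-level} statement that $\mu$-a.s.\ $Z^{k+1}_1=\delta_{Z^{k}}$, a Dirac at the whole path $Z^k\in\mathsf{M}_k$ (termination in the sense of Definitions~\ref{def:termMVM} and~\ref{def:consTermin}); only then does the chain $Z^{k+1}_t=\sfE[Z^{k+1}_1\mid\F_t]=\sfE[\delta_{Z^k}\mid\F_t]=\law(Z^k\mid\F_t)=\law(\pp^k(\fp X)\mid\F_t)$ go through. The hypothesis of the theorem is only the evaluation-level condition ${e_t}_\# Z^{k+1}_t=\delta_{Z^k_t}$; at $t=1$ this controls merely ${e_1}_\#Z^{k+1}_1$, so your phrase ``consistent termination pins down $Z^{k+1}_1$ in terms of $Z^{k}_1$'' is strictly weaker than what is used, and the mean-measure identity $Z^1_t=\sfE[Z^1_1\mid\F_t]$ cannot be ``pushed forward'' to give $\law(X\mid\F_t)$ unless $Z^1_1$ is a.s.\ a Dirac. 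The same issue undercuts the definition $X:=\delta^{-1}(Z^1_1)$ itself: contrary to your last paragraph, $\mathsf{M}_\infty$ as defined in \eqref{eq:Minfty} carries no $Z^0$-coordinate and no Dirac constraint at time $1$ (unlike the space $M_\infty$ used internally in Sections~\ref{sec:FP}--\ref{sec:Space}), so nothing in its definition forces $Z^1_1\in\delta(\mathsf{M}_0)$, nor $Z^0$ to be a \cadlag\ path. The missing argument combines the martingale property with the termination condition at \emph{all} times: for fixed $t$, ${e_t}_\#Z^{k+1}_t=\sfE\bigl[{e_t}_\#Z^{k+1}_1\mid\F^Z_t\bigr]=\delta_{Z^k_t}$, and since Dirac measures are extreme points this forces ${e_t}_\#Z^{k+1}_1=\delta_{Z^k_t}$ a.s.; running over $t\in(\Q\cap[0,1])\cup\{1\}$ and using that all paths involved are \cadlag\ yields $Z^{k+1}_1=\delta_{Z^k}$ a.s.\ (and, at level $k=0$, that $Z^1_1$ is a.s.\ a Dirac at a \cadlag\ path, so $X$ is well defined and \cadlag). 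This is precisely what the paper isolates in Lemma~\ref{lem:cons_equiv} and builds into Definition~\ref{def:consTermin} before invoking Proposition~\ref{prop:canonRep}. A smaller omission: you never verify that the constructed process is adapted, i.e.\ that $X_t$ is $\F_t$-measurable; this also comes from the $n=1$ termination condition via \eqref{eq:adapted} and a Lemma~\ref{lem:cond_law_dirac}-type argument, and without it $\fp X^\mu$ is only a filtered random variable, not a filtered process as the theorem requires.
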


The interesting implication in Theorem \ref{thm:introAdaptedDistrChar} is to show that every consistently terminating martingale measure $\mu$ is the adapted distribution of a filtered process. 
To build this process we attach a further coordinate $Z^0_t$ to the process $(Z_t^1, Z^2_t, \ldots)$. For this, fix a Borel map $\delta^{-1}: \prob( D([0,1];\R^d) ) \to D([0,1];\R^d) $ which satisfies  $\delta^{-1}(\delta_x) = x$ for $x \in D([0,1];\R^d)$ and is  arbitrary otherwise, cf. Remark~\ref{rem:Intensity_Delta_Inv}. We then set $$Z^0 := (Z^0_t)_{t\in [0,1]}:= \delta^{-1}(Z_1^1).$$ 
This leads to the following `canonical' construction of the desired filtered process:
\begin{theorem}\label{thm:CanonicalRep}
For a consistently terminating martingale measure $\mu \in \prob(\mathsf{M}_\infty)$ consider the filtered process 
\begin{align*}
	\fp X^\mu = (\mathsf{M}_\infty, \F, \mu, (\F_t)_{t \in [0,1]}, Z^0),
\end{align*}
 where $(\F_t)_{t \in [0,1]}$ is the right-continuous augmentation w.r.t.\ $\mu$ of the filtration generated by the process $Z$ on $\mathsf{M}_\infty$ and $\F := \F_1$.
Then the adapted distribution of $\fp X^\mu$ satisfies $$\law(\pp^\infty(\fp X^\mu)) = \mu.$$ 
\end{theorem}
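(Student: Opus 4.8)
The plan is to show that the iterated prediction processes of $\fp X^\mu$ simply reproduce the coordinates of the canonical process: $\pp^n(\fp X^\mu) = Z^n$ $\mu$-a.s.\ (as $\mathsf M_n$-valued random variables) for every $n\geq 1$. Granting this for all $n$, the \cadlag paths of $\pp^\infty(\fp X^\mu)$ and of $Z$ agree $\mu$-a.s.\ on a countable dense set of times together with $t=1$, hence everywhere, so $\pp^\infty(\fp X^\mu) = Z$ $\mu$-a.s.\ and thus $\law(\pp^\infty(\fp X^\mu)) = \law(Z) = \mu$. As a preliminary one checks that $\fp X^\mu$ is a genuine filtered process (the right-continuous $\mu$-augmentation $(\F_t)_{t\in[0,1]}$ satisfies the usual conditions) and records a mild upgrade of the martingale hypothesis: each $Z^n$ is \cadlag (being $\mathsf M_n$-valued) and a martingale for the raw filtration generated by $Z$, hence also for its right-continuous $\mu$-augmentation; so all martingale and conditioning statements below may be read with respect to $(\F_t)_{t\in[0,1]}$.

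The crux is the identification of the terminal values: for every $n\geq 1$,
\[
  Z^n_1 = \delta_{Z^{n-1}} \qquad \mu\text{-a.s.,}
\]
the Dirac mass on $\mathsf M_{n-1}$ at the whole path $Z^{n-1}$ (with $Z^0 = \delta^{-1}(Z^1_1)$ when $n=1$). For $n\geq 2$ I would argue as follows. Fix $s\in[0,1)$; pushing forward under $e_s$ turns the measure-valued martingale $Z^n$ into another measure-valued martingale $({e_s}_\# Z^n_t)_{t\in[s,1]}$, which by consistent termination satisfies ${e_s}_\# Z^n_s = \delta_{Z^{n-1}_s}$. Since the base point $Z^{n-1}_s$ is $\F_s$-measurable, freezing it and testing this martingale against $d(Z^{n-1}_s,\cdot)$, where $d$ is a bounded complete metric on the relevant Polish state space, produces a non-negative $(\F_t)$-martingale on $[s,1]$ which vanishes at $t=s$, hence vanishes identically; therefore ${e_s}_\# Z^n_1 = \delta_{Z^{n-1}_s}$ $\mu$-a.s. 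Letting $s$ range over a countable dense subset of $[0,1)$, adding $s=1$, and using right-continuity of the path $Z^{n-1}$ and of the generic path under $Z^n_1$, one concludes that $Z^n_1$ is $\mu$-a.s.\ concentrated on the single path $Z^{n-1}$, which is the claim. For $n=1$ the statement reduces to $Z^1_1$ being $\mu$-a.s.\ a Dirac measure -- the content of consistent termination at level one, which is precisely what renders the selector $\delta^{-1}$ meaningful -- whence $Z^1_1 = \delta_{\delta^{-1}(Z^1_1)} = \delta_{Z^0}$.

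With terminal values in hand, I would conclude by induction on $n\geq 1$. The base ingredient is $\pp^0(\fp X^\mu) = X^\mu = Z^0$, true by construction. For the step, assume $\pp^{n-1}(\fp X^\mu) = Z^{n-1}$ $\mu$-a.s.\ as $\mathsf M_{n-1}$-valued random variables; then for each $t\in[0,1]$,
\[
  \pp^n_t(\fp X^\mu) = \law\big(\pp^{n-1}(\fp X^\mu)\,\big|\,\F_t\big) = \law\big(Z^{n-1}\,\big|\,\F_t\big) = \E\big[\delta_{Z^{n-1}}\,\big|\,\F_t\big] = \E\big[Z^n_1\,\big|\,\F_t\big] = Z^n_t \quad \mu\text{-a.s.,}
\]
using successively the definition of the prediction process, the induction hypothesis (a.s.-equal random variables have a.s.-equal conditional laws), the elementary identification of $\E[\delta_W\mid\F_t]$ with a regular conditional law of $W$, the terminal-value lemma, and the $(\F_t)$-martingale property of $Z^n$. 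As this holds $\mu$-a.s.\ for each fixed $t$ and both $\pp^n(\fp X^\mu)$ (a prediction process, hence \cadlag under the usual conditions) and $Z^n$ have \cadlag paths, they coincide $\mu$-a.s.\ as $\mathsf M_n$-valued random variables, closing the induction and, as noted, the proof.

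I expect the main difficulty to be the terminal-value lemma for $n\geq 2$, i.e.\ upgrading the one-time-point identity ${e_t}_\# Z^n_t = \delta_{Z^{n-1}_t}$ to the path-level identity $Z^n_1 = \delta_{Z^{n-1}}$: this requires the martingale property to transport Dirac-ness of the time-$s$ marginal forward to the terminal time, the freezing of the $\F_s$-measurable base point, and a right-continuity argument to pass from a dense set of times to all of $[0,1]$. The two passages between the raw and the right-continuously augmented filtration, the measurability of $z\mapsto Z^{n-1}$ as a map $\mathsf M_\infty\to\mathsf M_{n-1}$, and the fact that measure-valued martingales remain martingales when tested against bounded measurable functions (needed to push forward under the only-measurable map $e_s$) are routine.
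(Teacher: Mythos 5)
Your proof is correct, and its core is the same induction as the paper's: the paper proves (Proposition~\ref{prop:canonRep}) that for a consistently terminating martingale law the coordinates satisfy $Z^k=\pp^k(\fp X^\mu)$ a.s., by exactly your chain $\pp^{k+1}_t=\law(Z^k\mid\F_t)=\sfE[\delta_{Z^k}\mid\F_t]=\sfE[Z^{k+1}_1\mid\F_t]=Z^{k+1}_t$ (handling the passage from the raw filtration to its right-continuous augmentation via backward martingale convergence, Lemma~\ref{lem:MVMconv}, where you instead invoke the standard fact that a \cadlag{} martingale remains one under the usual augmentation — same content). Where you genuinely diverge is in the decomposition: the paper works on the smaller spaces $M_n$, in which the terminal values are Diracs by construction, and reconciles the introduction's condition ${e_t}_\#Z^n_t=\delta_{Z^{n-1}_t}$ with the condition $Z^n_1=\delta_{Z^{n-1}}$ of Definition~\ref{def:consTermin} through Lemma~\ref{lem:cons_equiv}, whose relevant direction it declares trivial; you instead work directly on $\mathsf{M}_\infty$ and prove this terminal-value identity explicitly (push forward under $e_s$, freeze the $\F_s$-measurable base point, kill the nonnegative martingale started at $0$, then a countable-dense-set plus right-continuity argument). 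That lemma is precisely the step the paper glosses over, and your argument is a legitimate and needed justification of it, so your route is, if anything, more self-contained. Two small touch-ups: for $n=1$ the level-one condition does \emph{not} literally say that $Z^1_1$ is a Dirac on path space — it says ${e_t}_\#Z^1_t=\delta_{Z^0_t}$ for each $t$ — but your own $n\ge 2$ argument applies verbatim once you note that the atom of ${e_s}_\#Z^1_s$ is $\sigma(Z^1_s)\subset\F_s$-measurable and a.s.\ equal to $Z^0_s$; and the state spaces $\prob(\mathsf{M}_{n-2})$ are Lusin rather than Polish, so you should only ask for a bounded compatible metric (completeness is neither available in general nor needed).
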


\subsection{Stochastic processes as a Polish space}\label{sec:1.4}

Hoover--Keisler define the notion of \emph{adapted function} of a stochastic process and argue that two stochastic processes have the same probabilistic properties if and only if all adapted functions take the same value on these processes. Furthermore, this happens if and only if these processes have the same adapted distribution, see Section \ref{sec:AF} for details. We thus identify processes based on having the same adapted distribution as defined above: 

\begin{definition}
Filtered processes $\fp X, \fp Y$ are Hoover--Keisler-equivalent if and only if $\law(\pp^\infty(\fp X )) = \law(\pp^\infty(\fp Y ))$. The space of  filtered processes is the factor space 
	\[
	\uFP:= \FP/_{\approx_\infty}.
	\]

A sequence $(\fp X^n)_n$ in $\uFP$ converges to $\fp X \in \uFP$ in the adapted weak topology if the corresponding sequence of adapted distributions converges weakly on $\prob(\mathsf{M}_\infty)$.   
\end{definition}
By  Theorem \ref{thm:CanonicalRep}, the adapted distribution $\mu$ of a filtered process $\fp X$ provides a canonical representative $\fp X^\mu \approx_\infty \fp X$. Note that as $\mathsf M_\infty$ is standard Borel, this implies that up to equivalence, every process can be assumed to be supported by a standard Borel space. 


Besides Theorem \ref{thm:CanonicalRep}, a  main result of this article is:

\begin{theorem}\label{thm:FPPolish}
The space $\uFP$ equipped with  adapted weak convergence is Polish.
\end{theorem}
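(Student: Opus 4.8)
Here is how I would approach the proof.

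\medskip

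The plan is to realise $\uFP$ as a \emph{closed} subspace of a space of probability measures and invoke that closed subspaces of Polish spaces are Polish. By Theorems~\ref{thm:introAdaptedDistrChar} and~\ref{thm:CanonicalRep}, the adapted-distribution map $\fp X\mapsto\law(\pp^\infty(\fp X))$ descends to a bijection of $\uFP$ onto the set $\mathcal M:=\{\mu\in\prob(\mathsf M_\infty):\mu\text{ is a consistently terminating martingale measure}\}$, and, since $\prob(\mathsf M_\infty)$ is metrizable, this bijection carries the adapted weak topology to the topology that $\mathcal M$ inherits from the weak topology on $\prob(\mathsf M_\infty)$. Hence it suffices to show that $\prob(\mathsf M_\infty)$ is Polish and that $\mathcal M$ is closed in it. The first point is routine: $\mathsf M_0=D([0,1];\R^d)$ is Polish, and inductively $\mathsf M_{n+1}=D([0,1];\prob(\mathsf M_n))$ is Polish because $\prob(S)$ and $D([0,1];S)$ are Polish whenever $S$ is; a countable product of Polish spaces is Polish, so $\mathsf M_\infty$, and therefore $\prob(\mathsf M_\infty)$, is Polish.

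For the closedness of $\mathcal M$ take $\mu_k\to\mu$ weakly with $\mu_k\in\mathcal M$ and write $Z$ for the canonical process on $\mathsf M_\infty$. I expect the main obstacle to be showing that the martingale requirement persists in the limit, since this is the point where weak convergence is classically delicate: it involves the filtration generated by $Z$, and the evaluation maps on Skorokhod space are not weakly continuous. The key is that, for $g\in C_b(\mathsf M_{n-1})$, the process $\langle Z^n_\cdot,g\rangle$ is real-valued and \emph{bounded} by $\|g\|_\infty$, so no uniform-integrability issue arises; and, running $g$ over a countable measure-determining family, the statement that $Z^n$ is a $\mu$-martingale for the filtration generated by $Z$ is equivalent to the countable family of identities
\[
    \E^\mu\!\bigl[(\langle Z^n_t,g\rangle-\langle Z^n_s,g\rangle)\,h(Z_{t_1},\dots,Z_{t_m})\bigr]=0,\qquad t_1<\dots<t_m\le s<t,\ h\in C_b.
\]
When $t_1,\dots,t_m,s,t$ all lie in the co-countable, dense set of continuity times of $\mu$, the integrand above is bounded and $\mu$-a.s.\ continuous, so each such identity passes to the weak limit from $(\mu_k)$; by right-continuity of $Z$ one then extends the identities to all times, whence $Z^n$ is a $\mu$-martingale.

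For the consistently terminating property ${e_t}_\#Z^n_t=\delta_{Z^{n-1}_t}$, the difficulty that $e_t$ is not weakly continuous for $t<1$ can be circumvented via the terminal time, where evaluation \emph{is} continuous. Since each $\mu_k$ is an adapted distribution (Theorem~\ref{thm:introAdaptedDistrChar}), it satisfies the stronger terminal identity $Z^n_1=\delta_{Z^{n-1}}$ — because $\pp^{n-1}(\fp X)$ is measurable with respect to the terminal $\sigma$-algebra $\F_1$ — so $\mu_k(F_n)=1$ for the set $F_n:=\{\omega\in\mathsf M_\infty:\omega^n(1)=\delta_{Z^{n-1}(\omega)}\}$; as endpoint evaluation is continuous on Skorokhod space, $F_n$ is closed, and the portmanteau theorem gives $\mu(F_n)\ge\limsup_k\mu_k(F_n)=1$, so $Z^n_1=\delta_{Z^{n-1}}$ also $\mu$-a.s. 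Combining this with the martingale property, and writing $r_t$ for the (measurable) restriction of paths to $[0,t]$, one has, using that pushforward under a fixed measurable map commutes with conditional expectation,
\[
    (r_t)_\#Z^n_t=(r_t)_\#\E^\mu[Z^n_1\mid\F^Z_t]=\E^\mu\bigl[(r_t)_\#Z^n_1\mid\F^Z_t\bigr]=\E^\mu\bigl[\delta_{r_t(Z^{n-1})}\mid\F^Z_t\bigr]=\delta_{r_t(Z^{n-1})},
\]
since $r_t(Z^{n-1})$ is $\F^Z_t$-measurable; in particular ${e_t}_\#Z^n_t=\delta_{Z^{n-1}_t}$ for every $t\in[0,1]$. (The base case $n=1$, involving the auxiliary coordinate $Z^0=\delta^{-1}(Z^1_1)$, is treated analogously, using additionally that $\{\nu\in\prob(\mathsf M_0):\nu$ is a Dirac$\}$ is closed in $\prob(\mathsf M_0)$ and $\delta^{-1}(\delta_x)=x$.) Thus $\mu\in\mathcal M$, so $\mathcal M$ is closed in the Polish space $\prob(\mathsf M_\infty)$, hence Polish, and therefore so is $\uFP$.
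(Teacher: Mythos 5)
There is a genuine gap, and it sits at the very first step of your reduction: the claim that $\prob(\mathsf M_\infty)$ is Polish. In this paper the path spaces $D([0,1];\cdot)$ carry the Meyer--Zheng (pseudo-path) topology, and with that topology $D([0,1];S)$ is \emph{not} Polish even for Polish $S$ --- it is only a Borel subset of a Polish space, i.e.\ Lusin (this is exactly why the paper sets up the whole machinery of Lusin spaces; see the footnote in Section~\ref{sec:prelim} and Proposition~\ref{prop:pseudo-paths}, where only $L_0(S)$ is $G_\delta$, while $D(S)$ is merely Borel). Hence your induction ``$\mathsf M_{n+1}$ is Polish because $D([0,1];S)$ is Polish whenever $S$ is'' fails, and $\prob(\mathsf M_\infty)$ is only Lusin. (Also, $\mathsf M_\infty$ is not a countable product of the $\mathsf M_n$; it is a path space over a product.) Consequently, even granting your closedness argument, realizing $\uFP$ as a closed subset of $\prob(\mathsf M_\infty)$ only shows that $\uFP$ is Lusin, not Polish. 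This is precisely how the paper proceeds in the first half of its proof (Theorem~\ref{thm:CharPpLaw}, Lemma~\ref{lem:ppClosed}, Corollary~\ref{cor:FPPolish}); the upgrade from Lusin to Polish is the part your argument is missing. The paper obtains it from the Prohorov-type compactness theorem (Theorem~\ref{thm:ProhorovIntro}, resp.\ Theorem~\ref{thm:Compactness} and its filtered-process version): the map $\fp X \mapsto \law(X)$ is continuous into the Polish space $\prob(D(S))$ and preimages of compact sets are compact, so Lemma~\ref{lem:Lusin2Polish} (pulling back a complete metric along this map) yields completeness. Without this ingredient --- which the introduction explicitly flags as the key input to Theorem~\ref{thm:FPPolish} --- your route cannot conclude.

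A secondary problem is the adaptedness part of the closedness argument, which you dismiss as ``treated analogously''. For $n\ge 2$ your identity $(r_t)_\#Z^n_t=\delta_{r_t(Z^{n-1})}$ works because $Z^{n-1}$ is a coordinate of the canonical process, so $r_t(Z^{n-1})$ is $\F^Z_t$-measurable. For $n=1$, however, $Z^0=\delta^{-1}(Z^1_1)$ is only $\F^Z_1$-measurable a priori, and the $\F^Z_t$-measurability of $r_t(Z^0)$ that you would need to pull the Dirac out of the conditional expectation is exactly the adaptedness property ${e_t}_\#Z^1_t=\delta_{Z^0_t}$ you are trying to establish --- the argument is circular. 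The paper treats this point separately and with real work: Proposition~\ref{prop:adaptClosed} shows that adaptedness is $\HK_1$-closed by constructing the Lipschitz functionals $\phi_{f,t,n}$ to circumvent the failure of continuity of the evaluation maps in the Meyer--Zheng topology, and Lemma~\ref{lem:cons_equiv} then reconciles the two notions of consistent termination.
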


Our proof of Theorem \ref{thm:FPPolish} (implicitly) yields a complete metric on $\uFP$. However, building on the present work, we provide in the parallel paper \cite{BaBePaScZh23} an adapted Wasserstein distance which metrizes the adapted weak topology and which seems more convenient in view of applications.

\medskip
An important ingredient in the proof of Theorem \ref{thm:FPPolish} is the following extension of Prohorov's theorem which appears interesting in its own right. 

\begin{theorem}\label{thm:ProhorovIntro}
A collection of filtered processes is relatively compact in the adapted weak topology if and only if the respective collection of laws is relatively compact.
\end{theorem}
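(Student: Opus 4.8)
The plan is to identify $\uFP$ with the set of consistently terminating martingale measures on $\mathsf{M}_\infty$ (via Theorems \ref{thm:introAdaptedDistrChar} and \ref{thm:CanonicalRep}), equipped with the weak topology induced from $\prob(\mathsf{M}_\infty)$, and then to run the usual two implications of a Prohorov-type statement. The ``only if'' direction is essentially automatic: if a family $(\fp X^\alpha)$ is relatively compact in the adapted weak topology, then pushing forward the adapted distributions $\law(\pp^\infty(\fp X^\alpha))$ under the continuous coordinate map $Z \mapsto Z^0$ (recall $Z^0 = \delta^{-1}(Z_1^1)$ is a fixed Borel—but on the relevant martingale-measure subset, essentially continuous—readout of the top prediction coordinate, or alternatively one can recover the law of $X$ directly from $\pp^1_1(\fp X)$) gives relative compactness of the family of laws on $\mathsf{M}_0 = D([0,1];\R^d)$ by the continuous mapping theorem. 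Some care is needed because $\delta^{-1}$ is only Borel, so I would instead argue that the map sending a filtered process to the law of its underlying process factors through $\pp^\infty$ and is weakly continuous on the space of adapted distributions; this is where one uses that on consistently terminating measures, $e_1{}_\#Z^1_1 = \delta_{Z^0_1}$ pins down the path, and that $\mu \mapsto \int Z^1_1 \, d\mu \in \prob(\mathsf{M}_0)$ is weakly continuous.

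The substantive direction is ``if'': assuming the family of laws $\{\law(X^\alpha)\}$ is tight on $D([0,1];\R^d)$, show that the family of adapted distributions $\{\law(\pp^\infty(\fp X^\alpha))\}$ is tight on $\mathsf{M}_\infty = D\!\left([0,1];\prod_{n\ge 0}\prob(\mathsf{M}_n)\right)$. The natural strategy is induction on the level $n$: tightness of $\law(X^\alpha)$ on $\mathsf{M}_0$ should bootstrap to tightness of $\law(\pp^1(\fp X^\alpha))$ on $\mathsf{M}_1 = D([0,1];\prob(\mathsf{M}_0))$, and then to tightness of $\law(\pp^2(\fp X^\alpha))$ on $\mathsf{M}_2$, and so on; finitely-many-levels tightness plus a diagonal/product argument then yields tightness on $\mathsf{M}_\infty$, since a product of Polish spaces is tight-controlled coordinatewise and $\prob$ of a Polish space is Polish. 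For the inductive step I would exploit the martingale structure: $\pp^{n+1}(\fp X)$ is the measure-valued martingale $(\law(\pp^n(\fp X)\mid\F_t))_t$, and a tight family of terminal values (here $\pp^n(\fp X^\alpha)$, tight by the inductive hypothesis) forces tightness of the whole measure-valued martingale. Concretely, for tightness in the Skorokhod space $D([0,1];\prob(\mathsf{M}_n))$ one needs (i) compact containment of the marginals at each time, which follows from $\prob(\mathsf{M}_n)$-valued martingales having terminal values with the prescribed tight law, together with a maximal-inequality-type control, and (ii) an Aldous-type modulus-of-continuity / no-oscillation estimate, for which the martingale property is again the key input (e.g.\ via Rebolledo's criterion or an Aldous tightness criterion adapted to the metric space $\prob(\mathsf{M}_n)$, using that conditional expectations of a fixed tight terminal value cannot oscillate too much).

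The main obstacle I anticipate is precisely this inductive step in infinite dimensions: making rigorous that tightness of the terminal value of a $\prob(\mathsf{M}_n)$-valued càdlàg martingale propagates to tightness of the whole path in $D([0,1];\prob(\mathsf{M}_n))$. One has to be careful because $\prob(\mathsf{M}_n)$ is an infinite-dimensional Polish space with no vector-space structure, so classical martingale tightness criteria (Aldous, Rebolledo, Jakubowski) cannot be quoted verbatim; the likely fix is to test against a countable convergence-determining family of bounded continuous functions $\varphi_k$ on $\mathsf{M}_n$, reduce to tightness of the real-valued martingales $t \mapsto \int \varphi_k \, dZ^{n+1}_t$ (which are genuine bounded martingales, so Aldous's criterion applies and gives the modulus of continuity uniformly), and then upgrade from coordinatewise tightness to tightness in $D([0,1];\prob(\mathsf{M}_n))$ using compact containment of the time-marginals (inherited from the inductive hypothesis and the fact that the marginal law of $Z^{n+1}_t$ is a mixture, hence tight, of laws whose barycenters are controlled). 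A secondary technical point is the passage from ``tight at every finite level $n$'' to ``tight on $\mathsf{M}_\infty$,'' but since $\mathsf{M}_\infty$ embeds into $\prod_n D([0,1];\prob(\mathsf{M}_n))$ with the consistently-terminating constraint cutting out a closed subset, this is a routine product-space argument once the finite-level statements are in hand; one should also check that the limit of adapted distributions is again consistently terminating, which holds because consistent termination is a closed condition under weak convergence (it is expressed by equalities of continuous functionals).
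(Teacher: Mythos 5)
Your high-level skeleton (recover the laws by a continuous readout for the easy direction; induct over the prediction levels, pass to the product for $n=\infty$, and check that limit points are again adapted distributions) matches the paper's architecture, but the core analytic step is misidentified, and as stated it would fail. You propose to prove tightness of $\law(\pp^{n+1}(\fp X^\alpha))$ in $D([0,1];\prob(\mathsf{M}_n))$ by an Aldous/Rebolledo modulus-of-continuity argument, on the grounds that ``a tight family of terminal values forces tightness of the whole measure-valued martingale.'' In the Skorokhod $J_1$ topology this is simply false: a sequence of uniformly bounded martingales whose two jumps merge (e.g.\ a jump of size $1/4$ at time $\tfrac12-\tfrac1m$ followed by another at time $\tfrac12$) has a fixed, finitely supported terminal law but is not $J_1$-tight, and Aldous's criterion fails for it. The reason the statement is true here is that the nested path spaces $\mathsf{M}_n$ carry the Meyer--Zheng pseudo-path topology, in which no oscillation control is needed at all: the martingale property enters only through a Doob maximal inequality (Lemma~\ref{lem:MVM_tight}) giving \emph{compact containment of the entire path} in a compact set $\mathcal K_\epsilon\subset\prob(\mathsf{M}_n)$, after which the laws, viewed as measures on $\prob(\prob([0,1]\times\prob(\mathsf{M}_n)))$, are tight for free; this is Theorem~\ref{thm:M_compactness}/Corollary~\ref{cor:MVMcompTermin}, which drives the induction in Lemma~\ref{lem:ppnRelComp}. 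Quoting $J_1$ criteria ``adapted to the metric space $\prob(\mathsf{M}_n)$'' is not a repairable shortcut: the theorem in the $J_1$ setting is not true, so the topology is not a technicality you can postpone.

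A second, smaller gap is your claim that the limiting constraints are ``expressed by equalities of continuous functionals.'' Consistent termination and adaptedness involve the evaluations $e_t$ for $t<1$, which are \emph{not} continuous in the Meyer--Zheng topology, and the martingale property w.r.t.\ the canonical filtration is not a weakly closed condition by naive testing either. The paper closes these sets by different means: convergence of finite-dimensional distributions along co-countable sets of times for the martingale property (Lemma~\ref{lem:MartFClosed}, feeding into Lemma~\ref{lem:ppClosed}), the canonical-representative construction (Theorem~\ref{thm:CharPpLaw}, i.e.\ Theorems~\ref{thm:introAdaptedDistrChar} and~\ref{thm:CanonicalRep}) to realize a limiting consistently terminating martingale law as the adapted distribution of an actual process, and, for the adaptedness constraint distinguishing $\uFP$ inside $\uFR(D(S))$, the ramp-function functionals of Proposition~\ref{prop:adaptClosed}, which are Lipschitz for Meyer--Zheng precisely because pointwise evaluation is not usable. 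Your ``only if'' direction is fine (it is Lemma~\ref{lem:R}: $e_1$ is continuous since $\lambda$ charges $\{1\}$ and $\delta^{-1}$ is continuous on the closed set $\delta(\mathsf{M}_0)$), but the ``if'' direction needs to be rebuilt on the Meyer--Zheng compactness mechanism rather than on Skorokhod tightness criteria.
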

We emphasize that Theorem~\ref{thm:ProhorovIntro} was in essence already established by Hoover \cite[Theorem~4.3]{Ho91} based on repeated applications of the compactness result of Meyer--Zheng \cite{MeZh84} to the iterated prediction processes. The difference to Theorem~\ref{thm:ProhorovIntro} is that Hoover \cite{Ho91} does not consider stochastic processes in the classical sense, in particular, no adaptedness-constraint is present in his setting.

Theorem~\ref{thm:ProhorovIntro} guarantees the existence of numerous compact sets, and by identifying filtered processes with their adapted distributions, Theorem~\ref{thm:introAdaptedDistrChar} establishes a linear structure on $\uFP$.  Hence, it is thus natural to ask whether $\uFP$ allows for a Brouwer-type fixed point theorem. Indeed, such a theorem holds true.
\begin{theorem}\label{thmm:FixedPointIntro}
Let $f : \uFP \to \uFP$ be continuous with relatively compact range. Then $f$ has a fixed point. 
\end{theorem}

\begin{remark}
For simplicity the results in this section are stated for processes with paths in $D([0,1];\R^d)$ equipped with Skorohod's $J_1$-topology. We want to emphasize that most of our constructions are independent of the specific choice of the path space and the topology on it. 
\end{remark}

\subsection{Related literature}

This article is most closely related to works of Aldous \cite{Al81}, which uses the prediction process to define synonymity and closeness of stochastic processes, Hoover--Keisler \cite{HoKe84}, where the iterated prediction  processes is used to define a refined notion of equivalence of stochastic processes, and to Hoover \cite{Ho91} where this iterated prediction process is also used to define a mode of convergence of stochastic processes (or more precisely, random variables with filtration). 
 We note however, that the   idea to extend the weak topology or to define metrics that account for flow of information has arisen independently in different communities. Specifically, different groups of authors have introduced  `adapted' variants of the Wasserstein distance in discrete time, this includes the works of 
 Vershik \cite{Ve70, Ve94}, 
R\"{u}schendorf \cite{Ru85},  Gigli \cite[Chapter~4]{Gi04}, Pflug--Pichler \cite{PfPi12, PfPi14} and Nielsen--Sun \cite{NiSu20}. Another distance which accounts for the order of time is induced by the Knothe--Rosenblatt distance \cite{BePaPo21}. 
Further adapted extensions of the weak topology  were introduced by  Hellwig in economics \cite{He96} and using higher rank signatures  by Bonnier, Liu, and Oberhauser \cite{BoLiOb23}. Remarkably, when considering  processes in finite discrete time with their natural filtration, all of these approaches define the same \emph{adapted weak topology}, see \cite{BaBaBeEd19b, BePaPo21, Pa22, BoLiOb23}. Very much in the spirit of these results, we will establish in the parallel paper \cite{BaBePaScZh23} that also the adapted weak topology in continuous time defined in this paper admits a number of different characterizations, when restricting to processes with their natural filtration. In particular, we construct in \cite{BaBePaScZh23} a Wasserstein-type metric for the adapted weak topology. 

We note however, that in continuous time several different adapted Wasserstein distances have been considered (typically for continuous semimartingales), see \cite{BaBeHuKa20, BaBaBeEd19a, La18, BiTa19, Fo22a}. In these articles the focus is on specific applications and the respective Wasserstein distances are considered on smaller subclasses of processes (continuous (semi-) martingales or diffusions) and induce significantly  finer modes of convergence, see \cite[Appendix C]{BaBePa21} for more details.

Adapted topologies and adapted transport theory have been  utilized in domains such as  geometric inequalities  (e.g.\  \cite{BoKoMe05, BaBeLiZa17, La18, Fo22a}),    stochastic optimization and multistage programming (e.g.\ \cite{ Pf09, PfPi12, KiPfPi20, BaWi23}), 
mathematical finance (e.g.\ \cite{Do14, AcBaZa20, GlPfPi17, BaDoDo20, BaBaBeEd19a, BeJoMaPa21a, AcBePa20}), and maching learning (e.g.\ \cite{AkGaKi23, XuLiMuAc20, AcKrPa23, XuAc21}). We refer to \cite{PfPi16, BaBaBeWi20,  AcHo22} for the estimation of $\mathcal{AW}_p$ from statistical data, to \cite{PiWe21, EcPa22, BaHa23} for efficient  numerical methods for  adapted transport problems.

\subsection{Structure of the paper}
Section~\ref{sec:prelim} provides background on probability measures on Lusin spaces and \cadlag\ paths whose values are probability measures on Lusin spaces. These objects arise naturally in the construction of the nested spaces $\mathsf{M}_r$, $r \in \N \cup \{\infty \}$. The reader who is not interested in these technical details may skip the Sections~\ref{sec:ProbaLusin} and \ref{sec:MZLusin}.

Section~\ref{sec:MVM} concerns measure-valued martingales on Lusin spaces. We generalize the compactness result for real-valued martingales from Meyer--Zheng \cite{MeZh84} to measure-valued martingales and strengthen their results on convergence of finite dimensional distributions. The results which are needed further on in this article are summarized in Section~\ref{sec:MVMIntro}. 
For the first read the reader may skip the Sections~\ref{sec:MVMFDD} and~\ref{sec:MVMComp} containing the proofs of these results.

In Section~\ref{sec:FP} we introduce the concept of filtered random variables, a generalization of filtered processes, which was first considered by Hoover--Keisler \cite{HoKe84} and is technically more convenient for certain parts of this article. Moreover, we discuss the construction of the iterated prediction process in more detail. 

Section~\ref{sec:Space} is about the space of filtered processes (filtered random variables) modulo Hoover--Keisler-equivalence. We introduce the adapted weak topology on this space, prove that it is Polish and that it admits a Prokohorov-type compactness criterion.

In Section~\ref{sec:Discr} we introduce an operator that discretizes filtered random variables in time. This operator is useful to connect the continuous-time framework to the discrete-time framework that was studied in \cite{BaBePa21}.

Section~\ref{sec:AF} concerns the concept of adapted functions. This concept makes precise that Hoover--Keisler-equivalence can be interpreted as `having the same probabilistic properties'.

In Section~\ref{sec:adapted} we translate results on filtered random variables to the framework of filtered processes, in particular, we prove the theorems stated in the introduction.


\section{Preliminaries}\label{sec:prelim}

\subsection{Notations and Conventions}
Throughout this paper $S, S_1, S_2, \dots$ are Lusin spaces.\footnote{A Lusin space is a topological space that is homeomorphic to a Borel subset of a Polish space. We use Lusin spaces because the space of \cadlag{} functions with convergence in measure is not a Polish space, but merely Lusin. Further details are explained in Section~\ref{sec:ProbaLusin}.} If we need stronger assumptions on the spaces (e.g.\ Polish, compact metrizable) we will always state it explicitly. We denote compatible metrics by $d,d_1,d_2, \dots$ and always assume that they are complete if the respective space is Polish. 

Following the ideas of Meyer--Zheng \cite{MeZh84} we equip $[0,1]$ with the measure $\lambda = \frac{1}{2}( \textup{Leb} + \delta_1 )$, where Leb is the Lebesgue measure on $[0,1]$. We call the topology of convergence in measure w.r.t.\ $\lambda$ the Meyer--Zheng topology.
$C_b(S)$ denotes the set of continuous bounded functions $S \to \R$.

\begin{convention}\label{con:spaces}
Let $S$ be a Lusin space.
\begin{itemize}
	\item Subspaces are always endowed with the subspace topology.
	\item Product spaces are always equipped with the product topology.
	\item The space of probability measures on $S$ is denoted by $\prob(S)$ and is always equipped with the weak topology (testing against continuous bounded functions).
	\item The space of Borel functions $[0,1] \to S$ modulo $\lambda$-a.s.\ equality is denoted by $L_0(S)$ and equipped with the Meyer--Zheng topology.
	\item The space of \cadlag{} functions $[0,1] \to S$ is denoted by $D(S)$. By minor abuse of notation, we consider it as subspace of $L_0(S)$ and call $f \in L_0(S)$ \cadlag\ if it admits a representative in $D(S)$.
    In particular, $D(S)$ is equipped with the subspace topology inherited from the Meyer--Zheng topology.
\end{itemize}
\end{convention}
In Sections \ref{sec:ProbaLusin} and \ref{sec:MZLusin} below we will further discuss the topological properties of the spaces $\prob(S), L_0(S)$ and $D(S)$. 

Let $f : S \to \R$ be bounded and Borel. Then we define the mapping
\begin{align}
f^\ast : \prob(S) \to \R: \mu \mapsto \int f d\mu.
\end{align}
Note that by definition the weak topology is the initial topology w.r.t.\ the family $\{ f^\ast : f \in C_b(S) \}$. In particular, $f^\ast$ is continuous if $f$ is continuous. 

If $f: S_1 \to S_2$ is a Borel measurable mapping and $\mu \in \prob(S_1)$,  we denote the pushforward of $\mu$ under $f$ by $f_\#\mu$. We introduce a notation for the pushforward map
\[
\prob(f) : \prob(S_1) \to \prob(S_2) : \mu \mapsto f_\#\mu.
\]
On the first glance, this might seem like a notational excess, but it will help to keep track when dealing with nested spaces (e.g.\ probability measures on probability measures) later in this paper. 

It is convenient to note that $\prob(f \circ g) = \prob(f) \circ \prob(g)$ and $\prob(\id_S)=\id_{\prob(S)}$ (i.e.\  the operation $\prob$ is a covariant functor in the category of Lusin spaces).

\begin{remark}\label{rem:P}
	The operation $\prob$ preserves many properties of spaces and functions:
	\begin{itemize}
		\item $S \mapsto \prob(S)$ preserves the properties Lusin, Polish and compact metrizable.
		\item $f \mapsto \prob(f)$ preserves the properties Borel measurable, continuous, injective, surjective, bijective, topological embedding and homeomeomorphism.
	\end{itemize}
\end{remark}

We denote the mapping which maps $s$ to the Dirac measure at $s$ by $\delta$, i.e.
$$
\delta : S \to \prob(S) : s \mapsto \delta_s.
$$
Note that $\delta$ is a topological embedding and its range 
$$
\delta(S) := \{ \delta_s : s \in S\}
$$
is closed in $\prob(S)$. We will often use its inverse
$$
\delta^{-1} : \delta(S) \to S : \delta_s \mapsto s.
$$

\begin{remark}\label{rem:et}
For $t \in [0,1]$ we introduce the map
$$
e_t : D(S) \to S : f \mapsto f(t).
$$ 
First we need to argue that $e_t$ is well-defined because we consider $D(S)$ as a subset of $L_0(S)$, which formally consists of equivalence classes of functions w.r.t.\ $\lambda$-a.s.\ equality. For $t<1$ this is guaranteed by the right continuity of the functions. This argument fails for the endpoint $t=1$, but this causes no problem because we adopted the convention that $\lambda$ puts positive mass at the endpoint $t=1$. It is important to keep in mind that $e_t$ is not continuous for $t<1$, but it is continuous for $t=1$.

If $T$ is a finite set, i.e.\ $T=\{t_1,\dots, t_N\}$ with $t_1 < \dots < t_N$, we write 
$$
e_T : D(S) \to S^N : f \mapsto ( f(t_1),\dots,f(t_N) ).
$$

\end{remark}

The $\sigma$-algebra generated by the random variable $X$ augmented with $\P$-null sets is denoted by $\sigma^{\P}(X)$.
If $X^n \to X$ in law, we either write $\law(X^n) \to \law(X)$ or $X^n \inlaw X$.

Throughout this paper $\N$ denotes the set of natural number without 0 and $\N_0$ the set of natural numbers including 0.

\subsection{Probability measures on Polish and Lusin spaces}\label{sec:ProbaLusin}
Recall that a Polish space is a separable completely metrizable space. Polish spaces provide a convenient framework for measure theory, however, they are not sufficiently general for the purposes of this paper: The space of \cadlag{} functions equipped with the Meyer--Zheng-topology is not Polish, but only a Borel subset of a Polish space, see \cite[Appendix]{MeZh84}. Clearly, this also applies to the spaces $\mathsf{M}_r$, $ r \in \N \cup \{\infty \}$, that have been introduced in \eqref{eq:Minfty} and are crucial throughout this article. Therefore, we  have to deal with the class of topological spaces  that are homeomorphic to a Borel subset of a Polish space:
\begin{definition}\label{def:Lusin}
	A topological spaces $S$ is called Lusin space if there is a Polish space $S'$ and a topological embedding $\iota : S \to S'$ such that $\iota(S)$ is a Borel subset of $S'$.
\end{definition}

As Polish spaces are separable metrizable and this property carries over to subspaces, Lusin spaces are  separable metrizable as well. The proof of the following result is given in Appendix~\ref{app:proofs}:
\begin{proposition}\label{prop:lusin_facts}
	\begin{enumerate}[label = (\alph*)]
		\item A metrizable space $(S,\T)$ is Lusin if and only if there is a stronger topology $\T' \supset \T$ such that  $(S,\T')$ is a Polish space.
		\item Let $(S,\T)$ be a Lusin space and $A \subset S$. Then $A$ is Borel if and only if there is a Polish topology $\T'$ on $A$ that is stronger than the subspace topology $\T|_A$. 
	\end{enumerate}
\end{proposition}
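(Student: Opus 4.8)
The plan is to prove both statements by a careful bookkeeping of topologies, reducing (a) to the definition of Lusin space and (b) to the classical fact that a Borel subset of a Polish space carries a finer Polish topology.

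For part (a): if $(S,\T)$ is Lusin, then by Definition~\ref{def:Lusin} there is a topological embedding $\iota: S \to S'$ into a Polish space with $\iota(S)$ Borel in $S'$. The standard Lusin--Souslin theory (which I would simply cite, e.g.\ Kechris) says that a Borel subset $B$ of a Polish space admits a finer Polish topology $\T'_B$ generating the same Borel $\sigma$-algebra; transporting $\T'_B$ back along $\iota$ gives a Polish topology $\T'$ on $S$. It remains to check $\T' \supseteq \T$: since $\iota$ is an embedding, $\T$ corresponds to the subspace topology on $\iota(S)$ inherited from $S'$, and $\T'_B$ refines that subspace topology, so $\T'$ refines $\T$. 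Conversely, if some Polish $\T' \supseteq \T$ exists, then $\id: (S,\T') \to (S,\T)$ is continuous; by the fact just cited in the other direction, every Polish space is Lusin in itself, and one checks that $S$ with the coarser metrizable topology $\T$ still embeds as a Borel set — concretely, embed $(S,\T')$ Polishly into some $[0,1]^{\N}$, and note that the graph-type construction (or the fact that a continuous injective image of a Borel set under a Borel-isomorphism-on-its-image map stays Borel) yields a Borel copy of $(S,\T)$ in a Polish space. The cleanest route here is: $(S,\T)$ is metrizable and separable, embed it in the Hilbert cube $[0,1]^\N$; the identity map from $(S,\T')$ (Polish) to this copy is continuous and injective, hence its image is Borel by the Lusin--Souslin theorem on injective Borel images; this image is homeomorphic to $(S,\T)$, so $(S,\T)$ is Lusin.

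For part (b): fix a Lusin $(S,\T)$ and $A \subseteq S$. If $A$ is Borel in $S$: by part (a) pick a Polish topology $\T' \supseteq \T$ on $S$; since $\T$ and $\T'$ are both separable metrizable and $\T \subseteq \T'$, they generate the same Borel $\sigma$-algebra (a finer separable metrizable topology has more Borel sets a priori, but here one argues that $\T$-Borel $=$ $\T'$-Borel because $\id:(S,\T')\to(S,\T)$ is a continuous bijection between Polish-and-separable-metrizable spaces so is Borel-bimeasurable — this needs a small argument, or one can simply invoke that the $\T$-open sets are $\T'$-Borel, and $\T$ being second countable this already gives $\mathcal B_\T \subseteq \mathcal B_{\T'}$, while $\mathcal B_{\T'} \subseteq \mathcal B_\T$ follows from $(S,\T')$ being Lusin over $(S,\T)$ via part (a) applied symmetrically — this is the fiddly point). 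Granting $\mathcal B_\T = \mathcal B_{\T'}$, the set $A$ is $\T'$-Borel in the Polish space $(S,\T')$, so it carries a Polish topology $\T''$ refining $\T'|_A \supseteq \T|_A$. Conversely, if $A$ admits a Polish topology $\T''$ stronger than $\T|_A$: embed $(S,\T)$ Borel-ly into a Polish $S'$ as per the definition; then $A$ with $\T''$ is Polish and the inclusion $(A,\T'') \hookrightarrow S'$ is continuous and injective, hence has Borel image by Lusin--Souslin; that image is $A$ viewed inside $S'$, which is therefore Borel in $S'$, hence Borel in the Borel set $S \subseteq S'$, hence Borel in $S$.

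The main obstacle I anticipate is the bookkeeping around \emph{which} $\sigma$-algebras coincide: when one refines a separable metrizable topology to a Polish one, the Borel $\sigma$-algebras need not a priori agree, and the proposition's usefulness downstream depends on them agreeing (so that "$A$ Borel in $(S,\T)$" is unambiguous). The clean resolution is the Lusin--Souslin theorem that an injective continuous (indeed Borel) image of a Borel subset of a Polish space is Borel, applied to $\id: (S,\T') \to (S,\T)$ after embedding the target in a Polish space; this forces $\mathcal B_{\T'} \subseteq \mathcal B_{\T}$, and the reverse inclusion is trivial from $\T \subseteq \T'$. Everything else is routine transport of topologies along embeddings, and I would relegate the cited descriptive-set-theory facts to a reference rather than reproving them.
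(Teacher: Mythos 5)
Your proposal is correct and follows essentially the same route as the paper: for (a) it combines the change-of-topology theorem for Borel subsets of Polish spaces with an embedding into the Hilbert cube and the Lusin--Souslin theorem on injective continuous images, exactly as in the paper's appendix proof. For (b), where the paper only remarks that it is an easy modification of (a), you supply the details via part (a) together with the coincidence of the Borel $\sigma$-algebras of $\T$ and $\T'$ (resolved correctly by Lusin--Souslin); this is a minor variant of the intended modification (transporting $A$ into the ambient Polish space $S'$ and refining there), but rests on the same facts and is sound.
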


\begin{remark}
	There are two non-equivalent definitions in the literature. The definition that we use follows Bourbaki  \cite[IX.4~Definition~6]{Bo66} and Dellacherie--Meyer \cite[Chapter~III, Definition~16]{DeMeA}. Many authors (e.g.\ \cite{Bo07}) use the following definition: A Hausdorff space $S$ is called Lusin if it is the continuous bijective image of a Polish space (or equivalently, if there is a stronger Polish topology on $S$ itself). 
	
	Proposition~\ref{prop:lusin_facts} implies that this is weaker than the Definition~\ref{def:Lusin} that we use. Indeed, it is strictly weaker: A separable infinite dimensional Banach space $X$ equipped with 
		the weak-$\ast$-topology is Hausdorff and the norm topology is a stronger Polish topology on $X$, however, $X$ with the  weak-$\ast$-topology is not Lusin according to Definition~\ref{def:Lusin} because the  weak-$\ast$-topology is not metrizable.
	
		We work with Definition~\ref{def:Lusin} because it is sufficiently general for our purposes (the space of \cadlag{} functions is Lusin according to that definition, see Proposition~\ref{prop:pseudo-paths} below) and we want to restrict to separable metric spaces in this paper because we want to work with a countable family of convergence determining functions, see Definition~\ref{def:conv_deter_fam} below.   
\end{remark}

It is well known that separable metric spaces can be embedded into the Hilbert cube $[0,1]^\N$, see e.g. \cite[Theorem 4.14]{Ke95}. Polish spaces are (up to homeomorphism) the $G_\delta$-subsets of $[0,1]^\N$ and Lusin spaces the Borel subsets of $[0,1]^\N$.

Next, we introduce the notion of convergence determining families:
\begin{definition}\label{def:conv_deter_fam}
	A family $(\phi_j)_{j \in J}$ of $[0,1]$-valued functions on $S$ is called
	\begin{enumerate}[label = (\alph*)]
		\item point separating, if for $x,y \in S$ we have 
		$$
		x=y \iff \forall j \in J : \phi_j(x) = \phi_j(y);
		$$ 
		\item convergence determining, if for every sequence $(x_n)_{n}$ in $S$ and $x \in S$ we have 
		\[
		x_n \to x \iff \forall j \in J \colon \phi_j(x_n) \to \phi_j(x).
		\]
	\end{enumerate}		
\end{definition}

Note that $(f_j)_{j \in J}$ is convergence determining if and only if the product map
$$
\prod_{j \in J} f_j : S \to [0,1]^J : s \mapsto (f_j(s))_{j \in J}
$$
 is a topological embedding. Conversely, if $S$ can be embedded into a cube $[0,1]^J$, the projections are a convergence determining family. 
So, working with convergence determining functions is equivalent to using an embedding into the cube $[0,1]^J$, but notionally more convenient for our purposes. In particular, we have
\begin{lemma}\label{lem:convergence.determining}
	Let $S$ be a separable metric space. Then there is a countable convergence determining family of functions on $S$ that is closed under multiplication. 
\end{lemma}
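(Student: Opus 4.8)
The plan is to start from the standard embedding of a separable metric space into the Hilbert cube and then to close the resulting countable family of coordinate functions under finite products. Concretely, since $S$ is separable metric, by the Urysohn embedding theorem (cf.\ \cite[Theorem~4.14]{Ke95}, as recalled above) there is a topological embedding $\iota = \prod_{j\in\N}\phi_j \colon S \to [0,1]^\N$ with $[0,1]$-valued components $\phi_j$. As already noted in the discussion preceding the statement, saying that $\iota$ is a topological embedding is precisely saying that the countable family $(\phi_j)_{j\in\N}$ is convergence determining; this is our starting point.

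Next I would set $\mathcal F$ to be the collection of all finite products $\phi_{j_1}\phi_{j_2}\cdots\phi_{j_m}$ with $m\ge 0$ and $j_1,\dots,j_m\in\N$ (repetitions allowed; the empty product being the constant function $1$). The set of finite sequences of natural numbers is countable, so $\mathcal F$ is countable; $\mathcal F$ consists of $[0,1]$-valued functions because $[0,1]$ is closed under multiplication; and $\mathcal F$ is closed under multiplication essentially by construction, since the concatenation of two finite sequences of indices is again a finite sequence.

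It then remains to verify that $\mathcal F$ is convergence determining, and both directions are elementary. If $x_n\to x$ in $S$, then $\phi_j(x_n)\to\phi_j(x)$ in $[0,1]$ for every $j$ because $(\phi_j)_{j\in\N}$ is convergence determining, and a finite product of sequences converging in $[0,1]$ converges to the product of the limits; hence $\phi(x_n)\to\phi(x)$ for every $\phi\in\mathcal F$. Conversely, if $\phi(x_n)\to\phi(x)$ for all $\phi\in\mathcal F$, this applies in particular to the length-one products $\phi=\phi_j$, $j\in\N$, and since $(\phi_j)_{j\in\N}$ is already convergence determining we conclude $x_n\to x$. This is the entire argument; I do not expect a genuine obstacle here, the only mild points requiring a moment's care being the countability of $\mathcal F$ and the stability of convergence in $[0,1]$ under finite products, both of which are routine.
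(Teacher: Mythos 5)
Your proof is correct and is essentially the argument the paper has in mind: the lemma is stated there without an explicit proof as a direct consequence of the preceding Hilbert-cube embedding discussion, and the intended family is exactly the one you build (coordinate functions of the embedding closed under finite products, cf.\ the family $F$ of finite products of projections used later in the proof of Proposition~\ref{prop:adaptClosed}). Both the countability and the two directions of the convergence-determining property are handled as you do, so nothing further is needed.
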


Next, we discuss how point separating  [convergence determining] families on $S$ relate to point separating  [convergence determining] families on $\prob(S)$. The following useful results can be found in \cite{BlKo10}.
\begin{lemma}\label{lem:P(S)_ptsep_convdet}
	Let $S$ be a Lusin space and $\{ \phi_j : j \in J\}$ be a family of $[0,1]$-valued functions on $S$ that are closed under multiplication. Then we have: 
	\begin{enumerate}[label = (\alph*)]
		\item  If $\{ \phi_j : j \in J\}$ is point separating and consists of continuous functions, then $\{ \phi^\ast_j : j \in J\}$ is point separating on $\prob(S)$.
		\item  If $\{ \phi_j : j \in J\}$ is point separating, consists of Borel functions and $J$ is countable, then $\{ \phi^\ast_j : j \in J\}$ is point separating on $\prob(S)$.
		\item  If $\{ \phi_j : j \in J\}$ is convergence determining, then $\{ \phi^\ast_j : j \in J\}$ is convergence determining on $\prob(S)$.
	\end{enumerate}
\end{lemma}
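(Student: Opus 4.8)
The plan is to cite \cite{BlKo10} for the substance of the three claims and to indicate how each reduces to a standard fact about the weak topology on $\prob(S)$; the multiplicative closure of the family is exactly what is needed to invoke a Stone--Weierstrass-type density argument. Throughout, recall that by Lemma~\ref{lem:convergence.determining} we may as well think of a countable point separating (resp.\ convergence determining) family as giving a Borel (resp.\ topological) embedding of $S$ into the cube $[0,1]^J$, and that $\prob$ is a covariant functor preserving embeddings and Borel injectivity by Remark~\ref{rem:P}.

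For part (a): if $\{\phi_j : j\in J\}$ is point separating and consists of continuous functions, then the algebra generated by $\{\phi_j : j \in J\}$ together with the constants separates points of $S$; since $S$ is separable metrizable, an application of a Stone--Weierstrass argument on compact subsets (or directly the fact that such an algebra is dense in $C_b(S)$ in the topology of uniform convergence on compacta, which suffices to recover integrals against tight measures) shows that if $\int \phi_j\, d\mu = \int \phi_j\, d\nu$ for all $j$, then $\int f\, d\mu = \int f\, d\nu$ for all $f \in C_b(S)$, hence $\mu = \nu$ because $\prob(S)$ on a Lusin space is still separated by $C_b(S)$. Thus $\{\phi_j^\ast : j\in J\}$ is point separating on $\prob(S)$. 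For part (c), one argues similarly but keeping track of sequences: if $\phi_j^\ast(\mu_n) \to \phi_j^\ast(\mu)$ for every $j$, one first deduces tightness-type control (on a Lusin space this needs the embedding into $[0,1]^J$, under which the pushed-forward measures live on a fixed compact cube and the coordinate functions together with their products are convergence determining on $\prob([0,1]^J)$ by the classical portmanteau argument), and then transports weak convergence back along the embedding $\prob$-functorially; hence $\{\phi_j^\ast\}$ is convergence determining on $\prob(S)$.

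For part (b), continuity of the $\phi_j$ is dropped but countability of $J$ and Borel measurability are assumed. Here one embeds $S$ Borel-measurably into $[0,1]^J$ via $s \mapsto (\phi_j(s))_{j}$; by Remark~\ref{rem:P} the induced map $\prob(S) \to \prob([0,1]^J)$ is injective, and on the cube the coordinate projections (which are continuous there) together with their products are point separating in the sense of part (a), so their integrals separate points of $\prob([0,1]^J)$. Pulling back through the injection, $\{\phi_j^\ast : j\in J\}$ separates points of $\prob(S)$.

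I expect the only genuinely delicate point to be the passage from ``separates/determines on the cube $[0,1]^J$'' to ``separates/determines on $\prob(S)$'' when $S$ is merely Lusin rather than Polish or compact: one must be sure that the weak topology on $\prob(S)$ is genuinely the initial topology pulled back from $\prob([0,1]^J)$ along the embedding, and that Borel measurability of the embedding (part (b)) is enough to keep $\prob(\cdot)$ injective. Both are covered by Remark~\ref{rem:P} and the discussion of convergence determining families above, so this is bookkeeping rather than a new difficulty; the clean reference is \cite{BlKo10}, which we simply invoke.
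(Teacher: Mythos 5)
Your proposal matches the paper exactly in substance: the paper offers no proof of Lemma~\ref{lem:P(S)_ptsep_convdet} at all, simply stating that the results can be found in \cite{BlKo10}, which is precisely your fallback. Your additional sketches (embedding into $[0,1]^J$, multiplicative closure feeding a Stone--Weierstrass/monotone-class argument, injectivity and embedding preservation of $\prob(\cdot)$ from Remark~\ref{rem:P}) are a reasonable outline of how the cited results are proved, and the remaining delicate points you flag (uniform control off compact sets in (a), non-metrizability of $[0,1]^J$ for uncountable $J$ in (c)) are indeed covered by the reference rather than by the paper itself.
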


Next, we discuss compactness in $\prob(S)$. Recall that $\A \subset \prob(S)$ is called tight, if for every 
$\epsilon >0$ there is a compact set $K_\epsilon \subset S$ such that $\mu(K_\epsilon) \ge 1-\epsilon$ for all $\mu \in \A$. If $S$ is a Polish space, Prohorov's theorem characterizes the compact subsets of $\prob(S)$ via tightness:
\begin{theorem}[Prohorov in Polish spaces]
	Let $S$ be a Polish space and $\A \subset \prob(S)$. Then $\A$ is relatively compact if and only if it is tight.
\end{theorem}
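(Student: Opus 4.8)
The statement is the classical Prohorov theorem; the plan is to prove the two implications separately, using the embedding of $S$ as a Borel subset of the compact metrizable Hilbert cube as the main device.

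\emph{Tightness implies relative compactness.} By Lemma~\ref{lem:convergence.determining} fix a countable convergence determining family on $S$ and use it to embed $S$ homeomorphically as a Borel subset of $\hat S := [0,1]^\N$; accordingly identify $\prob(S)$ with the set of $\nu \in \prob(\hat S)$ having $\nu(S)=1$. By Remark~\ref{rem:P} the space $\prob(\hat S)$ is compact metrizable. Given a sequence $(\mu_n)$ in a tight family $\A$, pass to a subsequence $\mu_{n_k}\to\nu$ in $\prob(\hat S)$. Tightness of $\A$ provides compact — hence $\hat S$-closed — sets $K_m\subset S$ with $\inf_{\mu\in\A}\mu(K_m)\ge 1-1/m$, so the portmanteau inequality gives $\nu(K_m)\ge\limsup_k\mu_{n_k}(K_m)\ge 1-1/m$ for all $m$, whence $\nu(S)=1$, i.e.\ $\nu\in\prob(S)$. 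Finally, the restrictions to $S$ of finite products of coordinate projections of $\hat S$ form a countable convergence determining family of continuous functions closed under multiplication, so by Lemma~\ref{lem:P(S)_ptsep_convdet}(c) the associated maps $\phi^\ast$ are convergence determining on $\prob(S)$; since each such $\phi$ lies in $C_b(\hat S)$ and $\mu_{n_k},\nu$ charge $S$ fully, $\mu_{n_k}\to\nu$ in $\prob(\hat S)$ forces $\phi^\ast(\mu_{n_k})\to\phi^\ast(\nu)$ and hence $\mu_{n_k}\to\nu$ in $\prob(S)$. As $\prob(S)$ is metrizable (Remark~\ref{rem:P}), this sequential relative compactness is relative compactness.

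\emph{Relative compactness implies tightness.} Here completeness of $S$ is used: fix a complete metric $d$ and a dense sequence $(x_k)$. The key claim is that for all $\epsilon,r>0$ there is $n$ with $\mu\big(\bigcup_{k\le n}B(x_k,r)\big)>1-\epsilon$ for every $\mu\in\A$. If not, choose $\mu_n\in\A$ with $\mu_n\big(\bigcup_{k\le n}B(x_k,r)\big)\le 1-\epsilon$; a subsequential weak limit $\mu_{n_j}\to\nu$ (which lies in $\prob(S)$ since $\A$ is relatively compact) satisfies, for each fixed $m$ and all $j$ with $n_j \ge m$, $\nu\big(\bigcup_{k\le m}B(x_k,r)\big)\le\liminf_j\mu_{n_j}\big(\bigcup_{k\le m}B(x_k,r)\big)\le 1-\epsilon$ by the open-set portmanteau inequality and monotonicity, so letting $m\to\infty$ gives $\nu(S)\le 1-\epsilon$, a contradiction. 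Granting the claim, given $\epsilon>0$ pick $n_m$ with $\mu\big(\bigcup_{k\le n_m}B(x_k,1/m)\big)>1-\epsilon 2^{-m}$ for all $\mu\in\A$, and set $K:=\bigcap_m\overline{\bigcup_{k\le n_m}B(x_k,1/m)}$. Then $K$ is closed and, for each $m$, covered by $n_m$ sets of diameter $\le 2/m$, hence totally bounded; by completeness $K$ is compact, and $\mu(K)\ge 1-\sum_m\epsilon 2^{-m}=1-\epsilon$ for all $\mu\in\A$.

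The main obstacle is the first implication, specifically the step that identifies the subsequential limit taken in the compactification $\prob(\hat S)$ as a genuine element of $\prob(S)$ and then checks that the convergence descends to $\prob(S)$; the background fact that $\prob$ of a compact metrizable space is compact metrizable is invoked from Remark~\ref{rem:P}. The second implication is where completeness is genuinely needed — it fails for general separable metric $S$ — but amounts to the standard ``closed and totally bounded is compact'' argument.
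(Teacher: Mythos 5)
Your proof is correct. Note, however, that the paper does not prove this statement at all: it records Prohorov's theorem for Polish spaces as classical background (and for the Lusin case it cites Bogachev), so there is no in-paper argument to compare against. Your two directions are the standard ones and both are sound. For ``tight $\Rightarrow$ relatively compact'' you compactify via the Hilbert cube, extract a limit $\nu$ in $\prob([0,1]^\N)$, use the closed-set portmanteau inequality on the compact sets furnished by tightness to see that $\nu$ charges $S$ fully, and then correctly handle the only delicate point --- that convergence in $\prob(\hat S)$ must be pulled down to convergence in $\prob(S)$ --- by testing against the countable multiplicative family of finite products of coordinates and invoking Lemma~\ref{lem:P(S)_ptsep_convdet}(c); this is legitimate since that family is $[0,1]$-valued, continuous, closed under multiplication and convergence determining on $S$, and each member extends continuously to $\hat S$. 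For ``relatively compact $\Rightarrow$ tight'' you give the usual total-boundedness argument, and you correctly isolate where completeness enters (closed and totally bounded implies compact); the contradiction step via the open-set portmanteau inequality and monotonicity is stated accurately. Both halves tacitly rely on $\prob(S)$ being metrizable so that sequential extraction suffices for (relative) compactness; this is covered by Remark~\ref{rem:P}, which you cite. One could shorten the first direction slightly by noting that, once $\nu$ is concentrated on the $\sigma$-compact set $\bigcup_m K_m\subset S$, restriction to $S$ is automatic without appealing to $S$ being Borel in $\hat S$, but as $S$ is $G_\delta$ in the cube your formulation is also fine.
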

In the case that $S$ is merely Lusin, the situation gets a bit more complicated because compact subsets of $\prob(S)$ are in general not tight (see \cite[Theorem~8.10.17]{Bo07a} for an example in the case $S = \Q$). We still have the following results (see \cite[Theorem~8.6.4, Theorem~8.6.7]{Bo07a})
\begin{theorem}[Prohorov in Lusin spaces]\label{thm:prohorov_lusin}
	Let $S$ be a Lusin space. Then we have:
	\begin{enumerate}[label = (\alph*)]
		\item Tight subsets of $\prob(S)$ are relatively compact.
		\item Convergent sequences in $\prob(S)$ are tight.
	\end{enumerate}
\end{theorem}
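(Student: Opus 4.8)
The plan is to transfer both statements to the Hilbert cube. Since $S$ is Lusin it is separable metrizable, so by Definition~\ref{def:Lusin} I may fix a topological embedding $\iota\colon S\to Y:=[0,1]^{\N}$ whose image $\iota(S)$ is a Borel subset of the compact metrizable space $Y$. By Remark~\ref{rem:P} the pushforward $\prob(\iota)\colon\prob(S)\to\prob(Y)$ is again a topological embedding, and since $\iota$ is a Borel isomorphism onto the Borel set $\iota(S)$ one checks that its image is exactly $\mathcal{I}:=\{\nu\in\prob(Y):\nu(\iota(S))=1\}$. As $\prob(Y)$ is compact metrizable (Remark~\ref{rem:P}), this in particular shows that $\prob(S)$ is metrizable, so in $\prob(S)$ relative compactness and sequential compactness coincide. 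Below I abbreviate $\bar\mu:=\iota_{\#}\mu\in\prob(Y)$ for $\mu\in\prob(S)$.

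For part~(a), let $\A\subseteq\prob(S)$ be tight and set $\A':=\{\bar\mu:\mu\in\A\}\subseteq\prob(Y)$. Its closure $\overline{\A'}$ is compact because $\prob(Y)$ is. The crux is that $\overline{\A'}\subseteq\mathcal{I}$: tightness provides compact sets $K_k\subseteq S$ with $\mu(K_k)\geq 1-1/k$ for all $\mu\in\A$ and all $k$; each $\iota(K_k)$ is then compact, hence closed, in $Y$, so for $\nu\in\overline{\A'}$ and a sequence $\bar\mu_j\to\nu$ in $\A'$ (legitimate since $\prob(Y)$ is metrizable) the portmanteau theorem gives $\nu(\iota(K_k))\geq\limsup_j\bar\mu_j(\iota(K_k))\geq 1-1/k$; letting $k\to\infty$ and using $\bigcup_k\iota(K_k)\subseteq\iota(S)$ yields $\nu(\iota(S))=1$. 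Transporting the compact set $\overline{\A'}$ back through the homeomorphism $\prob(\iota)^{-1}\colon\mathcal{I}\to\prob(S)$ produces a compact set $\mathcal{C}\subseteq\prob(S)$ with $\A\subseteq\mathcal{C}$, and since $\mathcal{C}$ is closed we get that $\overline{\A}\subseteq\mathcal{C}$ is compact, i.e.\ $\A$ is relatively compact. This is the content of \cite[Theorem~8.6.4]{Bo07a}.

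For part~(b), suppose $\mu_n\to\mu$ in $\prob(S)$. First, every single Borel probability measure $\nu$ on $S$ is tight: its image $\bar\nu\in\prob(Y)$ is Radon (as $Y$ is Polish) and concentrated on the Borel set $\iota(S)$, so for $\epsilon>0$ there is a compact $L\subseteq\iota(S)$ with $\bar\nu(L)\geq 1-\epsilon$, whence $\iota^{-1}(L)$ is compact in $S$ and $\nu(\iota^{-1}(L))\geq 1-\epsilon$. This settles any finite subfamily of $\{\mu_n\}$. For the tail, one would like to use weak convergence: given $\epsilon>0$, pick a compact $C\subseteq S$ with $\mu(C)\geq 1-\epsilon$ and an open $V\supseteq C$; then $\liminf_n\mu_n(V)\geq\mu(V)\geq 1-\epsilon$ by the portmanteau theorem, so $\mu_n(V)\geq 1-\epsilon$ for all but finitely many $n$.

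I expect the passage from this open set back to a compact one to be the main obstacle: a Lusin space need not be locally compact (e.g.\ $S=\Q$), so $V$ cannot in general be chosen with compact closure in $S$, and — since weak convergence only controls closed sets from above and open sets from below, while we need compact subsets of $\iota(S)$ bounded from below — no single dominating measure controls the tails of $(\mu_n)$ uniformly. The way around this is to exploit the Borel (Souslin-scheme) structure of $\iota(S)$ inside $Y$: one first proves the uniform estimate when the carrying Borel set is \emph{open} in $Y$ — there the fattening step above does go through, because a compact subset of an open subset of the compact metric space $Y$ has a relatively compact open neighbourhood contained in it — and then propagates the estimate through the Souslin operation that builds the general Borel set $\iota(S)$. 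This is exactly \cite[Theorem~8.6.7]{Bo07a}, which I would invoke rather than reprove; part~(a), by contrast, is routine.
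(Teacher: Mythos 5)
Your proposal is correct, and in fact it does slightly more than the paper, which offers no proof at all of Theorem~\ref{thm:prohorov_lusin} and simply cites \cite[Theorems~8.6.4 and~8.6.7]{Bo07a}. Your argument for part~(a) is a sound self-contained proof: the embedding $\iota$ into the Hilbert cube, the identification of the image of $\prob(\iota)$ with $\{\nu \in \prob(Y) : \nu(\iota(S))=1\}$, the portmanteau estimate $\nu(\iota(K_k)) \ge \limsup_j \bar\mu_j(\iota(K_k)) \ge 1-1/k$ on the closed sets $\iota(K_k)$, and the transport of the compact closure back through the homeomorphism are all correct, and they reproduce the standard proof behind \cite[Theorem~8.6.4]{Bo07a}. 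For part~(b) you correctly prove tightness of each individual measure (Radonness on the Polish ambient space plus concentration on the Borel set $\iota(S)$), correctly diagnose why the naive portmanteau/fattening argument cannot control the tail uniformly (lack of local compactness, and the wrong direction of the inequalities for compact sets), and you then invoke \cite[Theorem~8.6.7]{Bo07a} for the Souslin-scheme argument --- which is exactly what the paper does, so deferring to that reference is entirely consistent with the paper's own treatment. No gap.
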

Note that assertion (b) does not imply that all countable relatively compact sets in $\prob(S)$ are tight.

The intensity operator will be crucial throughout this paper:

\begin{definition}[intensity operator]\label{def:intensity}
	The intensity operator $I \colon \prob(\prob(S)) \to \prob(S)$ is defined as the unique mapping that satisfies, for all Borel measurable $f \colon S \to [0,1]$,
	\begin{align}\label{eq:DefI}
	\int f(s) \, I(P)(ds) = \iint f(s) \, q(ds) \, P(dq). 
	\end{align}         
\end{definition}
An equivalent way to write \eqref{eq:DefI} is
\begin{align}\label{eq:DefI2}
	f^\ast(I(P)) = \int f^\ast(q) P(dq).
\end{align}
We see that $I$ is continuous by applying  \eqref{eq:DefI2} to all $f \in C_b(S)$.

\begin{remark}\label{rem:Intensity_Delta_Inv}
Consider the function $\delta : \prob(S) \to \delta(\prob(S)) : \mu \mapsto \delta_\mu$. Then we have $I(\delta_\mu) = \mu$, so the intensity operator $I : \prob(\prob(S)) \to \prob(S)$ is a continuous extension of $\delta^{-1}$ from $\delta(\prob(S))$ to the whole space $\prob(\prob(S))$. Of course, $I$ is not the inverse of $\delta$, it is merely a left inverse. 

If $S$ is (a convex subset of) a separable Banach space, then 
$$
\prob(S) \to S : \mu \mapsto \int x \mu(dx)
$$
is a continuous left inverse of $\delta$. 

In general there is not always a continuous left inverse of the function $\delta : S \to \prob(S)$. Indeed, a left inverse of $\delta : S \to \prob (S)$ is a continuous surjective mapping $\prob(S) \to S$. As $\prob(S)$ is connected, this can only exist, if $S$ is connected as well. In particular, if $S$ is discrete, $\delta : S \to \prob (S)$ has no continuous left inverse.
\end{remark}

\begin{proposition}
	\label{prop:intensity_tightness}
	For $\mathcal A \subset\prob(\prob(S))$ we have the following implications:
	$$
	\begin{array}{ccc}
		\text{(i) }  \mathcal A \text{ is tight } & \Leftarrow  & \text{(ii) }  I(\mathcal A) \text{ is tight } \\
		\Downarrow &  & \Downarrow \\
		\text{(iii) }  \mathcal A \text{ is relatively compact } & \Leftrightarrow  & \text{(iv) }  I(\mathcal A) \text{ is relatively compact }   
	\end{array}
	$$
	If $S$ is Polish all of these statements are equivalent. 
\end{proposition}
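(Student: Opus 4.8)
The plan is to verify the four displayed implications, together with the equivalence (iii)$\Leftrightarrow$(iv), and then to close the loop in the Polish case via Prohorov's theorem. The only substantial point is the implication (ii)$\Rightarrow$(i); everything else will follow from it together with soft facts: continuity of the intensity operator, Prohorov's theorem in Lusin spaces (Theorem~\ref{thm:prohorov_lusin}), and the fact that Lusin spaces are metrizable.

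For the key implication (ii)$\Rightarrow$(i) I would argue by a Markov-inequality estimate combined with a Borel--Cantelli-type construction. Fix $\epsilon>0$. Using tightness of $I(\A)$, for each $n\in\N$ choose a compact $K_n\subset S$ with $I(P)(S\setminus K_n)\le \epsilon\,4^{-n}$ for all $P\in\A$; here one uses that, by \eqref{eq:DefI} applied to the indicator of $S\setminus K_n$, one has $I(P)(S\setminus K_n)=\int q(S\setminus K_n)\,P(dq)$, and $q\mapsto q(S\setminus K_n)$ is Borel (indeed lower semicontinuous, since $S\setminus K_n$ is open). Markov's inequality then yields $P\big(\{q: q(S\setminus K_n)>2^{-n}\}\big)\le\epsilon\,2^{-n}$ for every $P\in\A$. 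I would then set
\[
\mathcal K:=\bigcap_{n\in\N}\big\{q\in\prob(S): q(K_n)\ge 1-2^{-n}\big\},
\]
observe that $\mathcal K$ is closed (each constituent set is closed, as $q\mapsto q(K_n)$ is upper semicontinuous) and tight (given $\eta>0$, pick $n$ with $2^{-n}<\eta$), hence compact by Theorem~\ref{thm:prohorov_lusin}(a), and finally note $P(\mathcal K)\ge 1-\sum_{n}\epsilon\,2^{-n}=1-\epsilon$ for all $P\in\A$. Thus $\A$ is tight.

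The remaining forward implications are routine. Both (i)$\Rightarrow$(iii) and (ii)$\Rightarrow$(iv) are immediate from Prohorov in Lusin spaces (Theorem~\ref{thm:prohorov_lusin}(a)), applied on $\prob(\prob(S))$ and on $\prob(S)$ respectively — both are Lusin by Remark~\ref{rem:P} — while (iii)$\Rightarrow$(iv) holds because $I$ is continuous and continuous maps preserve relative compactness. For (iv)$\Rightarrow$(iii) I would argue sequentially, using that $\prob(\prob(S))$ is metrizable, so relative compactness amounts to every sequence having a subsequence convergent in the space. Given $(P_k)\subset\A$, relative compactness of $I(\A)$ produces a subsequence with $I(P_{k_j})\to\nu$ in $\prob(S)$; by Theorem~\ref{thm:prohorov_lusin}(b) the set $\{I(P_{k_j}):j\}$ is tight, whence by the key implication $\{P_{k_j}:j\}$ is tight, hence relatively compact by Theorem~\ref{thm:prohorov_lusin}(a), so a further subsequence converges in $\prob(\prob(S))$. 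Finally, if $S$ is Polish, then so are $\prob(S)$ and $\prob(\prob(S))$, and Prohorov's theorem for Polish spaces gives (i)$\Leftrightarrow$(iii) and (ii)$\Leftrightarrow$(iv), so that all four statements are equivalent.

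I expect the main obstacle to be the implication (ii)$\Rightarrow$(i): extracting tightness of a family of measures from tightness of their intensities requires the quantitative Markov/Borel--Cantelli bookkeeping above. Once that is in place, (iv)$\Rightarrow$(iii) and the rest reduce to it via purely topological arguments (continuity of $I$, Prohorov in Lusin spaces, metrizability).
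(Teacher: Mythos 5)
Your proposal is correct and follows essentially the same route as the paper: (i)$\Rightarrow$(iii) and (ii)$\Rightarrow$(iv) from Prohorov in Lusin spaces, (iii)$\Rightarrow$(iv) by continuity of $I$, (iv)$\Rightarrow$(iii) via the identical sequential argument that reduces to (ii)$\Rightarrow$(i), and the Polish case closed by Prohorov's theorem. The only difference is that the paper simply cites \cite{Sz91} for the key implication (ii)$\Rightarrow$(i), whereas you supply a correct self-contained Markov-inequality/union-bound construction of the compact set $\mathcal K$ (in the same spirit as the paper's later Lemma~\ref{lem:MVM_tight}), which is a perfectly valid substitute.
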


\begin{proof} We start with the case that $S$ is Lusin.
	
	$(i) \implies (iii)$ and $(ii) \implies (iv)$ are immediate from Theorem~\ref{thm:prohorov_lusin}.
	
	$(ii) \implies (i)$ can be found in \cite[p. 178, Ch. II]{Sz91}. The claim there is for Polish spaces, but this implication did not use Polish.
	
	$(iii) \implies (iv)$ is immediate as $I$ is continuous and continuous images of relatively compact sets are relatively compact.
	
	$(iv) \implies (iii)$ Let $(\mu^n)_n$ be a sequence in $\mathcal A$. We need to show that it has a convergent subsequence. As $(I(\mu^n))_n$ is a sequence in the relatively compact set $I(\mathcal A)$, there is a subsequence $(I(\mu^{n_k}))_k$ converging to some $\nu \in \prob(S)$. By Theorem~\ref{thm:prohorov_lusin}(b), the sequence $(I(\mu^{n_k}))_k$ is tight. By the already proven implication $(ii) \implies (i)$, the sequence $(\mu^{n_k})_k$ is tight as well and therefore relatively compact by Theorem~\ref{thm:prohorov_lusin}(a). So,  $(\mu^{n_k})_k$, and therefore $(\mu^n)_n$, has a convergent subsequence. 
	
	In the case that $S$ is Polish, we get $(iii) \implies (i)$ and $(iv) \implies (ii)$ from Prohorov's theorem. Then we have proven enough implications to conclude that all statements are equivalent.
\end{proof}

\begin{lemma}\label{lem:LusinProdComp}
Let $S_j, j \in \N$ be Lusin spaces and $S:= \prod_{j \in \N} S_j$. Then we have
\begin{enumerate}[label = (\alph*)]
	\item A sequence $(\mu^n)_n$ in $\prob(S)$ converges to $\mu$ in $\prob(S)$ if and only if $(\pr_1,\dots,\pr_j)_\#\mu^n \to (\pr_1,\dots,\pr_j)_\#\mu$ for all $j \in \N$. 
	\item $\A \subset \prob(S)$ is relatively compact if and only if $\prob(\pr_j)(\A)$ is relatively compact in $\prob(S_j)$ for all $j \in \N$.
\end{enumerate}
\end{lemma}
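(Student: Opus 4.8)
The plan is as follows. First, I record that $S=\prod_{n\in\N}S_n$ is itself a Lusin space: a countable product of Borel subsets of the Hilbert cube $[0,1]^\N$ is again a Borel subset of a homeomorphic copy of $[0,1]^\N$. Consequently Lemma~\ref{lem:convergence.determining}, Lemma~\ref{lem:P(S)_ptsep_convdet} and Theorem~\ref{thm:prohorov_lusin} are available for $\prob(S)$, for each $\prob(S_n)$, and for each finite sub-product $S_1\times\dots\times S_n$ (all of these spaces being metrizable). The two assertions rest on these tools. The genuinely delicate point, which concerns (b), is that over Lusin spaces relative compactness of a family of probability measures is strictly weaker than tightness (cf.\ the remark after Theorem~\ref{thm:prohorov_lusin}), so the usual ``marginals tight $\Rightarrow$ joint tight $\Rightarrow$ Prohorov'' argument has to be run along a carefully chosen subsequence.

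\textbf{Part (a).} The ``only if'' implication is immediate: $(\pr_1,\dots,\pr_n)\colon S\to S_1\times\dots\times S_n$ is continuous, hence $\prob((\pr_1,\dots,\pr_n))$ is continuous by Remark~\ref{rem:P} and preserves weak convergence. For the converse I would fix, for each $n$, a countable convergence determining family $(\phi^n_j)_{j\in J_n}$ of $[0,1]$-valued functions on $S_n$ closed under multiplication (Lemma~\ref{lem:convergence.determining}), noting that convergence determining families automatically consist of continuous functions. Let $\Psi$ be the set of all functions on $S$ of the form $x\mapsto\prod_{n\in F}\phi^n_{j_n}(x_n)$ with $F\subset\N$ finite (the empty product being the constant $1$) and $j_n\in J_n$ for $n\in F$. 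Then $\Psi$ is countable, $[0,1]$-valued, closed under multiplication, and convergence determining on $S$: its members are continuous, and if $\psi(x^m)\to\psi(x)$ for all $\psi\in\Psi$ then, taking $F=\{n\}$, we get $\phi^n_j(x^m_n)\to\phi^n_j(x_n)$ for all $n$ and $j$, hence $x^m_n\to x_n$ for every $n$, i.e.\ $x^m\to x$ in the product topology. By Lemma~\ref{lem:P(S)_ptsep_convdet}(c), $\{\psi^\ast:\psi\in\Psi\}$ is convergence determining on $\prob(S)$, so it is enough to verify $\psi^\ast(\mu^m)\to\psi^\ast(\mu)$ for each $\psi\in\Psi$. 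Each such $\psi$ factors as $\psi=g\circ(\pr_1,\dots,\pr_N)$ for some $N$ and some $g\in C_b(S_1\times\dots\times S_N)$, whence $\psi^\ast(\mu^m)=g^\ast\big((\pr_1,\dots,\pr_N)_\#\mu^m\big)\to g^\ast\big((\pr_1,\dots,\pr_N)_\#\mu\big)=\psi^\ast(\mu)$, using the hypothesis together with continuity of $g^\ast$. Hence $\mu^m\to\mu$ in $\prob(S)$.

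\textbf{Part (b).} ``Only if'' is again short: $\prob(\pr_n)$ is continuous (Remark~\ref{rem:P}), and the continuous image of a relatively compact subset of a metric space is relatively compact, so $\prob(\pr_n)(\A)$ is relatively compact in $\prob(S_n)$. For ``if'' I would take an arbitrary sequence $(\mu^m)_m$ in $\A$ and show it has a convergent subsequence. Since each $\prob(\pr_n)(\A)$ is relatively compact in the metrizable space $\prob(S_n)$, a diagonal extraction over $n\in\N$ yields a subsequence along which $\prob(\pr_n)(\mu^m)$ converges in $\prob(S_n)$ for \emph{every} $n$; relabel so that this holds for the full sequence. Now, for each fixed $n$, the sequence $\big(\prob(\pr_n)(\mu^m)\big)_m$ is convergent and hence tight by Theorem~\ref{thm:prohorov_lusin}(b); thus, given $\epsilon>0$, there is a compact $K_n\subset S_n$ with $\mu^m\big(\pr_n^{-1}(K_n)\big)=\prob(\pr_n)(\mu^m)(K_n)\ge 1-2^{-n}\epsilon$ for all $m$. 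Then $K:=\prod_{n\in\N}K_n$ is compact in $S$ by Tychonoff's theorem, and a union bound over the coordinates gives $\mu^m(K)\ge 1-\sum_{n\in\N}2^{-n}\epsilon=1-\epsilon$ for all $m$. So $(\mu^m)_m$ is tight, and Theorem~\ref{thm:prohorov_lusin}(a) yields a convergent subsequence. As $(\mu^m)_m$ in $\A$ was arbitrary, $\A$ is relatively compact in $\prob(S)$.

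\textbf{Expected main obstacle.} Part (a) is essentially bookkeeping with convergence determining functions, once one records that such functions are automatically continuous; the ``only if'' direction of (b) and the construction of the compact product set are equally routine. The one point that genuinely requires care is that over Lusin spaces relative compactness of the coordinate projections $\prob(\pr_n)(\A)$ does not by itself force $\A$ to be tight. The remedy is to pass first to a subsequence along which all one-dimensional marginals \emph{converge}: only then does Theorem~\ref{thm:prohorov_lusin}(b) turn relative compactness of the marginals into actual tightness, which then assembles, via Tychonoff, into tightness of $\A$ along that subsequence.
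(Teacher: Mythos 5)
Your proof is correct and follows essentially the same route as the paper's: part (a) via testing against bounded continuous functions depending on finitely many coordinates together with Lemma~\ref{lem:P(S)_ptsep_convdet}, and part (b) by first extracting a (diagonal) subsequence along which all marginals converge, so that Theorem~\ref{thm:prohorov_lusin}(b) upgrades relative compactness of the marginals to tightness, then assembling a compact product set via Tychonoff and concluding with Theorem~\ref{thm:prohorov_lusin}(a). The one point you single out as delicate (relative compactness does not imply tightness over Lusin spaces) is exactly the caveat the paper's proof is built around.
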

For the sake of completeness, a proof is given in the appendix. While (a) is trivial, (b) is not because it relies on Prohorov's theorem and we need to be careful there in non-Polish spaces.

\begin{remark}\label{rem:PGamma}
Let $\prob_\Gamma(S_1 \times S_2)$ denote the set of $\mu \in \prob(S_1 \times S_2)$ that are concentrated on the graph of a Borel function from $S_1$ to $S_2$. It is known (see e.g.\ \cite{Ed19}) that $\prob_\Gamma(S_1 \times S_2)$ is a $G_\delta$-subset of $\prob(S_1 \times S_2)$, if $S_1, S_2$ are Polish. As Lusin spaces are Borel subsets of Polish space and Borel functions defined on a Borel subset can always be extended to the entire space, this assertion readily extends to Lusin spaces $S_1,S_2$.
\end{remark}

%
%
%
%

\subsection{Pseudopaths with values in Lusin spaces}\label{sec:MZLusin}
	The construction of Hoover--Keisler is based on processes with \cadlag{} paths in nested probability spaces whose path spaces are equipped with the pseudo-path topology of Meyer--Zheng \cite{MeZh84}.
For this reason, it is necessary to first establish some basic properties of this topology.

\begin{definition}
	An $S$-valued pseudo-path is a probability measure on $[0,1] \times S$ that is concentrated on a graph of function and has first marginal $\lambda$. We denote the set of all $S$-valued pseudo-paths as $\Psi(S)$.
\end{definition}

A probability measure $\mu \in \prob([0,1]\times S)$ with first marginal $\lambda$ is a pseudo-path, if and only if there is a Borel function $f: [0,1] \to S$ such that 
$$
\mu_t = \delta_{f(t)} \qquad \lambda\text{-a.s.},
$$
where $(\mu_t)_{t \in [0,1]}$ is a  disintegration of $\mu$. The mapping 
\begin{equation}
	\label{eq:def_iota_S}
	\iota_S \colon L_0(S) \to \prob([0,1] \times S) \colon f \mapsto  (\id,f)_\#\lambda
\end{equation}
is injective by the a.s.\ uniqueness of the disintegration and its range is $\Psi(S)$. 

\begin{definition}
	A pseudo-path $\mu$  is called \cadlag{} pseudo-path if there is a \cadlag{} function $f$ such that $\mu=\iota_S(f)$. We denote the set of $S$-valued \cadlag{} pseudo-paths as $\Psi_D(S)$, i.e.\ $\Psi_D(S) = \iota_S(D(S))$.
\end{definition}

The main goal of this section is to prove the following proposition:
\begin{proposition}\label{prop:pseudo-paths}
	$ $
	\begin{enumerate}[label = (\alph*)]
		\item \label{it:pseudo-paths.embedding}
		The mapping $\iota_S$ defined in \eqref{eq:def_iota_S} is a topological embedding with range $\Psi(S)$. In particular, $\Psi(S)$ is homeomorphic to $L_0(S)$.
		\item \label{it:pseudo-paths.Polish}
		$\Psi(S)$ is a $G_\delta$-subset of $\prob( [0,1] \times S )$. In particular, if $S$ is Polish, then $\Psi(S)$ is Polish; if $S$ is Lusin, then $\Psi(S)$ is Lusin.
		\item \label{it:pseudo-paths.cadlag_Borel}
		$D(S)$ is a Borel subset of $L_0(S)$. In particular, if $S$ is Lusin, then $D(S)$ is Lusin.
	\end{enumerate}
\end{proposition}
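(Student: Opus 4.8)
The plan is to prove the three parts in turn, with \ref{it:pseudo-paths.embedding} providing the homeomorphism $L_0(S)\cong\Psi(S)$ that feeds into \ref{it:pseudo-paths.cadlag_Borel}. For \ref{it:pseudo-paths.embedding}: injectivity of $\iota_S$ and the identification of its range with $\Psi(S)$ have already been recorded, so only the claim that $\iota_S$ is a homeomorphism onto $\Psi(S)$ remains, and I would first establish it for a \emph{compact} metrizable target $K$. Continuity of $\iota_K$ is immediate from the subsequence principle: if $f_n\to f$ in $\lambda$-measure, pass to a $\lambda$-a.s.\ convergent subsequence and apply bounded convergence to $t\mapsto g(t,f_n(t))$ for $g\in C_b([0,1]\times K)$. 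For the converse I use a doubling argument. Given $\iota_K(f_n)\to\iota_K(f)$, the measures $\nu_n:=(\id,f_n,f)_\#\lambda$ live on the compact space $[0,1]\times K\times K$ and hence form a relatively compact family; any subsequential limit $\nu_\infty$ satisfies $(\pr_{12})_\#\nu_\infty=\iota_K(f)$ (the limit of $(\pr_{12})_\#\nu_n=\iota_K(f_n)$) and $(\pr_{13})_\#\nu_\infty=\iota_K(f)$ (the sequence $(\pr_{13})_\#\nu_n$ is constant). Thus $\nu_\infty$ is concentrated on $\{(t,f(t),f(t)):t\in[0,1]\}$, and since its first marginal is $\lambda$, disintegration over the first coordinate forces $\nu_\infty=(\id,f,f)_\#\lambda$; so $\nu_n\to(\id,f,f)_\#\lambda$, and testing against $(t,s,s')\mapsto\min(d(s,s'),1)$ gives $\int\min(d(f_n(t),f(t)),1)\,\lambda(dt)\to0$, i.e.\ $f_n\to f$ in $\lambda$-measure. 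For a general separable metric (in particular Lusin) $S$, fix a topological embedding $\iota\colon S\to K:=[0,1]^\N$; then the induced map $L_0(\iota)\colon L_0(S)\to L_0(K)$, $f\mapsto\iota\circ f$, is a topological embedding (it is injective and continuous, and it reflects convergence because $\iota$ both preserves and reflects $\lambda$-a.s.\ convergence, again by the subsequence principle), by Remark~\ref{rem:P} $\prob(\id_{[0,1]}\times\iota)$ is a topological embedding, and the square $\prob(\id_{[0,1]}\times\iota)\circ\iota_S=\iota_K\circ L_0(\iota)$ commutes. Since the right-hand side is a composition of topological embeddings and $\prob(\id_{[0,1]}\times\iota)$ is a topological embedding, $\iota_S$ is one as well.

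For \ref{it:pseudo-paths.Polish}: I would write
\[
\Psi(S)=\bigl\{\mu\in\prob([0,1]\times S):{\pr_1}_\#\mu=\lambda\bigr\}\cap\prob_\Gamma([0,1]\times S),
\]
with $\prob_\Gamma$ as in Remark~\ref{rem:PGamma}; both inclusions are immediate from the description of pseudo-paths recalled before the proposition. The first set is closed (it is the preimage of the closed singleton $\{\lambda\}$ under the continuous map $\prob(\pr_1)$) and the second is $G_\delta$ by Remark~\ref{rem:PGamma}, so $\Psi(S)$ is $G_\delta$ in $\prob([0,1]\times S)$. If $S$ is Polish then $\prob([0,1]\times S)$ is Polish (Remark~\ref{rem:P}) and a $G_\delta$ subset of a Polish space is Polish; if $S$ is Lusin then $\prob([0,1]\times S)$ is Lusin, and a $G_\delta$ subset of a Lusin space is Lusin (realizing the Lusin space as a Borel subset $B$ of a Polish space $X$, a $G_\delta$ of $B$ has the form $B\cap H$ with $H$ a $G_\delta$ of $X$, hence is Borel in $X$).

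For \ref{it:pseudo-paths.cadlag_Borel}: by the homeomorphism of \ref{it:pseudo-paths.embedding} it suffices to prove that $D(S)$ is Borel in $L_0(S)$; the ``in particular'' then follows because $L_0(S)\cong\Psi(S)$ is Lusin by \ref{it:pseudo-paths.Polish} and Borel subsets of Lusin spaces are Lusin. I would use the classical oscillation criterion for regulated paths: for a rational partition $\pi\colon 0=t_0<\dots<t_k=1$ set
\[
O_\pi(f):=\max_{1\le i\le k}\ \operatorname*{ess\,sup}_{(s,s')\in[t_{i-1},t_i)^2}\min\bigl(d(f(s),f(s')),1\bigr),
\]
the essential supremum taken with respect to $\lambda\otimes\lambda$; then $f\in D(S)$ if and only if $\inf_\pi O_\pi(f)=0$ as $\pi$ ranges over rational partitions. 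Granting this, $D(S)=\bigcap_{m\in\N}\bigcup_{\pi}\{f:O_\pi(f)<1/m\}$ is Borel once each $O_\pi$ is Borel on $L_0(S)$, and $O_\pi$ is Borel because on each partition interval $O_\pi(f)$ is the $p\to\infty$ limit of the map $f\mapsto\bigl(\iint\min(d(f(s),f(s')),1)^p\,\lambda(ds)\,\lambda(ds')\bigr)^{1/p}$, which is continuous on $L_0(S)$ for each fixed $p$ (convergence of $f_n$ in $\lambda$-measure gives convergence of $(s,s')\mapsto d(f_n(s),f_n(s'))$ in $(\lambda\otimes\lambda)$-measure, and then bounded convergence applies along subsequences).

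The main obstacle is the oscillation criterion in \ref{it:pseudo-paths.cadlag_Borel}: the inclusion $D(S)\subseteq\{\inf_\pi O_\pi=0\}$ is immediate (the classical partition criterion together with the fact that $\lambda$ has full support), but the converse requires manufacturing a genuine \cadlag{} representative out of an \emph{essential} oscillation bound --- a Fubini argument first upgrades ``$O_\pi(f)$ small'' to honest smallness of $d(f(s),f(s'))$ for $s,s'$ in a co-null subset of each partition interval, after which the representative is obtained through right limits along these good points, in the spirit of \cite{MeZh84}. Parts \ref{it:pseudo-paths.embedding} and \ref{it:pseudo-paths.Polish} are comparatively routine; the only subtlety is that tightness and the functor $\prob$ behave worse on Lusin than on Polish spaces, which is precisely why the argument for \ref{it:pseudo-paths.embedding} is routed through the compact metrizable case, where $\prob$ of the ambient space is automatically compact.
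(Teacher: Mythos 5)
Parts \ref{it:pseudo-paths.embedding} and \ref{it:pseudo-paths.Polish} of your proposal are correct. For \ref{it:pseudo-paths.Polish} you argue exactly as the paper does (intersection of the closed marginal constraint with $\prob_\Gamma$ from Remark~\ref{rem:PGamma}). For \ref{it:pseudo-paths.embedding} the continuity of $\iota_S$ is the same dominated-convergence argument as in the paper, but your treatment of the inverse is genuinely different: the paper pushes everything to the real-valued case through a countable convergence determining family and quotes \cite[Lemma~1]{MeZh84}, whereas you give a self-contained coupling argument --- the measures $(\id,f_n,f)_\#\lambda$ on $[0,1]\times K\times K$ are automatically relatively compact for compact $K$, any limit has both two-dimensional marginals equal to $\iota_K(f)$ and is therefore $(\id,f,f)_\#\lambda$, and testing against $\min(d(s,s'),1)$ gives convergence in measure; the general case then follows from the compact one via an embedding $S\hookrightarrow[0,1]^\N$ and the commuting square with $\prob(\id_{[0,1]}\times\iota)$. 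That is correct and avoids the citation of the real-valued lemma, at the cost of routing through the compact case.

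The genuine gap is in \ref{it:pseudo-paths.cadlag_Borel}: the oscillation criterion you rely on, ``$f\in D(S)$ iff $\inf_\pi O_\pi(f)=0$'', is false when $S$ is Lusin but not Polish, and the proposition is stated (and later needed) exactly in that generality --- the nested path spaces are Lusin, not Polish. The essential-oscillation bound only allows you to manufacture a representative that is right-continuous with left limits in the \emph{completion} of $(S,d)$, and on a non-Polish Lusin space no compatible metric is complete, so the right limits along your ``good points'' need not exist in $S$. Concretely, take $S=\Q$, rationals $q_n\uparrow\sqrt2$, and $a_n\uparrow\tfrac12$; let $f=q_n$ on $[a_n,a_{n+1})$ and $f=0$ on $[\tfrac12,1]$. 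Then $\inf_\pi O_\pi(f)=0$ (refine the partition near $\tfrac12$ using that $(q_n)$ is Cauchy), but $f$ admits no $\Q$-valued \cadlag{} representative: any representative agrees with $f$ Lebesgue-a.e.\ on each interval $[a_n,a_{n+1})$, hence its left limit at $\tfrac12$ would have to be $\sqrt2\notin\Q$. So your set $\bigcap_m\bigcup_\pi\{O_\pi<1/m\}$ strictly contains $D(S)$, and the claimed Borel decomposition of $D(S)$ fails. (The measurability of $O_\pi$ via the $L^p$-approximation is fine, and your argument does work when $S$ is Polish and $d$ is chosen complete, as the paper's conventions then allow.) The missing idea is how to certify that the left limits of the reconstructed path stay in $S$ rather than in an ambient completion; this is precisely the delicate point for Lusin $S$, and it is not addressed by the essential-supremum route. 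The paper instead proves \ref{it:pseudo-paths.cadlag_Borel} by testing against a countable convergence determining family and invoking the real-valued result \cite[Theorem~2]{MeZh84} together with Lemma~\ref{lem:char_pseudo-path}, so that the only path-regularity statements used are for real-valued paths; if you want to keep your intrinsic criterion, you would at least have to supplement it by an argument that membership of the values \emph{and of all one-sided limits} in the Borel set $S$ cuts out a Borel subset of the Polish-valued \cadlag{} paths, which is not automatic and is where the real content of the statement for Lusin $S$ lies.
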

\begin{remark}\label{rem:pseudoL0}
	Proposition~\ref{prop:pseudo-paths} implies that we will no longer have to distinguish between $L_0$-functions and pseudo-paths. Will will choose the point of view depending on what is technically more convenient in the respective context. For the rest of this section, which is dedicated to the proof of this equivalence, we will of course carefully distinguish between $L_0$-functions and pseudo-paths.  
\end{remark}

The results of Proposition~\ref{prop:pseudo-paths} are well known from \cite{MeZh84} in the case  $S=\R$. However, the proof of \cite[Lemma 1]{MeZh84} made use of the fact that the paths are real-valued (they used that weak $L_2$-convergence plus convergence of the norms implies strong $L_2$-convergence).

We will generalize these results using convergence determining real-valued functions (see Definition \ref{def:conv_deter_fam} below). This generalization is straightforward, but we do not skip it because it gives us the opportunity to introduce notations that we need later and to derive lemmas about convergence determining families that will be useful several times throughout this paper.

Given two Lusin spaces $S_1, S_2$ and a Borel function $f : S_1 \to S_2$ one can consider the operation ``compose with $f$'' defined by
$$
L_0(S_1) \to L_0(S_2) : g \mapsto f \circ g.
$$
If $f$ is continuous, this operation maps \cadlag{} paths to \cadlag{} paths.

Next, we define this operation in the framework of pseudo-paths. For technical reasons, we define this operation not only on pseudo-paths, but we define an operation on the whole of $\prob([0,1] \times S_1)$ that acts on pseudo-paths as the composition.	
\begin{proposition}
	\label{prop:Psi_lift}
	Given $f \colon S_1 \to S_2$ be Borel, we define
		\begin{equation}
		\label{eq:def_Psi_lift}
		\Psi(f) := \prob( \,(t,s) \mapsto (t,f(s)) \, )
		 : \prob([0,1] \times S_1) \to \prob([0,1] \times S_2).
	\end{equation}
	Then $\Psi(f)$ has the following properties:
	\begin{enumerate}[label = (\alph*)]
		\item \label{it:Psi_lift.kernel_rep}
		On a probability measure  $\mu \in \prob([0,1] \times S_1)$ with disintegration $(\mu_t)_{t \in [0,1]}$ the mapping $\Psi(f)$ acts as
		$$\Psi(f)(\mu)(dt,ds) = f_\#\mu_t (ds)  \lambda(dt).$$
		\item \label{it:Psi_lift.iota} $\Psi(f)$ maps $\Psi(S_1)$ to $\Psi(S_2)$ and acts there as composition with $f$, i.e.
		\begin{align}\label{eq:PsiComp}
					\Psi(f)(\iota_{S_1}(g)) = \iota_{S_2}(f \circ g).
		\end{align}
		\item \label{it:Psi_lift.cadlag}
		If $f$ is continuous, $\Psi(f)$ is continuous from $\prob([0,1] \times S_1)$ to $\prob([0,1] \times S_2)$ and it maps $\Psi_D(S_1)$ to $\Psi_D(S_2)$.
		\item \label{it:Psi_lift.embedding}
		If $f$ is a topological embedding, $\Psi(f)$ is a topological embedding of $\Psi(S_1)$ into $\Psi(S_2)$ and of $\Psi_D(S_1)$ into $\Psi_D(S_2)$.			
	\end{enumerate}
\end{proposition}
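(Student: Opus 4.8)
The plan is to observe that $\Psi(f)$ is nothing but the pushforward functor $\prob$ applied to the product map
\[
F := \id_{[0,1]} \times f \colon [0,1]\times S_1 \to [0,1]\times S_2, \qquad F(t,s) = (t,f(s)),
\]
so that $\Psi(f) = \prob(F)$, and then to read off parts (a)--(d) from the elementary properties of $F$ together with the functorial behaviour of $\prob$ recorded in Remark~\ref{rem:P}.

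For (a) I would use that $F$ leaves the first coordinate unchanged, so $\prob(F)(\mu) = F_\#\mu$ has first marginal $\lambda$, and that for every bounded Borel $h$ on $[0,1]\times S_2$, using the given disintegration $(\mu_t)_t$ of $\mu$,
\[
\int h \, d(F_\#\mu) = \iint h(t,f(s)) \, \mu_t(ds)\,\lambda(dt) = \iint h(t,s') \, (f_\#\mu_t)(ds')\,\lambda(dt).
\]
By $\lambda$-a.s.\ uniqueness of disintegrations this identifies $(F_\#\mu)_t = f_\#\mu_t$ for $\lambda$-a.e.\ $t$, which is the claim; note that only the \emph{given} disintegration is used, so no existence statement for Lusin spaces is required.

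Part (b) follows from (a) specialised to $\mu = \iota_{S_1}(g) = (\id,g)_\#\lambda$: since $F\circ(\id,g) = (\id, f\circ g)$ we get $\Psi(f)(\iota_{S_1}(g)) = (\id,f\circ g)_\#\lambda = \iota_{S_2}(f\circ g)$, so in particular $\Psi(f)$ maps $\Psi(S_1)$ into $\Psi(S_2)$ and acts there as composition with $f$. For (c), if $f$ is continuous then $F$ is continuous, hence $\Psi(f) = \prob(F)$ is continuous by Remark~\ref{rem:P}; and since $f\circ g$ is \cadlag{} whenever $g$ is (a right limit of $f\circ g$ equals $f$ of the corresponding right limit of $g$, by continuity of $f$ and right-continuity of $g$, and similarly for left limits), (b) shows $\Psi(f)$ sends $\Psi_D(S_1) = \iota_{S_1}(D(S_1))$ into $\Psi_D(S_2)$. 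For (d), if $f$ is a topological embedding then so is the product map $F = \id_{[0,1]}\times f$, hence $\prob(F)$ is a topological embedding by Remark~\ref{rem:P}; restricting this embedding to the subspace $\Psi(S_1)$ (respectively $\Psi_D(S_1)$) and corestricting to $\Psi(S_2)$ (respectively $\Psi_D(S_2)$), which contains the image by (b) (respectively (c)), gives the two claimed embeddings.

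I do not expect a genuine obstacle here; the statement is essentially a bookkeeping exercise once $\Psi(f)$ is recognised as $\prob(\id_{[0,1]}\times f)$. The only points deserving a little care are the $\lambda$-a.s.\ uniqueness argument in (a), the observation that composition with a continuous map preserves the \cadlag{} property, and the fact that restricting a topological embedding to a subspace (and corestricting to any superset of the image) is still an embedding -- together with the discipline of \emph{not} invoking Proposition~\ref{prop:pseudo-paths}, which is meant to be deduced from the present proposition rather than assumed.
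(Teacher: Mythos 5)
Your proposal is correct, and it is exactly the routine verification the paper has in mind: the authors simply write ``Straightforward,'' and your identification $\Psi(f)=\prob(\id_{[0,1]}\times f)$ together with the disintegration computation for (a), specialisation to $\iota_{S_1}(g)$ for (b), and the functorial properties of $\prob$ from Remark~\ref{rem:P} for (c) and (d) is the natural way to fill in that omitted argument, with the circularity caveat about Proposition~\ref{prop:pseudo-paths} correctly observed.
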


\begin{proof}
	Straightforward.
\end{proof}

\begin{lemma}\label{lem:char_pseudo-path}
	Let $\mu \in \prob([0,1] \times S)$ with first marginal $\lambda$, let $\{ \phi_k \colon k \in \N \}$ be a point separating family of Borel functions on $S$, and let $\{ \psi_k \colon k \in \N \}$ be a convergence determining family. Then we have:
	\begin{enumerate}[label = (\alph*)]
		\item \label{it:char_pseudo-path.separating}
		$\mu$ is a pseudo-path if and only if  $\Psi(\phi_k)(\mu)$ is a pseudo-path for every $k \in \N$. 
		\item \label{it:char_pseudo-path.convergence_det}
		$\mu$ is a \cadlag{} pseudo-path if and only if  $\Psi(\psi_k)(\mu)$ is  a \cadlag{} pseudo-path for every $k \in \N$. 
	\end{enumerate}
\end{lemma}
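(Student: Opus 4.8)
The plan is to reduce both statements to the defining property of pseudo-paths—being concentrated on the graph of a function—using the fact that $\Psi(\phi_n)$ acts as "compose with $\phi_n$" on pseudo-paths (Proposition~\ref{prop:Psi_lift}\ref{it:Psi_lift.iota}) and more generally acts by pushing forward the disintegration fibrewise (Proposition~\ref{prop:Psi_lift}\ref{it:Psi_lift.kernel_rep}). In both parts the forward implication is immediate: if $\mu = \iota_S(f)$ is a pseudo-path (resp.\ \cadlag{} pseudo-path, so $f \in D(S)$), then $\Psi(\phi_n)(\mu) = \iota_{\R}(\phi_n \circ f)$ by \eqref{eq:PsiComp}, and $\phi_n \circ f$ is Borel (resp.\ \cadlag{}, using continuity of $\psi_n$ in part (b)), hence a pseudo-path (resp.\ \cadlag{} pseudo-path). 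So the content is entirely in the reverse directions.

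For part \ref{it:char_pseudo-path.separating}, suppose $\Psi(\phi_n)(\mu)$ is a pseudo-path for every $n$. Write $(\mu_t)_{t \in [0,1]}$ for a disintegration of $\mu$. By Proposition~\ref{prop:Psi_lift}\ref{it:Psi_lift.kernel_rep}, $\Psi(\phi_n)(\mu)$ has disintegration $t \mapsto (\phi_n)_\#\mu_t$, so the pseudo-path hypothesis says $(\phi_n)_\#\mu_t = \delta_{g_n(t)}$ for $\lambda$-a.e.\ $t$, for some Borel $g_n : [0,1] \to \R$. Intersecting the countably many full-measure sets, there is a single $\lambda$-conull set $A \subseteq [0,1]$ on which $(\phi_n)_\#\mu_t$ is a Dirac measure for all $n$ simultaneously. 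Now fix $t \in A$: the measure $\mu_t \in \prob(S)$ has the property that $\phi_n$ is $\mu_t$-a.s.\ constant for every $n$. Since $\{\phi_n\}$ is point separating, I claim this forces $\mu_t$ to be a Dirac measure—indeed, if $x,y$ were two distinct points in the support, there would be some $\phi_n$ with $\phi_n(x) \ne \phi_n(y)$, contradicting a.s.\ constancy (this needs a small separability argument: using that $S$ is separable metric one can take a countable point-separating subfamily of continuous functions refining $\{\phi_n\}$, or argue directly that the map $\prod_n \phi_n : S \to \R^{\N}$ is injective Borel and $\mu_t$-a.s.\ constant, hence $\mu_t$ is carried by a single point). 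Thus $\mu_t = \delta_{f(t)}$ on $A$; measurability of the resulting $f$ follows from measurable selection (e.g.\ $f = (\prod_n \phi_n)^{-1} \circ (t \mapsto (g_n(t))_n)$ using that $\prod_n \phi_n$ is a Borel isomorphism onto its image). Hence $\mu = \iota_S(f) \in \Psi(S)$.

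For part \ref{it:char_pseudo-path.convergence_det}, assume $\Psi(\psi_n)(\mu)$ is a \cadlag{} pseudo-path for all $n$. First apply part \ref{it:char_pseudo-path.separating}: a convergence determining family is in particular point separating, so $\mu \in \Psi(S)$, i.e.\ $\mu = \iota_S(f)$ for some Borel $f : [0,1] \to S$. Then $\Psi(\psi_n)(\mu) = \iota_\R(\psi_n \circ f)$, and the hypothesis says $\psi_n \circ f$ is (a.s.\ equal to) a \cadlag{} function, say with \cadlag{} representative $h_n$. The goal is to produce a single \cadlag{} representative of $f$. Fix a countable dense set $Q \subseteq [0,1]$ containing $1$ on which, after modifying on a null set, $\psi_n(f(q)) = h_n(q)$ for all $n$ and all $q \in Q$; since $\{\psi_n\}$ is convergence determining, the restriction $f|_Q$ is determined by the data $(h_n|_Q)_n$ and in particular is right-continuous along $Q$ and has left limits along $Q$ (because each $h_n$ does, and convergence in all coordinates $\psi_n$ is convergence in $S$). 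Define $\tilde f(t) := \lim_{q \downarrow t, q \in Q} f(q)$; the limit exists and lies in $S$ because it exists coordinatewise (each $h_n$ is \cadlag) and $\{\psi_n\}$ is convergence determining, so $\tilde f \in D(S)$. Finally one checks $\tilde f = f$ $\lambda$-a.s.: the set where they differ is contained in $\{t : \psi_n(f(t)) \ne h_n(t) \text{ for some } n\}$ up to the (Lebesgue-null, since $h_n$ \cadlag) set of discontinuity-mismatch points, hence $\lambda$-null. Thus $\mu = \iota_S(\tilde f) \in \Psi_D(S)$.

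The main obstacle I anticipate is the measurability/selection bookkeeping in part \ref{it:char_pseudo-path.separating}: one must pass from "$\mu_t$ is a Dirac for a.e.\ $t$" to "$\mu = \iota_S(f)$ for a \emph{Borel} $f$", which requires knowing that the countable product map $\prod_n \phi_n$ embeds $S$ Borel-isomorphically into $[0,1]^{\N}$ (available since $S$ is separable metric and, by Lemma~\ref{lem:convergence.determining}, carries a countable convergence determining—hence point separating—family; for a merely point separating Borel family one uses that a countable point-separating Borel family still gives a Borel isomorphism onto its image by the Lusin--Souslin theorem) and then inverting along the already-constructed Borel selections $g_n$. The \cadlag{} upgrade in part \ref{it:char_pseudo-path.convergence_det} is then essentially formal given the convergence-determining property, modulo the routine check that working along a countable dense set suffices and that the chosen representatives can be aligned on a common conull set.
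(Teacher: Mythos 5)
Your treatment of part \ref{it:char_pseudo-path.separating} and of both forward implications matches the paper's own proof: the forward directions are read off from Proposition~\ref{prop:Psi_lift}\ref{it:Psi_lift.iota}, and for the converse in (a) the paper likewise intersects the countably many $\lambda$-full sets and uses point separation to conclude that $\mu_t$ is a Dirac for a.e.\ $t$ (it stops there; your extra care about producing a Borel $f$, e.g.\ via $f(t)=\delta^{-1}(\mu_t)$ or the Lusin--Souslin theorem, is a legitimate filling-in of what is left implicit). Your own caveat about the ``support'' phrasing is right, and the clean argument is the second one you give, which is exactly the paper's: the set $\{x\in S:\phi_n(x)=x_{n,t}\ \forall n\}$ has full $\mu_t$-mass and at most one point.

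In part \ref{it:char_pseudo-path.convergence_det}, however, the decisive step of your construction is the claim that $\tilde f(t):=\lim_{q\downarrow t,\,q\in Q}f(q)$ exists \emph{in $S$} ``because it exists coordinatewise and $\{\psi_n\}$ is convergence determining''. This inference is not valid: the convergence determining property upgrades ``$\psi_n(x_k)\to\psi_n(x)$ for all $n$'' to ``$x_k\to x$'' only when a candidate limit $x\in S$ is already given; mere convergence of the coordinates $\psi_n(f(q))$ does not produce a limit point in $S$. Concretely, take $S=(0,1]$ with the convergence determining family $\psi_m(x)=x^m$, and let $f(t)=|t-\tfrac12|$ for $t\neq\tfrac12$, $f(\tfrac12)=1$. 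Every $\psi_m\circ f$ agrees $\lambda$-a.e.\ with a continuous real path, so every $\Psi(\psi_m)(\iota_S(f))$ is a \cadlag\ pseudo-path, yet $f$ admits no $S$-valued \cadlag\ representative, since any representative has right limit $0\notin S$ at $t=\tfrac12$. So this step cannot be repaired within the stated hypotheses: one needs an additional input, such as relative compactness in $S$ of the values $f(q)$ for $q$ near $t$ (in the paper's later applications this kind of compact containment is supplied externally, e.g.\ by Lemma~\ref{lem:MVM_tight} for measure-valued martingales), closedness of the range of the embedding $\prod_n\psi_n$, or compactness of $S$. To be fair, the paper's own proof of the reverse implication of (b) consists of exactly the same one-line assertion (``as $(\psi_n)_n$ is convergence determining we conclude that $f$ is \cadlag''), so you have reproduced its route faithfully; but your expanded write-up makes visible that this is precisely the point where the argument, as stated, has a genuine gap.
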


\begin{proof}
	\ref{it:char_pseudo-path.separating}
	The forward implication is due to Proposition \ref{prop:Psi_lift} \ref{it:Psi_lift.iota}.
	For the backward implication, note that for evrey $k \in \N$ there is a $\lambda$-full set $A_k$ such that ${\phi_k}_\#\mu_t = \delta_{x_{k,t}}$ for some $x_{k,t} \in [0,1]$. Then the set $A:= \bigcap_{k \in \N } A_k$ has $\lambda$-full measure and we have ${\phi_k}_\#\mu_t(\{x_{k,t}\})=1$	for all $t \in A$. The set $$S_t:= \{ x \in S: \phi_k(x)=x_{k,t}  \text{ for all } k \in \N\}$$
	contains at most one point as $(\phi_k)_{k\in \N}$ is point separating. Moreover, we have $\mu_t(S_t)=1$ for all $t \in A$, so $\mu_t$ is a Dirac for all $t \in A$.

	\ref{it:char_pseudo-path.convergence_det}
	Again, the direct implication is an easy consequence of Proposition \ref{prop:Psi_lift} \ref{it:Psi_lift.cadlag}.      	For the reverse implication note that $\mu$ is a pseudo-path by point (a) of this lemma, so there is some $f \in L_0(S)$ such that $\mu=\iota_S(f)$. The assumption and Proposition~\ref{prop:Psi_lift}\ref{it:Psi_lift.iota} imply that $\psi_k \circ f$ is \cadlag{} for all $k \in \N$. As $(\psi_k)_k$ is convergence determining we can conclude that $f$ is \cadlag.
\end{proof}

Before we prove the main result of this section, we recall some standard results about convergence in measure:	
\begin{lemma}\label{lem:convinprob_simpleppty}
	Let $f_n,f \in L_0(S_1)$ and $\phi \colon S_1 \to S_2$ be continuous.
	\begin{enumerate}[label = (\alph*)]
		\item \label{it:convinprob_simpleppty.subsequences}
		$(f_n)_{n \in \N}$ converges to $f$ in probability if and only if every subsequence of $(f_n)_{n \in \N}$ admits an a.s.-convergent subsequence with limit $f$.
		\item \label{it:convinprob_simpleppty.composition}
		If $(f_n)_{n \in \N}$ converges to $f$ in probability then $(\phi \circ f_n)_{n \in \N}$ converges to $\phi \circ f$ in probability.
	\end{enumerate}
\end{lemma}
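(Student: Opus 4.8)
The plan is to reduce everything to the metrizability of convergence in measure together with two textbook facts. First I would fix a bounded compatible metric $d_1$ on the separable metrizable (indeed Lusin) space $S_1$ --- say $d_1 \le 1$ --- and note that the topology of $L_0(S_1)$ is induced by $\rho(f,g) := \int_{[0,1]} d_1(f(t),g(t))\,\lambda(dt)$, and analogously for $S_2$ with a metric $d_2$. With this in place the lemma rests on the following two claims: (1) if $\rho(f_n,f)\to 0$, then some subsequence of $(f_n)$ converges to $f$ $\lambda$-a.s.; and (2) if $f_n\to f$ $\lambda$-a.s., then $\rho(f_n,f)\to 0$. For (1) I would choose $n_1<n_2<\cdots$ with $\lambda(\{t: d_1(f_{n_k}(t),f(t))>2^{-k}\})<2^{-k}$, which is possible by Markov's inequality since $\rho(f_n,f)\to 0$, then apply Borel--Cantelli to these sets and observe that off the resulting $\lambda$-null set one has $d_1(f_{n_k}(t),f(t))\le 2^{-k}$ for all large $k$, whence $f_{n_k}(t)\to f(t)$. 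Claim (2) is immediate from dominated convergence, as $d_1(f_n,f)\le 1$.

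For part \ref{it:convinprob_simpleppty.subsequences}, the forward implication is then trivial: an arbitrary subsequence of $(f_n)$ still converges to $f$ in $L_0(S_1)$, so (1) hands us the desired a.s.-convergent sub-subsequence. For the converse I would argue by contradiction: if $f_n\not\to f$ in $L_0(S_1)$, there are $\epsilon>0$ and a subsequence with $\rho(f_{n_k},f)\ge\epsilon$ for all $k$; by hypothesis this subsequence has a further subsequence converging a.s.\ to $f$, which by (2) also converges to $f$ in $\rho$, contradicting $\rho(\cdot,f)\ge\epsilon$ along it. This is just the standard ``Urysohn-type'' characterisation of sequential convergence in a metric space, specialised to $L_0$.

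For part \ref{it:convinprob_simpleppty.composition}, I would combine the two implications of part \ref{it:convinprob_simpleppty.subsequences}, the first applied in $S_1$ and the second in $S_2$. Given an arbitrary subsequence $(\phi\circ f_{n_k})$ of $(\phi\circ f_n)$: since $(f_{n_k})$ still converges to $f$ in measure, the forward implication of \ref{it:convinprob_simpleppty.subsequences} yields a further subsequence with $f_{n_{k_j}}\to f$ $\lambda$-a.s.; continuity of $\phi$ gives $\phi\circ f_{n_{k_j}}\to\phi\circ f$ pointwise $\lambda$-a.s., hence in $L_0(S_2)$ by (2). So every subsequence of $(\phi\circ f_n)$ has a further subsequence converging in measure to $\phi\circ f$, and the converse implication of \ref{it:convinprob_simpleppty.subsequences}, now applied in $L_0(S_2)$, gives $\phi\circ f_n\to\phi\circ f$. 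There is no genuine obstacle here; the only point worth flagging is that $S_1,S_2$ are merely Lusin, so $d_1,d_2$ need not be complete --- but completeness is never used, only the existence of a bounded compatible metric and the fact that it metrizes convergence in measure.
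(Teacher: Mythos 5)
Your proof is correct, and it is exactly the standard argument: the paper itself offers no proof of this lemma but simply cites Kallenberg (Lemmas 3.2 and 3.3 of \cite{Ka97}), whose proofs proceed along the same lines you wrote out (metrizing convergence in measure by a bounded compatible metric, a Borel--Cantelli subsequence extraction, and the subsequence criterion for metric convergence). Your observation that completeness of the metrics is never needed, only boundedness and compatibility, is the right point to flag for the Lusin setting.
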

\begin{proof}
	See e.g.\ \cite[Lemma~3.2 and Lemma~3.3]{Ka97}.
\end{proof}

\begin{lemma}\label{lem:convinprob_initialtop}
	Let $\{ \phi_k \colon k \in \N \}$ be a convergence determining family on $S$ and let $f_n,f \in L_0(S)$.
	Then $(f_n)_{n \in \N}$ converges to $f$ in probability if and only if, for every $k \in \N$, $(\phi_k \circ f_n)_{n \in \N}$ converges to $\phi_k \circ f$ in probability.
\end{lemma}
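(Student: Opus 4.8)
The plan is to reduce the statement about convergence in measure of $S$-valued maps to convergence in measure of $[0,1]$-valued maps via a convergence determining family, and then invoke the elementary properties of convergence in measure collected in Lemma~\ref{lem:convinprob_simpleppty}. The forward implication is immediate: if $f_k \to f$ in probability, then by Lemma~\ref{lem:convinprob_simpleppty}\ref{it:convinprob_simpleppty.composition}, applied to the continuous map $\phi_n \colon S \to [0,1]$, we get $\phi_n \circ f_k \to \phi_n \circ f$ in probability for every $n \in \N$. So the content is entirely in the reverse implication.

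For the reverse implication, suppose $(\phi_n \circ f_k)_k$ converges to $\phi_n \circ f$ in probability for every $n$. I would argue via subsequences using Lemma~\ref{lem:convinprob_simpleppty}\ref{it:convinprob_simpleppty.subsequences}: it suffices to show that every subsequence of $(f_k)_k$ has a further subsequence converging $\lambda$-a.s.\ to $f$. Fix a subsequence. Using a diagonal argument over $n \in \N$ (extract along $\phi_1$, then $\phi_2$, etc.), one obtains a single further subsequence $(f_{k_j})_j$ along which $\phi_n \circ f_{k_j} \to \phi_n \circ f$ holds $\lambda$-a.s.\ for every $n$ simultaneously. Intersecting the countably many $\lambda$-full sets, there is a $\lambda$-full set $A$ such that for all $t \in A$ and all $n \in \N$, $\phi_n(f_{k_j}(t)) \to \phi_n(f(t))$ as $j \to \infty$. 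Since $(\phi_n)_n$ is convergence determining (Definition~\ref{def:conv_deter_fam}), this pointwise-in-$n$ convergence of the real coordinates forces $f_{k_j}(t) \to f(t)$ in $S$ for every $t \in A$, i.e.\ $f_{k_j} \to f$ $\lambda$-a.s. By Lemma~\ref{lem:convinprob_simpleppty}\ref{it:convinprob_simpleppty.subsequences} this gives $f_k \to f$ in probability, completing the proof.

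The only mild subtlety — and the step I would be most careful about — is the diagonalization that produces a \emph{single} a.s.-convergent subsequence working for all $n$ at once: one must apply Lemma~\ref{lem:convinprob_simpleppty}\ref{it:convinprob_simpleppty.subsequences} countably many times, nesting the subsequences, and then take the diagonal, checking that along the diagonal sequence each fixed $\phi_n \circ f_{k_j}$ is eventually a subsequence of an a.s.-convergent sequence and hence still converges a.s. Everything else is bookkeeping: the definition of convergence determining is exactly tailored to convert coordinatewise convergence into convergence in $S$, so no topological work beyond that is needed. I expect this lemma to have a one-paragraph proof in the paper.
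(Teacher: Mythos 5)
Your proof is correct and follows exactly the route the paper takes: the forward direction via Lemma~\ref{lem:convinprob_simpleppty}\ref{it:convinprob_simpleppty.composition}, and the reverse direction by a diagonalization argument combined with the subsequence criterion of Lemma~\ref{lem:convinprob_simpleppty}\ref{it:convinprob_simpleppty.subsequences} and the defining property of a convergence determining family. The paper's proof is just a two-line sketch of this same argument, so your write-up simply fills in the details.
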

\begin{proof}
	The forward implication follows from Lemma \ref{lem:convinprob_simpleppty} \ref{it:convinprob_simpleppty.composition}. The reverse implication follows from a standard diagonalization argument and Lemma \ref{lem:convinprob_simpleppty} \ref{it:convinprob_simpleppty.subsequences}.
\end{proof}
Finally we are in the position to prove the main result of this section.
\begin{proof}[Proof of Proposition \ref{prop:pseudo-paths}]
	\ref{it:pseudo-paths.embedding}:
	Since $\iota_S$ is a bijection onto $\Psi(S)$, it remains to prove continuity of $\iota_S$ and $\iota_S^{-1}$ restricted to $\Psi(S)$.
	
	In order to prove continuity of $\iota_S$, let $(f_n)_{n \in \N}$ be a convergent sequence in $L_0(S)$ with limit $f \in L_0(S)$ and $g \in C_b([0,1] \times S)$.
	By Lemma \ref{lem:convinprob_simpleppty} \ref{it:convinprob_simpleppty.composition} we have that $(g \circ (\id,f_n))_{n \in \N}$ converges to $g \circ (\id,f)$ in probability.
	  Using dominated convergence we obtain
	\[
	\limsup_{n \to \infty}\left| \int g \, d\iota_S(f_n) - \int g \, d\iota_S(f) \right| \le
	\lim_{n \to \infty} \int |g(t,f_n(t)) - g(t,f(t))| \, \lambda(dt) = 0.
	\]
	As $g \in C_b([0,1] \times S)$ is arbitrary we obtain that $(\iota_S(f_n))_{n \in \N}$ converges to $\iota_S(f)$ in $\prob([0,1]\times S)$.
	
	In order to prove the continuity of $\iota^{-1}_S|_{\Psi(S)}$ assume that $(\iota_S(f_n))_{n \in \N}$ converges weakly to $\iota_S(f)$.
	Pick a convergence determining family $\{ \phi_k \colon k \in \N \}$.
		The map $\Psi(\phi_k) : \prob([0,1] \times S) \to  \prob([0,1] \times \R)$ is continuous (cf.\  Proposition~\ref{prop:Psi_lift}\ref{it:Psi_lift.cadlag}), so using \eqref{eq:PsiComp} we get 
	\begin{equation}
		\label{eq:prop.pseudo-paths.limits}
		\iota_{\R}(\phi_k \circ f_n)
		= \Psi(\phi_k)(\iota_S(f_n))
		\to \Psi(\phi_k)(\iota_S(f)) = \iota_\R(\phi_k \circ f).
	\end{equation}
	By the corresponding real-valued result  \cite[Lemma 1]{MeZh84}, we have $\phi_k \circ j_n \to \phi_k \circ f$ in measure for all $k \in \N$. Using 
	Lemma \ref{lem:convinprob_initialtop} we conclude that $f_n \to f$ in measure.
	
	\ref{it:pseudo-paths.Polish}: $\prob_\Gamma([0,1] \times S)$ is $G_\delta$, see Remark~\ref{rem:PGamma}, and the set of $\mu \in \prob([0,1] \times S)$ with first marginal $\lambda$ is closed and therefore $G_\delta$. So, $\Psi(S)$ is $G_\delta$ as intersection of two $G_\delta$-sets.
	
	\ref{it:pseudo-paths.cadlag_Borel}:
	Let $(\phi_k)_{k \in \N}$ be a convergence determining family on $S$.  
	Lemma \ref{lem:char_pseudo-path} implies 
	\begin{equation}\label{eq:prf_pseuopath_c}
		\Psi_D(S) = \bigcap_{k \in \N} \left\{ \mu \in \prob([0,1] \times S) \colon \Psi(\phi_k)(\mu) \in \Psi_D(\R) \right\}.
	\end{equation}
	By the corresponding real-valued result \cite[Theorem 2]{MeZh84}, $\Psi_D(\R)$ is a Borel subset of $\prob([0,1] \times \R)$. As $\Psi(\phi_k)$ is continuous for every $k \in \N$, \eqref{eq:prf_pseuopath_c} implies that $\Psi_D(S)$  is Borel as countable intersection of Borel sets. We conclude by \ref{it:pseudo-paths.embedding} and \ref{it:pseudo-paths.Polish} that $D(S)$ is a Borel subset of $L_0(S)$.
\end{proof}

\begin{lemma}\label{lem:product_map}
Let $J$ be a finite or countable set and let $S_j, j\in J$ be Lusin spaces. Then the mapping
\begin{align}\label{eq:prod_map}
\pi :\prod_{j \in J} L_0(S_j) \to L_0\Big(\prod_{j \in J} S_j\Big) : (f_j)_{j \in J} \mapsto ( t \mapsto (f_j(t))_{j \in J} )
\end{align}
is a homeomorphism that maps $\prod_{j \in J} D(S_j)$ onto  $D(\prod_{j \in J} S_j)$.
\end{lemma}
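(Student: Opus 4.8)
The plan is to prove the three assertions in turn: that $\pi$ is a well-defined bijection, that $\pi$ and $\pi^{-1}$ are continuous, and that $\pi$ maps $\prod_{j\in J}D(S_j)$ bijectively onto $D(\prod_{j\in J}S_j)$.

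For the first point, note that since $J$ is countable and each $S_j$ is separable metrizable, the Borel $\sigma$-algebra of $\prod_{j\in J}S_j$ coincides with the product of the Borel $\sigma$-algebras; hence $t\mapsto(f_j(t))_{j\in J}$ is Borel whenever each $f_j$ is, so $\pi$ genuinely takes values in $L_0(\prod_{j\in J}S_j)$. It descends to $\lambda$-a.s.-equivalence classes and is injective there because a countable union of $\lambda$-null sets is $\lambda$-null, and it is surjective because any Borel $f\colon[0,1]\to\prod_{j\in J}S_j$ equals $\pi\big((\pr_j\circ f)_{j\in J}\big)$.

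For the homeomorphism property I would use Lemma~\ref{lem:convinprob_initialtop}, which characterizes convergence in measure via a convergence-determining family. Using Lemma~\ref{lem:convergence.determining}, pick for each $j$ a countable convergence-determining family $\{\phi^j_n:n\in\N\}$ on $S_j$; since the product topology on $\prod_{j\in J}S_j$ is the initial topology of the coordinate projections $\pr_j$, the countable family $\{\phi^j_n\circ\pr_j:j\in J,\ n\in\N\}$ is convergence determining on $\prod_{j\in J}S_j$. Writing $f^k:=\pi\big((f^k_j)_j\big)$ and $f:=\pi\big((f_j)_j\big)$, one has $(\phi^j_n\circ\pr_j)\circ f^k=\phi^j_n\circ f^k_j$, so applying Lemma~\ref{lem:convinprob_initialtop} on $\prod_{j\in J}S_j$ and then on each $S_j$ yields
\[
f^k\to f\ \text{in}\ L_0\Bigl(\prod_{j\in J}S_j\Bigr)
\iff\bigl(\forall j,n\colon\phi^j_n\circ f^k_j\to\phi^j_n\circ f_j\bigr)
\iff\bigl(\forall j\colon f^k_j\to f_j\bigr),
\]
which is exactly sequential continuity of $\pi$ and of $\pi^{-1}$; as all spaces involved are metrizable (countable products of metrizable spaces, and $L_0(S)$ of a Lusin space, are metrizable) this gives that $\pi$ is a homeomorphism. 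Finally, for the càdlàg statement it suffices, $\pi$ being already a homeomorphism and $D(\cdot)$ carrying the subspace topology, to verify the set equality $\pi\big(\prod_{j\in J}D(S_j)\big)=D(\prod_{j\in J}S_j)$: if each $f_j$ has a representative $\tilde f_j\in D(S_j)$ then $t\mapsto(\tilde f_j(t))_j$ represents $\pi\big((f_j)_j\big)$ and is \cadlag{} since right-continuity and existence of left limits in the product topology are tested coordinatewise; conversely, if $f$ has a representative $\tilde f\in D(\prod_{j\in J}S_j)$ then each $\pr_j\circ\tilde f$ is \cadlag{} (a continuous image of a \cadlag{} path is \cadlag) and represents $\pr_j\circ f$, so $(\pr_j\circ f)_j\in\prod_{j\in J}D(S_j)$.

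No step here is deep; the only points needing a little care are the identification of the product Borel $\sigma$-algebra with the product of the Borel $\sigma$-algebras (which uses countability of $J$ and second countability of the $S_j$) and the fact that $\{\phi^j_n\circ\pr_j\}$ is again convergence determining (which rests on the product topology being the initial topology of the projections). Everything else is a routine combination of Lemmas~\ref{lem:convergence.determining}, \ref{lem:convinprob_simpleppty} and \ref{lem:convinprob_initialtop}.
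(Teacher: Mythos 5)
Your proof is correct and follows essentially the same route as the paper, whose proof is the one-line remark that the lemma is a straightforward application of Lemma~\ref{lem:convinprob_initialtop} (testing convergence in measure against a countable convergence determining family built from the coordinate projections); your elaboration of the bijection, the coordinatewise characterization of \cadlag{} paths, and the metrizability point is exactly the intended "straightforward" argument.
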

\begin{proof}
This is a straightforward application of Lemma~\ref{lem:convinprob_initialtop} and Proposition~\ref{prop:Psi_lift}.
\end{proof}


\section{Measure valued martingales}\label{sec:MVM}
Measure-valued martingales are central tools in the theories of Knight, Aldous and Hoover--Keisler to encode the information w.r.t.\ a stochastic process carried by the filtration. In this section we develop the theory of measure-valued martingales to the extend that we need for this article. The main results are Theorem~\ref{thm:M_compactness}, which implies that compactness properties are preserved through process of iterating prediction processes, and Theorem~\ref{thm:weakFDD} on convergence of finite dimensional distributions of measure-valued martingales.

We summarize all results on measure-valued martingales that we will need later in the article in Section~\ref{sec:MVMIntro}, postponing all longer proofs to the Sections~\ref{sec:MVMFDD} and~\ref{sec:MVMComp}.
\subsection{Definitions and main results}\label{sec:MVMIntro}
First we discuss the present notation to clarify things:\footnote{Throughout we use the following convention: Measure-valued random variables are denoted with $Z$, random variables with values in an arbitrary Polish space are denoted with $X$. Generally, identities involving $X$ are true for all random variables (also for $Z$'s), whereas identities formulated for $Z$'s are only true (or the appearing expressions are only well-defined) for measure-valued random variables.} If $Z$ is a random variable with values in $\prob(S)$, the expression $I(\law(Z))$ plays the role of the expectation of $Z$ and the expression $I(\law(Z|\G))$ plays the role of the conditional expectation of $Z$ given a sub-$\sigma$-algebra $\G$. In order to keep notation short and get identities which look similar to the case of real-valued random variables, we introduce the following  notations
\begin{align}
	\sfE[Z] &:= I(\law(Z)), \\
	\sfE[Z|\G] &:= I(\law(Z|\G)). 
\end{align}
Indeed, recalling \eqref{eq:DefI2}, we have for all bounded Borel $f : S \to \R$
\begin{align*}
	f^\ast(\sfE[Z]) &= \E[f^\ast(Z)],\\
	f^\ast(\sfE[Z|\G]) &=  \E[f^\ast(Z)|\G].
\end{align*}
Using these identities, the properties of the expectation and conditional expectation carry over to  $\sfE[\cdot ]$ and $\sfE[\cdot | \G]$. Indeed, we have for all $\sigma$-algebras $\G_1 \subset \G_2 \subset \F$ and all bounded Borel $f : S \to \R$ 
$$
f^\ast( \sfE[\sfE[Z|\G_2]|\G_1] ) = \E[ f^\ast(\sfE[Z|\G_2])   |\G_1 ] = \E[\E[f^\ast(Z)|\G_2]|\G_1] = \E[f^\ast(Z)|\G_1] = f^\ast(\sfE[Z|\G_1]),
$$
which implies
\begin{align*}
	\sfE[\sfE[Z|\G_2]|\G_1] &= \sfE[Z|\G_1], \\
	\sfE[ \sfE[ Z | \G ]] &= \sfE[Z].
\end{align*}
In view of these considerations, the following is the natural definition of a measure-valued martingale:
\begin{definition}
	A measure-valued martingale on the stochastic base $(\Omega, \F, \P, (\F_t)_{t \in [0,1]})$ is a process $Z=(Z_t)_{t \in [0,1]}$ with values in $\prob(S)$ that satisfies for all $s \le t$
	\begin{align}
		\mathsf{E}[Z_t|\F_s] = Z_s.
	\end{align}
We denote by $\mathcal{M}(S) \subset \prob(D(\prob(S)))$ the set of probability measures on $D(\prob(S))$ such that the canonical process on $D(\prob(S))$ is a measure-valued martingale in its own filtration. This space carries the topology as a subspace of $\prob(D(\prob(S)))$ according to Convention~\ref{con:spaces}. 
\end{definition}
We want to emphasize that in this paper the term ``measure-valued martingale'' always refers to martingales whose values are probability measures; we will never deal with the more general case of  martingales whose values are positive measures of different mass or signed measures. 

We will always denote the canonical process on $D(\prob(S))$ by $Z=(Z_t)_{t \in [0,1]}$.

Given a Borel function $f : S \to \R$ we can assign to every measure-valued process $Z$ the real-valued process $Z[f]$  given by
\begin{align}\label{eq:Bracket}
	Z[f]_t := \int f(s) Z_t(ds) = f^\ast(Z_t).
\end{align}
Throughout this section we will use the processes $Z[f]$ to investigate $Z$. For example, our previous considerations imply that $\mathsf{E}[Z_t|\F_s] = Z_s$ if and only if $\E[f^\ast(Z)|\F_s] =  f^\ast(Z_s)$ for all bounded Borel $f$. Hence, we can characterize measure-valued martingales via these associated real-valued processes:
\begin{lemma}\label{lem:TestMVM}
	A measure-valued process $Z$ is a measure-valued martingale w.r.t.\ $(\F_t)_{t \in [0,1]}$ if and only if $Z[f]$ is an $(\F_t)_{t \in [0,1]}$-martingale for all bounded Borel $f : S \to \R$. 
    In fact, it is sufficient to test against continuous bounded functions or any other class of point separating functions.
\end{lemma}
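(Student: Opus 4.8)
The plan is to translate the measure-valued martingale identity for $Z$ into the ordinary martingale identities for the scalar processes $Z[f]$ via the relation $f^\ast(\sfE[\,\cdot\mid\G]) = \E[f^\ast(\,\cdot\,)\mid\G]$ recorded just before the lemma. One direction will be immediate: if $Z$ is a measure-valued martingale and $f$ is bounded Borel, then $Z[f]_t = f^\ast(Z_t)$ is bounded by $\|f\|_\infty$ and $\F_t$-measurable (adaptedness of $Z$ being automatic, since $\sfE[Z_t\mid\F_t]=Z_t$), and for $s\le t$ one reads off
\[
\E[Z[f]_t \mid \F_s] = \E[f^\ast(Z_t)\mid\F_s] = f^\ast(\sfE[Z_t\mid\F_s]) = f^\ast(Z_s) = Z[f]_s ,
\]
so $Z[f]$ is a genuine $(\F_t)_{t\in[0,1]}$-martingale; in particular this holds for all $f\in C_b(S)$ and for all $f$ in any point-separating subclass.

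For the converse I would first fix, once and for all, a countable family $\{\phi_j\}_{j\in\N}\subseteq C_b(S)$ which is convergence determining --- hence point separating --- and closed under multiplication, as provided by Lemma~\ref{lem:convergence.determining} (taking the functions $[0,1]$-valued); by Lemma~\ref{lem:P(S)_ptsep_convdet} the family $\{\phi_j^\ast\}_{j\in\N}$ is then point separating on $\prob(S)$. Assuming $Z[\phi_j]$ is a martingale for every $j$, fix $s\le t$; reading the display above backwards gives, for each $j$, $\P$-a.s.
\[
\phi_j^\ast(\sfE[Z_t\mid\F_s]) = \E[Z[\phi_j]_t\mid\F_s] = Z[\phi_j]_s = \phi_j^\ast(Z_s).
\]
Intersecting the countably many exceptional null sets, $\P$-a.s.\ these equalities hold for all $j$ simultaneously, whence point separation of $\{\phi_j^\ast\}$ on $\prob(S)$ forces $\sfE[Z_t\mid\F_s] = Z_s$ $\P$-a.s. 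As $s\le t$ was arbitrary, $Z$ is a measure-valued martingale. Since $\{\phi_j\}$ is contained both in $C_b(S)$ and in the class of bounded Borel functions, this already establishes the two equivalences in the first sentence of the lemma.

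Finally, for an arbitrary point-separating class $\mathcal C$ (closed under multiplication, as are all point-separating families used in this paper, so that Lemma~\ref{lem:P(S)_ptsep_convdet} applies) the argument is the same once one passes to a countable point-separating subfamily $\mathcal C_0\subseteq\mathcal C$: when the elements of $\mathcal C$ are continuous such a subfamily is extracted from the Lindel\"of property of the space of off-diagonal pairs in $S\times S$, covered by the open sets $\{(x,y):\phi(x)\ne\phi(y)\}$, $\phi\in\mathcal C$. I do not expect a serious obstacle here; the only points that genuinely require care are this reduction to countably many test functions --- needed so that the $\P$-a.s.\ statements can be intersected into one --- and the passage from point separation on $S$ to point separation on $\prob(S)$ through Lemma~\ref{lem:P(S)_ptsep_convdet} (which is why one cannot drop the multiplicative-closure hypothesis there). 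Everything else is the routine bookkeeping of moving between $Z$ and the processes $Z[f]$ via $f\mapsto f^\ast$.
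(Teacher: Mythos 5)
Your proof is correct and follows essentially the same route as the paper, which treats the lemma as an immediate consequence of the identity $f^\ast(\sfE[Z\mid\G])=\E[f^\ast(Z)\mid\G]$ together with the reduction to a countable, multiplicatively closed point-separating family so that the countably many a.s.\ exceptional sets can be intersected and point separation of $\{\phi_j^\ast\}$ on $\prob(S)$ (Lemma~\ref{lem:P(S)_ptsep_convdet}) forces $\sfE[Z_t\mid\F_s]=Z_s$. Your remark that the final clause about ``any point separating class'' implicitly requires the class to separate points of $\prob(S)$ --- e.g.\ via multiplicative closure and a countable subfamily --- is a fair and careful reading of what the paper leaves unstated.
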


\begin{remark}\label{rem:MVMTesting}
	One has to read Lemma~\ref{lem:TestMVM} carefully: 
	Let $Z$ be a measure-valued process and $(\F_t)_{t \in [0,1]}$ be the filtration generated by $Z$.  Applying Lemma~\ref{lem:TestMVM} to $Z$ and $(\F_t)_{t \in [0,1]}$ yields that $Z$ is a measure-valued martingale in its own filtration if and only if $Z[f]$ is a martingale w.r.t.\ the filtration generated by $Z$ for all $f : S \to \R$ bounded Borel. It is not true that $Z$ is a measure-valued martingale in its own filtration if and only if all processes $Z[f]$ are martingales in \emph{their} own filtrations.\footnote{In fact, the respective claim is already false in $\R^2$: One can construct a vector-valued process $(X_t)_{t \in [0,1]}$, which is not a martingale, but has the property that for all $v \in \R^2$ the process $(v \cdot X_t)_{t \in [0,1]}$ is a martingale \emph{in its own} filtration. Note that martingales with values in the unit-simplex $\{ x \in \R^n: x^1,\dots, x^n \ge 0,  x^1+\dots +x^n \le 1 \}$ correspond to measure-valued martingales, where $S$ has cardinality $n+1$.   }  
\end{remark}
The proof of the following result is given in Section~\ref{sec:MVMComp}
\begin{proposition}\label{prop:mvmCadlag}
	A measure-valued martingale has a \cadlag{} modification if and only if it is right continuous in probability. In particular, every measure-valued martingale w.r.t.\ a filtration satisfying the usual conditions has a \cadlag{} modification. If a \cadlag{} modification exists, it is unique up to indistinguishability.
\end{proposition}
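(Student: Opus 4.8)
The plan is to reduce the whole statement to the classical regularisation theorem for real martingales by testing $Z$ against a countable convergence‑determining family, and then to reassemble the regularised real coordinates into a $\prob(S)$‑valued càdlàg process.

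First the easy parts. For uniqueness, if $\tilde Z^1,\tilde Z^2$ are càdlàg modifications of $Z$, then $\tilde Z^1_q=Z_q=\tilde Z^2_q$ $\P$‑a.s.\ for every rational $q$; intersecting over the countably many rationals and using pathwise right‑continuity gives $\tilde Z^1=\tilde Z^2$ up to indistinguishability. For the ``only if'' direction, if a càdlàg modification $\tilde Z$ exists then $\tilde Z_{s_n}\to\tilde Z_t$ pathwise, hence in probability, along any $s_n\downarrow t$, and since $\tilde Z_s=Z_s$ $\P$‑a.s.\ for each fixed $s$ this forces $Z_{s_n}\to Z_t$ in probability; so $Z$ is right‑continuous in probability.

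For the substantial ``if'' direction, fix a countable convergence‑determining family $\{\phi_j:j\in J\}$ of $[0,1]$‑valued functions on $S$, closed under multiplication (Lemma~\ref{lem:convergence.determining}). By Lemma~\ref{lem:P(S)_ptsep_convdet}(c) the map $\iota:=\prod_{j}\phi_j^\ast\colon\prob(S)\to[0,1]^J$ is a topological embedding, and since it is a continuous injection it has Borel range $B:=\iota(\prob(S))$ by the Lusin--Souslin theorem. By Lemma~\ref{lem:TestMVM} each $Z[\phi_j]=\phi_j^\ast(Z)$ is a $[0,1]$‑valued $(\F_t)$‑martingale on $[0,1]$, hence closed by its terminal value $Z[\phi_j]_1$ and in particular uniformly integrable; right‑continuity in probability of $Z$ together with continuity of $\phi_j^\ast$ (Lemma~\ref{lem:convinprob_simpleppty}\ref{it:convinprob_simpleppty.composition}) shows $Z[\phi_j]$ is right‑continuous in probability. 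The classical regularisation theorem for real martingales then provides a $[0,1]$‑valued càdlàg modification $Y^j$ of $Z[\phi_j]$. On a set of full measure all $Y^j$ are simultaneously càdlàg, and because the topology of the compact metrizable space $[0,1]^J$ is the coordinatewise one, the vector process $Y:=(Y^j)_{j\in J}$ then has càdlàg paths in $[0,1]^J$; moreover, for every fixed $t$ one has $(Y^j_t)_j=(\phi_j^\ast(Z_t))_j=\iota(Z_t)$ $\P$‑a.s., so $Y_t\in B$ $\P$‑a.s.

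The main obstacle is to upgrade this to the statement that $\P$‑a.s.\ $Y_t\in B$ \emph{and} the left limit $Y_{t-}\in B$ for \emph{all} $t\in[0,1]$ at once; only then can one set $\tilde Z_t:=\iota^{-1}(Y_t)\in\prob(S)$ and, using continuity of $\iota^{-1}$ on $B$, conclude that $\tilde Z$ is a genuine càdlàg $\prob(S)$‑valued modification of $Z$ (that it is a modification follows since $Y_t=\iota(Z_t)$ $\P$‑a.s.\ for each $t$). A priori the right and left limits of $(\iota(Z_q))_{q\in\Q}$ lie only in the closure $\overline B$, which in general is strictly larger than the non‑closed Borel set $B$. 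To close this gap I would invoke tightness: since $\sfE[Z_t]=I(\law(Z_t))=I(\law(Z_1))=\sfE[Z_1]$ is independent of $t$ (tower property of $\sfE$), the family $\{\law(Z_t):t\in[0,1]\}\subset\prob(\prob(S))$ has a singleton, hence tight, image under the intensity operator, so by Proposition~\ref{prop:intensity_tightness} it is tight; thus there are compacts $K_k\subset\prob(S)$ with $\P(Z_t\in K_k)\ge 1-2^{-k}$ for all $t$, and $\iota(K_k)$ is compact, hence closed, in $[0,1]^J$. A Fubini argument over the (measurable) set of ``bad times'' together with the right‑continuity of $Y$ and a maximal estimate then forces $\P$‑a.s.\ every right and left limit of $(\iota(Z_q))_{q\in\Q}$ to lie in some $\iota(K_k)\subseteq B$, as required. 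Finally, the ``in particular'' assertion follows because a martingale with respect to a filtration satisfying the usual conditions is automatically right‑continuous in probability: apply the classical regularisation theorem to each real coordinate $Z[\phi_j]$ (whose expectation is constant), and transfer right‑continuity in probability back to $Z$ via the diagonalisation argument underlying Lemma~\ref{lem:convinprob_initialtop}, using that $(\phi_j^\ast)_{j}$ is convergence‑determining on $\prob(S)$.
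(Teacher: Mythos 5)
Your overall strategy is the same as the paper's: test $Z$ against a countable convergence-determining family (Lemma~\ref{lem:TestMVM}), regularise the resulting $[0,1]$-valued martingales, and reassemble a \cadlag{} $\prob(S)$-valued modification; your treatment of uniqueness, of the converse direction, and of the ``in particular'' statement (right-continuity in probability under the usual conditions via backward/forward martingale convergence of the coordinates) is correct and matches what the paper leaves implicit. You also correctly identify the one real obstacle: the right and left limits of $(\iota(Z_q))_{q\in\Q}$ must be shown to stay in the non-closed Borel set $B=\iota(\prob(S))$, and you correctly sense that compactness coming from the martingale structure is what saves the day. The paper does exactly this, but constructs $Y_t=\lim_{s\searrow t,\,s\in\Q}Z_s$ directly in $\prob(S)$ instead of pulling back from $[0,1]^J$.

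The gap is in how you propose to realise that step. The pointwise tightness you extract from Proposition~\ref{prop:intensity_tightness} only gives compacts $K_k\subset\prob(S)$ with $\P(Z_t\in K_k)\ge 1-2^{-k}$ for each fixed $t$, and the ``Fubini argument over bad times'' then yields only $\E[\mathrm{Leb}\{t: Y_t\notin\iota(K_k)\}]\le 2^{-k}$. This does not suffice: almost surely the Lebesgue measure of bad times tends to $0$ along $k$, but you cannot conclude that it \emph{vanishes} for some fixed $k$, so when you approximate $Y_t$ (or $Y_{t-}$) by good times you may be forced to use times lying in larger and larger compacts $\iota(K_{k})$, and the limit can escape every $\iota(K_k)$ and hence $B$. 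What is needed is a single compact $\mathcal{K}_\epsilon\subset\prob(S)$ with the \emph{simultaneous} containment $\P[\,Z_q\in\mathcal{K}_\epsilon\text{ for all }q\in\Q\,]\ge 1-\epsilon$; this is precisely the paper's Lemma~\ref{lem:MVM_tight}, proved by choosing compacts $K_n\subset S$ with $\E[Z_1(K_n^c)]=I(\law(Z_1))(K_n^c)\le 2^{-2n}\epsilon$, applying Doob's maximal inequality to the $[0,1]$-valued martingales $t\mapsto Z_t(K_n^c)$, and setting $\mathcal{K}_\epsilon=\{p:\,p(K_n^c)\le 2^{-n}\ \forall n\}$. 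Your phrase ``a maximal estimate'' points in this direction, but it is the maximal inequality, not Fubini, that carries the entire weight, and it has to be formulated and proved (or Lemma~\ref{lem:MVM_tight} invoked) for your argument to close; once you have it, on the event $\{Z_q\in\mathcal{K}_\epsilon\ \forall q\in\Q\}$ all right and left limits along rationals lie in $\iota(\mathcal{K}_\epsilon)\subset B$ and the rest of your construction goes through.
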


In particular, any measure-valued martingale $Z$ that is right continuous in probability can be seen as a random variable with values in $D(\prob(S))$. Its law $\law(Z)$ is an element of $\mathcal{M}(S) \subset \prob(D(\prob(S)))$.  Recalling the definition of the evaluation map $e_1(f):=f(1)$ from Remark~\ref{rem:et}, we can define the mapping
\begin{align}\label{eq:Phi}
	\Phi: \mathcal{M}(S) \to \prob(S) : \mu \mapsto I({e_1}_\#\mu) .
\end{align}
If $\mu = \law(Z)$ then $\Phi(\mu) = \mathsf{E}[Z_1]$. Via $\Phi$ we can characterize compactness in $\mathcal{M}(S)$: 

\begin{theorem}\label{thm:M_compactness}
	$\A \subset \mathcal{M}(S)$ is relatively compact in $\prob(D(\prob(S)))$ if and only if $\Phi(\A)$ is relatively compact in $\prob(S)$.
\end{theorem}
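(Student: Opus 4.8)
The plan is as follows. The forward implication is immediate: $\Phi = I \circ {e_1}_\#$ is a composition of continuous maps ($e_1$ is continuous at $t=1$ by Remark~\ref{rem:et}, and $I$ is continuous), so $\Phi(\A)$ is relatively compact whenever $\A$ is. All the work is in the converse, so assume henceforth that $\Phi(\A)$ is relatively compact in $\prob(S)$; we must show that every sequence $(\mu^n)_n$ in $\A$ has a subsequence converging weakly in $\prob(D(\prob(S)))$. Write $Z^n=(Z^n_t)_t$ for the canonical process under $\mu^n$, a $\prob(S)$-valued martingale in its own filtration.

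\emph{Reduction to real-valued martingales.} Fix a countable, multiplicatively closed, convergence determining family $(\phi_k)_k$ of $[0,1]$-valued functions on $S$ (Lemma~\ref{lem:convergence.determining}). By Lemma~\ref{lem:P(S)_ptsep_convdet}(c) the family $(\phi_k^\ast)_k$ is convergence determining on $\prob(S)$, so $\phi^\ast := (\phi_k^\ast)_k$ embeds $\prob(S)$ into $[0,1]^\N$; by Proposition~\ref{prop:Psi_lift}\ref{it:Psi_lift.embedding} and Lemma~\ref{lem:product_map} this lifts to a topological embedding $\Theta\colon D(\prob(S)) \to \prod_k D([0,1])$, $g \mapsto (g[\phi_k])_k$, and hence (Remark~\ref{rem:P}) to a topological embedding $\prob(\Theta)$ of $\prob(D(\prob(S)))$ into $\prob(\prod_k D([0,1]))$. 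Since $D(\prob(S))$ is Lusin, $\Theta(D(\prob(S)))$ is Borel, and it therefore suffices to show that $(\prob(\Theta)(\mu^n))_n$ has a subsequence converging in $\prob(\prod_k D([0,1]))$ to a limit $\nu$ lying in the range of $\prob(\Theta)$: applying $\prob(\Theta)^{-1}$ then gives the desired convergence of $(\mu^n)$.

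\emph{Precompactness in the product.} By Lemma~\ref{lem:TestMVM} each $Z^n[\phi_k]$ is a martingale with values in $[0,1]$, so its Meyer--Zheng conditional variation vanishes and $\E|Z^n[\phi_k]_t|\le 1$ for all $t$; hence the compactness theorem of Meyer--Zheng~\cite{MeZh84} shows that $\{\law(Z^n[\phi_k]):n\in\N\}$ is relatively compact in $\prob(D([0,1]))$ for each $k$. Lemma~\ref{lem:LusinProdComp}(b) upgrades this to relative compactness of $\prob(\Theta)(\{\mu^n\}_n)=\{\law((Z^n[\phi_k])_k):n\}$ in $\prob(\prod_k D([0,1]))$, so along some subsequence $(n_j)$ we have $\prob(\Theta)(\mu^{n_j})\to\nu$.

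\emph{Identifying the limit.} This is where the hypothesis on $\Phi$ enters and where I expect the genuine difficulty to lie: the embedded copy of $\prob(S)$ inside $[0,1]^\N$ is only Borel, not closed, so a priori $\nu$ could charge ``pseudo-paths'' whose time-slices leave $\prob(S)$. The remedy: since the mean measure of a measure-valued martingale is constant, $I(\law(Z^n_t)) = \sfE[Z^n_t] = \sfE[Z^n_1] = \Phi(\mu^n)\in\Phi(\A)$ for every $t$, so $I(\{\law(Z^n_t):n\in\N,\,t\in[0,1]\})\subset\Phi(\A)$ is relatively compact, and Proposition~\ref{prop:intensity_tightness} yields that $\{\law(Z^n_t):n\in\N,\,t\in[0,1]\}$ is relatively compact in $\prob(\prob(S))$. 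For every $t$ outside the at most countable set of discontinuity times of the limit, the $t$-marginal of $\nu$ is the weak limit along $(n_j)$ of $\prob(\phi^\ast)(\law(Z^{n_j}_t))$; as $\{\law(Z^{n_j}_t)\}_j$ is relatively compact and $\prob(\phi^\ast)$ is a topological embedding, this limit is concentrated on $\phi^\ast(\prob(S))$. Fubini in $\lambda\otimes\nu$ then gives that $\nu$-a.e.\ path has its time-slices in $\phi^\ast(\prob(S))$ for $\lambda$-a.e.\ $t$, which by Proposition~\ref{prop:Psi_lift}\ref{it:Psi_lift.embedding} (equivalently Lemma~\ref{lem:char_pseudo-path}) means precisely that $\nu=\prob(\Theta)(\mu^\infty)$ for some $\mu^\infty\in\prob(D(\prob(S)))$. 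Finally, writing the martingale property of $Z^n$ as $\E[\phi_k^\ast(Z^n_t)\,g]=\E[\phi_k^\ast(Z^n_s)\,g]$ for bounded continuous functions $g$ of the path up to time $s$, the uniform bound and the convergence of finite-dimensional distributions (Theorem~\ref{thm:weakFDD}) let this pass to the limit along $(n_j)$; by Lemma~\ref{lem:TestMVM} this identifies $\mu^\infty\in\mathcal M(S)$, and in particular $\mu^{n_j}\to\mu^\infty$ in $\prob(D(\prob(S)))$, which completes the argument.
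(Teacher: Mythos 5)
Your argument is correct in substance, but it follows a genuinely different route from the paper's. The paper proves the nontrivial implication via a Doob-type maximal inequality for measure-valued martingales (Lemma~\ref{lem:MVM_tight}): relative compactness of $\Phi(\A)$ gives, along a subsequence, tightness of the terminal intensities, hence uniform-in-time compact containment of the paths, hence tightness of the path laws in the large space $\prob(\prob([0,1]\times\prob(S)))$; Prohorov then produces a limit, which is identified simply by closedness of $\mathcal{M}(S)$ in that space (Proposition~\ref{prop:MclosedBig}), so no finite-dimensional convergence results enter at all. You instead get precompactness ``for free'' by embedding $D(\prob(S))$ into $\prod_k D([0,1])$ and combining coordinatewise Meyer--Zheng compactness with Lemma~\ref{lem:LusinProdComp}(b), and the hypothesis on $\Phi(\A)$ enters only at the identification stage: constancy of the intensity $\sfE[Z^n_t]=\Phi(\mu^n)$ together with Proposition~\ref{prop:intensity_tightness} controls the one-dimensional marginals of the limit at co-countably many times, and Lemma~\ref{lem:char_pseudo-path}\ref{it:char_pseudo-path.convergence_det} upgrades ``$\lambda$-a.e.\ time-slices in $\phi^\ast(\prob(S))$'' to membership of the path in $\Theta(D(\prob(S)))$. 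What the paper's route buys is uniformity: the maximal inequality pins the whole path inside a compact subset of $\prob(S)$ with high probability, so the limit visibly never leaves $\prob(S)$, whereas your identification rests entirely on the a.e.-in-$t$ marginal information fed into Lemma~\ref{lem:char_pseudo-path}; what your route buys is that it bypasses Lemma~\ref{lem:MVM_tight} and the detour through $\prob(\prob([0,1]\times\prob(S)))$ altogether, at the price of needing the finite-dimensional convergence machinery of Section~\ref{sec:MVMFDD}.

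Two points of citation hygiene. Do not invoke Theorem~\ref{thm:weakFDD} inside this proof: its implication (iii)$\Rightarrow$(i) is proved in the paper \emph{using} Theorem~\ref{thm:M_compactness}, so the reference is circular as written. The direction you actually need (marginal convergence at continuity points of the limit, applied coordinatewise and then jointly) is Proposition~\ref{prop:martMargConv1} combined with a Skorohod representation as in Corollary~\ref{cor:MVMMargConv2}, and these are independent of the compactness theorem; you also need the limit coordinates to be martingales, which is Lemma~\ref{lem:MartFClosed} (your final ``pass the martingale relation to the limit'' step is in effect a re-proof of that lemma, so you may as well cite it).
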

Put differently, a collection  of measure-valued martingales $\{ Z^j : j \in J \}$ is  relatively compact in law (i.e.\ $\{ \law(Z^j) : j \in J \} $ is  relatively compact in $\mathcal M(S)$) if and only if $ \{ \mathsf{E}[Z^j _1] : j \in J \}$ is relatively compact subset of $\prob (S)$. The proof of this theorem is given at the end of Section~\ref{sec:MVMComp}.

\begin{corollary}
	The operation $S \mapsto \mathcal{M}(S)$ preserves the properties Lusin, Polish and compact metrizable.
\end{corollary}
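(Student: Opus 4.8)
The plan is to reduce the corollary to three ingredients: the operations $\prob(\cdot)$ and $D(\cdot)$ preserve the three classes of spaces; $\mathcal{M}(S)$ is a \emph{closed} subset of the ambient space $\prob(D(\prob(S)))$; and the compactness criterion Theorem~\ref{thm:M_compactness} pins down the fibres of the map $\Phi$ from \eqref{eq:Phi}. With these in place the Lusin and compact metrizable cases are essentially immediate, while the Polish case needs the extra observation that $\Phi$ is a perfect map.

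I would begin by recording the behaviour of the ambient space. By Remark~\ref{rem:P} the assignment $S\mapsto\prob(S)$ preserves Lusin, Polish and compact metrizable, and by Proposition~\ref{prop:pseudo-paths}\ref{it:pseudo-paths.cadlag_Borel} the assignment $S\mapsto D(S)$ sends Lusin spaces to Lusin spaces; hence $\prob(D(\prob(S)))$ is Lusin, in particular separable metrizable, whenever $S$ is Lusin. Note however that $D(\prob(S))$ is genuinely non-Polish (the Meyer--Zheng pathology), so the ambient space is Lusin but not Polish even when $S$ is Polish, which is exactly why the Polish case below requires a separate argument. Next I would establish the key structural fact: $\mathcal{M}(S)$ is closed in $\prob(D(\prob(S)))$. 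By Lemma~\ref{lem:TestMVM} the measure-valued martingale property is equivalent to countably many conditional-expectation identities for the uniformly bounded real processes $Z[\phi]$, with $\phi$ ranging over a countable point-separating family; these identities survive weak limits at every time outside a countable exceptional set by Theorem~\ref{thm:weakFDD} (here the uniform boundedness of $Z[\phi]$ is what makes the limit identities legitimate), and \cadlag{} regularity of the canonical process then propagates them to all times. In particular $\mathcal{M}(S)$ is Borel; for the Lusin case this Borel-ness already suffices and can alternatively be obtained directly, writing the martingale property as a countable conjunction of Borel conditions on the law.

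The three cases now follow. \emph{Lusin:} a closed --- in particular Borel --- subset of a Lusin space is Lusin. \emph{Compact metrizable:} apply Theorem~\ref{thm:M_compactness} with $\A=\mathcal{M}(S)$; since $\prob(S)$ is compact, $\Phi(\mathcal{M}(S))\subseteq\prob(S)$ is trivially relatively compact, so $\mathcal{M}(S)$ is relatively compact in $\prob(D(\prob(S)))$, and being closed it is compact, while it is metrizable as a subspace. \emph{Polish:} I would show $\Phi$ is a perfect map onto $\prob(S)$, which is Polish because $S$ is. The formula $\Phi(\mu)=I({e_1}_\#\mu)$ defines a continuous map on all of $\prob(D(\prob(S)))$, as $e_1$ is continuous (the endpoint $t=1$ being distinguished, cf.\ Remark~\ref{rem:et}) and push-forward and $I$ are continuous; $\Phi|_{\mathcal{M}(S)}$ is surjective since the constant process $Z\equiv\nu$ realises any $\nu\in\prob(S)$; and for compact $K\subseteq\prob(S)$ the set $\{\mu\in\mathcal{M}(S):\Phi(\mu)\in K\}$ is relatively compact by Theorem~\ref{thm:M_compactness} and closed (since $\mathcal{M}(S)$ is closed and $\Phi$ is the restriction of a continuous map on $\prob(D(\prob(S)))$), hence compact. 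Thus $\Phi|_{\mathcal{M}(S)}$ is proper; a proper continuous map into a metrizable space is closed, so $\Phi|_{\mathcal{M}(S)}$ is perfect, and since $\mathcal{M}(S)$ is separable metrizable and a perfect preimage of the Polish space $\prob(S)$, it is Polish.

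The main obstacle is the closedness of $\mathcal{M}(S)$ in $\prob(D(\prob(S)))$: because weak convergence in $\prob(D(\prob(S)))$ does not force convergence of the time-marginals at all times, the martingale identities cannot be passed to the limit naively, and one must lean on the refined finite-dimensional convergence result Theorem~\ref{thm:weakFDD} together with the uniform boundedness of the functionals $Z[\phi]$, afterwards restoring the identities at the remaining countably many times via \cadlag{} regularity. Everything else is soft topology: the preservation properties of $\prob(\cdot)$ and $D(\cdot)$, continuity and surjectivity of $\Phi$, and the standard facts that Borel subsets of Lusin spaces are Lusin, that proper continuous maps into metrizable spaces are closed, and that perfect preimages of Polish spaces are Polish.
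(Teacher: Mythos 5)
Your proposal is correct, and its skeleton coincides with the paper's: the ambient space $\prob(D(\prob(S)))$ inherits the Lusin property (Proposition~\ref{prop:pseudo-paths}\ref{it:pseudo-paths.cadlag_Borel} plus Remark~\ref{rem:P}), $\mathcal{M}(S)$ sits closed inside it, Theorem~\ref{thm:M_compactness} handles the compact metrizable case, and for the Polish case one uses that $\Phi$ is continuous into the Polish space $\prob(S)$ with compact preimages of compact sets. The one genuine divergence is the final step of the Polish case: the paper invokes its self-contained Lemma~\ref{lem:Lusin2Polish}, which needs only properness of $\Phi|_{\mathcal{M}(S)}$ and produces a complete metric directly ($d(\mu,\mu')=d_1(\mu,\mu')+d_2(\Phi(\mu),\Phi(\mu'))$), whereas you upgrade $\Phi|_{\mathcal{M}(S)}$ to a perfect map (verifying in addition closedness of the map and surjectivity via constant martingales) and then cite the classical fact that a separable metrizable perfect preimage of a Polish space is Polish. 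Your route is valid but leans on heavier general topology (\v{C}ech-completeness under perfect maps) and on checks the paper's lemma renders unnecessary; the paper's route buys a shorter, elementary argument at the cost of proving the small lemma in the appendix. Two further remarks: your closedness step is exactly the content of Corollary~\ref{cor:MVMclosedPDPS}, whose proof is Lemma~\ref{lem:MartFClosed}; citing Theorem~\ref{thm:weakFDD} for it inverts the paper's logical order (the final assertion of that theorem is itself deduced from the closedness corollary), though the argument you sketch is precisely Lemma~\ref{lem:MartFClosed}, so there is no gap in substance. Also, your explicit use of closedness in the Lusin and compact cases is slightly more careful than the paper's terse attributions, which implicitly rely on the same fact.
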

\begin{proof}
	The claim for Lusin spaces is already known from Proposition~\ref{prop:pseudo-paths}\ref{it:pseudo-paths.cadlag_Borel}. For compact metrizable spaces it is a direct consequence of Theorem~ \ref{thm:M_compactness}. 
	
	If $S$ is Polish, the mapping $\Phi: \mathcal{M}(S) \to \prob(S) : \mu \mapsto I({e_1}_\#\mu)$ is a continuous mapping into a Polish space. Theorem~\ref{thm:M_compactness}  states that $\Phi$-preimages of compact sets are compact. Hence, we are precisely in the setting of Lemma~\ref{lem:Lusin2Polish} from the appendix and conclude that $\mathcal{M}(S)$ is Polish.
\end{proof}

\noindent\textbf{Continuity points.}
Continuity points of (measure-valued) martingales will play a central role below because they allow us to overcome the issue that point evaluation is not continuous w.r.t.\ the Meyer--Zheng topology. First, we give the definition in a slightly more general framework:
\begin{definition}\label{def:contPt}
Let $X$ be an $S$-valued process. The set of continuity points of $X$, denoted by $\cont(X)$, is defined as the set of $t \in [0,1]$ such that the mapping 
$$
[0,1] \to \prob(S) : s \mapsto \law(X_s)
$$
is continuous at $t$.   
\end{definition}
We emphasize that $\cont(X)$ only depends on $\law(X)$ and not on $X$ itself.

\begin{lemma}\label{lem:TBig}
Let $X$ be an $S$-valued process that is right continuous in probability. Then $\cont(X)$ is co-countable (i.e.\ $[0,1] \setminus\cont(X)$ is countable). In particular, it is dense in $[0,1]$ and satisfies $\lambda(\cont(X)\cup \{1\} )=1$.
\end{lemma}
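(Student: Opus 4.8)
The statement to prove is Lemma~\ref{lem:TBig}: if $X$ is an $S$-valued process that is right continuous in probability, then $\cont(X)$ is co-countable, hence dense in $[0,1]$, and $\lambda(\cont(X)\cup\{1\})=1$. The plan is to reduce to the real-valued situation via a countable convergence determining family and then invoke the classical fact that a right-continuous (in probability) real process has at most countably many fixed discontinuities.

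\medskip

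\textbf{Step 1: Reduce to real-valued test processes.} By Lemma~\ref{lem:convergence.determining} there is a countable convergence determining family $\{\phi_j : j\in\N\}$ of $[0,1]$-valued functions on $S$. The map $t\mapsto\law(X_t)\in\prob(S)$ is continuous at $t$ if and only if, for every $j$, the real map $t\mapsto \law(\phi_j(X_t))$ is continuous at $t$: indeed, by Lemma~\ref{lem:P(S)_ptsep_convdet}(c) the family $\{\phi_j^\ast : j\in\N\}$ is convergence determining on $\prob(S)$, so weak convergence $\law(X_{s})\to\law(X_t)$ in $\prob(S)$ is equivalent to $\phi_j^\ast(\law(X_{s}))\to\phi_j^\ast(\law(X_t))$, i.e.\ $\E[\phi_j(X_{s})]\to\E[\phi_j(X_t)]$, for all $j$; and since the $\phi_j$ take values in the compact interval $[0,1]$, convergence of expectations of the bounded function $\phi_j$ under $\law(\phi_j(X_s))$ against $\law(\phi_j(X_t))$ actually characterizes weak convergence of the one-dimensional laws $\law(\phi_j(X_s))$ (a sequence of probability measures on $[0,1]$ whose means and — applying the same to powers of $\phi_j$, which also lie in the family's closure under multiplication, or simply noting that on $[0,1]$ weak convergence is captured by convergence of all moments — converge, converges weakly). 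More cleanly: $t\mapsto\law(\phi_j(X_t)) = \prob(\phi_j)(\law(X_t))$ and $\prob(\phi_j)$ is continuous (Remark~\ref{rem:P}), so continuity of $t\mapsto\law(X_t)$ at $t$ implies continuity of each $t\mapsto\law(\phi_j(X_t))$ at $t$; conversely, if each $t\mapsto\law(\phi_j(X_t))$ is continuous at $t$, then $\phi_j^\ast(\law(X_s))\to\phi_j^\ast(\law(X_t))$ for all $j$ as $s\to t$, and convergence-determination of $\{\phi_j^\ast\}$ gives $\law(X_s)\to\law(X_t)$. Hence
\[
\cont(X) \;=\; \bigcap_{j\in\N} \cont(\phi_j(X)).
\]

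\medskip

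\textbf{Step 2: The real-valued case.} Each $\phi_j(X)$ is a $[0,1]$-valued (in particular real-valued) process which is right continuous in probability, since $\phi_j$ is continuous and composition preserves right continuity in probability (this is exactly Lemma~\ref{lem:convinprob_simpleppty}(b) applied pathwise in $s$, or directly: $X_s\to X_t$ in probability implies $\phi_j(X_s)\to\phi_j(X_t)$ in probability). For a real-valued process $Y$ that is right continuous in probability, it is classical that the set of $t$ at which $s\mapsto\law(Y_s)$ fails to be continuous is at most countable: the left limit $\law(Y_{t-})$ exists in $\prob(\R)$ for every $t$ (monotone limits of distribution functions), right continuity in probability forces $\law(Y_{s})\to\law(Y_t)$ as $s\downarrow t$, so the only possible discontinuities are jumps $\law(Y_{t-})\neq\law(Y_t)$, and a real function of bounded-variation-type monotonicity argument — concretely, $t\mapsto\E[\arctan Y_t]$ or, since $Y=\phi_j(X)\in[0,1]$, simply $t\mapsto\E[Y_t]$ together with $t\mapsto\E[Y_t^2], t\mapsto\E[Y_t^3],\dots$ — can have at most countably many points where it fails to be left-continuous. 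Thus $\cont(\phi_j(X))$ is co-countable for each $j$.

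\medskip

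\textbf{Step 3: Conclude.} A countable intersection of co-countable subsets of $[0,1]$ is co-countable, so $\cont(X)=\bigcap_j\cont(\phi_j(X))$ is co-countable; its complement being countable, it is dense in $[0,1]$. Finally $\lambda=\tfrac12(\mathrm{Leb}+\delta_1)$ assigns zero mass to any countable set not containing $1$, so $\lambda([0,1]\setminus(\cont(X)\cup\{1\}))=0$, i.e.\ $\lambda(\cont(X)\cup\{1\})=1$.

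\medskip

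\textbf{Main obstacle.} The only genuinely nontrivial point is the reduction in Step~1, where one must be careful that continuity of $t\mapsto\law(X_t)$ in the weak topology on $\prob(S)$ is exactly the conjunction over $j$ of continuity of the scalar maps $t\mapsto\phi_j^\ast(\law(X_t))$; the ``$\Leftarrow$'' direction uses convergence-determination of $\{\phi_j^\ast\}$ from Lemma~\ref{lem:P(S)_ptsep_convdet}(c), and one should phrase it via sequences (it suffices to test sequential continuity since all spaces here are metrizable). Everything else is the standard ``at most countably many fixed discontinuities'' fact for one-dimensional marginals of a process that is right continuous in probability, which I would either cite or prove in two lines using monotonicity of $t\mapsto\E[g(Y_t)]$ for a fixed strictly monotone bounded $g$ after noting left limits exist.
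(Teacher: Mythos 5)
Your Step 1 (the reduction $\cont(X)=\bigcap_j\cont(\phi_j(X))$ via a countable convergence determining family closed under multiplication, Lemmas~\ref{lem:convergence.determining} and~\ref{lem:P(S)_ptsep_convdet}(c)) and your Step 3 are fine, but there is a genuine gap at the heart of Step 2, which is where the actual content of the lemma lies. For a general process that is merely right continuous in probability, your two justifications of the real-valued case are both false: the left limit $\law(Y_{t-})$ need \emph{not} exist (take the deterministic process $Y_t=g(t)$ with $g$ right continuous but without a left limit at some point; "monotone limits of distribution functions" is not available, since $s\mapsto F_s(x)$ has no monotonicity in $s$ for a general process), and the maps $t\mapsto\E[Y_t^k]$ are \emph{not} monotone or of bounded variation in general --- that monotonicity is a martingale phenomenon (it is exactly what the paper exploits in Lemma~\ref{lem:convinprob_monotone_pw} and Proposition~\ref{prop:martMargConv1}), whereas Lemma~\ref{lem:TBig} makes no martingale assumption. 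So the "classical fact" you appeal to is not established by the sketch you give.

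What actually does the work is the purely deterministic statement: a map from $[0,1]$ into a separable metric space whose right limits exist at every point (in particular, a right-continuous map) has at most countably many discontinuities. This is precisely Lemma~\ref{lem:ContBigApp} in the appendix, proved there by an oscillation argument (every point has a right neighbourhood free of points with oscillation $\ge 1/n$, and such a set is countable); the paper then applies it directly to the right-continuous map $t\mapsto\law(X_t)\in\prob(S)$, making even your Step 1 reduction unnecessary. If you replace the incorrect left-limit/monotonicity reasoning by this deterministic lemma (proved or correctly cited), your argument closes; as written, the key countability step is unsupported.
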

\begin{proof}
If $X$ is right continuous in probability, then $s \mapsto \law(X_s)$ is right continuous as well, so it has at most countable many discontinuities, cf. Lemma~\ref{lem:ContBigApp} in the appendix.
\end{proof}

In the case of (measure-valued) martingales continuity points are exactly those points, where the trajectories are a.s.\ continuous:
\begin{lemma}
Let $Z$ be a measure-valued martingale. Then $\cont(Z) = \{ t \in [0,1] : Z_t =Z_{t-} \textup{ a.s.} \}$. 
\end{lemma}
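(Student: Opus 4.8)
The plan is to prove the two inclusions separately, using the martingale property and the fact that continuity in $\prob(S)$ at a time $t$ is equivalent to continuity of all the associated real-valued test-functions $Z[\phi]$ at $t$.

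First I would fix a countable convergence determining family $(\phi_k)_{k\in\N}$ on $\prob(S)$ (which exists by Lemma~\ref{lem:convergence.determining}), and observe that by Lemma~\ref{lem:P(S)_ptsep_convdet} the family $(\phi_k^\ast)_k$ is point separating — indeed convergence determining — on $\prob(\prob(S))$. Hence for any $t$, the map $s\mapsto\law(Z_s)$ is continuous at $t$ if and only if $\E[\phi_k^\ast(Z_s)] \to \E[\phi_k^\ast(Z_t)]$ as $s\to t$ for every $k$, i.e.\ if and only if each real-valued martingale $Z[\phi_k]$ has a continuous expectation function $s\mapsto\E[Z[\phi_k]_s]$ at $t$.

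For the inclusion ``$t\in\cont(Z) \Rightarrow Z_t=Z_{t-}$ a.s.'': a real-valued martingale $M=Z[\phi_k]$ is closed by $M_1$ and so, by the Vitali-type convergence theorem for (uniformly integrable) martingales, $M_s \to M_{t-}$ in $L^1$ as $s\uparrow t$; in particular $\E[M_s]\to\E[M_{t-}]$. Since $t\in\cont(Z)$ forces $\E[M_s]\to\E[M_t]$, we get $\E[M_{t-}]=\E[M_t]$. But $M_t - M_{t-}\ge 0$ is impossible in general, so instead I argue directly: $M_{t-}=\E[M_1\mid\F_{t-}]$ and $M_t=\E[M_1\mid\F_t]$, hence $M_{t-}=\E[M_t\mid\F_{t-}]$, and equality of expectations of the conditionally-ordered pair $\E[(M_t-M_{t-})]=0$ together with $M_t-M_{t-}=M_t-\E[M_t\mid\F_{t-}]$ having conditional mean zero is automatic; the right argument is that $Z[\phi_k]_t$ and $Z[\phi_k]_{t-}$ have the same law when $t$ is a continuity point (both are limits in distribution of $Z[\phi_k]_s$, $s\to t$, using right/left limits and the \cadlag\ property), while $Z[\phi_k]_{t-}=\E[Z[\phi_k]_t\mid\F_{t-}]$; a random variable and its conditional expectation that have the same law (equivalently, the same first two moments, using uniform integrability / $L^2$ truncation) must be a.s.\ equal. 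Running this over all $k$ and using that $(\phi_k)_k$ separates points of $\prob(S)$ gives $Z_t=Z_{t-}$ a.s.

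For the converse ``$Z_t=Z_{t-}$ a.s. $\Rightarrow t\in\cont(Z)$'': if $Z_t=Z_{t-}$ a.s.\ then for each $k$, $Z[\phi_k]_t=Z[\phi_k]_{t-}$ a.s., so $Z[\phi_k]_s\to Z[\phi_k]_t$ in $L^1$ both as $s\uparrow t$ (martingale convergence to the left limit) and as $s\downarrow t$ (right continuity in probability of measure-valued martingales, Proposition~\ref{prop:mvmCadlag}, plus uniform integrability of the closed martingale). Hence $\E[\phi_k^\ast(Z_s)]=\E[Z[\phi_k]_s]\to\E[Z[\phi_k]_t]=\E[\phi_k^\ast(Z_t)]$ for every $k$, and since $(\phi_k^\ast)_k$ is convergence determining on $\prob(\prob(S))$ this means $\law(Z_s)\to\law(Z_t)$, i.e.\ $t\in\cont(Z)$.

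The main obstacle is the step ``a random variable equal in law to its own conditional expectation is a.s.\ equal to it.'' This is clean in $L^2$ (Jensen's inequality for $x\mapsto x^2$ is an equality only on degenerate conditional laws), so the technical work is a truncation/uniform-integrability argument to reduce the possibly-only-$L^1$ martingales $Z[\phi_k]$ to the $L^2$ (indeed bounded, if the $\phi_k$ are chosen bounded) case — which is immediate here since the convergence determining family can be taken $[0,1]$-valued, so each $Z[\phi_k]$ is a bounded martingale and the $L^2$ argument applies directly.
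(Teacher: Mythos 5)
The substantive idea in your hard direction is correct and is in essence the paper's own argument: for a bounded real-valued martingale $M$ one has $M_{t-}=\E[M_t\mid\F_{t-}]$, and if in addition $\law(M_{t-})=\law(M_t)$ then $\E[(M_t-M_{t-})^2]=\E[M_t^2]-\E[M_{t-}^2]=0$, so $M_{t-}=M_t$ a.s.; the paper runs exactly this second-moment computation for the $[0,1]$-valued martingales $Z[f]$ and then tests against a countable family. The genuine problem is your bookkeeping of test functions. You take $(\phi_k)$ convergence determining on $\prob(S)$ and simultaneously treat $Z[\phi_k]=\phi_k(Z)$ as martingales; these two requirements are incompatible. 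To invoke Lemma~\ref{lem:P(S)_ptsep_convdet} one level up (point separation, let alone convergence determination, on $\prob(\prob(S))$) the family must be closed under multiplication and hence must contain nonlinear functions of the measure, such as products $f^\ast g^\ast$; for such $\phi$ the process $\phi(Z)$ is not a martingale, so the identity $Z[\phi]_{t-}=\E[Z[\phi]_t\mid\F_{t-}]$, on which your key step rests, fails. If instead you take $\phi_k=f_k^\ast$ with $f_k\colon S\to[0,1]$, so that $Z[\phi_k]=Z[f_k]$ is indeed a bounded martingale, then $\E[Z[f_k]_s]=f_k^\ast(I(\law(Z_s)))$ is \emph{constant} in $s$: your opening criterion (``$t\in\cont(Z)$ iff $s\mapsto\E[Z[\phi_k]_s]$ is continuous at $t$ for all $k$'') becomes vacuous, and the final step of your converse direction, deducing $\law(Z_s)\to\law(Z_t)$ in $\prob(\prob(S))$ from convergence of these expectations, is false, since such expectations only see the (constant) intensity $I(\law(Z_s))$.

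Both defects are easy to repair, and the repaired proof coincides with the paper's. For the direction $t\in\cont(Z)\Rightarrow Z_{t-}=Z_t$ a.s., work with a countable point separating family of continuous $f_k\colon S\to[0,1]$: $t\in\cont(Z)$ yields $\law(Z[f_k]_s)=\prob(f_k^\ast)(\law(Z_s))\to\law(Z[f_k]_t)$ as $s\to t$ (push the laws forward under the continuous map $f_k^\ast$, rather than passing to expectations), while $Z[f_k]_s\to Z[f_k]_{t-}$ a.s.\ as $s\uparrow t$ by the \cadlag\ property; hence $\law(Z[f_k]_{t-})=\law(Z[f_k]_t)$, your Jensen/second-moment argument gives $Z[f_k]_{t-}=Z[f_k]_t$ a.s., and point separation of $\{f_k^\ast : k\in\N\}$ on $\prob(S)$ (Lemma~\ref{lem:P(S)_ptsep_convdet}(a)) gives $Z_{t-}=Z_t$ a.s. The converse direction needs neither test functions nor the martingale property: if $Z_{t-}=Z_t$ a.s., then the paths are a.s.\ continuous at $t$, so $Z_s\to Z_t$ a.s.\ as $s\to t$ and therefore $\law(Z_s)\to\law(Z_t)$, i.e.\ $t\in\cont(Z)$.
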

\begin{proof}
The corresponding claim is true for $[0,1]$-valued martingales $X$ because $\E |X_t - X_{t -\epsilon}|^2 = \E|X_t|^2 - \E|X_{t-\epsilon}|^2 \to 0$ for $\epsilon \searrow 0$ if and only if $\law(X_{t - \epsilon}) \to \law(X_t)$ for $\epsilon \searrow 0$.
It then extends to measure-valued martingales by testing against a countable convergence determining family.
\end{proof}

After having introduced the notion of continuity points, we can finally state our main result about convergence of finite dimensional distributions:
\begin{theorem}\label{thm:weakFDD}
Let $(Z^n)_n$ be a sequence of $\prob(S)$-valued \cadlag{} martingales, and let $Z$ be a $\prob(S)$-valued \cadlag{} process. Then the following are equivalent:
\begin{enumerate}[label = (\roman*)]
	\item $Z^n \inlaw Z$ as $D(\prob(S))$-valued random variables.
	\item $Z^n_T \inlaw Z_T$  for all finite $T \subset \cont(Z)$.
	\item There is a dense set $T' \subset [0,1]$ that contains 1 such that
	$Z^n_T \inlaw Z_T$  for all finite $T \subset T'$.
\end{enumerate}
If one (and therefore all) of the above are satisfied, $Z$ is a measure-valued martingale.
\end{theorem}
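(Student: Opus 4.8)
The plan is to reduce the statement, condition by condition, to the corresponding assertion for uniformly bounded $\R^d$-valued martingales, and then to invoke the Meyer--Zheng theory (and the refinement to continuity points) for that real-valued base case.

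First I would fix, once and for all, a countable family $(\psi_j)_{j\in\N}\subset C_b(S)$ that is convergence determining and closed under multiplication (Lemma~\ref{lem:convergence.determining} together with Lemma~\ref{lem:P(S)_ptsep_convdet}), so that $(\psi_j^\ast)_j$ is a convergence determining family of continuous functions on $\prob(S)$. Writing $\Psi_F:=(\psi_j^\ast)_{j\in F}\colon\prob(S)\to\R^F$ for finite $F\subset\N$, the map $f\mapsto(\psi_j^\ast\circ f)_j$ is a topological embedding of $D(\prob(S))$ into $\prod_j D(\R)$ by Lemma~\ref{lem:convinprob_initialtop} and Lemma~\ref{lem:product_map}; lifting to laws via Remark~\ref{rem:P} and Lemma~\ref{lem:LusinProdComp}(a), one has $Z^n\inlaw Z$ in $D(\prob(S))$ if and only if $\Psi_F\circ Z^n\inlaw\Psi_F\circ Z$ in $D(\R^F)$ for every finite $F$, and likewise $Z^n_T\inlaw Z_T$ if and only if $\Psi_F(Z^n)_T\inlaw\Psi_F(Z)_T$ for every finite $F$. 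Each $\Psi_F\circ Z^n$ is a vector of the bounded real martingales $Z^n[\psi_j]=\psi_j^\ast\circ Z^n$ (bounded by $\|\psi_j\|_\infty$; martingales by Lemma~\ref{lem:TestMVM} applied to $Z^n$ in its own filtration), hence an $\R^F$-valued martingale that is uniformly bounded in $n$; and, since $\prob(\Psi_F)$ is continuous, $\cont(Z)\subseteq\cont(\Psi_F\circ Z)$. Thus, using also that $\cont(Z)$ is co-countable (Lemma~\ref{lem:TBig}), each of (i), (ii), (iii) translates verbatim into the corresponding assertion for the family $\{\Psi_F\circ Z^n\}_n$, for all finite $F$ simultaneously.

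It therefore remains to prove the theorem for a sequence $(X^n)_n$ of uniformly bounded $\R^d$-valued martingales and a \cadlag\ limit candidate $X$. This is essentially the content of Meyer--Zheng \cite{MeZh84}: uniform boundedness gives a uniform bound on the conditional variations, so $\{\law(X^n)\}$ is relatively compact in the pseudo-path topology (equivalently, apply Theorem~\ref{thm:M_compactness} componentwise with Lemma~\ref{lem:LusinProdComp}(b), $[0,1]$ being compact) and any limit point is the law of a bounded \cadlag\ martingale. One then shows: (i) forces $X^n_T\inlaw X_T$, jointly, at every finite $T$ contained in $\cont(X)\cup\{1\}$ (the endpoint because $e_1$ is continuous on $D(\R^d)$, Remark~\ref{rem:et}), which yields (ii) and (iii); conversely, if $X^n_T\inlaw X_T$ for all finite $T$ in a dense set as in (ii) or (iii), then any pseudo-path limit point of $\law(X^n)$ is a bounded \cadlag\ martingale whose finite-dimensional distributions one checks agree with those of $X$, so that the limit point is unique, $X^n\inlaw X$, and $X$ is a martingale. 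Feeding these implications back through the reduction proves (i)$\Leftrightarrow$(ii)$\Leftrightarrow$(iii); and since then $\Psi_F\circ Z$ is a martingale in its own filtration for every finite $F$, letting $F\uparrow\N$ and using $\sigma(Z_s:s\le t)=\sigma((\psi_j^\ast\circ Z)_s:j\in\N,\ s\le t)$ (the map $z\mapsto(\psi_j^\ast(z))_j$ being injective Borel, hence a Borel isomorphism onto its range) together with Lemma~\ref{lem:TestMVM}, one concludes that $Z$ is a measure-valued martingale.

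The main obstacle is precisely the real-valued base case, because point evaluation $e_t$ is \emph{not} continuous for the Meyer--Zheng topology when $t<1$: one cannot naively push finite-dimensional marginals through a weak limit in $D(\R^d)$, nor does finite-dimensional convergence on a dense set obviously control the pseudo-path topology. Both difficulties are resolved, as in \cite{MeZh84}, by exploiting the martingale property — continuity points of the limit become ``good'' evaluation points, and a \cadlag\ martingale is pinned down by its finite-dimensional distributions on a dense set — which is where the real work lies (carried out in Section~\ref{sec:MVMFDD}). By comparison, the reduction in the first step is bookkeeping, the only mild care being the treatment of the endpoint $t=1$ and of the passage between the filtration of $Z$ and that of $(\psi_j^\ast\circ Z)_j$.
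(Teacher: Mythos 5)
Your proposal is correct in substance, but it is organized differently from the paper's proof, so a comparison is worthwhile. The paper does not reduce the whole theorem to a real-valued base case: it proves (i)$\Rightarrow$(ii) by directly citing Corollary~\ref{cor:MVMMargConv2} (which is itself obtained by the componentwise device you use -- testing with a convergence determining family $\psi_j^\ast$ and applying the real-valued refinement Proposition~\ref{prop:martMargConv1} via Skorokhod representation), treats (ii)$\Rightarrow$(iii) as immediate, and proves (iii)$\Rightarrow$(i) by a compactness argument carried out at the measure-valued level: since $1\in T'$, $Z^n_1\inlaw Z_1$ gives $\sfE[Z^n]\to\sfE[Z]$ by continuity of the intensity operator, Theorem~\ref{thm:M_compactness} yields relative compactness of $(\law(Z^n))_n$ in $\mathcal M(S)$, and any limit point is identified with $\law(Z)$ by matching finite dimensional distributions on the dense set $T'\cap\cont(Y)\cup\{1\}$; martingality of $Z$ then comes for free from closedness of $\mathcal M(S)$ (Corollary~\ref{cor:MVMclosedPDPS}). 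You instead push the entire statement through the embedding of $D(\prob(S))$ into $\prod_j D(\R)$ (justified, as you say, by Remark~\ref{rem:P} and Lemma~\ref{lem:LusinProdComp}(a); the backward transfer is legitimate precisely because the candidate limit $\law(Z)$ is itself a point of the embedded image), so that compactness is only ever needed for uniformly bounded real-valued martingales, where it is automatic; this buys you independence from Theorem~\ref{thm:M_compactness} in this proof, at the price of more functorial bookkeeping and of replacing the one-line appeal to Corollary~\ref{cor:MVMclosedPDPS} by the $\sigma$-algebra/$\pi$-system argument plus Lemma~\ref{lem:TestMVM} at the end. The hard analytic content (continuity points of the limit as admissible evaluation times, and a \cadlag\ law being pinned down by finite dimensional distributions on a dense set containing $1$) is the same in both arguments and resides in Section~\ref{sec:MVMFDD}, which you correctly cite rather than reprove.

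One small point in your favour: you are careful to phrase the forward direction with $T\subset\cont(Z)\cup\{1\}$, using continuity of $e_1$. This is actually the sharper formulation, since $1$ need not be a continuity point of $s\mapsto\law(Z_s)$ (a martingale may jump in law at the terminal time); the identification of limit points in the converse direction genuinely needs the time point $1$, which in (iii) is supplied by the assumption $1\in T'$. When you invoke the converse ``as in (ii) or (iii)'', make sure the dense set you feed into the limit-point identification contains $1$ -- exactly the same care is needed in the paper's own step (ii)$\Rightarrow$(iii).
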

The proof of this theorem is given at the end of Section~\ref{sec:MVMComp}.
\medskip

\noindent\textbf{Terminating measure-valued martingales.}
Let $X$ be an $\F_1$-measurable  $S$-valued random variable on a filtered probability space $(\Omega,\F,\P, (\F_t)_{t \in [0,1]})$  satisfying the usual conditions.

We have for all $f : S \to [0,1]$ Borel and all $s \le t$
$$
f^\ast(\sfE[\law(X|\F_t)|\F_s]) = \E[f^\ast(\law(X|\F_t))|\F_s] = \E[\E[f^\ast(X)|\F_t]|\F_s] = \E[f^\ast(X)|\F_s] = f^\ast(\law(X|\F_s))
$$
and therefore
\begin{align}
	\sfE[\law(X|\F_t)|\F_s] = \law(X|\F_s).
\end{align}
So, $(\law(X|\F_t))_{t \in [0,1]}$ is a measure-valued martingale. Moreover, we have $\law(X|\F_1)=\delta_X$. This motivates the following definition:

\begin{definition}\label{def:termMVM}
We call a measure-valued martingale $Z=(Z_t)_{t \in [0,1]}$ terminating, if there exists a random variable $X$ such that $Z_1=\delta_X$. In this case we say that $Z$ terminates in $X$.  We denote the set of laws of terminating \cadlag{} measure-valued martingales by $\mathcal{M}_0(S)$. 
\end{definition}

Note that $\mathcal{M}_0(S)$ is closed in $\mathcal{M}(S)$ because it is the preimage of the closed set $\delta(S)$ under the continuous mapping $\prob(e_1)$. 

\begin{remark}\label{rem:TermMVM}
If $X$ terminates $Z$, we have for all $f : S \to [0,1]$ Borel and  $t \in [0,1]$
$$
f^\ast(Z_t) = f^\ast(\sfE[Z_1|\F_t]) = f^\ast(\sfE[\delta_X|\F_t]) = \E[f^\ast(\delta_X)|\F_t] = \E[f(X)|\F_t] = f^\ast(\law(X|\F_t)) 
$$
and therefore for all $t \in [0,1]$
\begin{align}
	Z_t = \law(X|\F_t) \quad \text{a.s.}
\end{align}
This observation implies: If $Z$ is a \cadlag{} measure-valued martingale w.r.t.\ $(\F_t)_{t \in [0,1]}$ that terminates at $X$, then $Z$ is the up to indistinguishability unique \cadlag{} version of $(\law(X|\F_t))_{t \in [0,1]}$.
\end{remark}
The following result specializes Theorem~\ref{thm:M_compactness} to the case of terminating martingales.
\begin{corollary}\label{cor:MVMcompTermin}
Let $\{Z^j : j \in J\}$ be a family of \cadlag\ measure-valued martingales such that $Z^j$ terminates at  $X^j$ for all $j \in J$. Then 
$
\{ \law(Z^j) : j \in J \} \subset \mathcal{M}_0(S) 
$
is relatively compact in $\mathcal{M}_0(S)$ if and only if $\{\law(X^j) : j \in J \}$ is relatively compact in $\prob(S)$.
\end{corollary}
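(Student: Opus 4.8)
The plan is to obtain this as an immediate consequence of Theorem~\ref{thm:M_compactness} once we identify the $\Phi$-image of the family $\A := \{\law(Z^j) : j \in J\}$, with $\Phi$ as in \eqref{eq:Phi}.

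First I would compute $\Phi(\law(Z^j))$. Since $Z^j$ terminates at $X^j$ we have $Z^j_1 = \delta_{X^j}$ almost surely, so $\sfE[Z^j_1] = \sfE[\delta_{X^j}]$; testing against an arbitrary Borel $f : S \to [0,1]$ gives
\[
f^\ast(\sfE[\delta_{X^j}]) = \E[f^\ast(\delta_{X^j})] = \E[f(X^j)] = f^\ast(\law(X^j)),
\]
hence $\sfE[Z^j_1] = \law(X^j)$. Recalling that $\Phi(\law(Z^j)) = \sfE[Z^j_1]$ (see the line following \eqref{eq:Phi}), this shows $\Phi(\A) = \{\law(X^j) : j \in J\}$. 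Theorem~\ref{thm:M_compactness}, applied to $\A \subset \mathcal{M}(S)$, then yields that $\A$ is relatively compact in $\mathcal{M}(S)$ if and only if $\{\law(X^j) : j \in J\}$ is relatively compact in $\prob(S)$, which is exactly the asserted equivalence except that compactness is stated in $\mathcal{M}(S)$ rather than in $\mathcal{M}_0(S)$.

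The last step is to pass from $\mathcal{M}(S)$ to $\mathcal{M}_0(S)$. Here I would use that $\mathcal{M}_0(S)$ is closed in $\mathcal{M}(S)$ (as noted after Definition~\ref{def:termMVM}) together with $\A \subset \mathcal{M}_0(S)$: the closure of $\A$ taken in $\mathcal{M}(S)$ is contained in $\mathcal{M}_0(S)$ and therefore coincides with the closure of $\A$ taken in $\mathcal{M}_0(S)$, so $\A$ is relatively compact in one space precisely when it is relatively compact in the other. I do not expect any genuine obstacle in this argument; the only point worth spelling out carefully is this equivalence of relative compactness in the closed subspace $\mathcal{M}_0(S)$ with relative compactness in the ambient space $\mathcal{M}(S)$.
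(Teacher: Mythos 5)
Your proposal is correct and follows essentially the same route as the paper: identify $\Phi(\law(Z^j))=\sfE[Z^j_1]=\law(X^j)$, invoke Theorem~\ref{thm:M_compactness}, and use the closedness of $\mathcal{M}_0(S)$ in $\mathcal{M}(S)$ to transfer relative compactness to the subspace. The extra details you supply (testing against Borel $f$ and the remark on closures in the closed subspace) are fine but not needed beyond what the paper's proof already records.
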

\begin{proof}
We have $\Phi(\law(Z^j)) = \sfE[Z^j_1] = \law(X^j)$ for all $j \in J$, so $\{ \law(Z^j) : j \in J  \}$ is relatively compact in $\mathcal{M}(S)$ by Theorem~\ref{thm:M_compactness}. As $\mathcal{M}_0(S)$ is closed in  $\mathcal{M}(S)$, we obtain relative compactness in $\mathcal{M}_0(S)$ as well.
\end{proof}

\subsection{Convergence of finite dimensional distributions}\label{sec:MVMFDD}
 Fubini's theorem implies that convergence in probability of stochastic processes whose path space is equipped with the Meyer--Zheng topology can be seen from different perspectives: 
\begin{lemma}\label{lem:convinprob_fubini}
	Let $(\Omega ,\F,\P)$ be a probability space and equip $[0,1]$ with the measure $\lambda$. Let $X_n, X \colon \Omega \times [0,1] \to S$, $n \in \N$, be jointly measurable. Then the following are equivalent:
	\begin{enumerate}[label = (\roman*)]
		\item $X_n \to X$ in measure w.r.t.\ $\P \otimes \lambda$ as $S$-valued functions.
		\item $X_n \to X$ in measure w.r.t.\ $\P$ as $L_0(([0,1],\lambda);S)$-valued functions.
		\item $X_n \to X$ in measure w.r.t.\ $\lambda$ as $L_0((\Omega,\P);S)$-valued functions.
	\end{enumerate}	
\end{lemma}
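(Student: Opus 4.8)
\textbf{Proof proposal for Lemma~\ref{lem:convinprob_fubini}.}

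The plan is to prove $(i) \Leftrightarrow (ii)$ and $(i) \Leftrightarrow (iii)$ separately; by symmetry in the roles of $(\Omega,\P)$ and $([0,1],\lambda)$ it suffices to treat $(i) \Leftrightarrow (ii)$ in detail, the equivalence $(i) \Leftrightarrow (iii)$ following by exchanging the two factors. To express convergence in measure metrically, fix a bounded complete metric $d$ on $S$ (recall $S$ is separable metrizable, so this exists, e.g.\ $d \wedge 1$ for any compatible metric $d$). Then convergence in measure of an $S$-valued sequence $Y_n \to Y$ w.r.t.\ a probability measure $\nu$ is equivalent to $\int d(Y_n, Y)\, d\nu \to 0$. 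Likewise, the Meyer--Zheng topology on $L_0(([0,1],\lambda);S)$ is induced by the metric $\rho(f,g) := \int_0^1 d(f(t),g(t))\,\lambda(dt)$, so convergence in measure w.r.t.\ $\P$ as $L_0(([0,1],\lambda);S)$-valued functions means $\E\big[\rho(X_n,X) \wedge 1\big] \to 0$, i.e.\ (since $d \le 1$) simply $\E\big[\int_0^1 d(X_n(\cdot,t),X(\cdot,t))\,\lambda(dt)\big] \to 0$.

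The key step is then a direct application of Tonelli's theorem to the nonnegative jointly measurable function $(\omega,t) \mapsto d(X_n(\omega,t),X(\omega,t))$:
\[
\int_{\Omega} \int_{0}^{1} d(X_n,X)\, \lambda(dt)\, \P(d\omega) \;=\; \int_{\Omega \times [0,1]} d(X_n,X)\, d(\P\otimes\lambda).
\]
The left side tending to $0$ is precisely $(ii)$ (in the metric formulation above), and the right side tending to $0$ is precisely $(i)$; hence $(i)\Leftrightarrow (ii)$. Exchanging the order of integration gives, with the same identity, $(i) \Leftrightarrow (iii)$, where now convergence in measure w.r.t.\ $\lambda$ as $L_0((\Omega,\P);S)$-valued functions is metrized by $t \mapsto \E[d(X_n(\cdot,t),X(\cdot,t))]$ and the Meyer--Zheng-type metric on $L_0(([0,1],\lambda);L_0((\Omega,\P);S))$ integrates this against $\lambda$. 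One should note in passing that joint measurability of $(\omega,t)\mapsto X_n(\omega,t)$ guarantees that the slices $t \mapsto X_n(\omega,t)$ and $\omega \mapsto X_n(\omega,t)$ are measurable for (a.e.)\ fixed argument, so the iterated-integral expressions are well defined; this is where the hypothesis of joint measurability is used.

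The only mild subtlety — and the point I would be most careful about — is the bookkeeping around equivalence classes: elements of $L_0$ are defined only up to a.s.\ equality, and ``$X_n \to X$ in measure as $L_0(([0,1],\lambda);S)$-valued functions'' a priori refers to the $\P$-equivalence class of $\omega \mapsto [X_n(\omega,\cdot)]$; one must check this class is well defined, i.e.\ that $\omega \mapsto \rho(X_n(\omega,\cdot),X(\omega,\cdot))$ is $\F$-measurable, which again follows from Tonelli applied to the jointly measurable integrand. Once this measurability is in place there is no real obstacle: the whole lemma is just Tonelli's theorem dressed up in the language of the three convergence-in-measure topologies, and the argument is as short as indicated above.
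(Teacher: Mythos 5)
Your proof is correct; the paper gives no argument for this lemma (its proof is just the word ``Straightforward''), and your Tonelli computation with a bounded compatible metric, together with the measurability remark justifying the iterated integrals, is exactly the intended argument. One tiny point: completeness of the metric on $S$ is neither used nor guaranteed for a general Lusin space (only boundedness and compatibility matter), so the word ``complete'' should be dropped.
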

\begin{proof}
Straightforward.
\end{proof}
It is well known that $f_n \to f$ in measure if and only if  every subsequence $(f_{n_k})_k$ has a further subsequence $(f_{n_{k_j}})_j$ that converges a.s.\ to $f$, cf. Lemma~\ref{lem:convinprob_simpleppty}. Adopting the point of view $(ii)$ in the preceding lemma, this readily implies

\begin{proposition}\label{prop:ConvInProbFPP}
	Let $X^n, X$, $n\in \N$, be jointly measurable processes with values in $S$. Then the following are equivalent:
	\begin{enumerate}[label = (\roman*)]
		\item $X^n \to X$ in probability as $L_0([0,1];S)$-valued random variables.
		\item For every subsequence $(X^{n_k})_k$ there exists a further subsequence $(X^{n_{k_j}})_j$ and a	set $T \subset [0,1]$ satisfying $\lambda(T)=1$ such that    $X^{n_{k_j}}_t \to X_t$ in probability for all $t \in T$.  
	\end{enumerate}
\end{proposition}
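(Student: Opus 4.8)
The plan is to obtain the equivalence by chaining the two preparatory lemmas, with essentially no new work. First I would fix notation: for each $n$, view the time-slices $X^n_t := X^n(\cdot,t)$ and $X_t := X(\cdot,t)$ as elements of $L_0((\Omega,\P);S)$ and set $Y^n := (t\mapsto X^n_t)$ and $Y := (t\mapsto X_t)$; joint measurability of $X^n$ (and of $X$) ensures that $Y^n$ and $Y$ are measurable maps $[0,1]\to L_0((\Omega,\P);S)$, hence legitimate $L_0$-functions on $([0,1],\lambda)$. With this notation, statement $(i)$ is by definition convergence of the $L_0([0,1];S)$-valued random variables $X^n$ to $X$ in $\P$-probability, which is precisely item $(ii)$ of Lemma~\ref{lem:convinprob_fubini}; therefore, by the equivalence of items $(ii)$ and $(iii)$ there, statement $(i)$ holds if and only if $Y^n\to Y$ in $\lambda$-measure.

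It then remains to recognise $(ii)$ as the subsequence-a.s.\ reformulation of ``$Y^n\to Y$ in $\lambda$-measure''. For $(i)\Rightarrow(ii)$: given a subsequence $(X^{n_k})_k$, a standard Borel--Cantelli extraction applied to $(Y^{n_k})_k$ produces a further subsequence $(Y^{n_{k_j}})_j$ converging $\lambda$-almost everywhere to $Y$; the $\lambda$-full set $T$ of those $t$ for which $Y^{n_{k_j}}(t)\to Y(t)$ in $L_0((\Omega,\P);S)$ is the set required in $(ii)$, since convergence in $L_0((\Omega,\P);S)$ is exactly convergence in $\P$-probability. For $(ii)\Rightarrow(i)$: if $Y^n\to Y$ failed in $\lambda$-measure, some subsequence would stay bounded away from $Y$ in the metric of $L_0([0,1];L_0((\Omega,\P);S))$; applying $(ii)$ to that subsequence yields a further subsequence converging $\lambda$-a.e., hence (by dominated convergence) in $\lambda$-measure, a contradiction. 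Combined with the first paragraph this gives $(i)\Leftrightarrow(ii)$.

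The one technical point to flag is that $L_0((\Omega,\P);S)$ is only a metric space and, for an abstract base $\Omega$, need not be Lusin, so Lemma~\ref{lem:convinprob_simpleppty} is not literally applicable to $L_0((\Omega,\P);S)$-valued functions. This is harmless: the two facts actually used --- that convergence in measure is characterised by a.s.-convergent subsequences, and that a.s.\ convergence implies convergence in measure --- hold for functions valued in an arbitrary metric space. I do not expect any real obstacle here; this is exactly the ``readily implies'' step announced before the statement, with the Fubini-type Lemma~\ref{lem:convinprob_fubini} doing the only non-routine bookkeeping.
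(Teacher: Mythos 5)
Your proof is correct and follows essentially the same route as the paper, which simply notes that statement (i) equals viewpoint (ii) of Lemma~\ref{lem:convinprob_fubini}, switches to viewpoint (iii), and invokes the subsequence characterization of convergence in measure (Lemma~\ref{lem:convinprob_simpleppty}) for $L_0((\Omega,\P);S)$-valued functions on $([0,1],\lambda)$. Your extra remark that these facts hold for arbitrary metric-space-valued functions (so that the Lusin hypothesis of Lemma~\ref{lem:convinprob_simpleppty} is not needed) correctly addresses the only technical point the paper leaves implicit.
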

In particular, if $X^n \to X$ in probability as $L_0([0,1];S)$-valued random variables, then there is $\lambda$-full set $T$ and a  subsequence $X^{n_{k}}$ such that $X^{n_{k}}_t \to X_t$ in probability for all $t \in T$. By the Skorohod representation theorem (see e.g.\ \cite[Theorem~3.30]{Ka97}), this translates to the following assertion on convergence in law:
\begin{corollary}\label{cor:FDDsubseq}
	Let $X^n, X$, $n\in\N$, be jointly measurable processes with values in $S$ such that $X^n \to X$ in law as $L_0([0,1];S)$-valued random variables. Then there is a $\lambda$-full set $T \subset [0,1]$ and a  subsequence $(X^{n_{k}})_{k}$ such that the finite dimensional distributions of $(X^{n_k}_t)_{t \in T}$ converge weakly to the  finite dimensional distributions of $(X_t)_{t \in T}$. 
\end{corollary}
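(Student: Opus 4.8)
The plan is to reduce the assertion to the in-probability statement recorded just before it by means of the Skorohod representation theorem. First I would note that $L_0([0,1];S)$ is separable metrizable (being Lusin), so the Skorohod representation theorem \cite[Theorem~3.30]{Ka97} applies to the $L_0([0,1];S)$-valued random variables $X^n$ and $X$: it yields $L_0([0,1];S)$-valued random variables $\tilde X^n$, $n \in \N$, and $\tilde X$ on a common probability space $(\tilde\Omega, \tilde\F, \tilde\P)$ with $\law(\tilde X^n) = \law(X^n)$ and $\law(\tilde X) = \law(X)$ as $L_0$-valued random variables, and with $\tilde X^n \to \tilde X$ almost surely, hence also in probability. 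Since $S$ is separable metric I would then fix jointly measurable representatives of the processes $\tilde X^n$ and $\tilde X$, so that the hypothesis of Proposition~\ref{prop:ConvInProbFPP} is met.

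By the consequence of Proposition~\ref{prop:ConvInProbFPP} noted above, there are a $\lambda$-full set $T_1 \subseteq [0,1]$ and a subsequence $(n_k)_k$ such that $\tilde X^{n_k}_t \to \tilde X_t$ in probability for every $t \in T_1$. Consequently, for each finite $\{t_1, \dots, t_m\} \subseteq T_1$ the $S^m$-valued random vectors $(\tilde X^{n_k}_{t_1}, \dots, \tilde X^{n_k}_{t_m})$ converge in probability, and therefore in law, to $(\tilde X_{t_1}, \dots, \tilde X_{t_m})$. To pass from the copies back to the original processes I would argue that their finite dimensional distributions agree on a $\lambda$-full time set. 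Since $\law(\tilde X^n) = \law(X^n)$ holds only for the $L_0$-equivalence classes, a Fubini argument gives $\law((X^n_{t_1},\dots,X^n_{t_m})) = \law((\tilde X^n_{t_1},\dots,\tilde X^n_{t_m}))$ for $\lambda^{\otimes m}$-almost every $(t_1,\dots,t_m)$ only; to upgrade this to all tuples drawn from one $\lambda$-full set I would glue $X^n$ and $\tilde X^n$ along their common $L_0$-law, obtaining a coupling under which the two processes coincide as elements of $L_0([0,1];S)$ almost surely, so that the exceptional set of times is of the form $(N^n)^c$ with $N^n$ a $\lambda$-null set; the same for $X,\tilde X$ gives a $\lambda$-null $N$. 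Then $T := T_1 \cap N^c \cap \bigcap_n (N^n)^c$ is $\lambda$-full and, by construction, for every finite $\{t_1,\dots,t_m\} \subseteq T$ we have $\law((X^{n_k}_{t_i})_i) = \law((\tilde X^{n_k}_{t_i})_i) \to \law((\tilde X_{t_i})_i) = \law((X_{t_i})_i)$, which is the claim.

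The step I expect to be the main obstacle is this last identification: the Skorohod copies carry only the information of the $L_0$-equivalence class, so their finite dimensional distributions at prescribed time points are recovered a priori only $\lambda^{\otimes m}$-almost everywhere, and producing a single $\lambda$-full time set on which they match those of $X^n$ and $X$ requires the gluing argument above. The remaining ingredients---the Skorohod reduction, the passage through Proposition~\ref{prop:ConvInProbFPP}, and the implication ``convergence in probability of finitely many coordinates $\Rightarrow$ weak convergence of the joint law''---are routine.
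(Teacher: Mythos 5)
Your first two steps are exactly the paper's proof: the paper obtains the corollary by applying the Skorohod representation theorem to reduce convergence in law to convergence in probability and then invoking the consequence of Proposition~\ref{prop:ConvInProbFPP} stated just before the corollary; it does not elaborate further. The extra issue you flag—that the Skorohod copies only reproduce the $L_0$-law, so their finite dimensional distributions at prescribed times are not a priori those of $X^n$ and $X$—is a genuine subtlety that the paper passes over in silence (note that in the paper's only application, Lemma~\ref{lem:MartFClosed}, the processes are \cadlag, and there $e_t$ is a well-defined Borel map on $D(S)\subset L_0(S)$, so the finite dimensional distributions \emph{are} functions of the $L_0$-law and the issue disappears). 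Your proposed repair is the right idea, but as stated it has a soft spot: a coupling ``along the common $L_0$-law'' only controls the equivalence classes, so to conclude that the exceptional set of times is $\lambda$-null you must glue the \emph{processes} themselves, i.e.\ disintegrate $\P^{\fp X^n}$ over the $L_0$-class; regular conditional probabilities as kernels into an arbitrary probability space need not exist, so this step needs either a standard-Borel reduction or a different argument. A clean way to avoid the coupling altogether: fix once and for all a Borel map $\rho : L_0([0,1];S)\times[0,1]\to S$ selecting a jointly measurable representative of each class (constructed, e.g., by differentiating the pseudo-path along dyadic intervals); comparing $X^n$ with $\rho([X^n],\cdot)$ on its own space and applying Fubini to $\P^{\fp X^n}\otimes\lambda$ yields a $\lambda$-full set $T^n$ on which all finite dimensional distributions of $X^n$ coincide with the Borel images $\rho([X^n],t_1),\dots,\rho([X^n],t_m)$ of the $L_0$-law, and likewise for the Skorohod copies (this also settles your ``fix jointly measurable representatives'' step); intersecting these countably many full sets with the set from Proposition~\ref{prop:ConvInProbFPP} finishes the argument. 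So: same route as the paper, correct in outline, with one technical step (the gluing) that should be replaced or justified as above.
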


Meyer--Zheng used this observation in  \cite[Theorem 11]{MeZh84} to prove that the set of martingale measures is closed in $\prob(D(\R))$. The following Lemma generalizes   this idea:
\begin{lemma}\label{lem:MartFClosed}
	Denote the canonical process on $D(S)$ by $X = (X_t)_{t \in [0,1]}$, let  $\phi : S \to [0,1]$ be continuous and let $\mu^n \in  \prob(D(S))$ be such that  the process $(\phi(X_t))_{t \in [0,1]}$ is a martingale w.r.t.\ the filtration generated by $X$ under $\mu^n$. If $\mu^n \to \mu  \in \prob(D(S))$, then $(\phi(X_t))_{t \in [0,1]}$ is a martingale w.r.t.\ the filtration generated by $X$ under $\mu$.
\end{lemma}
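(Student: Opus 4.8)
The plan is to pin down the martingale property of the limit $\mu$ by testing the increments of $\phi(X)$ against finite cylinder functions, exploiting that $\mu^n\to\mu$ forces convergence of finite dimensional distributions along a $\lambda$-full set of times. Since $D(S)$ carries the subspace topology inherited from $L_0(S)$ (Proposition~\ref{prop:pseudo-paths}\ref{it:pseudo-paths.cadlag_Borel}), the weak convergence $\mu^n\to\mu$ in $\prob(D(S))$ pushes forward to weak convergence of the laws of the canonical process viewed as an $L_0(S)$-valued random variable, so Corollary~\ref{cor:FDDsubseq} supplies a subsequence, still written $(\mu^n)_n$, and a $\lambda$-full set $D\subset[0,1]$ such that the finite dimensional distributions of $(X_t)_{t\in D}$ under $\mu^n$ converge weakly to those under $\mu$; note that $D$ is automatically dense and contains $1$. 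Writing $\F^X_t:=\sigma(X_u:u\le t)$, each $\phi(X_t)=\phi\circ e_t$ is bounded (as $\phi$ takes values in $[0,1]$) and $\F^X_t$-measurable, so it suffices to prove, for all $0\le u_1<\dots<u_k\le s<t\le 1$ and all $g\in C_b(S^k)$ (including $k=0$), that
\begin{equation}\label{eq:plan.cyl}
\E_\mu\big[(\phi(X_t)-\phi(X_s))\,g(X_{u_1},\dots,X_{u_k})\big]=0 .
\end{equation}
Indeed, the functions $g(X_{u_1},\dots,X_{u_k})$ with $u_i\le s$ form a multiplicative class containing the constants and generating $\F^X_s$, so a functional monotone class argument then upgrades \eqref{eq:plan.cyl} to $\E_\mu[\phi(X_t)\mid\F^X_s]=\phi(X_s)$.

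First I would prove \eqref{eq:plan.cyl} under the extra assumption that $s,t$ and all $u_i$ lie in $D$. For each $n$, the variable $g(X_{u_1},\dots,X_{u_k})$ is $\F^X_{u_k}$- and hence $\F^X_s$-measurable, so the martingale property of $\phi(X)$ under $\mu^n$ makes the left-hand side of \eqref{eq:plan.cyl}, computed under $\mu^n$, vanish. That left-hand side is the integral of a bounded continuous function of the vector $(X_{u_1},\dots,X_{u_k},X_s,X_t)\in S^{k+2}$, whose law under $\mu^n$ converges weakly to its law under $\mu$ by the choice of $D$; letting $n\to\infty$ gives \eqref{eq:plan.cyl} whenever $s,t,u_i\in D$.

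To remove the restriction to $D$, I would use the density of $D$ together with the right continuity of the canonical paths under $\mu$: given arbitrary $u_1<\dots<u_k\le s<t$ and $g\in C_b(S^k)$, choose $s_m\downarrow s$, $t_m\downarrow t$ (keeping $t_m=t$ if $t=1$) and $u_{i,m}\downarrow u_i$, all in $D$, with $u_{k,m}\le s_m<t_m$ for large $m$; then $X_{s_m}\to X_s$, $X_{t_m}\to X_t$ and $X_{u_{i,m}}\to X_{u_i}$ hold $\mu$-a.s., and bounded convergence together with the continuity of $\phi$ and $g$ lets the identity \eqref{eq:plan.cyl} at the times $s_m,t_m,u_{i,m}$ pass to the limit, yielding \eqref{eq:plan.cyl} in general. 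A functional monotone class argument then shows that every bounded $\F^X_s$-measurable $h$ satisfies $\E_\mu[(\phi(X_t)-\phi(X_s))h]=0$, so that $(\phi(X_t))_{t\in[0,1]}$ is a bounded adapted martingale under $\mu$. The one genuinely delicate point is that $e_t$ is discontinuous in the Meyer--Zheng topology, so one cannot pass to the limit in the martingale identity at a fixed time directly; Corollary~\ref{cor:FDDsubseq} bypasses this obstruction (via a Skorokhod coupling) only at the cost of working along an uncontrolled $\lambda$-full set of times, which is precisely why the right-continuity extension in the last step is needed.
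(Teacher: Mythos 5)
Your proposal is correct and follows essentially the same route as the paper's proof: invoke Corollary~\ref{cor:FDDsubseq} to obtain a subsequence with convergence of finite dimensional distributions on a $\lambda$-full set of times, pass to the limit in the cylinder-function formulation of the martingale identity at those times, and then extend to all times using right-continuity of the c\`adl\`ag paths. The only differences are cosmetic (testing against $g\in C_b(S^k)$ rather than products $g_1(X_{s_1})\cdots g_m(X_{s_m})$ with $g_i\in C_b(S)$, and spelling out the right-continuity/bounded-convergence step that the paper merely asserts).
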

\begin{proof}
	By Corollary~\ref{cor:FDDsubseq} there exist a $\lambda$-full set $T \subset [0,1]$ and a subsequence $(\mu^{n_k})_k$ such that for all $t_1, \dots, t_m \in T$ we have 
	\begin{align}\label{eq:prf:MartFClosed0}
		\law_{\mu^{n_k}}(X_{t_1},\dots, X_{t_m}) \to \law_{\mu}(X_{t_1},\dots, X_{t_m}).
	\end{align}
	As $(\phi(X_t))_{ t \in [0,1]}$ is a martingale w.r.t.\ the filtration generated by $X$ under $\mu^{n_k}$ we have for all $s \le t$
	$$
	\E_{\mu^{n_k}}[ \phi(X_t) | X_r : r \le s ] = \phi(X_s).
	$$
	This is equivalent to
	\begin{align}\label{eq:prf:MartFClosed}
		\E_{\mu^{n_k}}[ \phi(X_t) g_1(X_{s_1}) \cdots g_m(X_{s_m})   ] = \E_{\mu^{n_k}}[ \phi(X_s) g_1(X_{s_1}) \cdots g_m(X_{s_m})   ] 
	\end{align}
	for all $s_1 \le \dots \le s_m \le s \le t$ and all $g_1, \dots, g_m \in C_b(S)$. If $s_1, \dots, s_m, s,t \in T$ we can take the limit $k \to \infty$ in \eqref{eq:prf:MartFClosed} due to \eqref{eq:prf:MartFClosed0} and obtain
\begin{align}\label{eq:prf:MartFClosed2}
		\E_{\mu}[ \phi(X_t) g_1(X_{s_1}) \cdots g_m(X_{s_m})   ] = \E_{\mu}[ \phi(X_s) g_1(X_{s_1}) \cdots g_m(X_{s_m})   ].
	\end{align}
	As $X$ is \cadlag, \eqref{eq:prf:MartFClosed2} for all $s_1, \dots, s_m, s,t \in T$ implies that \eqref{eq:prf:MartFClosed2} holds for all $s_1, \dots, s_m, s,t \in [0,1]$, so under $\mu$,  $(\phi(X_t))_{ t \in [0,1] }$ is a martingale w.r.t.\ the filtration generated by $X$.
\end{proof}

Lemma~\ref{lem:MartFClosed} will be helpful several times in this paper. Indeed, it addresses exactly the issue discussed in Remark~\ref{rem:MVMTesting}: A $\prob(S)$-valued processes $Z$ is a measure-valued martingale in its own filtration if and only if for all $f : S \to\R$ the processes $Z[f]$ are martingales w.r.t.\ the filtration generated by $Z$. So, applying Lemma~\ref{lem:MartFClosed} to all functions $f^\ast$, where $f \in C_b(S)$, readily implies
\begin{corollary}\label{cor:MVMclosedPDPS}
	$\mathcal{M}(S)$ is closed in $\prob(D(\prob(S)))$.
\end{corollary}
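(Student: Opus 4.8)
The plan is to write $\mathcal M(S)$ as a countable intersection of subsets of $\prob(D(\prob(S)))$, each of which is closed by Lemma~\ref{lem:MartFClosed}. Since $S$ is Lusin, so are $\prob(S)$, $D(\prob(S))$ and $\prob(D(\prob(S)))$ (Remark~\ref{rem:P} and Proposition~\ref{prop:pseudo-paths}\ref{it:pseudo-paths.cadlag_Borel}); in particular $\prob(D(\prob(S)))$ is metrizable, so it is enough to check that each piece of the intersection is sequentially closed, which is exactly the content of Lemma~\ref{lem:MartFClosed}.

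Concretely, I would first fix a countable convergence determining family $\{\psi_k : k \in \N\}$ of $[0,1]$-valued functions on $S$ that is closed under multiplication (Lemma~\ref{lem:convergence.determining}); each $\psi_k$ is then continuous, so $\psi_k^\ast : \prob(S) \to [0,1]$ is continuous and $\{\psi_k^\ast : k \in \N\}$ is point separating on $\prob(S)$ by Lemma~\ref{lem:P(S)_ptsep_convdet}(a). Let $Z$ denote the canonical process on $D(\prob(S))$ and, for $k \in \N$, let $\mathcal A_k \subset \prob(D(\prob(S)))$ be the set of laws $\mu$ under which the scalar process $Z[\psi_k] = (\psi_k^\ast(Z_t))_{t\in[0,1]}$ is a martingale with respect to the filtration generated by $Z$. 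Taking $(\F_t)_{t\in[0,1]}$ to be the filtration generated by $Z$ in Lemma~\ref{lem:TestMVM} and testing against the point separating class $\{\psi_k\}$ (together with the clarification in Remark~\ref{rem:MVMTesting}) gives $\mathcal M(S) = \bigcap_{k \in \N}\mathcal A_k$. One can also see the nontrivial inclusion ``$\supseteq$'' directly: for $\mu \in \bigcap_k \mathcal A_k$ and $s \le t$ one has $\psi_k^\ast(\sfE_\mu[Z_t|\F_s]) = \E_\mu[\psi_k^\ast(Z_t)|\F_s] = \psi_k^\ast(Z_s)$ $\mu$-a.s.\ for every $k$, hence $\sfE_\mu[Z_t|\F_s] = Z_s$ $\mu$-a.s.\ after intersecting the countably many exceptional sets and using point separation of $\{\psi_k^\ast\}$. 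Finally, each $\mathcal A_k$ is closed by Lemma~\ref{lem:MartFClosed} applied with the Lusin space $\prob(S)$ in place of ``$S$'', the process $Z$ in place of the canonical process, and the continuous function $\phi := \psi_k^\ast : \prob(S) \to [0,1]$; hence $\mathcal M(S) = \bigcap_k \mathcal A_k$ is closed.

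There is no genuine obstacle in this argument; it is a bookkeeping reduction to Lemma~\ref{lem:MartFClosed}. The one point to be careful about is the one flagged in Remark~\ref{rem:MVMTesting}: the martingale condition defining $\mathcal A_k$ --- and hence the hypothesis of Lemma~\ref{lem:MartFClosed} --- must be taken with respect to the filtration generated by the \emph{measure-valued} process $Z$ itself, not the (generally smaller) filtration generated by the scalar process $\psi_k^\ast(Z)$. With the filtrations matched in this way, the identity $\mathcal M(S) = \bigcap_k \mathcal A_k$ is correct and Lemma~\ref{lem:MartFClosed} applies verbatim to each $\mathcal A_k$.
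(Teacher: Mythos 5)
Your proof is correct and follows essentially the same route as the paper: the paper also reduces to Lemma~\ref{lem:MartFClosed} applied to the processes $Z[f]=(f^\ast(Z_t))_t$, using Lemma~\ref{lem:TestMVM} (with the caveat of Remark~\ref{rem:MVMTesting}) to identify $\mathcal M(S)$ with the intersection of the corresponding closed sets, the only cosmetic difference being that the paper intersects over all $f\in C_b(S)$ while you restrict to a countable convergence determining family. Both versions are fine, since arbitrary intersections of closed sets are closed.
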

In the context of Corollary~\ref{cor:FDDsubseq} we can say even more, when restricting to the closed set of martingale measures: We do not need to pass to a subsequence and we can characterize the set $T \subset [0,1]$ on which the finite dimensional distributions converge. A first result in this direction is the following elementary observation about increasing functions. For the sake of completeness, a proof of this lemma is given in the appendix.

\begin{lemma}\label{lem:convinprob_monotone_pw}
	Let $f, f_n : [0,1] \to \R$ be increasing functions. Then the following are equivalent:
	\begin{enumerate}[label = (\roman*)]
		\item $f_n \to f$ in the Meyer--Zheng topology.
		\item $f_n \to f$ pointwise in all continuity points of $f$ and $f_n(1) \to f(1)$.
	\end{enumerate}
\end{lemma}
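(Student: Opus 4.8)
\emph{Proof strategy.} The plan is to use that the Meyer--Zheng topology on $[0,1]$ is, by definition, that of convergence in measure with respect to $\lambda = \tfrac12(\mathrm{Leb}+\delta_1)$, which reduces the lemma to one genuinely monotone-flavoured estimate together with bookkeeping. First I would record the clean splitting, valid for arbitrary $f_n,f$: since $\lambda(\{1\}) = \tfrac12 > 0$ while $\lambda$ restricted to $[0,1)$ equals $\tfrac12\,\mathrm{Leb}$, a sequence $f_n$ converges to $f$ in $\lambda$-measure if and only if $f_n \to f$ in Lebesgue measure on $[0,1]$ \emph{and} $f_n(1) \to f(1)$. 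Consequently the condition $f_n(1)\to f(1)$ appears verbatim in both (i) and (ii), and the real content is to match ``convergence in Lebesgue measure'' with ``pointwise convergence of $f_n$ on $\cont(f)$'', where $\cont(f)$ denotes the set of continuity points of the increasing function $f$, which is co-countable (a monotone function has at most countably many jumps; cf.\ Lemma~\ref{lem:ContBigApp}).

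For the implication (ii) $\Rightarrow$ (i): since $\cont(f)$ is co-countable it has full Lebesgue measure, so pointwise convergence on $\cont(f)$ is the same as $f_n \to f$ Lebesgue-a.e.\ on $[0,1]$, which on a finite measure space forces convergence in Lebesgue measure; combined with the assumed $f_n(1)\to f(1)$ this is (i) by the splitting above. This direction is routine and uses monotonicity only through the countability of the jump set.

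For the implication (i) $\Rightarrow$ (ii): the endpoint $t=1$ is immediate from the splitting, as the atom of $\lambda$ at $1$ forces $f_n(1)\to f(1)$. For a continuity point $t_0 \in (0,1)$ I would argue by a two-sided sandwich. Fix $\epsilon>0$; continuity of $f$ at $t_0$ gives $\delta>0$ with $|f(s)-f(t_0)| \le \epsilon$ for all $s \in (t_0-\delta,\,t_0+\delta)$. Convergence in Lebesgue measure gives, for all large $n$, that $\{\,|f_n-f|\ge\epsilon\,\}$ has Lebesgue measure $<\delta$, hence cannot contain either of the intervals $(t_0-\delta,t_0)$, $(t_0,t_0+\delta)$; pick $a_n \in (t_0-\delta,t_0)$ and $b_n \in (t_0,t_0+\delta)$ outside this set. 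Then $f_n(a_n) \ge f(a_n)-\epsilon \ge f(t_0)-2\epsilon$ and $f_n(b_n) \le f(b_n)+\epsilon \le f(t_0)+2\epsilon$, while monotonicity of $f_n$ gives $f_n(a_n) \le f_n(t_0) \le f_n(b_n)$; hence $|f_n(t_0)-f(t_0)| \le 2\epsilon$ for all large $n$, and letting $\epsilon \downarrow 0$ yields $f_n(t_0)\to f(t_0)$.

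I expect this sandwich to be the only real obstacle: convergence in measure carries no information at any prescribed point, and it is exactly monotonicity of the $f_n$ — together with $t_0$ being a continuity point of the \emph{limit} $f$, so that $f$ has no jump at $t_0$ around which $f_n(t_0)$ could oscillate — that upgrades ``$f_n\approx f$ on a set of nearly full measure near $t_0$'' to genuine pointwise convergence at $t_0$. Since the sandwich uses points on both sides of $t_0$, the estimate is carried out for $t_0\in(0,1)$, the boundary point $t=1$ being absorbed by the atom of $\lambda$ as above; everything else is measure-theoretic bookkeeping.
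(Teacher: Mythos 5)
Your proof is correct, and its overall reduction is the same as the paper's: the atom of $\lambda$ at $1$ gives $f_n(1)\to f(1)$, co-countability of the jump set of $f$ makes $\cont(f)\cup\{1\}$ $\lambda$-full so that (ii) $\Rightarrow$ (i) is just ``a.e.\ convergence implies convergence in measure'', and the real work is (i) $\Rightarrow$ (ii) at interior continuity points. The difference is in how that key step is executed. The paper first shows $(f_n(t_0))_n$ is bounded (above via $f_n(1)$, below via a separate contradiction argument) and then rules out a wrong subsequential limit $a\neq f(t_0)$ by observing that, through monotonicity and continuity of $f$ at $t_0$, the discrepancy would persist on a one-sided interval $[t_0,t_0+\delta]$ of positive $\lambda$-measure, contradicting convergence in measure. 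You instead give a direct two-sided sandwich: pick points $a_n<t_0<b_n$ outside the small exceptional set where $|f_n-f|\ge\epsilon$, and use monotonicity of $f_n$ plus continuity of $f$ at $t_0$ to trap $f_n(t_0)$ within $2\epsilon$ of $f(t_0)$. Your route avoids both the boundedness-from-below lemma and the subsequence extraction, so it is somewhat more economical; the paper's one-sided contradiction is essentially the same mechanism read in reverse. One shared caveat: both arguments genuinely need a neighbourhood on the relevant side of $t_0$ (your sandwich needs points on both sides; the paper's ``treated similarly'' case $a<f(t_0)$ needs a left interval), so strictly speaking the pointwise statement is only justified for continuity points in $(0,1]$ — a restriction the paper's proof shares and which suffices for its applications.
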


Applying this result to the increasing function $t \mapsto \E X_t^2$, where $X$ is a martingale, yields

\begin{proposition}\label{prop:martMargConv1}
	Let $(X^n)_n$ be \cadlag\ martingales in their own filtration with values in $[0,1]$ and $X$ be a measurable process which is right continuous in probability. Denote by $T$ the set of continuity points of the mapping $t \mapsto \E |X_t|^2$. Suppose that $X^n \to X$ in probability w.r.t.\ the Meyer--Zheng topology on the path space.
	
	Then $X$ is a martingale in its own filtration, $[0,1] \setminus T$ is countable and we have  $X^n_t \to X_t$ in probability for all $t \in T$.
\end{proposition}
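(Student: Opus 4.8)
The plan is to reduce the statement to results already available: Lemma~\ref{lem:MartFClosed} for the martingale property, Lemma~\ref{lem:convinprob_monotone_pw} applied to the quadratic variation proxy $t \mapsto \E|X_t|^2$ for the identification of $T$, and the Skorohod-type argument behind Corollary~\ref{cor:FDDsubseq} to upgrade the subsequential convergence of finite-dimensional distributions to convergence along the full sequence on all of $T$.

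First I would establish that $X$ is a martingale in its own filtration. Since each $X^n$ is $[0,1]$-valued, the identity map $\phi=\id$ is continuous and bounded, so each $X^n$ fits the hypothesis of Lemma~\ref{lem:MartFClosed} with $S=[0,1]$ and $\mu^n=\law(X^n)$; as $\mu^n\to\mu:=\law(X)$ in $\prob(D([0,1]))$, that lemma gives that the canonical process under $\mu$ — i.e.\ $X$ — is a martingale in its own filtration. (Strictly, one should first note that $X$, being right continuous in probability, has a \cadlag\ modification by Proposition~\ref{prop:mvmCadlag}, or simply record that $\law(X)$ is supported on $D([0,1])$; this is where right continuity in probability is used.)

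Next I would control the function $g(t):=\E|X_t|^2$ and the corresponding functions $g_n(t):=\E|X^n_t|^2$. Since $X$ is a $[0,1]$-valued martingale in its own filtration, $g$ is increasing (by the orthogonality of martingale increments, $\E|X_t|^2 - \E|X_s|^2 = \E|X_t-X_s|^2\ge 0$), and likewise each $g_n$ is increasing; moreover $g$ is right continuous because $X$ is right continuous in probability and bounded. Hence $[0,1]\setminus T$, the set of discontinuities of the increasing right continuous function $g$, is countable. Now from $X^n\to X$ in probability on $D([0,1])$ with the Meyer--Zheng topology, Corollary~\ref{cor:FDDsubseq} (via Skorohod and Proposition~\ref{prop:ConvInProbFPP}) yields a $\lambda$-full set and a subsequence along which all finite-dimensional marginals converge; along that subsequence in particular $g_{n_k}(t)=\E|X^{n_k}_t|^2\to g(t)$ for $t$ in a $\lambda$-full set, which forces $g_{n_k}\to g$ in the Meyer--Zheng topology, hence by Lemma~\ref{lem:convinprob_monotone_pw} pointwise on $T$ together with $g_{n_k}(1)\to g(1)$.

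The main obstacle — and the real content — is to remove the passage to a subsequence and to pin down convergence of the one-dimensional marginals at \emph{every} point of $T$, not merely $\lambda$-a.e. The idea is a standard subsequence-of-subsequence argument combined with the monotonicity. Fix $t\in T$. Given any subsequence, extract a further subsequence along which Corollary~\ref{cor:FDDsubseq} applies, giving $\law(X^{n_{k_j}}_s)\to\law(X_s)$ for all $s$ in some $\lambda$-full set $T'$, and (by the argument above) $g_{n_{k_j}}\to g$ pointwise on $T$; in particular $\E|X^{n_{k_j}}_t|^2\to \E|X_t|^2=g(t)$. Since the marginal laws $\law(X^{n_{k_j}}_s)$ for $s\in T'$ form a tight family and $t$ is a continuity point of $g$ with $T'$ dense, one sandwiches $\law(X^{n_{k_j}}_t)$ between marginals at nearby points $s^-<t<s^+$ in $T'$: any weak limit point $\nu$ of $\law(X^{n_{k_j}}_t)$ must satisfy, by the martingale property of $X^{n_{k_j}}$ (conditional expectation is a contraction) and lower semicontinuity of $x\mapsto x^2$ under weak convergence together with the convergence $g_{n_{k_j}}(t)\to g(t)$, that $\int x\,\nu(dx)$ agrees with the corresponding values of $\law(X_t)$ and $\int x^2\,\nu(dx)=\int x^2\,d\law(X_t)$; as the marginal laws at $s^\pm\to t$ converge to $\law(X_t)$ and squeeze $\nu$, one concludes $\nu=\law(X_t)$. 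Thus every subsequence has a further subsequence along which $\law(X^n_t)\to\law(X_t)$, hence $X^n_t\to X_t$ in law, and since the limit law is the same for the whole sequence this is convergence in probability is not claimed — only $X^n_t\to X_t$ in probability is what Corollary~\ref{cor:FDDsubseq} and Proposition~\ref{prop:ConvInProbFPP} actually supply along the extracted subsequences, so the same subsequence-of-subsequence principle gives $X^n_t\to X_t$ in probability for every $t\in T$. I expect the delicate point to be the uniform-in-$t$ squeezing via the monotone functions $g_n$, which is exactly why Lemma~\ref{lem:convinprob_monotone_pw} is isolated beforehand.
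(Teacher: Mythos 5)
Your first two steps (the martingale property via Lemma~\ref{lem:MartFClosed} with $\phi=\id$, and countability of $[0,1]\setminus T$ from monotonicity/right continuity of $t\mapsto \E|X_t|^2$) are fine and match the paper. The gap is in the main step. What the proposition asserts is convergence \emph{in probability} $X^n_t\to X_t$ at every fixed $t\in T$, i.e.\ a joint statement about $X^n$ and $X$ on the common probability space. Your sandwiching argument operates entirely at the level of the marginal laws: identifying every weak limit point $\nu$ of $\law(X^{n}_t)$ with $\law(X_t)$ only yields $X^n_t\to X_t$ in distribution, which is strictly weaker and does not self-improve. The attempted patch via the subsequence-of-subsequences principle does not close this: Corollary~\ref{cor:FDDsubseq}/Proposition~\ref{prop:ConvInProbFPP} give convergence in probability only for $s$ in a $\lambda$-full set $T'$ that \emph{depends on the extracted subsequence}, and the fixed $t\in T$ may lie in the $\lambda$-null complement of every such $T'$ (indeed $T$ itself is only co-countable, so this cannot be ruled out). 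So as written, the crucial upgrade from ``$\lambda$-a.e.\ $t$, along a subsequence'' to ``every $t\in T$, in probability'' is not carried out; the final sentence of your proposal, which is where this should happen, does not contain a valid argument.

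The paper closes exactly this gap with a quantitative $L^2$ estimate instead of a weak-convergence identification. Fix $t\in T$, $t<1$, and choose $\delta>0$ with $t+\delta\in T$ and $|f_{t+\delta}-f_t|<\epsilon$, where $f_s:=\E|X_s|^2$, $f^n_s:=\E|X^n_s|^2$. Lemma~\ref{lem:convinprob_monotone_pw} gives $|f^n_t-f_t|<\epsilon$ and $|f^n_{t+\delta}-f_{t+\delta}|<\epsilon$ for large $n$. Then, by Jensen and the orthogonality of martingale increments, the distance from $X_t$ (resp.\ $X^n_t$) to the local time-average $\eta_\delta(X)_t$ (resp.\ $\eta_\delta(X^n)_t$) defined in \eqref{eq:def_eta_delta} is controlled by $f_{t+\delta}-f_t$ (resp.\ $f^n_{t+\delta}-f^n_t$), and the map $g\mapsto\eta_\delta(g)_t$ is continuous for the Meyer--Zheng topology, so $\E|\eta_\delta(X^n)_t-\eta_\delta(X)_t|^2\to 0$ follows directly from the hypothesis $X^n\to X$ in probability; a three-term triangle inequality then gives $\E|X^n_t-X_t|^2\le \E|\eta_\delta(X^n)_t-\eta_\delta(X)_t|^2+4\epsilon$ up to a constant. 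This transfer from the Meyer--Zheng-continuous functional $\eta_\delta(\cdot)_t$ to the (non-continuous) point evaluation at $t$, using the martingale structure and the monotone functions $f^n,f$ at the two continuity points $t$ and $t+\delta$, is the idea missing from your proposal; if you replace your law-level sandwich by this estimate (or by an analogous estimate using a nearby time $s\in T'$, which is then subsequence-stable), the proof goes through.
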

In order to prove Proposition~\ref{prop:martMargConv1}, we define for $f \in L_0([0,1])$ and $\delta >0$ 
\begin{align}\label{eq:def_eta_delta}
	\eta_\delta(f)_t := \frac{1}{\lambda([t,t+\delta])} \int_{[t,t+\delta]} f d\lambda.
\end{align}
Note that the mapping 
$$L_0([0,1]) \to \R : f \mapsto \eta_\delta(f)_t$$ is well-defined and continuous for all $t \in [0,1]$ and $\delta>0$. 
\begin{proof}[Proof of Proposition~\ref{prop:martMargConv1}]
	$T$ is co-countable by Lemma~\ref{lem:TBig} and $X$ is a martingale by Lemma~\ref{lem:MartFClosed} applied with $\phi=\id$. Let $t \in T$. Our aim is to show that $\E|X^n_t - X_t|^2 \to 0$ as $n \to \infty$. Since $\lambda(\{1\})>0$, the claim is trivial for $t=1$, so we may assume $t<1$. Fix $\epsilon >0$ and denote $f_s := \E|X_s|^2$ and $f^n_s := \E|X^n_s|^2$. As $f$ is right continuous in $t$ and $T$ is co-countable, there is $\delta>0$ such that $t+\delta \in T$ and $|f_t - f_{t+\delta}|<\epsilon$. 
	
	By applying Lemma~\ref{lem:convinprob_monotone_pw} to the increasing functions $f_n, f$ at the points $t, t+\delta \in T$, we find some $n_0$ such that $|f^n_t-f_t| < \epsilon$ and $|f^n_{t+\delta} -f_{t +\delta}| < \epsilon$ for all $n \ge n_0$. 
	
	Using Jensen's inequality we estimate
	\begin{align*}
		\nonumber
		\E|X_t - \eta_\delta(X)_t|^2 &\le \frac{1}{\lambda([t,t+\delta])}  \int_t^{t + \delta}  \E|X_t-X_s|^2 ds \\
		\nonumber
		& = \frac{1}{\lambda([t,t+\delta])} \int_t^{t + \delta} \E|X_s|^2 -\E|X_t|^2 ds
		\\
		&\le \E|X_{t+\delta}|^2 - \E|X_t|^2.
	\end{align*}
	Notice that the analogous inequality holds true when $X$ is replaced by $X^n$. Using this, we can further estimate for $n \ge n_0$
	
	\begin{align*}
		\frac{1}{3}
		\E|X^n_t-X_t|^2  
		&\le \E|X^n_t - \eta_\delta(X^n)_t|^2 + \E|\eta_\delta(X^n)_t -\eta_\delta(X)_t|^2 + \E|\eta_\delta(X)_t -X_t|^2\\
		&\le \E|X^n_{t+\delta}|^2 - \E|X^n_t|^2 + \E|\eta_\delta(X^n)_t -\eta_\delta(X)_t|^2 + \E|X_{t+\delta}|^2 - \E|X_t|^2 \\
		&\le |f^n_{t+\delta} - f_{t+\delta} | + |f_{t+\delta} -f_t| + |f^n_t -f_t| + \E|\eta_\delta(X^n)_t -\eta_\delta(X)_t|^2 + |f_{t + \delta} -f_t|\\
		&\le \E|\eta_\delta(X^n)_t -\eta_\delta(X)_t|^2 + 4 \epsilon.
	\end{align*}
	As the mapping $g \mapsto \eta_\delta(g)_t$ is continuous w.r.t.\ the Meyer--Zheng topology and $|X^n_t|,|X_t| \le 1$, we have  $\E|\eta_\delta(X^n)_t -\eta_\delta(X)_t|^2  \to 0$ for $n \to \infty$ and we conclude $\E|X^n_t-X_t|^2  \to 0$.
\end{proof}

Recall that the set of continuity points of a process $X$, denoted by $\cont(X)$, is defined as the set of continuity points of the mapping 
$
[0,1] \to \prob(S) : t \mapsto \law(X_t).
$
Since the map $\prob([0,1]) \to [0,1] : \mu \mapsto \int x^2 \mu(dx)$ is continuous, every continuity point of $t \mapsto \law(X_t)$ is also a continuity point of $t \mapsto \E|X_t|^2$. Therefore, we conclude $X^n_t \to X_t$ for all $t \in \cont(X)$ in Proposition~\ref{prop:martMargConv1}. 

Using this notion of continuity points, we can easily extend the result to measure-valued martingales. 

\begin{corollary}\label{cor:MVMMargConv1}
Let $Z^n, Z$ be $\prob(S)$-valued martingales such that $Z^n \to Z$ in probability w.r.t.\ the Meyer--Zheng topology on the path space. Let $T \subset \cont(Z)$ be finite. Then $Z^n_T \to Z_T$ in probability.
\end{corollary}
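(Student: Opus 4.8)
The plan is to reduce Corollary~\ref{cor:MVMMargConv1} to the scalar statement of Proposition~\ref{prop:martMargConv1} by testing the measure-valued martingales against a countable convergence determining family.

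First I would invoke Lemma~\ref{lem:convergence.determining} to fix a countable family $(\phi_j)_{j\in\N}$ of $[0,1]$-valued functions on $S$ that is convergence determining and closed under multiplication; by Definition~\ref{def:conv_deter_fam} each $\phi_j$ is automatically continuous, since sequential continuity equals continuity in metric spaces. Lemma~\ref{lem:P(S)_ptsep_convdet}\,(c) then makes $(\phi_j^\ast)_{j\in\N}$ a convergence determining family on $\prob(S)$. Since $T\subset\cont(Z)$ is finite and joint convergence in probability of a finite vector follows from convergence of its coordinates, it therefore suffices to show that $\phi_j^\ast(Z^n_t)\to\phi_j^\ast(Z_t)$ in probability for every $j\in\N$ and every fixed $t\in\cont(Z)$; passing from this to $Z^n_t\to Z_t$ in probability in $\prob(S)$ is again the subsequence/diagonalization argument underlying Lemma~\ref{lem:convinprob_initialtop}, which is valid over the probability space $(\Omega,\P)$.

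Next I would fix $j$ and put $X^n:=Z^n[\phi_j]=\phi_j^\ast\circ Z^n$ and $X:=Z[\phi_j]=\phi_j^\ast\circ Z$. These are $[0,1]$-valued and \cadlag\ (post-composition of a \cadlag\ path with the continuous map $\phi_j^\ast$), so in particular $X$ is right continuous in probability. By Lemma~\ref{lem:TestMVM} the process $X^n$ is a martingale for the filtration generated by $Z^n$; being adapted to its own, smaller filtration, it is then a martingale in its own filtration by the tower property, so the phenomenon flagged in Remark~\ref{rem:MVMTesting} is not an obstacle here, since we pass from the larger filtration to the smaller one. Moreover, post-composition with the continuous map $\phi_j^\ast$ is continuous between the relevant $L_0$-spaces (Lemma~\ref{lem:convinprob_simpleppty}\,(b)), and continuous maps preserve convergence in probability, so the hypothesis $Z^n\to Z$ in probability for the Meyer--Zheng topology transfers to $X^n\to X$ in probability for the Meyer--Zheng topology. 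This puts us exactly in the hypotheses of Proposition~\ref{prop:martMargConv1}, which, together with the remark immediately following its proof, yields $X^n_t\to X_t$ in probability for every $t\in\cont(X)$.

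Finally I would match the continuity points: because $\law(X_t)=\prob(\phi_j^\ast)(\law(Z_t))$ and $\prob(\phi_j^\ast)$ is continuous, every continuity point of $t\mapsto\law(Z_t)$ is one of $t\mapsto\law(X_t)$, i.e.\ $\cont(Z)\subset\cont(X)$. Hence for every $t\in\cont(Z)$ and every $j$ we obtain $\phi_j^\ast(Z^n_t)\to\phi_j^\ast(Z_t)$ in probability, and the reduction from the first paragraph then gives $Z^n_t\to Z_t$ in probability for all $t\in\cont(Z)$, whence $Z^n_T\to Z_T$ in probability. No step is genuinely hard; the only points demanding care are keeping the two notions of convergence in probability apart (over $(\Omega,\P)$ versus over $([0,1],\lambda)$) and checking that the scalar processes $X^n$ are martingales in their own filtration, i.e.\ that Remark~\ref{rem:MVMTesting} does not bite.
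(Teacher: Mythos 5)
Your proof is correct and follows essentially the same route as the paper: test against a countable family $\{\phi_j^\ast\}$ that is convergence determining on $\prob(S)$, note $\cont(Z)\subset\cont(Z[\phi_j])$ by continuity, apply Proposition~\ref{prop:martMargConv1} (with the remark following it) to the scalar processes $Z^n[\phi_j]$, and conclude pointwise and then jointly over the finite set $T$. The extra details you supply — that $Z^n[\phi_j]$ is a martingale in its own filtration via the tower property, and that Meyer--Zheng convergence in probability transfers under post-composition with $\phi_j^\ast$ — are points the paper leaves implicit, and they are handled correctly.
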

\begin{proof}
Let $\{ f_k : k \in \N \} \subset C_b(S)$ such that $\{ f_k^\ast : k \in \N \}$ is convergence determining on $\prob(S)$. By continuity of $f_k$ we have $T \subset \cont(Z) \subset \cont(Z[f_k])$ for all $k \in \N$. Fix $t \in T$. Proposition~\ref{prop:martMargConv1} applied to $Z^n[f_k], Z[f_k]$ yields 
$$
f_k^\ast(Z^n_t) = Z^n[f_k]_t \to Z[f_k]_t =  f_k^\ast(Z_t)
$$
in probability, for all $k \in \N$, so we can conclude $Z^n_t \to Z_t$ in probability. Since $T$ is finite, this implies $Z^n_T \to Z_T$ in probability.
\end{proof}

\begin{corollary}\label{cor:MVMMargConv2}
Let $Z^n, Z$ be $\prob(S)$-valued \cadlag{} martingales such that $Z^n \inlaw Z$  w.r.t.\ Meyer--Zheng on the path space, and let $T \subset \cont(Z)$. Then $(Z^n ,Z^n_T) \inlaw (Z,Z_T)$.
\end{corollary}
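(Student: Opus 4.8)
The plan is to reduce the assertion, by means of a Skorohod coupling, to the already-established Corollary~\ref{cor:MVMMargConv1}, which yields precisely this conclusion under the stronger hypothesis of convergence \emph{in probability}. First I would apply the Skorohod representation theorem (legitimate since $D(\prob(S))$ is separable metrizable, being Lusin by Proposition~\ref{prop:pseudo-paths}\ref{it:pseudo-paths.cadlag_Borel}; see \cite[Theorem~3.30]{Ka97}) to obtain, on a common probability space $(\Omega,\F,\P)$, random variables $\tilde Z^n$ ($n\in\N$) and $\tilde Z$ with $\law(\tilde Z^n)=\law(Z^n)$, $\law(\tilde Z)=\law(Z)$ and $\tilde Z^n\to\tilde Z$ $\P$-a.s.\ in $D(\prob(S))$, hence in probability with respect to the Meyer--Zheng topology.

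The key point is that all hypotheses of Corollary~\ref{cor:MVMMargConv1} depend only on laws and therefore transfer to the coupled objects: being a $\prob(S)$-valued \cadlag{} martingale in its own filtration amounts to having law in $\mathcal{M}(S)$, so $\tilde Z^n$ and $\tilde Z$ are again such martingales; and $\cont(\,\cdot\,)$ depends only on the law, so $\cont(\tilde Z)=\cont(Z)\supseteq T$. Applying Corollary~\ref{cor:MVMMargConv1} with the singleton $\{t\}$ for each $t\in T$ then gives $\tilde Z^n_t\to\tilde Z_t$ in probability. Combining this with $\tilde Z^n\to\tilde Z$ a.s.\ yields joint convergence $(\tilde Z^n,(\tilde Z^n_t)_{t\in T})\to(\tilde Z,(\tilde Z_t)_{t\in T})$ in probability, and hence in law, in $D(\prob(S))\times\prob(S)^T$ --- reducing to finite subfamilies of $T$ (as in Lemma~\ref{lem:LusinProdComp}(a)) when $T$ is infinite. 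Since $Z^n_t=e_t(Z^n)$ and $\tilde Z^n_t=e_t(\tilde Z^n)$ with $e_t$ Borel (and $e_1$ continuous, cf.\ Remark~\ref{rem:et}), the pair $(Z^n,(Z^n_t)_{t\in T})$ has the same law as $(\tilde Z^n,(\tilde Z^n_t)_{t\in T})$, and likewise for the limits, so $(Z^n,Z^n_T)\inlaw(Z,Z_T)$ follows.

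I expect the only genuinely delicate point to be the verification that the hypotheses of Corollary~\ref{cor:MVMMargConv1} survive the Skorohod coupling --- but, as noted, this is automatic since each of them ($\mathcal{M}(S)$-membership, respectively the continuity-point set) is a property of the law. Everything else, namely the Skorohod step, the passage from coordinatewise convergence in probability to joint weak convergence on the product, and the pushforward identification of laws, is routine.
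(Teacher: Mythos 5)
Your proposal is correct and follows essentially the same route as the paper's own proof: a Skorohod coupling on the Lusin space $D(\prob(S))$, the observation that the relevant hypotheses (martingale property, i.e.\ $\mathcal{M}(S)$-membership, and $\cont(\cdot)$) depend only on the law, an application of Corollary~\ref{cor:MVMMargConv1} to get convergence in probability of the evaluations, and then joint convergence in probability transferred back to convergence in law. The only difference is that you spell out the reduction to finite subfamilies of $T$ and the law-invariance of the martingale property slightly more explicitly than the paper does, which is harmless.
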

\begin{proof}
The Skorohod representation theorem  applied to $Z^n,Z$ regarded as random variables with values in the Lusin space $D(\prob(S))$ yields the existence of a probability space $(\Omega,\F,\P)$ and random variables  $\widetilde{Z}^n, \widetilde{Z}$ on this space such that $\widetilde{Z}^n \sim Z^n$,  $\widetilde{Z} \sim Z$ and $\widetilde{Z}^n\to \widetilde{Z}$ $\P$-a.s.\ and hence in probability. 

Let $T \subset \cont(Z)$ and recall that $ \cont(Z)$ merely depends on $\law(Z)$ and not on $Z$ itself, so we have $\cont(\widetilde{Z}) = \cont(Z)$. Corollary~\ref{cor:MVMMargConv1} implies that $\widetilde{Z}^n_T \to \widetilde{Z}_T$ in probability. Hence, $(\widetilde{Z}^n, \widetilde{Z}^n_T) \to (\widetilde{Z}, \widetilde{Z}_T)$ in probability. We conclude $\law(Z^n,Z^n_T) = \law(\widetilde{Z}^n, \widetilde{Z}^n_T) \to \law(\widetilde{Z}, \widetilde{Z}_T) = \law(Z,Z_T)$.
\end{proof}

\subsection{Tightness, compactness and existence of \cadlag{} modifications}\label{sec:MVMComp}
In this section we finally prove Theorem~\ref{thm:M_compactness} and Theorem~\ref{thm:weakFDD}. The following lemma can be seen as a Doob maximal inequality for measure-valued martingales.
\begin{lemma}\label{lem:MVM_tight}
Let $\A$ be a family of measure-valued martingales such that $\{ I(\law(Z_1)) : Z \in \A \}$ is tight and let $T \subset [0,1]$ be countable. Then, for every $\epsilon>0$, there exists a compact set $\mathcal{K}_\epsilon \subset \prob(S)$ such that for all $Z \in \A$ 
\begin{align}\label{eq:lem:MVM_tight1}
\P[ Z_t \in \mathcal{K}_\epsilon \text{ for all }t \in T] \ge 1 - \epsilon.
\end{align}
If additionally the measure-valued martingales have \cadlag{} paths, we have 
\begin{align}\label{eq:lem:MVM_tight2}
\P[ Z_t \in \mathcal{K}_\epsilon \text{ for all }t \in [0,1]  ] \ge 1 - \epsilon.
\end{align}
\end{lemma}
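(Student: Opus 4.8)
The plan is to reduce \eqref{eq:lem:MVM_tight1} to a Doob-type maximal inequality for the real-valued martingales $(Z_t(K))_{t\in[0,1]}=(Z[\mathbf 1_K]_t)_{t\in[0,1]}$ attached to compact sets $K\subset S$, and then to deduce \eqref{eq:lem:MVM_tight2} from \eqref{eq:lem:MVM_tight1} by a right-continuity argument. Fix $\epsilon>0$. Since $\{\sfE[Z_1]:Z\in\A\}=\{I(\law(Z_1)):Z\in\A\}$ is tight, for every $k\in\N$ there is a compact set $K_k\subset S$ with $I(\law(Z_1))(K_k)\ge 1-\epsilon 4^{-k}$ for all $Z\in\A$; applying \eqref{eq:DefI} to $f=\mathbf 1_{K_k}$, this reads $\E[Z_1(K_k)]\ge 1-\epsilon 4^{-k}$. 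Put $a_k:=2^{-k}$ and set
\[
\mathcal K_\epsilon:=\bigl\{\mu\in\prob(S)\ :\ \mu(K_k)\ge 1-a_k\ \text{ for all }k\in\N\bigr\}.
\]
This set is tight (given $\eta>0$ choose $k$ with $a_k<\eta$ and use the compact $K_k$), hence relatively compact by Theorem~\ref{thm:prohorov_lusin}(a); and it is closed, since $\mu\mapsto\mu(K_k)$ is upper semicontinuous on $\prob(S)$ for the compact, hence closed, set $K_k$ (Portmanteau). Thus $\mathcal K_\epsilon$ is compact, and — importantly — it does not depend on $T$.

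Next, fix $Z\in\A$ on its stochastic base $(\Omega,\F,\P,(\F_t)_{t\in[0,1]})$ and fix $k\in\N$. Because $\mathbf 1_{K_k}$ is a bounded Borel function, $Z[\mathbf 1_{K_k}]_t=Z_t(K_k)$ defines an $(\F_t)$-martingale with values in $[0,1]$ (this is exactly the characterization of measure-valued martingales recorded just before Lemma~\ref{lem:TestMVM}), so $M^k_t:=1-Z_t(K_k)$ is a nonnegative $[0,1]$-valued $(\F_t)$-martingale with $\E[M^k_t]=\E[M^k_1]=1-\E[Z_1(K_k)]\le\epsilon 4^{-k}$ for every $t$. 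Enumerating the countable set $T$ and applying Doob's maximal inequality along increasing finite subsets of $T$ yields
\[
\P\bigl[\exists\, t\in T:\ Z_t(K_k)<1-a_k\bigr]\ \le\ \P\bigl[\exists\, t\in T:\ M^k_t\ge a_k\bigr]\ \le\ \frac{\E[M^k_1]}{a_k}\ \le\ \epsilon 2^{-k}.
\]
Since $Z_t\notin\mathcal K_\epsilon$ forces $Z_t(K_k)<1-a_k$ for some $k$, a union bound over $k$ gives $\P[\exists\, t\in T:\ Z_t\notin\mathcal K_\epsilon]\le\sum_k\epsilon 2^{-k}=\epsilon$, which is \eqref{eq:lem:MVM_tight1}.

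For \eqref{eq:lem:MVM_tight2}, suppose all paths of $Z$ are \cadlag\ and apply \eqref{eq:lem:MVM_tight1} with $T$ replaced by the countable dense set $T_0:=(\Q\cap[0,1])\cup\{1\}$, so that $\P[Z_t\in\mathcal K_\epsilon\ \text{for all }t\in T_0]\ge 1-\epsilon$. On this event, right continuity of $t\mapsto Z_t$ together with the closedness of $\mathcal K_\epsilon$ forces $Z_t\in\mathcal K_\epsilon$ for every $t\in[0,1)$ (approximate $t$ from the right within $T_0$), while $Z_1\in\mathcal K_\epsilon$ because $1\in T_0$; hence $Z_t\in\mathcal K_\epsilon$ for all $t\in[0,1]$ on that event, giving \eqref{eq:lem:MVM_tight2}. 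On \cadlag\ paths the event $\{Z_t\in\mathcal K_\epsilon\ \forall t\in[0,1]\}$ coincides with the measurable event $\{Z_t\in\mathcal K_\epsilon\ \forall t\in T_0\}$, which also takes care of measurability.

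The computations here are largely routine, so the points I would treat with care are the Lusin-space topology facts — that $\mu\mapsto\mu(K)$ remains upper semicontinuous for compact $K$ and that tightness still yields relative compactness via Theorem~\ref{thm:prohorov_lusin} — and the bookkeeping that $Z[\mathbf 1_K]$ is genuinely an $(\F_t)$-martingale for the non-continuous test function $\mathbf 1_K$. I expect the main obstacle, such as it is, to be the clean passage from finite index sets to the countable set $T$ in Doob's inequality, together with the measurability of the events $\{\exists\, t\in T:\ Z_t(K_k)<1-a_k\}$.
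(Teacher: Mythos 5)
Your proof is correct and follows essentially the same route as the paper: choose compacts $K_k$ with $\E[Z_1(K_k^c)]\le \epsilon 4^{-k}$, apply Doob's maximal inequality to the $[0,1]$-valued martingales $t\mapsto Z_t(K_k^c)$ over the countable set $T$, take a union bound to land in the tight (hence compact) set $\mathcal K_\epsilon$, and extend to all of $[0,1]$ via \cadlag{} paths and $T_0=(\Q\cap[0,1])\cup\{1\}$. The only cosmetic differences are your explicit verification that $\mathcal K_\epsilon$ is closed (the paper leaves this implicit) and slightly different bookkeeping of the constants.
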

\begin{proof}
For all $n \in \N$, let $K_n \subset S$ be a compact set such that $\E[Z_1(K_n^c)] = I(\law(X))(K_n^c) \le 2^{-2n}\epsilon$ for all $Z \in \A$. As $(Z_t(K_n^c))_t$ is a $[0,1]$-valued martingale by Lemma \ref{lem:TestMVM}, Doob's martingale inequality implies 
$$
\P\left[  \sup_{t \in T } Z_t(K^c_n) \ge 2^{-n} \right] \le {2^{n} \E[Z_1(K_n^c)]} \le  2^{-n}\epsilon.
$$
The set
$$
\mathcal{K}_\epsilon := \left\{ p \in \prob(S) : p(K_n^c) \le 2^{-n} \text{ for all } n \in \N \right\}
$$
is tight by construction and hence compact by Prohorov's theorem.
We have
$$
\P[ Z_t \in \mathcal{K}_\epsilon \text{ for all }t \in T ] \ge 1 - \sum_{n \in \N} \P[\exists t \in T : Z_t(K^c_n) \ge 2^{-n} ] \ge   1 - \sum_{n \in \N} 2^{-n}\epsilon = 1-\epsilon,
$$
for all $Z \in \A$. If $Z$ has \cadlag{} paths, \eqref{eq:lem:MVM_tight1} applied to  $T=[0,1] \cap \Q$ readily implies \eqref{eq:lem:MVM_tight2}. 
\end{proof}
Using this, we can derive the existence of \cadlag{} modifications for measure-valued martingales:
\begin{proof}[Proof of Proposition~\ref{prop:mvmCadlag}]
Let $Z$ be a measure-valued martingale on $(\Omega,\F,\P)$ that is right continuous in probability and let $\{\phi_n \colon n \in \N\}$ be a convergence determining family on $S$ that is closed under multiplication. For all $n \in \N$, $Z[\phi_n]$ is a real-valued martingale that is right-continuous in probability. Thus it admits a \cadlag{} version $Y^n$.

By Lemma~\ref{lem:MVM_tight}, there is for every $m \in \N$ a set $\Omega^m \in \F$ s.t. $\P[\Omega^m] \ge 1 - \frac1m$ and a compact set $\mathcal{K}_m \subseteq \prob(S)$ such that for all $\omega \in \Omega^m$, $n \in \N$ and $t \in [0,1] \cap \Q$
\begin{equation}\label{eq:MVM_version_det_fam}
	Z[\phi_n]_t(\omega) = Y^n_t(\omega )\quad\mbox{and}\quad Z_t(\omega) \in \mathcal{K}_m. 
\end{equation}
The set $\Omega':= \bigcup_{m \in \N} \Omega^m$ is $\P$-full. On $\Omega'$ we define
\begin{align}\label{eq:prf:cadlagV}
Y_t := \lim_{s \searrow t,  s \in  \Q} Z_s.
\end{align}
First, we argue that this limit exists for all $\omega \in \Omega'$. If $\omega \in \Omega'$, then there is an $m \in \N$ such that $\omega \in \Omega^m$. Let $(s_k)_k$ be as sequence in $\Q \cap (t,1]$ that converges to $t$. As $Z_{s_k}(\omega) \in \mathcal{K}_m$ for all $k \in \N$, this sequences has at least one limit point. By the right-continuity of $Y^n$ and \eqref{eq:MVM_version_det_fam} any limitpoint $\mu$ of this sequence has to satisfy $\phi_n^\ast(\mu) = Y^n_t$, so the limit point is unique, i.e.\ the sequence converges. By the same reasoning, this limit point is independent of the choice of  $(s_k)_k$, so the limit in   \eqref{eq:prf:cadlagV} exists.

As $Z$ is right continuous in probability, we have $Z_t=Y_t$ a.s.\ for all $t \in [0,1]$. Any further \cadlag{} modification $Y'$ satisfies $Y'_t=Z_t=Y_t$ a.s.\ for all $t \in [0,1]$ and, as $Y, Y'$ are both \cadlag{} $Y$ and $Y'$ are indistinguishable.
\end{proof}

Let $\prob_M(D([0,1]))$ denote the set of all $\mu \in \prob(D([0,1]))$ such that the coordinate process on $D([0,1])$ is a martingale under $\mu$. Meyer--Zheng proved in \cite[Theorem 4]{MeZh84} that $\prob_M(D([0,1]))$ is compact.

As discussed in Section~\ref{sec:MZLusin}, we can identify a path $f \in D([0,1])$ with the corresponding pseudopath $\iota_{[0,1]}(f) \in \prob([0,1] \times [0,1])$, or more rigorously $\iota_{[0,1]} : D([0,1]) \to \prob([0,1] \times [0,1])$ is a topological embedding. Hence, we can consider measures on $D([0,1])$ as measures on $\prob([0,1] \times [0,1])$, rigorously, 
$$
\prob(\iota_{[0,1]}) : \prob(D([0,1])) \to \prob(\prob( [0,1] \times [0,1] ))
$$
is again an embedding, cf. Remark~\ref{rem:P}. As $\prob_M(D([0,1]))$ is compact, $\prob(\iota_{[0,1]})(  \prob_M(D([0,1])))$ is a closed subset of $\prob(\prob( [0,1] \times [0,1] ))$. As already discussed in Remark~\ref{rem:pseudoL0} we can suppress the embedding $\iota_{[0,1]}$ in order to avoid notational excess, so we can just say $\prob_M(D([0,1]))$ is a closed subset of $\prob(\prob( [0,1] \times [0,1] ))$.

The following proposition generalizes this fact from $[0,1]$-valued martingales to measure-valued martingales. As usual, we prove results for a measure-valued process $Z$ by considering the associated real-valued process $Z[f]$, where $f \in C_b(S)$, which were introduced in \eqref{eq:Bracket}. However, keeping Remark~\ref{rem:MVMTesting} in mind, we have to be careful here.
 
\begin{proposition}\label{prop:MclosedBig}
$\mathcal{M}(S)$ is a closed subset of $\prob(\prob([0,1] \times \prob(S) ))$.
\end{proposition}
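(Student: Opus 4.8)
The plan is to prove that $\mathcal{M}(S)$ is closed in $\prob(\prob([0,1]\times\prob(S)))$ by reducing the problem to the already-known real-valued case via a countable convergence determining family and the lift operator $\Psi$. First I would fix a countable convergence determining family $\{\phi_k : k\in\N\}$ on $\prob(S)$ that is closed under multiplication (using Lemma~\ref{lem:convergence.determining} applied to the Lusin space $\prob(S)$), so that by Lemma~\ref{lem:P(S)_ptsep_convdet}(c) the family $\{\phi_k^\ast : k\in\N\}$ is convergence determining on $\prob(\prob(S))$. The key observation, combining Lemma~\ref{lem:TestMVM} with Remark~\ref{rem:MVMTesting}, is that a measure $\mu\in\prob(D(\prob(S)))$ lies in $\mathcal{M}(S)$ if and only if for every $k$ the real-valued process $(\phi_k(Z_t))_{t\in[0,1]}=(Z[\phi_k]_t)_{t}$ is a martingale with respect to the filtration generated by the \emph{full} process $Z$ under $\mu$ — not merely in its own filtration, which is exactly the subtlety flagged in Remark~\ref{rem:MVMTesting}.

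Next I would set up the identification with pseudopaths. Writing $e := (t,p)\mapsto(t,\phi_k(p))$, the lift $\Psi(\phi_k)\colon \prob([0,1]\times\prob(S))\to\prob([0,1]\times[0,1])$ is continuous (Proposition~\ref{prop:Psi_lift}(c)) and, via the embeddings $\iota_{\prob(S)}$ and $\iota_{[0,1]}$ which we suppress as in Remark~\ref{rem:pseudoL0}, it sends a \cadlag{} path $Z$ to the \cadlag{} path $\phi_k\circ Z = Z[\phi_k]$. The martingale condition I need to preserve under weak limits is the conditional-expectation identity tested against bounded continuous functions of finitely many past coordinates of $Z$: for $s_1\le\dots\le s_m\le s\le t$ and $g_1,\dots,g_m\in C_b(\prob(S))$,
\begin{align}\label{eq:plan.mart}
\E_\mu\big[\phi_k(Z_t)\, g_1(Z_{s_1})\cdots g_m(Z_{s_m})\big]
= \E_\mu\big[\phi_k(Z_s)\, g_1(Z_{s_1})\cdots g_m(Z_{s_m})\big].
\end{align}
This is precisely the structure handled in the proof of Lemma~\ref{lem:MartFClosed}: given $\mu^n\to\mu$ in $\prob(D(\prob(S)))$ with each $\mu^n\in\mathcal{M}(S)$, Corollary~\ref{cor:FDDsubseq} furnishes a $\lambda$-full set $T\subseteq[0,1]$ and a subsequence along which all finite-dimensional distributions $\law(Z_{t_1},\dots,Z_{t_\ell})$ converge for times in $T$; passing to the limit in \eqref{eq:plan.mart} for times in $T$ and then using right-continuity (\cadlag) of $Z$ to extend to all times gives that $(\phi_k(Z_t))_t$ is a martingale in the filtration generated by $Z$ under $\mu$, for every $k$. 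By the reduction in the first paragraph this means $\mu\in\mathcal{M}(S)$, so $\mathcal{M}(S)$ is closed.

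The main obstacle — and the reason one cannot simply quote Lemma~\ref{lem:MartFClosed} as a black box — is the issue in Remark~\ref{rem:MVMTesting}: the scalar processes $Z[\phi_k]$ must be martingales with respect to the common filtration $\sigma(Z_r : r\le s)$, not their own filtrations, so I must carry the auxiliary test functions $g_1(Z_{s_1}),\dots,g_m(Z_{s_m})$ of the vector-valued process through the limit, exactly as in the proof of Lemma~\ref{lem:MartFClosed} but with $\phi=\phi_k$ and with the $g_i$ ranging over $C_b(\prob(S))$. The remaining care is purely bookkeeping: checking that the pseudopath identifications and the suppressed embeddings $\iota_{[0,1]}$, $\iota_{\prob(S)}$ are compatible with the weak topologies (all guaranteed by Proposition~\ref{prop:pseudo-paths} and Proposition~\ref{prop:Psi_lift}), so that "$\mathcal{M}(S)$ closed in $\prob(D(\prob(S)))$" — which is Corollary~\ref{cor:MVMclosedPDPS} — transfers verbatim to the statement "closed in $\prob(\prob([0,1]\times\prob(S)))$". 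Indeed, once Corollary~\ref{cor:MVMclosedPDPS} is in hand, the proof is essentially the observation that $\prob(\iota_{\prob(S)})$ is a topological embedding of $\prob(D(\prob(S)))$ into $\prob(\prob([0,1]\times\prob(S)))$ with $\prob(D(\prob(S)))$ itself closed there (as the intersection over $k$ of the closed sets $\{\mu : \Psi(\phi_k)(\mu)\in\Psi_D([0,1])\}$, cf. \eqref{eq:prf_pseuopath_c}), so a closed subset of a closed subset is closed.
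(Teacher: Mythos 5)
Your reduction in the final paragraph contains the essential gap. You claim that, once Corollary~\ref{cor:MVMclosedPDPS} is available, it suffices to note that $\prob(D(\prob(S)))$ is itself closed in $\prob(\prob([0,1]\times\prob(S)))$, writing it as an intersection of sets of the form $\{\mu : \Psi(\phi_k)_\#\mu \text{ concentrated on c\`adl\`ag paths}\}$ and citing \eqref{eq:prf_pseuopath_c}. This is false: by Proposition~\ref{prop:pseudo-paths}, $D(\prob(S))$ (i.e.\ $\Psi_D(\prob(S))$) is only a Borel, not a closed, subset of $\prob([0,1]\times\prob(S))$, and the collection of laws concentrated on a non-closed Borel set is not weakly closed --- for instance, Dirac masses at c\`adl\`ag paths can converge weakly to the Dirac mass at a non-c\`adl\`ag pseudo-path, or even to a measure not carried by pseudo-paths at all, since $\Psi(\prob(S))$ is merely $G_\delta$. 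Equation \eqref{eq:prf_pseuopath_c} is a statement about individual pseudo-paths, not about measures on path space, so it does not yield closedness of $\{\mu : \Psi(\phi_k)_\#\mu \in \prob(\Psi_D([0,1]))\}$; the set $\prob(\Psi_D([0,1]))$ is itself not closed in $\prob(\prob([0,1]\times[0,1]))$. Hence ``a closed subset of a closed subset'' does not apply, and this is exactly where the content of the proposition lies: one must show that concentration on c\`adl\`ag paths survives the limit, and this works only because of the martingale property. The paper's proof does this by introducing the intermediate set $\A = \{\mu : \Psi(f^\ast)_\#\mu \in \prob_M(D([0,1])) \text{ for all } f \in C_b(S)\}$ and invoking Meyer--Zheng's result that $\prob_M(D([0,1]))$ is compact, hence closed in $\prob(\prob([0,1]\times[0,1]))$; this single closedness simultaneously forces the limit to be carried by c\`adl\`ag paths (via the convergence-determining characterization) and to be a martingale law, after which $\mathcal{M}(S)\subset\A\subset\prob(D(\prob(S)))$ and Corollary~\ref{cor:MVMclosedPDPS} finish the proof. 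Your middle step, which assumes $\mu^n\to\mu$ already in $\prob(D(\prob(S)))$, only reproves Corollary~\ref{cor:MVMclosedPDPS} and never confronts limits taken in the larger space.

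A secondary, fixable problem is your choice of test functions. You take a convergence determining family $\{\phi_k\}$ on $\prob(S)$, closed under multiplication, and assert that $\mu\in\mathcal{M}(S)$ if and only if each $(\phi_k(Z_t))_t$ is a martingale with respect to the filtration generated by $Z$. For nonlinear $\phi_k$ (and a multiplicatively closed family necessarily contains functions such as $(f^\ast)^2$) this is not the content of Lemma~\ref{lem:TestMVM}: a measure-valued martingale only turns the \emph{linear} functionals $f^\ast(Z_t)$, $f$ bounded Borel on $S$, into martingales, and demanding the martingale property also for squares would force $Z$ to be a.s.\ constant in time. The correct choice, as in the paper, is to take $f_k$ convergence determining on $S$ and closed under multiplication and to work with $\phi_k = f_k^\ast$: these are linear, so Lemma~\ref{lem:TestMVM} applies (they are point separating on $\prob(S)$), and by Lemma~\ref{lem:P(S)_ptsep_convdet}(c) the family $\{f_k^\ast\}$ is convergence determining on $\prob(S)$, which is what the c\`adl\`ag characterization requires. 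Your notation $Z[\phi_k]$ suggests you may have intended this, but as written the ``key observation'' of your first paragraph is incorrect.
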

\begin{proof}
Consider the set 
$$
\A:= \{  \mu \in \prob(\prob([0,1] \times \prob(S) )) : \Psi(f^\ast)_\# \mu \in  \prob_M(D([0,1])) \text{ for all } f \in C_b(S) \}.
$$	
We first show the following chain of inclusions:
\begin{align}\label{eq:prf:Mclosed}
\mathcal{M}(S) \subset \A \subset \prob(D(\prob(S))) \subset \prob(\prob([0,1] \times \prob(S) )).
\end{align}

The last inclusion is trivial. In order to show the first inclusion, let $\mu \in \mathcal{M}(S)$ and denote by $Z$ the canonical process on $D(\prob(S))$. For all $f \in C_b(S)$, $Z[f] = \Psi(f^\ast)(Z)$ is a \cadlag{} martingale under $\mu$, so $\Psi(f^\ast)_\# \mu \in  \prob_M(D([0,1]))$. Hence, $\mu \in \A$. 

To show the second inclusion, let $\{ f_n : n \in \N \}$ be convergence determining on $S$ and closed under multiplication and recall that Lemma~\ref{lem:P(S)_ptsep_convdet} states that  $\{ f^\ast_n : n \in \N \}$ is convergence determining in $\prob(S)$ as well. Recall that  Proposition~\ref{prop:Psi_lift}(c) states that $p \in \prob( [0,1] \times \prob(S) )$ is a \cadlag{} pseudo-path if and only if $\Psi(f_n^\ast)(p)$ is a \cadlag{} pseudo-path for all $n \in \N$. Let $\mu \in \A$, then $\Psi(f_n^\ast)_\# \mu$ is concentrated on $[0,1]$-valued \cadlag{} paths for all $n \in \N$, so $\mu$ is concentrated on $\prob(S)$-valued \cadlag{} paths, i.e.\ $\mu \in \prob(D(\prob(S)))$, and we have shown \eqref{eq:prf:Mclosed}.

Corollary~\ref{cor:MVMclosedPDPS} states that $\mathcal{M}(S)$ is closed in $\prob(D(\prob(S)))$, so it is closed in $\A$ as well. But, it is easy to see that $\A$ itself is closed in $ \prob(\prob([0,1] \times \prob(S) ))$.  Indeed,  $\prob(\Psi(f^\ast))$ is continuous for all $f \in C_b(S)$ and  $\prob(\iota_{[0,1]})(  \prob_M(D([0,1])))$ is closed in $\prob( \prob( [0,1] \times [0,1] ))$.  Hence, we conclude that $\mathcal{M}(S)$ is closed in $\prob(\prob([0,1] \times \prob(S) ))$.
\end{proof}

If $S$ is compact, then $\prob(\prob([0,1] \times \prob(S) ))$ is compact as well, so Proposition~\ref{prop:MclosedBig} implies that $\mathcal{M}(S)$ is compact. So the only thing we need to do in the upcoming proof of Theorem~\ref{thm:M_compactness} is to relax the compactness assumption on $S$. This can be done  using Lemma~\ref{lem:MVM_tight}.

\begin{proof}[Proof of Theorem~\ref{thm:M_compactness}]
Let $\A \subset \mathcal{M}(S)$ be such that  $\{ I(\law(Z_1)) : Z \in \A \}$ is relatively compact in $\prob(S)$. We need to show that every sequence  $(Z^n)_n$ in $\A$ has a convergent subsequence with limit $Z \in \mathcal{M}(S)$. After possibly passing to a subsequence, we may assume that $(I(\law(Z_1^n)))_n$ is a convergent sequence in $\prob(S)$. By Theorem~\ref{thm:prohorov_lusin}(b), the set $\{ I(\law(Z^n_1)) : n \in \N \} $ is tight in $\prob(S)$. Denote 
$$\mu^n:= \law(Z^n) \in \mathcal{M}(S) \subset  \prob(\prob([0,1] \times \prob(S) )).$$
 Our aim is to show that $(\mu^n)_n$ is tight in $ \prob(\prob([0,1] \times \prob(S) ))$. Let $\epsilon>0$. By Lemma~\ref{lem:MVM_tight} there is a compact set $\mathcal{K}_\epsilon$ such that for all $n \in \N$, 
 $$\P[ Z^n_t \in \mathcal{K}_\epsilon \text{ for all }t\in[0,1] ]  \ge 1 -\epsilon. $$
 Put differently, this asserts that $\mu^n( \prob(  [0,1] \times \mathcal{K}_\epsilon  ) ) \ge 1-\epsilon$. As  $\prob(  [0,1] \times \mathcal{K}_\epsilon  )$ is compact, we can conclude tightness of the sequence $(\mu^n)_n$. So, Prohorov's theorem implies that there is some $\mu \in \prob(\prob([0,1] \times \prob(S) ))$ such that, after passing to a subsequence, we have $\mu^n \to \mu$. By Proposition~\ref{prop:MclosedBig} we have $\mu \in \mathcal{M}(S)$. 
\end{proof}

\begin{proof}[Proof of Theorem~\ref{thm:weakFDD}]
$(i) \implies (ii)$ was already shown in Corollary~\ref{cor:MVMMargConv2}.

$(ii) \implies (iii)$ is true since  $\cont( Z)$ is dense and contains 1 according to Lemma~\ref{lem:TBig}.

$(iii) \implies (i)$: Let $D \subset [0,1]$ be dense and contain 1. Let $(Z^n)_n$ be a sequence of \cadlag{} $\prob(S)$-valued martingales and let $Z$ be a $\prob(S)$-valued \cadlag{} process such that $Z^n_T \inlaw Z_T$ for all finite $T \subset D$. 

First note that we have $Z^n_1 \inlaw Z_1$ and hence, due to the continuity of the intensity operator, $\sfE[Z^n] \to \sfE[Z]$. So, Theorem~\ref{thm:M_compactness} implies that $(\law(Z^n))_n$ is relatively compact in $\mathcal{M}(S)$. Hence, it suffices to show that any limit point of $(\law(Z^n))_n$ is $\law(Z)$.

So, let $Z^{n_j} \inlaw Y$ for some $\prob(S)$-valued \cadlag{} process $Y$. The already shown implication $(i) \implies (ii)$ yields that  $Z^{n_j}_T \inlaw Y_T$ for all finite $T \subset \cont(Y)$. This implies $\law(Y_T) = \law(Z_T)$ for all finite $T \subset D \cap \cont(Y)$. Since $Y,Z$ are \cadlag{} and $ D \cap \cont(Y)$ is dense and  contains 1, we conclude $\law(Y)=\law(Z)$. Hence, $Z^n \inlaw Z$.

If either $(i), (ii)$ or  $(iii)$ is satisfied, then $(i)$ is satisfied as we have already shown that they are equivalent. So, we have  $Z^n \inlaw Z$ and conclude that $Z$ is a measure-valued martingale because $\mathcal{M}(S)$ is closed in $\prob(D(\prob(S)))$, cf. Corollary~\ref{cor:MVMclosedPDPS}. 
\end{proof}

\section{Filtered random variables and their prediction process}\label{sec:FP}
\subsection{Filtered processes and filtered random variables} 
Recall from the introduction that a continuous-time filtered process is a 5-tuple 
\[
\fp X = (\Omega,\F,\P, (\F_t)_{t \in [0,1]}, (X_t)_{t \in [0,1]}),
\]
where $(\Omega,\F,\P)$ is a probability space, $(\F_t)_{t \in [0,1]}$ is a filtration satisfying the usual conditions, and $(X_t)_{t \in [0,1]}$ is a \cadlag\ process that is  adapted to   $(\F_t)_{t \in [0,1]}$. The collection of a filtered processes with $S$-valued \cadlag\ paths (i.e.\ $X_t \in S$ for all $t \in [0,1]$) is denoted by $\FP(S)$, or just by $\FP$ if the space $S$ is clear from the context. 

Similarly, a discrete time filtered process is a 5-tuple consisting of a probability space $(\Omega,\F,\P)$, a complete filtration $(\F_t)_{t=1}^N$ on $(\Omega,\F,\P)$ and an  $(\F_{t})_{t=1}^N$-adapted $S$-valued process $X = (X_t)_{t=1}^N$. The collection of all $S$-valued filtered processes with $N$ timesteps is denoted by $\FP_N(S)$. 

\medskip
 
Next, we introduce the more abstract setting of filtered random variables: Here the \emph{process} $(X_t)_{t \in [0,1]}$ is replaced by a \emph{random variable} $X$.  This notion was introduced by Hoover--Keisler \cite{HoKe84}. As a stochastic process $(X_t)_{t \in [0,1]}$ can be viewed as a random variable which takes values in a path space, the Hoover--Keisler viewpoint is more general. However,  the actual reason to take this stance is that it is technically more convenient for the purposes of the next sections.

\begin{definition}[filtered random variable in discrete time]\label{def:FP_discr} 
	Let $S$ be a Lusin space and $N \in \N$. An $S$-valued filtered random variable with $N$ timesteps is a 5-tuple
	\[
	\fp X = \left( \Omega, \F, \P, (\F_t)_{t=1}^N, X \right)
	\]	
	consisting of a probability space $(\Omega,\F,\P)$,  a complete filtration $(\F_t)_{t=1}^N$ on $(\Omega,\F,\P)$ and an  $\F_{N}$-measurable random variable $X : \Omega \to S$.
	
We write $\FR_N(S)$ for the class of all $S$-valued filtered random variables with $N$ time steps; if $S$ is clear from the context we write $\FR_N$ instead of $\FR_N(S)$. If $\fp X$ is a filtered random variable we always refer to the elements of this tuple by $\Omega^\fp X, \F^\fp X, \P^\fp X, (\F_t^\fp X)_{t =1}^N, X$, i.e.\  $\Omega^\fp X$ refers to the base set of the probability space of $\fp X$ etc.
\end{definition}	

%
%
%
\begin{definition}[filtered random variable in continuous time]\label{def:fp_cont}
	Let $S$ be a Lusin space. An $S$-valued filtered random variable in continuous time is a 5-tuple
	\[
	\fp X = \left( \Omega, \F, \P, (\F_t)_{t \in [0,1]}, X \right)
	\]	
	consisting of a probability space $(\Omega,\F,\P)$, a right-continuous complete filtration $(\F_t)_{t \in [0,1]} $ and an $\F_{1}$-measurable random variable $X : \Omega \to S$.

	The class of all $S$-valued filtered random variables is denoted by $\FR(S)$; if $S$ is clear from the context we write $\FR$ instead of $\FR(S)$. Again, we refer to the elements of the tuple $\fp X$ by $\Omega^\fp X, \F^\fp X, \P^\fp X$, $(\F_t^\fp X)_{t \in [0,1]}$ and $X$.
\end{definition}

As mentioned above, every filtered process is in particular a filtered random variable, but the converse is not true because in the definition of a filtered random variable no adaptedness condition is present. The relation between filtered processes and filtered random variables is further examined in Section~\ref{sec:adapted}. In particular, we show that these concepts are equivalent in a certain sense and that results on filtered random variables can be translated into results on filtered processes and vice versa.

\subsection{The prediction process}
We start with defining the prediction processes in discrete time.
\begin{definition}[prediction process in discrete time]
The prediction process of a filtered random variable $\fp X \in \FR_N(S)$ is the $\prob(S)$-valued filtered process $\bpp(\fp X)$ defined by
$$
\bpp(\fp X) = (\Omega^{\fp X}, \F^{\fp X}, \P^{\fp X}, (\F_t^{\fp X})_{t=1}^N, \pp(\fp X)),
$$
where 
$
\pp_t(\fp X) = \law(X|\F_t^{\fp X})
$ for all $t = 1,\dots, N$. 
\end{definition}

Note that $\pp(\fp X)$ is a discrete time measure-valued $(\F_t^{\fp X})_{t=1}^N$-martingale that terminates at $X$. The definition of the prediction processes in continuous time is analogous, but technically a bit more involved.

\begin{definition}[prediction process in continuous time]
The prediction process of a filtered random variable $\fp X \in \FR(S)$ is the $D(\prob(S))$-valued filtered process $\bpp(\fp X)$ defined by
$$
\bpp(\fp X) = (\Omega^{\fp X}, \F^{\fp X}, \P^{\fp X}, (\F_t^{\fp X})_{t=1}^N, \pp(\fp X)),
$$
where 
$
\pp(\fp X)$ is the up to indistinguishablity unique \cadlag{} version of the measure-valued martingale $(\law(X|\F_t^{\fp X}))_{t \in [0,1]}$.  
\end{definition}

An elementary, but crucial observation is
\begin{lemma}\label{lem:condLawSelfAware}
	Let $X$ be an $S$-valued random variable on $(\Omega,\F,\P)$ and $\G$ be a sub-$\sigma$-algebra of $\F$. Then we have
	\begin{align}\label{eq:condLawSelfAware}
		\law(X|\G) = \law(X|\law(X|\G)).
	\end{align}
	More generally, if $\mathcal{H}$ is a $\sigma$-algebra satisfying $\sigma(\law(X|\G)) \subset \mathcal{H} \subset \G$, we have $\law(X|\G) = \law(X|\mathcal{H})$.
\end{lemma}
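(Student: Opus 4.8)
\textbf{Proof plan for Lemma~\ref{lem:condLawSelfAware}.}

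The plan is to prove the more general statement first and then read off \eqref{eq:condLawSelfAware} as the special case $\mathcal H = \sigma(\law(X|\G))$. So assume $\sigma(\law(X|\G)) \subset \mathcal H \subset \G$. Write $Z := \law(X|\G)$, a $\prob(S)$-valued random variable, and note $Z$ is $\mathcal H$-measurable by hypothesis. It suffices to check the defining property of conditional law: for every bounded Borel $f : S \to \R$ we must show $\E[f(X)\mid \mathcal H] = f^\ast(Z)$ a.s., equivalently $\E[f(X)\, h] = \E[f^\ast(Z)\, h]$ for every bounded $\mathcal H$-measurable $h$. First I would fix such $f$ and $h$. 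Since $\mathcal H \subset \G$, the tower property gives $\E[f(X)\, h] = \E\big[\E[f(X)\mid\G]\, h\big] = \E[f^\ast(Z)\, h]$, using that $\E[f(X)\mid\G] = f^\ast(\law(X|\G)) = f^\ast(Z)$ by definition of the conditional law. Since $f^\ast(Z)$ is $\mathcal H$-measurable (as $Z$ is) and $f$ was arbitrary in the bounded Borel class, this identifies $\law(X|\mathcal H) = f^\ast(Z)$ tested against all $f$; testing against a countable point-separating family of bounded Borel functions (which exists on the Lusin space $S$ by Lemma~\ref{lem:convergence.determining} and Lemma~\ref{lem:P(S)_ptsep_convdet}) then yields $\law(X|\mathcal H) = Z$ as $\prob(S)$-valued random variables, a.s.

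For \eqref{eq:condLawSelfAware} itself, I would simply take $\mathcal H := \sigma(\law(X|\G))$; this clearly satisfies $\sigma(\law(X|\G)) \subset \mathcal H \subset \G$ (the right inclusion because $\law(X|\G)$ is $\G$-measurable), so $\law(X|\G) = \law(X|\sigma(\law(X|\G))) = \law(X | \law(X|\G))$ by the general case, where the last equality is just the convention that conditioning on a random variable means conditioning on the $\sigma$-algebra it generates.

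The only genuine subtlety — and the step I would be most careful about — is the passage from ``$f^\ast(\law(X|\mathcal H)) = f^\ast(Z)$ a.s.\ for each bounded Borel $f$'' to ``$\law(X|\mathcal H) = Z$ a.s.''. Because there are uncountably many $f$, one cannot naively intersect the null sets; instead one fixes a countable family $\{f_k\}$ that is point separating on $S$ and closed under multiplication, applies the identity to each $f_k$, intersects the countably many null sets, and invokes Lemma~\ref{lem:P(S)_ptsep_convdet}(b) to conclude that $\{f_k^\ast\}$ is point separating on $\prob(S)$, hence the two $\prob(S)$-valued random variables agree off a single null set. Everything else is a routine application of the tower property and the definition of conditional law, so I would keep that part brief.
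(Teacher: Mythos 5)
Your proposal is correct and follows essentially the same route as the paper's proof: reduce to the general claim, observe that $\E[f(X)|\G] = f^\ast(\law(X|\G))$ is $\mathcal H$-measurable, and conclude via the tower property. The only difference is that you spell out the verification against bounded $\mathcal H$-measurable test functions and the passage from the scalar identities to equality of the $\prob(S)$-valued random variables via a countable point-separating family, steps the paper leaves implicit.
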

\begin{proof}
	Clearly, it suffices to prove the second claim. Let $\mathcal{H}$ be a $\sigma$-algebra satisfying $\sigma(\law(X|\G)) \subset \mathcal{H} \subset \G$ and $f: S \to [0,1]$ be Borel. Since $\E[f(X|\G)]= f^\ast(\law(X|\G))$, $\E[f(X)|\G]$ is $\sigma(\law(X|\G))$-measurable. Therefore, it is $\mathcal{H}$-measurable as well, which yields
	$$
	\E[f(X)|\G] = \E[ \E[ f(X)|\G ] | \mathcal{H} ] = \E[f(X)|\mathcal{H}],
	$$
	where the second equality is due to the tower property. 
\end{proof}
Equation \eqref{eq:condLawSelfAware} expresses precisely a conditional independence property (see e.g.\ \cite[Proposition~5.6]{Ka97}), namely: Given the random measure $\law(X|\G)$, the random variable $X$ is independent of the $\sigma$-algebra $\G$. Loosely speaking, \eqref{eq:condLawSelfAware} says that $\law(X|\G)$ already contains all information that $\G$ has about $X$.

In particular,  $\pp_t^1(\fp X) = \law(X|\F^{\fp X}_t)$ contains all information that $\F_t^{\fp X}$ has about $X$.  On a first glance, one might think that this implies that $\pp^1(\fp X)$ contains all information that the filtration $(\F^{\fp X}_t)_{t \in [0,1]}$ has about $X$. This is not true: $\pp^1(\fp X)$ does not contain ``information of higher order'', e.g. for $s<t$, information of $\F^{\fp X}_s$ on whether $\F^{\fp X}_t$ knows something about $X$. Such information is relevant in sequential decision problems. For instance \cite[Section~7]{BaBePa21} provides an example of two filtered processes that have the same prediction process but yield different values in optimal stopping problems, see also Remark \ref{rem:FP} below.

Indeed, $\law(\law(X|\F^{\fp X}_t)|\F_s^{\fp X})$ contains the information that $\F_s^{\fp X}$ has about $\law(X|\F^{\fp X}_t)$, which is precisely the information that $\F^{\fp X}_s$ has on what $\F^{\fp X}_t$ knows about $X$. This motivates to consider iterated prediction processes: $\bpp(\fp X)$ is by definition again a filtered process, hence we can simply consider its prediction process $\bpp(\bpp(\fp X))$. This \emph{iterated prediction process} of $\fp X$ takes values in the space $\prob(\prob(S)^N)^N$ resp. $D(\prob(D(\prob(S))))$. As the path spaces of the iterated prediction processes are increasingly complicated, we need to introduce a notation for these spaces:
\begin{definition}[path spaces of higher rank prediction processes]
In the case of $N$ discrete time steps we define by induction on $r$	
$$
M_0^{(N)}(S):=S, \qquad M_{r+1}^{(N)}(S) := \prob(M_{r}^{(N)}(S))^N. 
$$	
In the continuous time case we define 
$$
M_0(S):=S, \qquad M_{r+1}(S) := \{ f \in D(\prob(M_r(S))) : f(1) \in \delta(M_r) \}.   
$$
For $r= \infty$, we set 
$$
M_\infty(S) := \left\{ f \in  D\left(\prod_{j=0}^\infty \prob(M_j(S)) \right) : f(1) \in \prod_{j=0}^\infty \delta(M_j(S))    \right\}.
$$

\end{definition}
Next, we iterate the construction of the prediction processes in order to define higher rank prediction processes.
\begin{definition}
\label{def:iterated_pp} 
Let $\fp X$ be a discrete- or continuous-time filtered random variable. We define by induction on the rank $r \in \N_0$ 
\begin{align*}
	\pp^0(\fp X) :=& X, & \bpp^0(\fp X) :=& \fp X, \\
	\pp^{r+1}({\fp X}):=& \pp(\bpp^r({\fp X})), & \bpp^{r+1}({\fp X}):=& \bpp(\bpp^r({\fp X})).
\end{align*}
The rank $r$ prediction process $\bpp^r(\fp X)$ is an $M_r$- resp. $M_r^{(N)}$-valued filtered process. The prediction process of rank $\infty$ of a continuous time filtered process is given by
\begin{align*}
	\pp_t^\infty(\fp X) :=& (\pp_t^1(\fp X),\pp_t^2(\fp X),\ldots), \\ 
	\bpp^\infty(\fp X) :=& \left( \Omega^\fp X, \F^\fp X, \P^\fp X,  (\F_t^\fp X)_{t \in [0,1]}, \pp^\infty(\fp X) \right).
\end{align*}
It is an $M_\infty$-valued filtered process. 
\end{definition}
We collect a few remarks on the definition of the iterated prediction process:
\begin{remark}
	\begin{enumerate}[label = (\alph*)]\label{rem:afterDefPp}
		\item The rank $r$ predictions processes $\pp^r(\fp X)$ is a measure-valued martingale that terminates at $\pp^{r-1}(\fp X)$ for all $r \in \N$.  Moreover, $\pp^\infty(\fp X)$ is a martingale in the sense that it is a (countable) vector, all of whose entries are measure-valued martingales.  
		\item In the definition of $M_{r+1}$, the condition $f(1) \in \delta(M_r)$ is purely for technical convenience: It helps to avoid the problem of extending $\delta^{-1} : \delta(M_r) \to M_r$ to a map that is defined on the whole space $\prob(M_r)$, (cf.\ Remark~\ref{rem:Intensity_Delta_Inv}) in Lemma~\ref{lem:R} and applications of it.
		
	For all $r \in \N \cup \{ \infty\}$, the space $M_r$ can be seen as a closed subspace of the space $\mathsf{M}_r$ introduced in the introduction.\footnote{$M_1$ is a closed subset of $\mathsf{M}_1$. For $r >1$, one can easily construct canonical topological embeddings with closed range. To that end, consider the inclusion map $\iota_1 : M_1 \to \mathsf{M}_1$ and define inductively $\iota_{r+1} :=  \Psi(\prob(\iota_r)) \circ j_{r+1}$, where $j_{r+1} : D(\prob(M_r)) \to \mathsf{M}_{r+1}$ is the inclusion. For $r = \infty$,  set $\iota_\infty := \pi \circ (\iota_1, \iota_2, \dots ) \circ \pi^{-1} : M_\infty \to \mathsf{M}_\infty$, where $\pi$ is the map introduced in Lemma~\ref{lem:product_map}.} By definition, $\pp^r(\fp X)$ has paths in  $M_r$, so $\law(\pp^r(\fp X)) \in \prob(M_r)$, but we can also regard it as element of $\prob(\mathsf{M}_r)$ that is supported on $M_r$. 
		\item\label{it:afterDefPpC} In the case of $N$ discrete time steps, only the predictions processes of rank up to $N-1$ are relevant. More precisely, \cite[Lemma 4.7 and Lemma 4.10]{BaBePa21} imply that for every $k > N-1$ there exists a continuous map $F_k : M^{(N)}_{N-1} \to M^{(N)}_k$ such that $\pp^k(\fp X) = F(\pp^{N-1}(\fp X))$ for all $\fp X \in \FR_N$. What matters is that $\pp^k(\fp X)$ can be computed using only the random variable $\pp^{N-1}(\fp X)$, without relying on the filtration $(\F_t^\fp X)_{t = 1}^N$. This means that all the necessary information is already present in $\pp^{N-1}(\fp X)$ and higher-rank prediction processes do not offer any extra insights. In particular, in discrete time $\pp^\infty(\fp X)$ is redundant, hence we omitted the definition.
	\end{enumerate}
\end{remark}
Next, we prove properties of the prediction processes. For simplicity we state the results only in the continuous time setting, but the respective results are also true in discrete time. A first important observation is that the prediction processes of higher rank contain more information: That $\pp^r(\fp X)$ terminates at $\pp^{r-1}(\fp X)$ means precisely that
$$
\pp^{r-1}(\fp X) = (\delta^{-1} \circ e_1)(\pp^{r}(\fp X)),
$$
recalling that $e_1$ denotes the map that evaluates paths at time 1 and $\delta^{-1}(\delta_x)=x$. Iterating this procedure yields
\begin{lemma}\label{lem:R}
For all $0 \le k \le r \le \infty$, there is a continuous function $R^{r,k} : M_r \to M_k$ such that we have for all $\fp X \in \FR$
$$
R^{r,k}(\pp^r(\fp X)) = \pp^k(\fp X).
$$
For all $\ell \le k \le r < \infty$, we have $R^{r,\ell} = R^{k,\ell} \circ R^{r,k}$. 
\end{lemma}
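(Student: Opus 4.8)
The plan is to assemble every $R^{n,k}$ out of a single ``one step down'' map. The crucial input is the termination property recorded in Remark~\ref{rem:afterDefPp}: for every $n\in\N$ the prediction process $\pp^n(\fp X)$ terminates at $\pp^{n-1}(\fp X)$, i.e.\ $\pp^n_1(\fp X)=\delta_{\pp^{n-1}(\fp X)}$ almost surely. This suggests defining, for $1\le n<\infty$,
\[
R^{n,n-1}:=\delta^{-1}\circ e_1\colon M_n\to M_{n-1}.
\]
First I would verify that this is well defined and continuous: $e_1$ is continuous on $D(\prob(M_{n-1}))$ by Remark~\ref{rem:et}, and by the very definition of $M_n$ every $f\in M_n$ satisfies $e_1(f)=f(1)\in\delta(M_{n-1})$, where $\delta^{-1}$ is continuous because $\delta$ is a topological embedding with closed range. (This is exactly the technical role of the endpoint constraint in the definition of the spaces $M_n$, cf.\ Remark~\ref{rem:afterDefPp}.) Evaluating at an arbitrary $\fp X\in\FR$ we then obtain, for a.e.\ $\omega$, that $R^{n,n-1}(\pp^n(\fp X))(\omega)=\delta^{-1}\big(\pp^n_1(\fp X)(\omega)\big)=\pp^{n-1}(\fp X)(\omega)$, hence $R^{n,n-1}(\pp^n(\fp X))=\pp^{n-1}(\fp X)$.

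For finite $k\le n$ I would then set $R^{n,n}:=\id_{M_n}$ and $R^{n,k}:=R^{k+1,k}\circ\cdots\circ R^{n,n-1}$; this is continuous as a finite composition of continuous maps, the identity $R^{n,k}(\pp^n(\fp X))=\pp^k(\fp X)$ follows by downward induction from the one-step case, and the cocycle relation $R^{n,\ell}=R^{k,\ell}\circ R^{n,k}$ is immediate from this telescoping definition (the degenerate cases $\ell=k$ and $k=n$ being covered by the identity maps). It remains to handle $n=\infty$. Here I would exploit that, by construction, $\pp^\infty(\fp X)=(\pp^1(\fp X),\pp^2(\fp X),\dots)$, and that by Lemma~\ref{lem:product_map} the projection $\pr_{k-1}\colon M_\infty\to M_k$ onto the $(k-1)$-st coordinate path is continuous (it does land in $M_k$, since $f(1)\in\prod_m\delta(M_m)$ forces its $(k-1)$-st component into $\delta(M_{k-1})$). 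I would then put $R^{\infty,k}:=\pr_{k-1}$ for $k\ge 1$, $R^{\infty,0}:=R^{1,0}\circ\pr_0$, and $R^{\infty,\infty}:=\id_{M_\infty}$, so that $R^{\infty,k}(\pp^\infty(\fp X))=\pp^k(\fp X)$ by the definition of $\pp^\infty$ together with the one-step identity.

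I do not expect a genuine obstacle: the content is bookkeeping. The points that require a little care are the well-definedness of $e_1$ on equivalence classes of pseudo-paths (harmless, since $\lambda(\{1\})>0$), keeping the index shift between $\pp^n$ and the $n$-th coordinate of $M_\infty$ straight, and checking that each one-step map $R^{m,m-1}$ is type-correct, i.e.\ genuinely sends $M_m$ into $M_{m-1}$ — which is precisely what the termination property and the endpoint constraint in the definition of the spaces $M_m$ guarantee.
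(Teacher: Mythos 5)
Your proposal is correct and follows essentially the same route as the paper: the one-step map $R^{n,n-1}=\delta^{-1}\circ e_1$, the telescoping composition for finite $k\le n$, and the coordinate-path projection (the paper writes it as $\Psi(\pr_{k-1})$, justified via Proposition~\ref{prop:Psi_lift} rather than Lemma~\ref{lem:product_map}, but it is the same map) for $n=\infty$, with $R^{\infty,0}=R^{1,0}\circ R^{\infty,1}$ and $R^{n,n}=\id$. Your added checks of continuity, type-correctness and the role of the endpoint constraint are exactly the details the paper leaves as ``straightforward.''
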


\begin{proof}
For $r \in \N$, let $R^{r,r-1} : M_r \to M_{r-1} := \delta^{-1} \circ e_1$. For $k<r$ we set 
$$
R^{r,k} : M_r \to M_k := R^{k+1,k} \circ R^{k+2,k+1} \circ  \dots \circ R^{r-1,r-2}  \circ R^{r,r-1}.
$$
For $k \in \N$ let $\pr_k := \prod_{j=0}^\infty \prob(M_j(S)) \to \prob(M_k(S))$ denote the projection. We set 
$$
R^{\infty,k} := \Psi(\pr_{k-1}) : M_\infty(S) \to M_k(S).
$$
Moreover, we set $R^{\infty,0} := R^{1,0} \circ R^{\infty,1}$; and for all $r \in \N \cup \{ \infty \}$ we adopt the convention $R^{r,r}=\id_{M_r}$. It is straight forward to check that $R^{r,k}(\pp^r(\fp X)) = \pp^k(\fp X)$.
\end{proof}
 
Lemma~\ref{lem:R} implies that one can obtain the whole path of $\pp^r(\fp X)$ from the whole path of $\pp^{r+1}(\fp X)$. There is also an adapted variant of this, i.e.\ from knowing $\pp^{r+1}(\fp X)$ up to time $t$, one can obtain $\pp^r(\fp X)$ up to time $t$. This implies that the filtrations generated by the prediction processes are ordered: 
\begin{lemma}
Let $\fp X \in \FR$. For every $r \in \N \cup  \{ \infty \}$ let $(\G^r_t)_{t \in [0,1]}$ be the right-continuous completion of the filtration generated by $\pp^r(\fp X)$, i.e.\ 
 $\G^r_t := \bigcap_{\epsilon>0} \sigma^{\P^{\fp X}}( \pp^r_s(\fp X) : s \le t + \epsilon)$. Then we have for every $t \in [0,1]$,
\[
\G_t^1 \subset \G_t^2 \subseteq \dots \subset \G_t^\infty \subset \F_t^{\fp X}.
\]
Moreover, we have $\G^\infty_t = \sigma( \G^r_t : r \in \N)$.  

If in addition, $\fp X \in \FP$, we set $\G^0_t := \bigcap_{\epsilon>0} \sigma^{\P^{\fp X}}(X_s : s \le t + \epsilon)$ and we have $\G^0_t \subset \G^1_t$ as well.
\end{lemma}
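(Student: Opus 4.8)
The plan is to derive everything from the consistent-termination identity~\eqref{eq:terminaterCond}, which is the adapted refinement of Lemma~\ref{lem:R}, together with the fact that $\pp^\infty(\fp X)$ is nothing but the coordinatewise assembly of the processes $\pp^n(\fp X)$. First I would record a pointwise statement: fix $s\in[0,1]$ and $n\ge 2$ (resp.\ $n\ge 1$ when $\fp X\in\FP$, so that $\pp^0=X$). By~\eqref{eq:terminaterCond} we have ${e_s}_\#\pp^n_s(\fp X)=\delta_{\pp^{n-1}_s(\fp X)}$, hence
\[
\pp^{n-1}_s(\fp X)=I\bigl({e_s}_\#\pp^n_s(\fp X)\bigr),
\]
a \emph{fixed} Borel function of $\pp^n_s(\fp X)$: indeed $e_s$ is Borel (Remark~\ref{rem:et}) and the intensity operator $I$ is a globally defined Borel left inverse of $\delta$ (Remark~\ref{rem:Intensity_Delta_Inv}). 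Thus $\sigma(\pp^{n-1}_s(\fp X))\subseteq\sigma(\pp^n_s(\fp X))$, and since $\pp^\infty_s(\fp X)$ is the vector $(\pp^m_s(\fp X))_{m\ge1}$, also $\sigma(\pp^n_s(\fp X))\subseteq\sigma(\pp^\infty_s(\fp X))$ for every finite $n$.

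These inclusions hold at each individual time point, so for every $r\in[0,1]$ one gets $\sigma^{\P}(\pp^{n-1}_s:s\le r)\subseteq\sigma^{\P}(\pp^n_s:s\le r)\subseteq\sigma^{\P}(\pp^\infty_s:s\le r)$, and likewise $\sigma^{\P}(X_s:s\le r)\subseteq\sigma^{\P}(\pp^1_s:s\le r)$ when $\fp X\in\FP$. Taking $r=t+\epsilon$ and intersecting over $\epsilon>0$ yields the chain $\G^0_t\subseteq\G^1_t\subseteq\G^2_t\subseteq\cdots\subseteq\G^\infty_t$. For the last inclusion $\G^\infty_t\subseteq\F^{\fp X}_t$, note that each $\pp^n(\fp X)$ is adapted to $(\F^{\fp X}_t)_t$ (it is the process of the filtered process $\bpp^n(\fp X)$), hence so is $\pp^\infty(\fp X)$; therefore $\sigma^{\P}(\pp^\infty_s:s\le t+\epsilon)\subseteq\F^{\fp X}_{t+\epsilon}$ for all $\epsilon>0$, and intersecting over $\epsilon$ and using right-continuity of $(\F^{\fp X}_t)_t$ gives $\G^\infty_t\subseteq\F^{\fp X}_t$.

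It remains to prove $\G^\infty_t=\sigma(\G^n_t:n\in\N)$. The inclusion ``$\supseteq$'' is immediate from $\G^n_t\subseteq\G^\infty_t$. For ``$\subseteq$'' I would first use Lemma~\ref{lem:product_map} (with $[0,1]$ replaced by $[0,r]$): the path of $\pp^\infty(\fp X)$ on $[0,r]$ corresponds homeomorphically to the tuple $\bigl(\pp^n(\fp X)|_{[0,r]}\bigr)_{n}$, and since the Borel $\sigma$-algebra of a countable product of separable metric spaces is generated by the coordinate projections, this gives $\sigma^{\P}(\pp^\infty_s:s\le r)=\bigvee_{n}\sigma^{\P}(\pp^n_s:s\le r)$ for every $r$. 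Hence $\G^\infty_t=\bigcap_{\epsilon>0}\bigvee_{n}\sigma^{\P}(\pp^n_s:s\le t+\epsilon)$, and the claim reduces to exchanging the intersection over $\epsilon$ with the increasing countable join, that is, to $\bigcap_{\epsilon>0}\bigvee_{n}\sigma^{\P}(\pp^n_s:s\le t+\epsilon)\subseteq\bigvee_{n}\G^n_t$.

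This commutation is the only delicate point, and it genuinely uses the structure of the prediction tower -- for arbitrary families of $\sigma$-algebras that are increasing in $n$ and decreasing in $\epsilon$ the exchange is false. My plan here is a conditional-expectation argument. Let $\xi$ be bounded and $\G^\infty_t$-measurable. Upward martingale convergence along the increasing generators gives, for each fixed $\epsilon>0$, $\xi=\E[\xi|\sigma^{\P}(\pp^\infty_s:s\le t+\epsilon)]=\lim_{n\to\infty}\E[\xi|\sigma^{\P}(\pp^n_s:s\le t+\epsilon)]$, while reverse martingale convergence gives, for each fixed $n$, $\E[\xi|\sigma^{\P}(\pp^n_s:s\le t+\epsilon)]\to\E[\xi|\G^n_t]$ as $\epsilon\downarrow 0$; one then wants to exchange the two limits and conclude $\E[\xi|\bigvee_n\G^n_t]=\lim_{n}\E[\xi|\G^n_t]=\xi$, i.e.\ that $\xi$ is $\bigvee_n\G^n_t$-measurable. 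Justifying the exchange is where the real work lies: it requires controlling, uniformly in $n$, the information that $\pp^n$ carries on the germ interval $(t,t+\epsilon]$ beyond $\F^{\fp X}_t$, and for this I would combine the self-awareness identity of Lemma~\ref{lem:condLawSelfAware} -- which says $\pp^{n+1}_t=\law(\pp^n(\fp X)|\F^{\fp X}_t)$ already captures everything $\F^{\fp X}_t$ knows about the path $\pp^n(\fp X)$ -- with the Doob-type maximal estimate of Lemma~\ref{lem:MVM_tight}, which yields tightness uniformly over the tower. I expect this limit exchange to be the technical heart; the remaining steps are bookkeeping with~\eqref{eq:terminaterCond} and the product structure of $\pp^\infty$.
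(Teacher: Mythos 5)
Your treatment of the inclusion chain is correct and takes a mildly different route from the paper: you use the pointwise terminating identity ${e_s}_\#\pp^n_s(\fp X)=\delta_{\pp^{n-1}_s(\fp X)}$ at each fixed time $s$, whereas the paper pushes $\pp^{n+1}_t(\fp X)$ forward under the restriction map $r_{[0,t]}$ and obtains the stronger fact that the whole path $\pp^n(\fp X)|_{[0,t]}$ is a Borel function of the single random variable $\pp^{n+1}_t(\fp X)$. For the inclusions $\G^{n-1}_t\subset\G^n_t\subset\G^\infty_t\subset\F^{\fp X}_t$ and $\G^0_t\subset\G^1_t$ either version suffices (one minor point: Remark~\ref{rem:et} only records well-definedness of $e_s$ and its continuity at $s=1$; Borel measurability of $e_s$ on $D(\prob(M_{n-2}))$ for $s<1$ needs a word, e.g.\ writing $e_s$ as a pointwise limit of the continuous averaged evaluations from \eqref{eq:def_eta_delta} tested against a convergence determining family).

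The genuine gap is the ``moreover'' statement. You correctly reduce $\G^\infty_t=\sigma(\G^n_t:n\in\N)$ to the inclusion $\bigcap_{\epsilon>0}\bigvee_n\sigma^{\P}(\pp^n_s:s\le t+\epsilon)\subset\bigvee_n\G^n_t$, correctly note that such an interchange of a decreasing intersection with an increasing join is false for general families, and then do not prove it: the proposed combination of upward and backward martingale convergence merely restates the problem (one still has to interchange the limits in $n$ and $\epsilon$), and the tools you point to are not the relevant ones. Lemma~\ref{lem:MVM_tight} is a tightness bound on the \emph{values} of the measure-valued martingales and gives no control on the \emph{information} carried on the germ interval $(t,t+\epsilon]$; it cannot exclude the standard pathology in which the $n$-th coordinate reveals a fixed nontrivial random variable at time $t+1/n$, for which the left side strictly contains the right side. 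What actually closes the argument is the Markov/self-awareness structure of the tower: by Lemma~\ref{lem:ppOmegaPlus1} (equivalently \eqref{eq:ppMarkov}), $\law(\pp^\infty(\fp X)\mid\F^{\fp X}_r)$ is a fixed Borel function of $\pp^\infty_r(\fp X)$, which via Lemma~\ref{lem:continuityfiltration} yields that the completed filtration generated by $\pp^\infty(\fp X)$ is already right-continuous (Proposition~\ref{prop:contPt}, proved in the paper without using this lemma); then $\G^\infty_t=\sigma^{\P}(\pp^\infty_s:s\le t)=\bigvee_n\sigma^{\P}(\pp^n_s:s\le t)\subset\bigvee_n\G^n_t$, the middle equality being the elementary generation fact you also derive from Lemma~\ref{lem:product_map}. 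The paper itself is terse here (it asserts that the generation relation ``carries over to the right-continuous augmentation''), but making that step precise is exactly the work your proposal identifies as the technical heart and then leaves undone.
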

\begin{proof}
For $t \in [0,1]$, let $r_{[0,t]}$ be the map which restricts a
\cadlag\ path defined on $[0,1]$ to $[0,t]$. Let $r \in \N$ (or $r \in \N_0$ if $\fp X \in \FP$). As $\pp^r(\fp X)$ is adapted to $(\F_t^{\fp X})_{t \in [0,1]}$, we find for $t \in [0,1]$,
\[
{r_{[0,t]}}_\# \pp_t^{r+1}(\fp X) = \law( \pp^r_{[0,t]}(\fp X) | \F^{\fp X}_t) = \delta_{ \pp^r_{[0,t]}(\fp X)}. 
\]
Hence, $\pp^r_t(\fp X) = (\delta^{-1} \circ \prob(r_{[0,t]})) (\pp^{r+1}_t(\fp X))$. Therefore, the filtration generated by $\pp^r(\fp X)$ is smaller than the filtration generated by $\pp^{r+1}(\fp X)$ and this inclusion carries over to the right-continuous augmentation.

For the second claim observe that, as $\pp^\infty_t(\fp X) = (\pp^r_t(\fp X))_{r \in \N}$, sets of type 
\begin{align}\label{eq:prf:ppmarkov}
\{  \pp^{r_1}_{t_1}(\fp X) \in A_1 ,\dots,  \pp^{r_k}_{t_k}(\fp X) \in A_k   \},
\end{align}
where $k \in \N, r_1,\dots r_k \in \N, t_1,\dots t_k \in [0,1]$ and $A_i \subset \prob(M_{r_i-1})$ generate $\sigma(\pp_t^{\infty}(\fp X))$.  As the set given in \eqref{eq:prf:ppmarkov} is $\sigma(\pp^r(\fp X))$-measurable for $r = \max\{r_1,\dots,r_k\}$,  $\bigcup_{r \in \N} \sigma(\pp^r_t(\fp X))$ generates $\sigma(\pp^\infty_t(\fp X))$.  This relation carries over to the right-contiguous augmentation. 
\end{proof}

The following property of the prediction process will play an  important role throughout the paper.

\begin{corollary}
Let $\fp X \in \FR$, $t \in [0,1]$ and $r \in \N_0$. Then we have 
\begin{align}\label{eq:ppCondIndep}
\pp^{r+1}_t(\fp X)  =  \law( \pp^r(\fp X) | \G ) 
\end{align}
for any $\sigma$-algebra $\G$ that satisfies $\sigma(\pp^{r+1}_t(\fp X)) \subset \G \subset \F_t^{\fp X}$. Moreover, we have 
\begin{align}\label{eq:ppMarkov}
 \law( \pp^\infty(\fp X) | \F_t^{\fp X}) =  \law( \pp^\infty(\fp X) | \G ) 
\end{align}
for any $\sigma$-algebra $\G$ that satisfies $\sigma(\pp^{\infty}_t(\fp X)) \subset \G \subset \F_t^{\fp X}$.
\end{corollary}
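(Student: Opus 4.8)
The plan is to deduce both identities from the ``self-awareness'' of conditional laws, Lemma~\ref{lem:condLawSelfAware}. The starting observation is that, for each fixed $t\in[0,1]$, the random measure $\pp^{n+1}_t(\fp X)$ is a regular conditional distribution of $\pp^n(\fp X)$ given $\F^{\fp X}_t$ (with the convention $\pp^0(\fp X)=X$): by construction $\pp^{n+1}(\fp X)$ is the \cadlag{} modification of the measure-valued martingale $(\law(\pp^n(\fp X)\mid\F^{\fp X}_t))_{t\in[0,1]}$, hence $\pp^{n+1}_t(\fp X)=\law(\pp^n(\fp X)\mid\F^{\fp X}_t)$ a.s., and being an $\F^{\fp X}_t$-measurable $\prob(M_n)$-valued map it is itself a version of this conditional law.

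For \eqref{eq:ppCondIndep} I would apply Lemma~\ref{lem:condLawSelfAware} to the random variable $\pp^n(\fp X)$, with $\F^{\fp X}_t$ in the role of the large $\sigma$-algebra and the $\sigma$-algebra $\G$ from the statement in the role of the intermediate one: using the version $\pp^{n+1}_t(\fp X)$ of $\law(\pp^n(\fp X)\mid\F^{\fp X}_t)$, the hypothesis $\sigma(\pp^{n+1}_t(\fp X))\subset\G\subset\F^{\fp X}_t$ is exactly what the lemma requires, and it returns $\pp^{n+1}_t(\fp X)=\law(\pp^n(\fp X)\mid\F^{\fp X}_t)=\law(\pp^n(\fp X)\mid\G)$.

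For \eqref{eq:ppMarkov} the same mechanism applies once we exhibit a version of $\law(\pp^\infty(\fp X)\mid\F^{\fp X}_t)$ that is $\sigma(\pp^\infty_t(\fp X))$-measurable; then, since $\sigma(\pp^\infty_t(\fp X))\subset\G\subset\F^{\fp X}_t$, Lemma~\ref{lem:condLawSelfAware} applied to $\pp^\infty(\fp X)$ yields $\law(\pp^\infty(\fp X)\mid\F^{\fp X}_t)=\law(\pp^\infty(\fp X)\mid\G)$. To build this version I would use Lemma~\ref{lem:R}: for each $n\in\N$ the map $G_n:=(R^{n+1,1},\dots,R^{n+1,n})\colon M_{n+1}\to\prod_{k=1}^n M_k$ is continuous and satisfies $(\pp^1(\fp X),\dots,\pp^n(\fp X))=G_n(\pp^{n+1}(\fp X))$. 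Pushing the conditional law forward and using the preceding paragraph at level $n+1$,
\[
\law\bigl((\pp^1(\fp X),\dots,\pp^n(\fp X))\mid\F^{\fp X}_t\bigr)=\prob(G_n)\bigl(\law(\pp^{n+1}(\fp X)\mid\F^{\fp X}_t)\bigr)=\prob(G_n)\bigl(\pp^{n+2}_t(\fp X)\bigr)\quad\text{a.s.},
\]
which is $\sigma(\pp^{n+2}_t(\fp X))$-, hence $\sigma(\pp^\infty_t(\fp X))$-measurable. Via the homeomorphism $\pi$ of Lemma~\ref{lem:product_map}, $\pp^\infty(\fp X)$ corresponds to the sequence of paths $(\pp^n(\fp X))_{n\in\N}$ in the countable product $\prod_n D(\prob(M_{n-1}))$, whose Borel $\sigma$-algebra is generated by the countable $\pi$-system of cylinders depending on finitely many coordinates; a monotone-class argument then shows that the kernel $\law(\pp^\infty(\fp X)\mid\F^{\fp X}_t)$ is completely determined by the finite-dimensional conditional laws above, and in particular admits a $\sigma(\pp^\infty_t(\fp X))$-measurable version.

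The genuinely routine ingredients are the commutation of conditional laws with Borel push-forwards and the monotone-class step. The one point needing a little care is the bookkeeping of versions and $\P^{\fp X}$-null sets — in particular, checking that the $\sigma(\pp^\infty_t(\fp X))$-measurable candidate assembled from the cylinder conditional laws is genuinely (a version of) the regular conditional distribution $\law(\pp^\infty(\fp X)\mid\F^{\fp X}_t)$; this is where one uses that the cylinders form a \emph{countable} $\pi$-system, so that almost-sure agreement of two kernels on each cylinder upgrades to almost-sure agreement of the kernels themselves.
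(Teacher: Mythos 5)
Your proof of \eqref{eq:ppCondIndep} is exactly the paper's: apply Lemma~\ref{lem:condLawSelfAware} to $\pp^n(\fp X)$ with $\F_t^{\fp X}$ as the large $\sigma$-algebra and $\G$ as the intermediate one, using that $\pp^{n+1}_t(\fp X)$ is a version of $\law(\pp^n(\fp X)\mid\F_t^{\fp X})$. For \eqref{eq:ppMarkov}, however, you take a genuinely different and heavier route. The paper stays at the level of the individual prediction processes: having proved the first claim for \emph{every} $n$ (the hypothesis $\sigma(\pp^{n+1}_t(\fp X))\subset\sigma(\pp^\infty_t(\fp X))\subset\G$ holds for all $n$), it reads the resulting identity $\law(\pp^n(\fp X)\mid\F_t^{\fp X})=\law(\pp^n(\fp X)\mid\G)$ as conditional independence of $\pp^n(\fp X)$ and $\F_t^{\fp X}$ given $\G$ for each $n$, and then upgrades this to $\pp^\infty(\fp X)$ via the fact that $\bigcup_n\sigma(\pp^n(\fp X))$ is an increasing family generating $\sigma(\pp^\infty(\fp X))$ — a short $\pi$-system/monotone-class step with no new construction. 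You instead build a $\sigma(\pp^\infty_t(\fp X))$-measurable version of the full kernel $\law(\pp^\infty(\fp X)\mid\F_t^{\fp X})$ by gluing the finite-dimensional conditional laws $\prob(G_n)(\pp^{n+2}_t(\fp X))$, and then apply Lemma~\ref{lem:condLawSelfAware} once at level $\infty$; this is essentially a re-derivation of (the relevant part of) Lemma~\ref{lem:ppOmegaPlus1}, which the paper only establishes afterwards, so there is no circularity, and the argument is correct provided you handle the almost-sure consistency of the glued family and the null-set bookkeeping you yourself flag. One small imprecision: the cylinders depending on finitely many coordinates do not literally form a countable $\pi$-system; you should pass to cylinders built from countable bases of the (second countable) factor spaces to get the countable generating $\pi$-system that lets you upgrade a.s.\ agreement on generators to a.s.\ agreement of the kernels. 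What your approach buys is an explicit measurable representation of $\law(\pp^\infty(\fp X)\mid\F_t^{\fp X})$ as a function of $\pp^\infty_t(\fp X)$ (which the paper later needs anyway, e.g.\ for the strong Markov property); what the paper's buys is brevity, since it never has to assemble an infinite-dimensional kernel.
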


\begin{proof}
The first claim is an immediate consequence of Lemma~\ref{lem:condLawSelfAware} applied to $\pp^r(\fp X)$ and $\G$.

In order to prove the second claim, let $\G$ be a $\sigma$-algebra satisfying $\sigma(\pp^{\infty}_t(\fp X)) \subset \G \subset \F_t^{\fp X}$. Due to the already proved first claim,   $\pp^r(\fp X)$ is independent of $\F_t^{\fp X}$ given  $\G$, for all $r \in \N$. Equation \eqref{eq:ppMarkov} follows because  $\bigcup_{r \in \N} \sigma(\pp_t^r(\fp X))$ is a generator of $\sigma(\pp_t^{\infty}(\fp X))$. 
\end{proof}

\begin{corollary}
Let $\fp X \in \mathcal{FR}$ and set $\widehat{\fp X} :=( \Omega^{\fp X}, \F^{\fp X}, \P^{\fp X}, (\G_t)_{t\in[0,1]}, X  )$, where $(\G_t)_{t\in[0,1]}$ is the right-continuous augmentation\footnote{We will see in Proposition~\ref{prop:contPt} below that the filtration of $\pp^\infty(\fp X)$ is already right-continuous.} of the filtration generated by $\pp^\infty(\fp X)$. Then $\fp X \approx_\infty \widehat{\fp X}$.  
\end{corollary}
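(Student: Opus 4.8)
The plan is to show the stronger statement that $\pp^\infty(\widehat{\fp X})$ and $\pp^\infty(\fp X)$ are indistinguishable; since this in particular gives $\law(\pp^\infty(\widehat{\fp X})) = \law(\pp^\infty(\fp X))$, it yields $\fp X \approx_\infty \widehat{\fp X}$. First one checks that $\widehat{\fp X}$ is a legitimate filtered random variable: $(\G_t)_{t\in[0,1]}$ satisfies the usual conditions by construction, and $X$ is $\G_1$-measurable because $\delta_X = \law(X\mid\F_1^{\fp X}) = \pp^1_1(\fp X)$ is $\sigma(\pp^\infty_1(\fp X))\subset\G_1$-measurable and $\delta$ is a topological embedding.

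The core is an induction over $n\in\N_0$ proving that $\pp^n(\widehat{\fp X})$ and $\pp^n(\fp X)$ are indistinguishable. The case $n=0$ is trivial since $\pp^0 = X$ for both. For the inductive step, assume the claim at level $n$; then $\pp^n(\widehat{\fp X}) = \pp^n(\fp X)$ a.s.\ as $M_n$-valued random variables, hence $\law(\pp^n(\widehat{\fp X})\mid\G_t) = \law(\pp^n(\fp X)\mid\G_t)$ a.s.\ for every $t$. Now observe the sandwich $\sigma(\pp^{n+1}_t(\fp X)) \subset \G_t \subset \F_t^{\fp X}$: the left inclusion holds because $\pp^{n+1}_t(\fp X)$ is a coordinate of $\pp^\infty_t(\fp X)$ and $\G_t$ is an augmentation of a $\sigma$-algebra containing $\sigma(\pp^\infty_t(\fp X))$; the right inclusion is exactly the content of the preceding lemma on the filtrations generated by the prediction processes (namely $\G_t^\infty\subset\F_t^{\fp X}$). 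Hence the Markov-type identity \eqref{eq:ppCondIndep} applies with $\G=\G_t$ and gives $\law(\pp^n(\fp X)\mid\G_t) = \pp^{n+1}_t(\fp X)$ a.s.\ for each $t$. The same sandwich shows $\pp^{n+1}(\fp X)$ is $(\G_t)$-adapted, so it is a \cadlag{} $(\G_t)$-modification of the measure-valued martingale $t\mapsto\law(\pp^n(\widehat{\fp X})\mid\G_t)$; by uniqueness of \cadlag{} modifications of measure-valued martingales (Proposition~\ref{prop:mvmCadlag}) it is indistinguishable from $\pp^{n+1}(\widehat{\fp X})$, which closes the induction.

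Finally, from $\pp^n(\widehat{\fp X}) = \pp^n(\fp X)$ for all $n$ and $\pp^\infty = (\pp^1,\pp^2,\dots)$ we obtain $\pp^\infty(\widehat{\fp X}) = \pp^\infty(\fp X)$ up to indistinguishability, hence equal in law, i.e.\ $\fp X \approx_\infty \widehat{\fp X}$. The work here is bookkeeping rather than substance: the only points requiring care are that the right-continuously augmented filtration $(\G_t)$ really sits between $\sigma(\pp^{n+1}_t(\fp X))$ and $\F_t^{\fp X}$ so that \eqref{eq:ppCondIndep} is applicable, and that ``equal at each fixed time'' upgrades to ``indistinguishable'' via \cadlag{} regularity — both of which are handed to us by the preceding lemma and Proposition~\ref{prop:mvmCadlag}.
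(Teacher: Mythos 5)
Your proof is correct and follows essentially the same route as the paper: an induction on $n$ whose step applies the identity \eqref{eq:ppCondIndep} with $\G=\G_t$ (using $\sigma(\pp^{n+1}_t(\fp X))\subset\G_t\subset\F_t^{\fp X}$) to get $\pp^{n+1}_t(\fp X)=\law(\pp^n(\fp X)\mid\G_t)=\law(\pp^n(\widehat{\fp X})\mid\G_t)=\pp^{n+1}_t(\widehat{\fp X})$ a.s.\ for each $t$, upgraded to indistinguishability by \cadlag{} regularity. You merely spell out a few details the paper leaves implicit (well-definedness of $\widehat{\fp X}$, the sandwich inclusions, and the appeal to uniqueness of \cadlag{} modifications), which is fine.
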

\begin{proof}
We show by induction on $r \in \N$ that $\pp^r(\fp X) = \pp^r(\widehat{\fp X})$ a.s.
For $n=0$, we have $\pp^0(\fp X) = X =\pp^0(\widehat{\fp X})$. Assuming  $\pp^r(\fp X) = \pp^r(\widehat{\fp X})$ a.s., \eqref{eq:ppCondIndep} implies that for all $t \in [0,1]$  a.s.
\[
\pp^{r+1}_t(\fp X) = \law(\pp^r(\fp X) | \F_t) =  \law(\pp^r(\fp X) | \G_t) = \law(\pp^r( \widehat{\fp X})| \G_t) = \pp^{r+1}_t(\widehat{\fp X}).
\]
As the paths of prediction processes are \cadlag, we conclude $\pp^{r+1}_t(\fp X) = \pp^{r+1}_t(\widehat{\fp X})$ a.s.
\end{proof}

Next, we want to make precise that the procedure of iterating prediction processes naturally ends with $\pp^\infty(\fp X)$, i.e.\ that $\pp^1(\bpp^\infty(\fp X))$ does not contain more information about $\fp X$ than $\pp^\infty(\fp X)$. To that end, denote 
$$
C : = \left\{  (\mu^r)_{ r \in \N_0 } \in \prod_{j=0}^\infty \prob(M_j(S))  : R^{r,k}_\# \mu^r = \mu^k \text{ for all } k \le r \right\}.
$$
Note that Lemma~\ref{lem:R} implies that $\pp^\infty(\fp X)$ takes values in $C$ for every $\fp X \in \FR$ and $t \in [0,1]$.  

\begin{theorem}\label{thm:ppFeller} 
For every $\fp X \in \FR$, the process $\pp^\infty(\fp X)$ is Feller. More specifically, there is a homeomorphism  $F : C \to \prob(M_\infty(S))$ such that for every $\fp X \in \FR(S)$ and every $t \in [0,1]$ we have a.s.
\begin{align}\label{eq:F} 
\law(\pp^\infty(\fp X) |\F_t) =\pp^1_t(\bpp^\infty(\fp X)) = F(\pp^\infty_t(\fp X)).
\end{align}
\end{theorem}
The operation $\fp X \mapsto \pp^\infty(\fp X)$ can be interpreted as a canonical way to lift filtered processes to Feller processes. We emphasize that the map $F$ is independent of the process $\fp X$ and the time $t \in [0,1]$. Therefore, \eqref{eq:F} implies that the procedure of iterating prediction processes naturally terminates with $\pp^\infty(\fp X)$. Its prediction process $\pp^1(\pp^\infty(\fp X))$ can be obtained from $\pp^\infty(\fp X)$ itself (without knowing the filtration $(\F_t^{\fp X})_{t \in [0,1]}$) using the map $F$. 

\begin{proof}[Proof of Theorem~\ref{thm:ppFeller}] The map 
	$$
	\pi : \prod_{j=1}^\infty  M_j(S) \to M_\infty(S) : (z_j)_{j \in \N} \mapsto   (  t \mapsto (z_j(t))_{j \in \N}  )
	$$
	is a homeomorphism, cf.  Lemma~\ref{lem:product_map}. We have for all $t \in [0,1]$
	\begin{align}\label{eq:prf:G1}
	\prob(\pi^{-1})(\pp^1_t(\bpp^\infty(\fp X))) = \prob(\pi^{-1})( \law(\pp^\infty(\fp X) |\F_t^{\fp X} )) = \law( (\pp^r(\fp X))_{r \in \N} | \F_t^{\fp X} ).
	\end{align}	
	On the other hand, we have by definition
	\begin{align}\label{eq:prf:G2}
	\pp^\infty_t(\fp X) = (\pp^r_t(\fp X))_{ r \in \N} = ( \law(\pp^r(\fp X)  |\F_t^{\fp X}  )  )_{r \in \N_0}.
	\end{align}
	On the first glance it may appear that \eqref{eq:prf:G2} contains less information than \eqref{eq:prf:G1} because the sequence $( \law(\pp^r(\fp X)  |\F_t^{\fp X}  )(\omega)  )_{r \in \N}$ is the collection of marginal distributions of  $\law( (\pp^r(\fp X))_{r \in \N} | \F_t^{\fp X} )(\omega)$.
	
	However, this is not the case because $\pp^k(\fp X)$ is a function of $\pp^r(\fp X)$ for $k<r$, so we can derive $\law( \pp^1(\fp X),\dots,  \pp^r(\fp X)|\F_t^{\fp X} )$ from $\law(\pp^r(\fp X)|\F_t^{\fp X} )$. To formalize this, write $\pr_r : \prod_{k=1}^\infty \prob (M_k) \to \prob(M_r) $ for the projection and set 
	$$
	\psi_r := \prob(R^{r,1},\dots, R^{r,r}) \circ
    \pr_r :
    \prod_{k=0}^\infty \prob(M_k)
 \to \prob \Big( \prod_{k=1}^r M_k \Big).
	$$ 
	Indeed, $\psi_r$ is continuous and satisfies
	$$
	\psi_r(\pp^\infty_t(\fp X)) = \law( \pp^1(\fp X),\dots, \pp^r(\fp X) | \F_t^{\fp X} ).
	$$
	The next step is to ``glue'' them together to get  $\law( (\pp^r(\fp X))_{r \in \N} | \F_t^{\fp X} )$. To that end, denote by
	$$
	G : \left\{ (\mu^r)_{r \in \N} \in \prod_{r = 1}^\infty \prob \Big(\prod_{k = 1}^r M_k \Big) : {\pr_{M_1 \times \cdots \times M_r}}_\# \mu_{r + 1} = \mu_r \text{ for all }r \in \N \right\} \to \prob\left( \prod_{k=1}^\infty M_k \right) 
	$$
	the map which maps a consistent family $(\mu^k)_k$ to the unique measure $\mu$ on the infinite product space that satisfies $(\pr_1,\dots,\pr_k)_\#\mu=\mu^k$ for every $k \in \N$. 
 Consider the map
$$
\widetilde{F} := G \circ (\psi_r)_{r \in \N}:  C \to \prob\left( \prod_{k=1}^\infty M_k \right).
$$
Lemma~\ref{lem:LusinProdComp}(a) implies that $\widetilde{F}$ is continuous because 
$\prob(\pr_1,\dots,\pr_k) \circ \widetilde F = \psi_k$ is continuous for all $k \in \N$. Moreover, $\widetilde{F}$ satisfies
$$
\widetilde{F}(( \law(\pp^r(\fp X)  |\F_t^{\fp X}  )  )_{r \in \N_0})  = \law( (\pp^r(\fp X))_{r \in \N} | \F_t^{\fp X} ).
$$
It is straightforward that its inverse $
{\big(\widetilde F\big)}^{-1}(\mu) = (R^{\infty, r}_\# \mu)_{r\in \N_0}
$
is also continuous. We conclude by  setting $F := \pi \circ \widetilde{F}$. 
\end{proof}




\subsection{Continuity points of filtered random variables}
Recall from Definition~\ref{def:contPt} that the set of continuity points of a process $X$ is defined as the set of continuity points of the map $t \mapsto \law(X_t)$. We extend this definition to filtered random variables in the following way:

\begin{definition}\label{def:contPtFP}
The set of continuity points of $\fp X \in \FR$ is defined as $\cont(\fp X) := \cont(\pp^\infty(\fp X))$.
\end{definition}

A direct consequence of Lemma~\ref{lem:TBig} is 
\begin{corollary}\label{cor:ContBig}
Let $\fp X \in \FR$. Then $\cont(\fp X)$ is co-countable. Hence, it is dense and we have $\lambda( \cont(\fp X)  \cup \{ 1 \} )=1$.
\end{corollary}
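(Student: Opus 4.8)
The plan is to read the statement off from Lemma~\ref{lem:TBig}. By Definition~\ref{def:contPtFP}, $\cont(\fp X) = \cont(\pp^\infty(\fp X))$, so it suffices to check that the process $\pp^\infty(\fp X)$, which has values in $\prod_{n=0}^\infty \prob(M_n(S))$, is right continuous in probability; then Lemma~\ref{lem:TBig} yields directly that $\cont(\pp^\infty(\fp X))$ is co-countable, hence dense, and satisfies $\lambda(\cont(\pp^\infty(\fp X)) \cup \{1\}) = 1$.

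To verify the right continuity in probability, I would recall that each component $\pp^n(\fp X)$, $n\in\N$, is by construction a \cadlag{} process, being the up-to-indistinguishability unique \cadlag{} version of the measure-valued martingale $(\law(\pp^{n-1}(\fp X)\mid\F^{\fp X}_t))_{t\in[0,1]}$; see Definition~\ref{def:iterated_pp} and Remark~\ref{rem:afterDefPp}(a). Consequently $\pp^\infty(\fp X) = (\pp^1(\fp X),\pp^2(\fp X),\dots)$ has \cadlag{} paths in the product space (this is exactly the assertion that $\pp^\infty(\fp X)$ is $M_\infty(S)$-valued in Definition~\ref{def:iterated_pp}, and also follows from Lemma~\ref{lem:product_map}). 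Fixing a bounded compatible metric on the separable metrizable space $\prod_{n=0}^\infty\prob(M_n(S))$, almost-sure right continuity of the paths together with dominated convergence gives $\pp^\infty_{t+h}(\fp X)\to\pp^\infty_t(\fp X)$ in probability as $h\searrow 0$, for every $t\in[0,1)$; this is precisely right continuity in probability of $\pp^\infty(\fp X)$.

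I do not expect any genuine obstacle, since the corollary is an immediate specialization of Lemma~\ref{lem:TBig}. The only points deserving a word of care are that here ``right continuity in probability'' refers to convergence of the $\prod_n\prob(M_n(S))$-valued random variables $\pp^\infty_{t+h}(\fp X)$ --- which, by Lemma~\ref{lem:convinprob_initialtop} (equivalently Lemma~\ref{lem:LusinProdComp}(a)), is the coordinatewise convergence in probability --- and that this coordinatewise statement is exactly what the componentwise \cadlag{} property above provides.
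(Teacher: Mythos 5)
Your proof is correct and follows exactly the route the paper takes: the paper states the corollary as a direct consequence of Lemma~\ref{lem:TBig} via Definition~\ref{def:contPtFP}, which is precisely your argument, with your verification that $\pp^\infty(\fp X)$ is right continuous in probability (being \cadlag{} by construction of $M_\infty$-valued prediction processes) simply filling in the step the paper leaves implicit.
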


\begin{lemma}\label{lem:MVMconv}
Let $X$ be an $S$-valued random variable on $(\Omega,\F,\P)$ and $(\F_t)_{t \in [0,1]}$ be a filtration\footnote{Note that we do not assume right continuity of the filtration in this lemma.} on $(\Omega,\F,\P)$. 
\begin{enumerate}[label = (\alph*)]
	\item If $(t_n)_n$ is strictly increasing to $t$, we have  
	$
	\law( X |\F_{t_n} ) \to \law(X|\F_{t-})
	$ a.s.
	\item If $(t_n)_n$ is strictly decreasing to $t$, we have  
	$
\law( X |\F_{t_n} ) \to \law(X|\F_{t+}) 
	$ a.s.
\end{enumerate}
\end{lemma}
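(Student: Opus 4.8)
The plan is to reduce the assertion --- which concerns convergence of $\prob(S)$-valued conditional laws --- to scalar martingale convergence by testing against a countable convergence determining family. First I would fix a countable convergence determining family $\{f_k : k \in \N\} \subset C_b(S)$, which exists by Lemma~\ref{lem:convergence.determining}. Since $S$ is Lusin, hence a Borel subset of a Polish space, the regular conditional distribution $\law(X \mid \G)$ is well defined for every sub-$\sigma$-algebra $\G$ of $\F$, and for each bounded Borel $f$ one has $f^\ast(\law(X\mid\G)) = \E[f(X)\mid\G]$ almost surely. For each $k$, the process $s \mapsto \E[f_k(X)\mid\F_s]$ is a bounded, hence uniformly integrable, martingale, and $f_k^\ast(\law(X\mid\F_s)) = \E[f_k(X)\mid\F_s]$ a.s.

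For (a), with $t_n \uparrow t$ strictly, the $\sigma$-algebras $(\F_{t_n})_n$ form an increasing sequence with $\sigma\bigl(\bigcup_n \F_{t_n}\bigr) = \F_{t-}$: the inclusion $\subset$ is immediate since each $\F_{t_n} \subset \F_{t-}$, and for $\supset$ note that every $s < t$ satisfies $s < t_n$ for $n$ large, hence $\F_s \subset \F_{t_n}$. Lévy's upward martingale convergence theorem (see e.g.\ \cite{Ka97}) then yields $\E[f_k(X)\mid\F_{t_n}] \to \E[f_k(X)\mid\F_{t-}]$ a.s., for each $k$. For (b), with $t_n \downarrow t$ strictly, $(\F_{t_n})_n$ is decreasing with $\bigcap_n \F_{t_n} = \F_{t+}$, where this time only the inclusion $\subset$ requires an argument, using that every $s > t$ dominates some $t_n$ and hence $\bigcap_n \F_{t_n} \subset \F_s$. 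The downward (reverse) martingale convergence theorem then yields $\E[f_k(X)\mid\F_{t_n}] \to \E[f_k(X)\mid\F_{t+}]$ a.s., for each $k$.

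To conclude, I would intersect over $k \in \N$ the countably many full-measure events just obtained, producing a single full-measure event on which $f_k^\ast(\law(X\mid\F_{t_n})) \to f_k^\ast(\law(X\mid\F_{t-}))$ (respectively $\to f_k^\ast(\law(X\mid\F_{t+}))$) for all $k$ simultaneously. Since $\{f_k^\ast : k\in\N\}$ is convergence determining on $\prob(S)$ by Lemma~\ref{lem:P(S)_ptsep_convdet}(c), this forces $\law(X\mid\F_{t_n}) \to \law(X\mid\F_{t-})$ in (a) and $\law(X\mid\F_{t_n}) \to \law(X\mid\F_{t+})$ in (b), in $\prob(S)$ and almost surely, which is exactly the claim.

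I do not expect a substantial obstacle here; the proof is a routine combination of Lévy's martingale convergence theorems with the countable reduction. The only points that call for care are that the null set must be chosen once and for all simultaneously for every test function --- which is precisely why the convergence determining family must be countable --- and that regular conditional distributions and martingale convergence remain available for the merely Lusin space $S$, which is the case because $S$ embeds as a Borel subset of a Polish space.
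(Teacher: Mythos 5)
Your proof is correct, but it takes a genuinely different route from the paper's. Both arguments test against a countable, multiplicatively closed convergence determining family $\{f_k\}\subset C_b(S)$ and apply L\'evy's upward (resp.\ backward) martingale convergence theorem to $\E[f_k(X)|\F_{t_n}]$, intersecting the countably many exceptional null sets; the difference is in how the convergence of the test integrals is upgraded to convergence in $\prob(S)$. You invoke Lemma~\ref{lem:P(S)_ptsep_convdet}(c), i.e.\ that $\{f_k^\ast\}$ is convergence determining on $\prob(S)$, and conclude directly. The paper uses only the weaker fact that $\{f_k^\ast\}$ is point separating: it first applies Lemma~\ref{lem:MVM_tight} with $T=\{t_n : n\in\N\}$ (using $I(\law(\law(X|\F_s)))=\law(X)$) to obtain that, off a small set, the entire sequence $(\law(X|\F_{t_n})(\omega))_n$ lies in a compact subset $\mathcal{K}_m\subset\prob(S)$, and then identifies the unique limit point through the countably many identities $f_k^\ast(\mu)=f_k^\ast(\law(X|\F_{t-})(\omega))$ — the same compactness-plus-point-separation pattern as in the proof of Proposition~\ref{prop:mvmCadlag}. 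Your route is shorter and skips the tightness step entirely, at the price of leaning on the stronger statement (c) of Lemma~\ref{lem:P(S)_ptsep_convdet} (the Blount--Kouritzin result), whose hypothesis of closure under multiplication you should state explicitly; it does hold for the family furnished by Lemma~\ref{lem:convergence.determining}. The paper's argument buys independence from that stronger lemma, needing only parts (a)/(b), and reuses the measure-valued Doob inequality developed anyway for the compactness results. Like the paper, you handle part (b) by simply swapping upward for backward martingale convergence, which is all that is required.
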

\begin{proof}
We only prove the first statement, since the proof of the second statement is a straight forward modification, replacing martingale convergence by backward martingale convergence. Fix a sequence $(t_n)_n$ strictly increasing to $t$. Since $I( \law(X|\F_s) ) = \law(X)$ for all $s \in [0,1]$,  Lemma~\ref{lem:MVM_tight} applied with $T=\{ t_n : n \in \N \}$ implies that for all $m \in \N$, there is a compact set $\mathcal{K}_m \subset \prob(S)$ and $\Omega_m \subset \Omega$ with $\P(\Omega_m) \ge 1-1/m$ such that for all $\omega \in \Omega_m$ and $n \in \N$
$$
\law( X | \F_{t_n} ) (\omega) \in \mathcal{K}_m.
$$
Let $\{f_k : k \in \N\}$ be a convergence determining sequence for $S$ that is closed under multiplication. Then $\{f_k^\ast : k \in \N\}$ is convergence determining in $\prob(S)$. Fix a version of $\law(X|\F_{t-})$.  As $f_k^\ast(\law(X|\F_{t_n}))$ is a version of $\E[f_k(X)|\F_{t_n}]$ for all $n\in \N$ and $f_k^\ast(\law(X|\F_{t-}))$ is a version of $\E[f_k(X)|\F_{t-}]$, the martingale convergence theorem implies that there there is a $\P$-full set $\Omega'$ such that for all $\omega \in \Omega'$ and all $k \in \N$ we have 
\begin{align}\label{eq:prf:martConv}
\lim_{n \to \infty} f_k^\ast(\law(X|\F_{t_n})(\omega)) = f_k^\ast( \law(X|\F_{t-})(\omega) ).
\end{align}
The set $\Omega'' := \Omega' \cap ( \bigcup_m \Omega_m  )$ is again $\P$-full. Let $\omega \in \Omega''$. Then there is $m \in \N$ such that $\law( X | \F_{t_n} ) (\omega) \in \mathcal{K}_m$ for all $n\in \N$. Hence, the sequence $(\law( X | \F_{t_n} ) (\omega))_n$ has a limit point $\mu \in \prob(S)$. By \eqref{eq:prf:martConv} any limit point $\mu$ satisfies $f_k^\ast(\mu) = f_k^\ast( \law(X|\F_{t-})(\omega) )$ for all $k \in \N$, hence $\mu= \law(X|\F_{t-})(\omega)$. This shows that  $\law( X |\F_{t_n} ) \to \law(X|\F_{t-}) $ on $\Omega''$.
\end{proof}

\begin{lemma}\label{lem:continuityfiltration}
	Let $X$ be an $S$-valued \cadlag{} process and let $\G_t:=\sigma^\P(X_s: s \leq t)$. Then for $t \in [0,1]$ the following are equivalent:
	\begin{enumerate}[label = (\roman*)]
		\item \label{it:lem.continuityfiltration.filtration} $(\G_s)_{s \in [0,1]}$ is left-continuous (right-continuous) at $t$;
		\item \label{it:lem.continuityfiltration.prob} $ \law( X | \G_{t_n}) \to \law(X|\G_t)$ a.s.\ whenever $t_n \to t$ and $(t_n)_n$ is increasing (decreasing); 
		\item \label{it:lem.continuityfiltration.law} $s \mapsto \law( \law(X|\G_s) )$ is left-continuous (right-continuous) at $t$.
	\end{enumerate}
\end{lemma}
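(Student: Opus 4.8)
The plan is to prove the left-continuous and right-continuous versions in parallel; I write out the left-continuous case, the right-continuous one being obtained by the symmetric modifications (forward replaced by backward martingale convergence, Lemma~\ref{lem:MVMconv}(a) by Lemma~\ref{lem:MVMconv}(b), $\G_{t-}$ by $\G_{t+}$, and the inclusions between $\G_t$ and $\G_{t\pm}$ reversed). Abbreviate $W_s:=\law(X|\G_s)\in\prob(S)$, and fix once and for all a countable family $\{f_k\}_k\subset C_b(S)$, closed under multiplication, such that $\{f_k^\ast\}_k$ is convergence determining---in particular point separating---on $\prob(S)$; such a family exists by Lemma~\ref{lem:convergence.determining} and Lemma~\ref{lem:P(S)_ptsep_convdet}. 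I claim that $(i)$, $(ii)$, $(iii)$ are each equivalent to
\[
(\star)\qquad \law(X|\G_{t-})=\law(X|\G_t)\quad\P\text{-a.s.}
\]

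For $(\star)\Leftrightarrow(ii)$: if $(t_n)_n$ increases to $t$, then $W_{t_n}=\law(X|\G_{t_n})\to\law(X|\G_{t-})$ $\P$-a.s.\ by Lemma~\ref{lem:MVMconv}(a), so the convergence in $(ii)$ holds along every such sequence exactly when $\law(X|\G_{t-})=\law(X|\G_t)$ a.s. For $(\star)\Leftrightarrow(i)$: the implication $(i)\Rightarrow(\star)$ is clear; conversely, fix $B\in\G_t$ and note that, since $B\in\G_1$, we may write $\mathbf{1}_B=h(X)$ $\P$-a.s.\ for a bounded Borel functional $h$ of the path. As $\E[h(X)\mid\G_s]$ is obtained by integrating $h$ against the regular conditional distribution $\law(X|\G_s)$, $(\star)$ gives $\E[h(X)\mid\G_t]=\E[h(X)\mid\G_{t-}]$ $\P$-a.s.; since $B\in\G_t$ the left side equals $h(X)$, so $\mathbf{1}_B$ is $\G_{t-}$-measurable and $\G_t=\G_{t-}$, i.e.\ $(i)$.

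For $(\star)\Leftrightarrow(iii)$: for each $k$ the real martingale $s\mapsto f_k^\ast(W_s)=\E[f_k(X)\mid\G_s]$ satisfies $f_k^\ast(W_s)\to\E[f_k(X)\mid\G_{t-}]=f_k^\ast(\law(X|\G_{t-}))$ in $L^1$ as $s\uparrow t$ by martingale convergence along the directed family $(\G_s)_{s<t}$; since $\{f_k^\ast\}_k$ is convergence determining, $W_s\to\law(X|\G_{t-})$ in probability, hence in law, as $s\uparrow t$. Consequently $s\mapsto\law(W_s)$ is left-continuous at $t$ iff $\law(\law(X|\G_{t-}))=\law(W_t)$, which holds under $(\star)$. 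Conversely, assume $(iii)$, so $\law(\law(X|\G_{t-}))=\law(W_t)$; by the tower property for conditional laws $\law(X|\G_{t-})=\sfE[W_t\mid\G_{t-}]$, so testing against $f_k$,
\[
f_k^\ast(\law(X|\G_{t-}))=\E[f_k^\ast(W_t)\mid\G_{t-}]\quad\text{and}\quad \law\big(f_k^\ast(\law(X|\G_{t-}))\big)=\law\big(f_k^\ast(W_t)\big).
\]
The second identity gives $\E[f_k^\ast(W_t)^2]=\E[f_k^\ast(\law(X|\G_{t-}))^2]$, whence $\E\big[\big(f_k^\ast(W_t)-f_k^\ast(\law(X|\G_{t-}))\big)^2\big]=\E[f_k^\ast(W_t)^2]-\E[f_k^\ast(\law(X|\G_{t-}))^2]=0$; as $\{f_k^\ast\}_k$ separates points of $\prob(S)$, we get $W_t=\law(X|\G_{t-})$ a.s., i.e.\ $(\star)$.

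The three equivalences yield the lemma. The main obstacle is the converse in $(\star)\Leftrightarrow(iii)$: promoting equality of the \emph{laws} of $\law(X|\G_{t-})$ and $\law(X|\G_t)$ to their $\P$-a.s.\ equality. This is precisely the $L^2$-rigidity of conditional expectation (equality of $L^2$-norms forces measurability with respect to the smaller $\sigma$-algebra), applied to the scalars $f_k^\ast(W_t)$ and then reassembled via the countable point-separating family. A smaller technical point, needed to identify the left limit of $s\mapsto\law(W_s)$ at $t$, is that $\E[Y\mid\G_s]\to\E[Y\mid\G_{t-}]$ in $L^1$ along any sequence $s\uparrow t$, not just monotone ones---which follows by approximating $Y\in L^1(\G_{t-})$ by a variable measurable with respect to some $\G_r$, $r<t$.
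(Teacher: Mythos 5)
Your proof is correct and follows essentially the same route as the paper: the easy implications rest on the measure-valued martingale convergence of Lemma~\ref{lem:MVMconv}, and the crucial step of upgrading equality of laws to almost sure equality of $\law(X|\G_{t-})$ and $\law(X|\G_t)$ is exactly the paper's ``two-step martingale with equal initial and terminal distribution'' argument, which you simply spell out via the $L^2$ identity. Your passage from $(\star)$ to $\G_t=\G_{t-}$ using a general $B\in\G_t$ written as a Borel functional of the path is only a mild variant of the paper's argument with cylinder sets (both ultimately using completeness of $\G_{t-}$), so the two proofs coincide in substance.
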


\begin{proof}
		As the proof of the corresponding claims of left- resp.\ right-continuity are simple modifications of each other, we only show the assertion for left-continuity.
	
%

	\ref{it:lem.continuityfiltration.filtration}$\implies$\ref{it:lem.continuityfiltration.prob}: Let $(t_n)_n$ be a sequence that increases to $t$. By Lemma~\ref{lem:continuityfiltration} we have  $\law(X|\G_{t_n}) \to \law(X|\G_{t-}) = \law(X|\G_{t})$ a.s.
	
	\ref{it:lem.continuityfiltration.prob}$\implies$\ref{it:lem.continuityfiltration.law}: Is trivial because almost sure convergence implies convergence in law. 
	
	\ref{it:lem.continuityfiltration.law}$\implies$\ref{it:lem.continuityfiltration.filtration}:
	The pair $(\law(X|\G_{t-}  ), \law(X|\G_t  )  )$ is a two step martingale which has the same initial and terminal distribution, according to \ref{it:lem.continuityfiltration.law}.  
	Therefore, we find
	\begin{equation}\label{eq:mtgequal}
		\law(X \, | \, \mathcal{G}_{t-}  )= \law(X | \G_t  ) \quad \mbox{a.s.}
	\end{equation}
	In order to check \ref{it:lem.continuityfiltration.filtration}, we need to show that $\G_t \subset \G_{t-}$. Since cylindrical sets of type
	\begin{align}\label{eq:prf:martcyl}
		\{ X_{t_1} \in A_1, \dots, X_{t_n} \in A_n  \},
	\end{align}
	where $n \in \N$,  $t_1,\dots, t_n \le t$ and $A_1, \dots, A_n \subset S$ Borel are generator of $\G_t$, so it suffices to show that all sets of type  \eqref{eq:prf:martcyl} are contained in $\G_{t-}$. Indeed, \eqref{eq:mtgequal} implies that we have a.s.
	$$
	\textbf{1}_{ 	\{ X_{t_1} \in A_1, \dots, X_{t_n} \in A_n  \} } = \P[  X_{t_1} \in A_1, \dots, X_{t_n} \in A_n |\G_t  ] = \P[  X_{t_1} \in A_1, \dots, X_{t_n} \in A_n |\G_{t-}  ].
	$$
	As $\G_{t-}$ is complete, we conclude $\{ X_{t_1} \in A_1, \dots, X_{t_n} \in A_n  \} \in \G_{t-}$. 
\end{proof}

\begin{proposition}\label{prop:contPt}
	Let $\fp X \in \FR$ and $\G_s := \sigma^{\P^{\fp X}}(\pp^\infty_r(\fp X) : r \le s)$, $s \in [0,1]$. The filtration $(\G_s)_{s \in [0,1]}$ is right-continuous and for $t \in [0,1]$ the following are equivalent:
	\begin{enumerate}[label = (\roman*)]
		\item The filtration $(\G_s)_{s \in [0,1]}$ is continuous at $t$, that is $\G_t=\G_{t-}$.
		\item The paths of $\pp^\infty(\fp X)$ are a.s.\ continuous at $t$.
		\item $t \in \cont(\fp X)$, that is $s \mapsto \law(\pp^\infty_s(\fp X))$ is continuous at $t$.
		 
	\end{enumerate}
\end{proposition}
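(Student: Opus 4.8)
The plan is to deduce all three equivalences, together with the asserted right-continuity of $(\G_s)_s$, from the abstract filtration-continuity criterion of Lemma~\ref{lem:continuityfiltration} applied to the \cadlag{} process $\pp^\infty(\fp X)$ on the stochastic base of $\fp X$ (this is legitimate since $M_\infty(S)$ is Lusin). The engine of the argument is the identity
\[
\law\bigl(\pp^\infty(\fp X)\,\bigm|\,\G_s\bigr)\;=\;F\bigl(\pp^\infty_s(\fp X)\bigr)\qquad\text{for every }s\in[0,1],
\]
holding $\P^{\fp X}$-a.s., where $F$ is the homeomorphism of Lemma~\ref{lem:ppOmegaPlus1}. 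Indeed, $\pp^\infty(\fp X)$ is adapted to $(\F^{\fp X}_t)_t$, hence $\sigma(\pp^\infty_s(\fp X))\subset\G_s\subset\F^{\fp X}_s$, so the Markov property \eqref{eq:ppMarkov} gives $\law(\pp^\infty(\fp X)\mid\G_s)=\law(\pp^\infty(\fp X)\mid\F^{\fp X}_s)=\pp^1_s(\bpp^\infty(\fp X))$, and this equals $F(\pp^\infty_s(\fp X))$ by \eqref{eq:F}. Pushing forward, $\law(\law(\pp^\infty(\fp X)\mid\G_s))=\prob(F)(\law(\pp^\infty_s(\fp X)))$, and since $\prob(F)$ is again a homeomorphism (Remark~\ref{rem:P}), the map $s\mapsto\law(\law(\pp^\infty(\fp X)\mid\G_s))$ is left-continuous (resp.\ right-continuous) at a point $t$ precisely when $s\mapsto\law(\pp^\infty_s(\fp X))$ is.

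Right-continuity of $(\G_s)_s$ then follows immediately: $\pp^\infty(\fp X)$ is \cadlag{}, hence right-continuous in probability, so $s\mapsto\law(\pp^\infty_s(\fp X))$ is right-continuous everywhere; by the previous paragraph and the right-continuity case of Lemma~\ref{lem:continuityfiltration} applied with $X:=\pp^\infty(\fp X)$ (whose augmented generated filtration is exactly $(\G_s)_s$), the filtration $(\G_s)_s$ is right-continuous at every $t\in[0,1]$.

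For the equivalence (i)$\Leftrightarrow$(iii): the left-continuity case of Lemma~\ref{lem:continuityfiltration} gives $\G_{t-}=\G_t$ iff $s\mapsto\law(\law(\pp^\infty(\fp X)\mid\G_s))$ is left-continuous at $t$, which by the first paragraph is iff $s\mapsto\law(\pp^\infty_s(\fp X))$ is left-continuous at $t$; as this map is always right-continuous, this amounts to continuity at $t$, i.e.\ $t\in\cont(\pp^\infty(\fp X))=\cont(\fp X)$. Since $(\G_s)_s$ is right-continuous, ``$\G_{t-}=\G_t$'' is exactly (i). For (ii)$\Leftrightarrow$(iii): each $\pp^n(\fp X)$ is a \cadlag{} measure-valued martingale, so by the characterization $\cont(Z)=\{t:\ Z_t=Z_{t-}\ \text{a.s.}\}$ of continuity points of measure-valued martingales, $t\in\cont(\pp^n(\fp X))$ iff $\pp^n_t(\fp X)=\pp^n_{t-}(\fp X)$ a.s. Since $\pp^n_s(\fp X)$ is a continuous image of $\pp^\infty_s(\fp X)$, we have $\cont(\pp^\infty(\fp X))\subset\cont(\pp^n(\fp X))$ for every $n$; conversely, if $\pp^n_t(\fp X)=\pp^n_{t-}(\fp X)$ a.s.\ for all $n$, then $\pp^\infty_t(\fp X)=\pp^\infty_{t-}(\fp X)$ a.s., so the \cadlag{} trajectories of $\pp^\infty(\fp X)$ are a.s.\ continuous at $t$, whence $\law(\pp^\infty_s(\fp X))\to\law(\pp^\infty_t(\fp X))$ as $s\to t$, i.e.\ $t\in\cont(\pp^\infty(\fp X))$. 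Thus (iii) is equivalent to ``$\pp^\infty_t(\fp X)=\pp^\infty_{t-}(\fp X)$ a.s.'', which, \cadlag{} paths being automatically right-continuous, is precisely (ii).

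The one genuinely substantial step is the identity $\law(\pp^\infty(\fp X)\mid\G_s)=F(\pp^\infty_s(\fp X))$ with $F$ continuous: this is where the Markov and self-awareness structure of the iterated prediction process established earlier (\eqref{eq:ppMarkov} together with Lemma~\ref{lem:ppOmegaPlus1}) enters, converting the abstract criterion of Lemma~\ref{lem:continuityfiltration} into the concrete statement about $\cont(\fp X)$; I expect this to be the main load-bearing point, everything else being bookkeeping. The only mild technical nuisance is the joint-versus-marginal issue in the (ii)$\Leftrightarrow$(iii) argument, which is sidestepped by working with almost-sure left limits of the \cadlag{} trajectories rather than with the marginal laws directly.
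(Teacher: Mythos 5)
Your proof is correct and follows essentially the same route as the paper: both hinge on Lemma~\ref{lem:continuityfiltration} applied to $\pp^\infty(\fp X)$, with the Markov property \eqref{eq:ppMarkov} and the homeomorphism $F$ of Lemma~\ref{lem:ppOmegaPlus1} converting its criterion into statements about $s\mapsto\law(\pp^\infty_s(\fp X))$. The only (harmless) deviations are that you obtain right-continuity of $(\G_s)_s$ directly from right-continuity in probability of the \cadlag{} process rather than passing through $(\F_t^{\fp X})_t$, and you settle (ii)$\Leftrightarrow$(iii) componentwise via $\cont(Z)=\{t:\ Z_t=Z_{t-}\ \text{a.s.}\}$ instead of via criterion (ii) of Lemma~\ref{lem:continuityfiltration}.
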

\begin{proof}
As the filtration $(\F_t^{\fp X})_{t \in [0,1]}$ is assumed to be right-continuous, the implication \ref{it:lem.continuityfiltration.filtration} $\implies$ \ref{it:lem.continuityfiltration.law} in Lemma~\ref{lem:continuityfiltration} yields that $t \mapsto \law(\law(\pp^\infty(\fp X) |\F_t^{\fp X} ))$ is right-continuous. By \eqref{eq:ppMarkov} we have $\law(\law(\pp^\infty(\fp X) |\F_t^{\fp X} )) = \law(\law(\pp^\infty(\fp X) |\G_t ))$, thus the implication \ref{it:lem.continuityfiltration.law} $\implies$ \ref{it:lem.continuityfiltration.filtration} in Lemma~\ref{lem:continuityfiltration} yields that $(\G_t)_t$ is right-continuous. 

In order to show the claimed equivalence, we apply Lemma~\ref{lem:continuityfiltration} to $\pp^\infty(\fp X)$. Again by imposing \eqref{eq:ppMarkov}, we can conclude that for $t \in [0,1]$ that the following are equivalent:
\begin{enumerate}
	\item[(i)] $\G_t=\G_{t-}$.
	\item[(ii')] We have $\pp^1_{t_n}(\bpp^\infty(\fp X)) \to \pp^1_t(\bpp^\infty(\fp X))$ a.s.\ for every increasing sequence $t_n \to t$.
	\item[(iii')] $s \mapsto \law(\pp^1_s(\pp^\infty(\fp X)))$ is continuous at $t$.	
\end{enumerate}
Note that (ii') is equivalent to a.s.\ continuity of the paths at $t$ because $\pp^1(\bpp^\infty(\fp X))$ has \cadlag\ paths. Theorem~\ref{thm:ppFeller} yields that (ii) is equivalent to (ii')  and that (iii) is equivalent to (iii').
\end{proof}


\section{The space of filtered random variables}
\label{sec:Space}
The aim of this section is to rigorously define the space of filtered processes modulo equivalence, to introduce the Hoover--Keisler topologies $\HK_r$, $r \in \N_0 \cup \{ \infty \}$ on this space, and to prove the main topological properties of this space.

\subsection{Equivalence classes of filtered random variables}

\begin{definition}\label{def:HK}
For $r \in \N_0 \cup \{\infty \}$ we define the equivalence relation $\approx_r$ on the class of filtered random variables $\FR(S)$ by
$$
\fp X \approx_r \fp Y \iff \law (\pp^r(\fp X)) = \law (\pp^r(\fp Y)).
$$
We call the factor space
$$
\uFR(S) := \FR(S) /_{\approx_\infty}
$$
the space of $S$-valued filtered random variables.

For $r \in \N_0 \cup \{\infty \}$ we define the Hoover--Keisler topology of rank $r$, denoted by $\HK_r$, as the initial topology w.r.t.\ the mapping 
\begin{align}\label{eq:defHK}
\uFR(S) \to \prob(M_r) : \fp X \mapsto \law(\pp^{r}(\fp X)).
\end{align}
The Hoover--Keisler topology of rank $r=\infty$ is called the adapted weak topology and we write $\HK$ instead of $\HK_\infty$. 

Moreover, $\law(\pp^\infty(\fp X))$ is called the adapted distribution of $\fp X$.
\end{definition}

\begin{remark}
\begin{enumerate}[label = (\alph*)]\label{rem:FP}
	\item We have $ \fp X\approx_0 \fp Y$ if and only if $\law(X) = \law(Y)$. The equivalence relation $\approx_1$ was already considered by Aldous \cite{Al81}. Two processes are called \emph{synonymous} if their rank 1 prediction processes have the same law.  
	\item\label{it:remHKb}Lemma~\ref{lem:R} implies that for $0 \le k \le r \le \infty$ the equivalence relation $\approx_r$ is finer than the relation $\approx_k$. It is straightforward to translate the example given in \cite[Section~7]{BaBePa21} to the continuous setup.
  { }This yields that $\approx_r$ is \emph{strictly} finer than  $\approx_k$, if $k <r$. In view of Definition~\ref{def:iterated_pp} of the prediction process of order $\infty$, it is clear that $\fp X \approx_\infty \fp Y $ if and only if $\fp X \approx_r \fp Y $ for all $r < \infty$. 
	\item\label{it:remHKc} Note that the map $\uFR  \to \prob(M_\infty) : \fp X \mapsto \law(\pp^\infty(\fp X))$ is an embedding. This follows because the map is injective (its kernel is precisely  the equivalence relation $\approx_\infty$, which is factored out in the definition of $\uFR$) and the topology $\HK$ is defined as the initial topology w.r.t.\ this map.
 As $\prob(M_\infty)$ is separable metrizable and these properties are inherited by subspaces, $(\uFR,\HK)$ is separable metrizable as well. We will later see in Corollary~\ref{cor:FPPolish} that $(\uFR,\HK)$ is a Lusin space\footnote{This statement is not trivial because despite being ``separable metrizable'',
 the property ``Lusin'' is in general not inherited by subspaces.} and that $(\uFR,\HK)$ is Polish, if $S$ is Polish.
	\item The considerations in \ref{it:remHKb} imply that $(\uFR,\HK_r)$ is not Hausdorff for $r < \infty$. However, it is easy to see that is still a separable pseudometrizable space.

\end{enumerate}
\end{remark}
\begin{notation}
	For the rest of the paper $r$ will always denote the rank of the Hoover--Keisler topology $\HK_r$. If we state a result for $\HK_r$ it is always meant for all $r\in \N_0\cup\{\infty\}$, unless we explicitly specify something else. If a result is only true for $r=\infty$ we state it as result for $\HK$ (without any index). 
\end{notation}

Indeed, $\HK$ is the weakest topology that is stronger than all topologies $\HK_r$, $r \in \N$:
\begin{lemma}\label{lem:ppninfty}
Let $(\fp X^n)_n$ be a sequence in $\uFR$ and $\fp X \in \uFR$. Then $\fp X^n \to \fp X$ in $\HK$ if and only if $\fp X^n \to \fp X$ in $\HK_r$ for all $r \in \N$.
\end{lemma}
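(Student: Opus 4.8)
The plan is to exploit the continuous ``projection'' maps $R^{n,k}\colon M_n(S)\to M_k(S)$ from Lemma~\ref{lem:R}, which satisfy $R^{n,k}(\pp^n(\fp X))=\pp^k(\fp X)$ pathwise, together with the fact that $M_\infty(S)$ is (up to the homeomorphism $\pi$ from Lemma~\ref{lem:product_map}) the countable product $\prod_{n\geq 1}M_n(S)$. Note first that $\pp^\infty(\fp X)$ and each $\pp^n(\fp X)$ depend only on the $\approx_\infty$-class of $\fp X$, since $\approx_\infty$ refines every $\approx_n$; hence all the laws below are well defined on $\uFR(S)$.

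For the easy implication, suppose $\fp X^m\to\fp X$ in $\HK$, i.e.\ $\law(\pp^\infty(\fp X^m))\to\law(\pp^\infty(\fp X))$ in $\prob(M_\infty(S))$. Fix $n\in\N$. From $R^{\infty,n}(\pp^\infty(\fp X))=\pp^n(\fp X)$ we get $\law(\pp^n(\fp X))=\prob(R^{\infty,n})(\law(\pp^\infty(\fp X)))$, and likewise for every $\fp X^m$. As $R^{\infty,n}$ is continuous, $\prob(R^{\infty,n})$ is continuous (Remark~\ref{rem:P}), whence $\law(\pp^n(\fp X^m))\to\law(\pp^n(\fp X))$, which is exactly $\fp X^m\to\fp X$ in $\HK_n$.

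For the converse, assume $\fp X^m\to\fp X$ in $\HK_n$ for every $n\in\N$. As in the proof of Lemma~\ref{lem:ppOmegaPlus1}, Lemma~\ref{lem:product_map} provides a homeomorphism $\pi\colon\prod_{n\geq 1}M_n(S)\to M_\infty(S)$ under which $\pp^\infty(\fp X)$ corresponds to $(\pp^n(\fp X))_{n\geq 1}$; since $\prob(\pi)$ is then a homeomorphism (Remark~\ref{rem:P}), it suffices to prove $\law((\pp^n(\fp X^m))_{n\geq 1})\to\law((\pp^n(\fp X))_{n\geq 1})$ in $\prob(\prod_{n\geq 1}M_n(S))$. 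By Lemma~\ref{lem:LusinProdComp}(a) this reduces to showing, for each $N\in\N$, that $\law(\pp^1(\fp X^m),\dots,\pp^N(\fp X^m))\to\law(\pp^1(\fp X),\dots,\pp^N(\fp X))$. But for $k\leq N$ we have $\pp^k(\fp X)=R^{N,k}(\pp^N(\fp X))$, so the tuple $(\pp^1(\fp X),\dots,\pp^N(\fp X))$ is the image of $\pp^N(\fp X)$ under the continuous map $(R^{N,1},\dots,R^{N,N})$, giving $\law(\pp^1(\fp X),\dots,\pp^N(\fp X))=\prob((R^{N,1},\dots,R^{N,N}))(\law(\pp^N(\fp X)))$, and similarly for the $\fp X^m$. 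Since $\fp X^m\to\fp X$ in $\HK_N$ means $\law(\pp^N(\fp X^m))\to\law(\pp^N(\fp X))$, continuity of $\prob((R^{N,1},\dots,R^{N,N}))$ yields the required convergence of the $N$-th marginal, hence of all marginals, hence the claim.

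The only step carrying real content is the reduction of $\prob(M_\infty(S))$-convergence to convergence of the finite-index marginals $\law(\pp^1,\dots,\pp^N)$, which is precisely Lemma~\ref{lem:LusinProdComp}(a) (and is where Prohorov's theorem in the Lusin setting enters); everything else is bookkeeping with the functor $\prob(\cdot)$ and the tower $R^{n,k}$, so I expect the write-up to be very short.
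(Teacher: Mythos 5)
Your argument is correct and follows essentially the same route as the paper's proof: identify $\prob(M_\infty)$-convergence with convergence of $\law((\pp^n)_{n\in\N})$ via Lemma~\ref{lem:product_map}, reduce to finite-index marginals via Lemma~\ref{lem:LusinProdComp}(a), and pass between the tuple $(\pp^1,\dots,\pp^N)$ and $\pp^N$ alone using the continuous maps $R^{N,k}$ from Lemma~\ref{lem:R}. The only cosmetic difference is that you treat the two implications separately (using $R^{\infty,n}$ for the easy direction) where the paper writes a single chain of equivalences.
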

\begin{proof}
We deduce from Lemma~\ref{lem:product_map} that $\law(\pp^\infty(\fp X^n)) \to \law(  \pp^\infty(\fp X))$ is equivalent to the convergence of $\law((\pp^r(\fp X^n))_{r\in \N})$ to  $\law((\pp^r(\fp X))_{r\in \N})$. Lemma~\ref{lem:LusinProdComp} implies that this is again equivalent to $\law(\pp^1(\fp X^n),\dots,\pp^r(\fp X^n)  ) \to \law(\pp^1(\fp X),\dots,\pp^r(\fp X) )$ for all $r \in \N$. Lemma~\ref{lem:R} provides for $k<n$ the existence of a continuous function $R^{n,k}$ that satisfies $R^{n,k}(\pp^r(\fp Y)) = \pp^{k}(\fp Y)$, hence convergence of  $\law(\pp^1(\fp X^n),\dots,\pp^r(\fp X^n)  )$ to $ \law(\pp^1(\fp X),\dots,\pp^r(\fp X) )$ is equivalent to $\law(\pp^r(\fp X^n)  ) \to \law(\pp^r(\fp X) )$. All in all, we have shown that $\law(\pp^\infty(\fp X^n)) \to \law(  \pp^\infty(\fp X))$ if and only if $\law(\pp^r(\fp X^n)  ) \to \law(\pp^r(\fp X) )$ for all $r \in \N$.
\end{proof}

We introduce a notion of applying a function $f : S_1 \to S_2$ to an $S_1$-valued filtered random variable that will be useful later on.
\begin{definition}\label{def:diamond}
Let $\fp X \in \uFR(S_1)$ and $f : S_1 \to S_2$ be Borel. Then we define $f \diamond \fp X \in \uFR(S')$ as
$$
f \diamond \fp X := (\Omega^{\fp X}, \F^{\fp X}, \P^{\fp X}, (\F_t^{\fp X})_t, f \circ X  ).
$$
\end{definition}

\begin{proposition}\label{prop:diamond}
Let $\fp X, \fp Y \in \FR(S)$ and $f : S \to S'$ be Borel.
\begin{enumerate}[label = (\alph*)]
	\item If $\fp X \approx_r \fp Y$, then $ f \diamond \fp X \approx_r f \diamond \fp Y$. 
	\item The operation  $ \fp X \mapsto f \diamond \fp X$ is well defined from  $\uFR(S_1)$ to  $\uFR(S_2)$.
	\item The operation $ \fp X \mapsto f \diamond \fp X$ inherits the following properties from the function $f : S_1 \to S_2$ to  : injective, surjective, bijective, Borel, continuous, being a topological embedding.
	\item If $f$ is continuous, we have $\cont(f \diamond \fp X) \supseteq \cont(\fp X)$. 
\end{enumerate}
\end{proposition}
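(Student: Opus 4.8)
The plan is to prove the four assertions in order, reducing (b), (c), (d) to (a) together with earlier structural results. The conceptual heart of the matter is a compatibility statement: applying a continuous function $f$ to a filtered random variable commutes, at the level of prediction processes, with a corresponding continuous operation on the iterated path spaces $M_n$. Concretely, I would first establish by induction on $n$ that there is a continuous map $\mathsf{F}^n_f : M_n(S_1) \to M_n(S_2)$ such that $\pp^n(f \diamond \fp X) = \mathsf{F}^n_f(\pp^n(\fp X))$ for all $\fp X \in \FR(S_1)$. For $n=0$ this is just $\mathsf{F}^0_f = f$. For the inductive step, note $\pp^{n+1}_t(f\diamond\fp X) = \law(\pp^n(f\diamond\fp X) \mid \F^{\fp X}_t) = \law(\mathsf{F}^n_f(\pp^n(\fp X)) \mid \F^{\fp X}_t) = \prob(\mathsf{F}^n_f)_\# \law(\pp^n(\fp X)\mid\F^{\fp X}_t) = \prob(\mathsf{F}^n_f)\big(\pp^{n+1}_t(\fp X)\big)$; since $f$ and hence $\mathsf{F}^n_f$ is continuous, $\prob(\mathsf{F}^n_f)$ is continuous, and it induces (via $\Psi(\prob(\mathsf F^n_f))$, cf.\ Proposition~\ref{prop:Psi_lift}) a continuous map on the \cadlag{} path spaces which one checks maps $M_{n+1}(S_1)$ into $M_{n+1}(S_2)$ (it respects the terminal Dirac condition because $\prob(\mathsf F^n_f)(\delta_x)=\delta_{\mathsf F^n_f(x)}$). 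This gives $\mathsf F^{n+1}_f$. For $n=\infty$ one assembles $\mathsf F^\infty_f := \pi \circ (\mathsf F^1_f,\mathsf F^2_f,\dots)\circ\pi^{-1}$ using Lemma~\ref{lem:product_map}, which is continuous.

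Given this, assertion (a) is immediate: if $\law(\pp^n(\fp X)) = \law(\pp^n(\fp Y))$, then pushing forward by $\mathsf F^n_f$ gives $\law(\pp^n(f\diamond\fp X)) = \law(\pp^n(f\diamond\fp Y))$, i.e.\ $f\diamond\fp X\approx_n f\diamond\fp Y$. Assertion (b) — well-definedness on $\uFR$ — is the special case $n=\infty$ of (a). For (c), continuity of $\fp X\mapsto f\diamond\fp X$ follows because, by definition of $\HK_n$, it suffices to check that $\fp X\mapsto\law(\pp^n(f\diamond\fp X)) = \prob(\mathsf F^n_f)\big(\law(\pp^n(\fp X))\big)$ is continuous, which holds since $\prob(\mathsf F^n_f)$ is continuous (Remark~\ref{rem:P}) and $\fp X\mapsto\law(\pp^n(\fp X))$ is continuous by definition of the topology. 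For injective/surjective/bijective and Borel/embedding: when $f$ is injective so is $\mathsf F^n_f$ (induction, using that $\prob(\cdot)$ and $\Psi(\cdot)$ preserve injectivity, cf.\ Remark~\ref{rem:P}), hence $\prob(\mathsf F^n_f)$ is injective, which forces $\fp X\mapsto f\diamond\fp X$ to be injective on $\uFR$; similarly for surjectivity one must check that every $\fp Y\in\uFR(S_2)$ with law supported appropriately arises — here one uses that a surjective $f$ admits a Borel right inverse $g$ (Lusin spaces are Borel subsets of Polish spaces, so one may invoke the Jankov–von Neumann / Lusin–Novikov uniformization) and $f\diamond(g\diamond\fp Y)\approx_\infty\fp Y$; the embedding property then follows by combining injectivity, continuity, and — for the inverse — the fact that $\mathsf F^n_f$ being an embedding makes $\prob(\mathsf F^n_f)$ an embedding (Remark~\ref{rem:P}).

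For (d), suppose $f$ is continuous and $t\in\cont(\fp X)$, i.e.\ $s\mapsto\law(\pp^\infty_s(\fp X))$ is continuous at $t$. By Proposition~\ref{prop:contPt} this is equivalent to the paths of $\pp^\infty(\fp X)$ being a.s.\ continuous at $t$. Since $\pp^\infty(f\diamond\fp X) = \mathsf F^\infty_f(\pp^\infty(\fp X))$ pathwise (this is the content of the $n=\infty$ case, read trajectory by trajectory, which requires care because $\mathsf F^\infty_f$ acts on whole \cadlag{} paths — but the construction above is exactly a composition operator, so $\pp^\infty_t(f\diamond\fp X) = $ [the value at $t$ of $\mathsf F^\infty_f$ applied to the path $\pp^\infty(\fp X)$], and $\mathsf F^\infty_f$ being induced by continuous maps on the state spaces preserves continuity of a path at any given time $t$), continuity of the paths of $\pp^\infty(\fp X)$ at $t$ forces continuity of the paths of $\pp^\infty(f\diamond\fp X)$ at $t$, whence $t\in\cont(f\diamond\fp X)$ by Proposition~\ref{prop:contPt} again. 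This proves $\cont(f\diamond\fp X)\supseteq\cont(\fp X)$.

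The main obstacle I anticipate is the bookkeeping in the inductive construction of $\mathsf F^n_f$ — specifically, verifying at each stage that the composition operator $\Psi(\prob(\mathsf F^n_f))$ genuinely restricts to a map $M_{n+1}(S_1)\to M_{n+1}(S_2)$ (the terminal-Dirac constraint $f(1)\in\delta(M_n)$ must be preserved) and that it acts on the prediction process as the composition claimed, rather than merely tracking marginals. Once the identity $\pp^n(f\diamond\fp X)=\mathsf F^n_f(\pp^n(\fp X))$ with $\mathsf F^n_f$ continuous is in hand, everything else is a routine transfer of properties of $f$ through the functorial operations $\prob(\cdot)$, $\Psi(\cdot)$, $D(\cdot)$, using Remark~\ref{rem:P} and Proposition~\ref{prop:Psi_lift}. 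The surjectivity half of (c) is the one place where a genuinely new ingredient (Borel uniformization of a surjection between Lusin spaces) enters.
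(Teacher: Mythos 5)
Your core construction is exactly the paper's proof: the paper establishes by induction the identity $\pp^n(f\diamond\fp X)=(\Psi\circ\prob)^n(f)(\pp^n(\fp X))$ a.s.\ (your $\mathsf{F}^n_f$), with the same inductive step $\pp^{n+1}_t(f\diamond\fp X)=\prob\bigl((\Psi\circ\prob)^n(f)\bigr)(\pp^{n+1}_t(\fp X))$ upgraded to a pathwise identity of \cadlag{} processes via uniqueness of \cadlag{} versions of terminating measure-valued martingales (Remark~\ref{rem:TermMVM}), and then (a), (b), (d) and the topological parts of (c) are read off as you do. The genuine gap is that your argument is carried out only for \emph{continuous} $f$, while the proposition is stated for Borel $f$, and (a), (b) and the items ``injective, surjective, bijective, Borel'' of (c) are claimed in that generality. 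The naive extension of your induction to Borel $f$ fails: composition with a merely Borel map does not preserve \cadlag{} paths, so $\Psi(\prob(\mathsf{F}^n_f))$ need not map $M_{n+1}(S_1)$ into $M_{n+1}(S_2)$, and the identification of the \cadlag{} version of $(\law(\pp^n(f\diamond\fp X)\mid\F^{\fp X}_t))_{t\in[0,1]}$ with a composition operator applied to $\pp^{n+1}(\fp X)$ breaks down. The paper closes this with an explicit reduction you omit: replace the topology on $S_1$ by a stronger Polish topology generating the same Borel sets and rendering $f$ continuous (\cite[Theorem 13.11]{Ke95}), and observe that (a), (b), and all parts of (c) except ``continuous'' and ``topological embedding'' do not depend on the topology of $S_1$. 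You need this (or an equivalent) step for the statement as given.

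A smaller point: for the surjectivity part of (c) you invoke a Borel right inverse of $f$ via Jankov--von Neumann / Lusin--Novikov. A Borel surjection between Lusin spaces need not admit a \emph{Borel} right inverse; Jankov--von Neumann yields only a universally measurable selection. One can still run your argument with such a selection (the underlying probability spaces carry complete $\sigma$-algebras, so $g\circ Y$ is a legitimate random variable and $f\diamond(g\diamond\fp Y)$ has terminal variable $Y$), or follow the paper, which deduces (c) from the preservation properties of the functors $\prob$ and $\Psi$ (Remark~\ref{rem:P}, Proposition~\ref{prop:Psi_lift}); either way the detail needs adjusting. Your route through Proposition~\ref{prop:contPt} for (d) is a harmless detour: once the intertwining identity holds with $\mathsf{F}^\infty_f$ acting continuously on the state space, continuity of $t\mapsto\law(\pp^\infty_t(\fp X))$ at $t$ transfers directly, which is how the paper reads (d) off the same identity.
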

\begin{proof}
We assume that $f$ is continuous and discuss at the end of the proof how to relax this assumption. First we prove by induction on $r$ that we have 
\begin{align}\label{eq:DiamondSomething}
\pp^r( f \diamond \fp X) =  (\Psi \circ \prob)^r(f) ( \pp^r(\fp X ) ) \quad \text{a.s.}
\end{align}
This is trivial for $r=0$ because $\pp^0(f \diamond \fp X ) = f(X) = f(\pp^0(\fp X))$. Assume that \eqref{eq:DiamondSomething} is true for $r$, then we have for all $t \in [0,1]$ a.s.
\begin{equation}\label{eq:prf:diamond}
\begin{aligned}
\pp^{r+1}_t(f \diamond \fp X ) &= \law( \pp^r( f \diamond \fp X) |\F_t^{\fp X}  ) =  \law( \, (\Psi \circ \prob)^n(f) ( \pp^r(\fp X ) ) \, |\F_t^{\fp X}  ))\\ & = \prob( (\Psi \circ \prob)^r(f)  ) ( \law(\pp^r(\fp X) | \F_t^{\fp X} )) = \prob( (\Psi \circ \prob)^r(f)  ) (  \pp^{r+1}_t(\fp X) ).
\end{aligned}
\end{equation}
As $f$ is continuous, $(\Psi \circ \prob)^{r+1}(f)$ maps \cadlag{} paths to \cadlag{} paths (cf.\ Proposition~\ref{prop:Psi_lift}), so $(\Psi \circ \prob)^{r+1}(f) ( \pp^{r+1}(\fp X ) )$ is a \cadlag{} process. Using Remark~\ref{rem:TermMVM} we  conclude 
$\pp^{r+1}( f \diamond \fp X) =  (\Psi \circ \prob)^{n+1}(f) ( \pp^{r+1}(\fp X ) )$ from \eqref{eq:prf:diamond}.

This immediately implies (a), (b) and (d). Also (c) follows then from the respective properties of the functors $\Psi$ and $\prob$, cf.\ Remark~\ref{rem:P} and Proposition~\ref{prop:Psi_lift}. 

If $f : S_1  \to S_2$ is not continuous but merely Borel, we may replace the topology on $S_1$ by a stronger Polish topology that generates the same Borel sets and renders the map $f : S_1 \to S_2$ continuous, cf.\ \cite[Theorem 13.11]{Ke95}. Note that (a), (b), and (c) except for the items  ``continuous'' and ``topological embedding'', are statements that do not depend on the topology of $S_1$. Hence it is legitimate to replace the topology for the proof. 
\end{proof}
\subsection{Canonical representatives}
Formally, $\fp X \in \uFR$ is an equivalence class of filtered random variables whose prediction processes of rank $\infty$ have the same law. In this section we construct a canonical representative of these equivalence classes. 
This canonical representative will allow us to characterize the probability measures on $M_r$ that are the distribution of a prediction process of rank $r$. As we will see below, this  is also a crucial ingredient for the proof of the compactness result in the next section.

Let $\fp X \in  \FR$ be given. Whenever $k \le r$ (or $k<r$ if $r=\infty$), the process $\pp^k(\fp X)$ is a measure-valued martingale w.r.t.\ $(\F_t^{\fp X} )_{t \in [0,1]}$, so in particular,  $\pp^k(\fp X)$ is also a measure-valued martingale w.r.t.\ the filtration generated by $\pp^r(\fp X)$. The latter is just a property of the joint law $( \pp^r(\fp X), \pp^k(\fp X) )$. As $\pp^k(\fp X)$ is a function of $\pp^r(\fp X)$ (cf.\  Lemma~\ref{lem:R}), it is in fact just a property of the law of $\pp^r(\fp X)$. It turns out that this property already characterizes the laws of predictions processes among all probabilities on $M_r$. To make this precise, we need to introduce some notation:

\begin{notation}\label{not:Z}
Fix $r \in \N \cup \{ \infty \}$. We denote the canonical process on 

$$
M_r \subset \begin{cases}
	D(\prob(M_{r-1})) & r \in \N \\
	D(\prod_{k=0}^\infty \prob(M_k)) & r = \infty 
\end{cases}
$$	
by $Z^r = (Z^r_t)_{t \in [0,1]}$, i.e.\ $Z^r$ is a \cadlag{} process with values in $\prob(M_{r-1})$ if $r < \infty$, and in $\prod_{k=0}^\infty \prob(M_k)$ if $r= \infty$.  For $k < r$ we define 
$$
Z^k := R^{r,k} : M_r \to M_k \subset D(\prob(M_{k-1})),
$$
so $Z^k = (Z^k_t)_{t \in [0,1]}$ is a \cadlag{} process with values in $\prob(M_{k-1})$. Finally, we define the $S$-valued random variable
\begin{align}\label{eq:canonRepX}
X := Z^0 := \delta^{-1}(Z^1_1).
\end{align}
\end{notation}

\begin{definition}\label{def:consTermin} A probability
$\mu \in \prob(M_r)$ is called consistently terminating martingale law, if for all $1 \le k <r$ it holds under $\mu$ that 
\begin{itemize}
    \item 
$Z^{k+1}$ is a measure-valued martingale w.r.t.\ the filtration generated by $Z^r$ and 
\item $Z^{k+1}$ terminates at $Z^k$.  
\end{itemize}
\end{definition}

\begin{theorem}\label{thm:CharPpLaw}
Let $\mu \in \prob(M_r)$. Then the following are equivalent:
\begin{enumerate}[label = (\roman*)]
	\item The probability  $\mu$ is a consistently terminating martingale law.  \label{it:CharPpLaw1}
	\item There exists some $\fp X \in \FR$ such that $\mu = \law(\pp^r(\fp X))$. \label{it:CharPpLaw2}
\end{enumerate}	
\end{theorem}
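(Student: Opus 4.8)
The implication \ref{it:CharPpLaw2} $\implies$ \ref{it:CharPpLaw1} is the easy direction, and I would dispatch it first. Given $\fp X \in \FR$ with $\mu = \law(\pp^n(\fp X))$, Lemma~\ref{lem:R} identifies $Z^k$ under $\mu$ with the law of $\pp^k(\fp X)$ for $k < n$, and Remark~\ref{rem:afterDefPp}\ref{it:afterDefPpC} (together with the discussion around Definition~\ref{def:iterated_pp}) tells us that each $\pp^{k+1}(\fp X)$ is a measure-valued martingale w.r.t.\ $(\F_t^{\fp X})_t$ terminating at $\pp^k(\fp X)$. Since $\pp^n(\fp X)$ is adapted to $(\F_t^{\fp X})_t$, the filtration generated by $\pp^n(\fp X)$ is coarser, and a martingale w.r.t.\ a finer filtration is a martingale w.r.t.\ the coarser one (as long as it stays adapted, which it does since $\pp^{k+1}(\fp X) = R^{n,k+1}(\pp^n(\fp X))$ is a function of $\pp^n(\fp X)$). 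The termination property $Z^{k+1}$ terminates at $Z^k$ transfers verbatim. Hence $\mu$ is a consistently terminating martingale law.

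The substantial direction is \ref{it:CharPpLaw1} $\implies$ \ref{it:CharPpLaw2}. Here the natural candidate is the ``canonical'' filtered random variable built directly on $M_n$: take $\Omega := M_n$, $\P := \mu$, let $(\F_t)_{t\in[0,1]}$ be the right-continuous $\mu$-augmentation of the filtration generated by the canonical process $Z^n$ (equivalently, by the vector $(Z^k)_{k \le n}$, since each $Z^k$ is a continuous function of $Z^n$), $\F := \F_1$, and $X := Z^0 = \delta^{-1}(Z^1_1)$ as in \eqref{eq:canonRepX}. I would then set $\fp X^\mu := (M_n, \F, \mu, (\F_t)_t, X)$ and verify that $\law(\pp^n(\fp X^\mu)) = \mu$. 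The key claim, to be proved by induction on $k$ from $0$ up to $n$, is that
\[
\pp^k(\fp X^\mu) = Z^k \qquad \mu\text{-a.s.}
\]
For $k = 0$ this is the definition of $X$. For the inductive step, assuming $\pp^k(\fp X^\mu) = Z^k$ a.s., I need $\pp^{k+1}_t(\fp X^\mu) = \law(Z^k \mid \F_t) = Z^{k+1}_t$ a.s.\ for all $t$, and then use \cadlag-ness of both sides (Remark~\ref{rem:TermMVM}) to upgrade to indistinguishability. The equality $\law(Z^k \mid \F_t) = Z^{k+1}_t$ is where the consistently-terminating-martingale hypothesis does its work: because $Z^{k+1}$ is a measure-valued martingale in the filtration of $Z^n$ terminating at $Z^k$, Remark~\ref{rem:TermMVM} gives $Z^{k+1}_t = \law(Z^k \mid \F^{Z^n}_t)$, and one then checks that conditioning on the right-continuous augmentation $\F_t$ gives the same thing — this uses Lemma~\ref{lem:condLawSelfAware} and the conditional-independence structure, exactly as in the proof of Proposition~\ref{prop:contPt}, together with the fact that $\sigma(Z^{k+1}_t) \subset \F_t$ so that $Z^{k+1}_t$ already ``sees'' everything $\F_t$ knows about $Z^k$. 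For $n = \infty$ one additionally invokes that $\sigma(Z^\infty_t)$ is generated by $\bigcup_k \sigma(Z^k_t)$ to push the finite-rank identities through to rank $\infty$, mirroring the argument for \eqref{eq:ppMarkov}.

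The main obstacle I anticipate is the bookkeeping around the filtration: one must be careful that the filtration $(\F_t)_t$ generated (and augmented) by $Z^n$ is rich enough that $X = Z^0$ is $\F_1$-measurable and $\pp^k(\fp X^\mu)$ is adapted, yet not so rich that conditioning on $\F_t$ differs from conditioning on the raw filtration $\sigma(Z^n_r : r \le t)$ — the content of Lemma~\ref{lem:condLawSelfAware} and \eqref{eq:ppCondIndep} is precisely that these coincide for the relevant conditional laws. A secondary technical point is the passage from the a.s.\ identity $\pp^{k+1}_t(\fp X^\mu) = Z^{k+1}_t$ for each fixed $t$ to an identity of processes, which is handled by right-continuity of both sides and a countable dense set of times. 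Once the inductive claim is established, the case $k = n$ (interpreting $Z^n$ as the identity on $M_n$, cf.\ Notation~\ref{not:Z} with the convention $R^{n,n} = \id$) yields $\pp^n(\fp X^\mu) = \id_{M_n}$ a.s., hence $\law(\pp^n(\fp X^\mu)) = \mu$, completing the proof.
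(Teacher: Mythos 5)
Your overall architecture is exactly the paper's: the easy direction \ref{it:CharPpLaw2}$\implies$\ref{it:CharPpLaw1} via Lemma~\ref{lem:R} and the observation that the martingale/termination properties are properties of $\law(\pp^n(\fp X))$ alone, and the hard direction via the canonical filtered random variable $\fp X^\mu$ on $M_n$ with the induction $\pp^k(\fp X^\mu)=Z^k$ a.s., upgraded to indistinguishability by \cadlag ness (Remark~\ref{rem:TermMVM}). This is precisely Proposition~\ref{prop:canonRep}.

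There is, however, a gap in how you justify the key step of the induction, namely $\law(Z^k\mid \F_t)=Z^{k+1}_t$ where $\F_t=\G_{t+}$ is the right-continuous augmentation of $\G_t=\sigma^\mu(Z^n_s:s\le t)$. The hypothesis only gives $\law(Z^k\mid \G_t)=Z^{k+1}_t$, and the problem is to pass to the \emph{larger} $\sigma$-algebra $\G_{t+}$. Lemma~\ref{lem:condLawSelfAware} cannot do this: it allows you to \emph{coarsen} the conditioning $\sigma$-algebra from $\G$ down to any $\mathcal H$ with $\sigma(\law(X\mid\G))\subset\mathcal H\subset\G$, but it says nothing about enlarging $\G_t$ to $\G_{t+}$, and the assertion that ``$Z^{k+1}_t$ already sees everything $\F_t$ knows about $Z^k$'' is exactly what has to be proved, not an input. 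Likewise, appealing to the argument of Proposition~\ref{prop:contPt} is circular here: that proposition (and \eqref{eq:ppMarkov}) is proved for prediction processes of a given $\fp X\in\FR$, whose filtration is right-continuous by definition, which is the very structure you are in the middle of constructing. The paper closes this step with backward martingale convergence (Lemma~\ref{lem:MVMconv}(b)) combined with right-continuity of the \cadlag{} paths of $Z^{k+1}$: for $s\searrow t$, $\law(Z^k\mid\G_s)=Z^{k+1}_s\to Z^{k+1}_t$ and $\law(Z^k\mid\G_s)\to\law(Z^k\mid\G_{t+})$ a.s., while $t=1$ is handled by $\F_1=\G_1$ (completion by null sets is harmless). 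With that substitution your induction goes through; also note that for $n=\infty$ the final step is simpler than you suggest --- one just assembles $Z^\infty_t=(Z^j_t)_j=(\pp^j_t(\fp X^\mu))_j=\pp^\infty_t(\fp X^\mu)$, with no generation argument for $\sigma(Z^\infty_t)$ needed.
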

We have already established $\ref{it:CharPpLaw2} \implies \ref{it:CharPpLaw1}$ in the consideration at the beginning of this section. The other direction $\ref{it:CharPpLaw1} \implies \ref{it:CharPpLaw2}$ is a consequence of the following proposition:

\begin{proposition}\label{prop:canonRep}
Let $\mu \in \prob(M_r)$ be a consistently terminating martingale law. Define  $\G_t := \sigma^\mu(Z^r_s : s\le t)$,  $\F_t := \G_{t+}, t\in [0,1]$ and 
\begin{align}\label{eq:canonRep}
\fp X^\mu = (M_r, \mathcal{B}_{M_r}, \mu, (\F_t)_{t \in [0,1]}, X),
\end{align}
where $\mathcal{B}_{M_r}$ denotes the (completed) Borel $\sigma$-algebra on $M_r$ and $X$ is defined as in \eqref{eq:canonRepX}.

Then we have $Z^k=\pp^k(\fp X^\mu)$ a.s.\ for all $k \le r$.
\end{proposition}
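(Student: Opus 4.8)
The plan is to prove $Z^k=\pp^k(\fp X^\mu)$ $\mu$-a.s.\ by induction on $k$, the engine being the characterization of terminating measure-valued martingales in Remark~\ref{rem:TermMVM}. The base case $k=0$ is immediate: by \eqref{eq:canonRepX} we have $Z^0=X$, and $\pp^0(\fp X^\mu)=X$ by definition of the iterated prediction process.

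For the inductive step, let $k+1\le n$ and assume $Z^k=\pp^k(\fp X^\mu)$ a.s. Since $\mu$ is a consistently terminating martingale law, $Z^{k+1}$ is, under $\mu$, a measure-valued martingale with respect to the filtration generated by $Z^n$ — and hence also with respect to its $\mu$-completion $(\G_t)=(\sigma^\mu(Z^n_s:s\le t))$ — and it satisfies $Z^{k+1}_1=\delta_{Z^k}$. First I would upgrade these properties to the filtration $(\F_t)_{t\in[0,1]}=(\G_{t+})_{t\in[0,1]}$ of $\fp X^\mu$: adaptedness of $Z^{k+1}$ to $(\F_t)$ is clear since $Z^{k+1}_t$ is $\G_t$-measurable, and the measure-valued martingale property carries over to $(\F_t)$ because $Z^{k+1}$ has \cadlag{} paths — testing against $f^\ast$ with $f\in C_b(M_k)$, for $s<t$ one gets
\[
\E[f^\ast(Z^{k+1}_t)\mid\F_s]=\lim_{u\downarrow s}\E[f^\ast(Z^{k+1}_t)\mid\G_u]=\lim_{u\downarrow s}f^\ast(Z^{k+1}_u)=f^\ast(Z^{k+1}_s)
\]
by backward martingale convergence and right-continuity of $u\mapsto Z^{k+1}_u$, hence $\sfE[Z^{k+1}_t\mid\F_s]=Z^{k+1}_s$. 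Thus $Z^{k+1}$ is a \cadlag{} measure-valued $(\F_t)$-martingale terminating at $Z^k=\pp^k(\fp X^\mu)$, and Remark~\ref{rem:TermMVM} identifies it, up to indistinguishability, with the unique \cadlag{} version of $\big(\law(\pp^k(\fp X^\mu)\mid\F_t)\big)_{t\in[0,1]}$ — which is exactly $\pp^{k+1}(\fp X^\mu)$ by the definition of the continuous-time prediction process. This closes the induction for all finite $k\le n$.

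For $n=\infty$, the above yields $Z^k=\pp^k(\fp X^\mu)$ a.s.\ for every $k\in\N_0$; since $\pp^\infty_t(\fp X^\mu)=(\pp^k_t(\fp X^\mu))_{k\ge 1}$ while, by Notation~\ref{not:Z}, the $k$-th coordinate of $Z^\infty_t$ equals $Z^{k+1}_t$ (through $Z^{k+1}=R^{\infty,k+1}(Z^\infty)$), the homeomorphism of Lemma~\ref{lem:product_map} lets me assemble the countably many coordinatewise a.s.\ identities into $Z^\infty=\pp^\infty(\fp X^\mu)$ a.s.

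The step I expect to require the most care is the passage from the filtration generated by $Z^n$ — with respect to which the hypothesis supplies the martingale and termination properties of $Z^{k+1}$ — to the right-continuous completed filtration $(\F_t)$ through which $\pp^{k+1}(\fp X^\mu)$ is defined; after that it is bookkeeping with the maps $R^{n,k}$ (one only has to keep in mind that, for finite $n$, these are built from evaluations at time $1$, which is harmless here precisely because termination is part of the hypothesis). No new estimate beyond Remark~\ref{rem:TermMVM} should be needed.
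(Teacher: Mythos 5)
Your proof is correct and takes essentially the same route as the paper's: induction on $k$, with the crucial step being the passage from $\G_t$ to $\F_t=\G_{t+}$ via backward martingale convergence combined with right-continuity of the \cadlag{} paths of $Z^{k+1}$, conclusion of the inductive step through Remark~\ref{rem:TermMVM}, and coordinatewise assembly for the case $n=\infty$. The only cosmetic difference is that you verify the $(\F_t)$-martingale property of $Z^{k+1}$ by testing against $f^\ast$ with $f\in C_b(M_k)$, whereas the paper takes the backward limit directly at the level of conditional laws via Lemma~\ref{lem:MVMconv}; both amount to the same convergence argument.
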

From now on $\fp X^\mu$ will always denote the filtered random variable defined in \eqref{eq:canonRep}. Before we give the proof of Proposition~\ref{prop:canonRep}, we stress the following immediate consequence of this proposition: 
\begin{corollary}\label{cor:canonRep}
Let $\fp X \in \FR$. Then we have $\fp X \approx_\infty \fp X^{\law(\pp^\infty(\fp X))}$. In particular, every $\HK$-equivalence class contains a representative that is defined on a standard Borel probability space.
\end{corollary}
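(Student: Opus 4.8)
The plan is to obtain this as an essentially immediate consequence of Proposition~\ref{prop:canonRep}, once we check that its hypothesis is satisfied. Write $\mu := \law(\pp^\infty(\fp X))$. By construction $\mu$ is the law of a prediction process of order $\infty$, so the implication $\ref{it:CharPpLaw2}\Rightarrow\ref{it:CharPpLaw1}$ of Theorem~\ref{thm:CharPpLaw} --- which was already verified in the considerations preceding Proposition~\ref{prop:canonRep} --- tells us that $\mu \in \prob(M_\infty)$ is a consistently terminating martingale law. In particular the filtered random variable $\fp X^\mu$ of \eqref{eq:canonRep} is well defined, and by the convention fixed after Proposition~\ref{prop:canonRep} it is precisely $\fp X^{\law(\pp^\infty(\fp X))}$.

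Next I would apply Proposition~\ref{prop:canonRep} with the index $n = \infty$. It yields $Z^k = \pp^k(\fp X^\mu)$ $\mu$-a.s.\ for every $k \le \infty$; specializing to $k = \infty$ gives $\pp^\infty(\fp X^\mu) = Z^\infty$ $\mu$-a.s. Since $Z^\infty$ is, by Notation~\ref{not:Z}, the canonical process on $M_\infty$, i.e.\ the identity map on $M_\infty \subset D(\prod_{k=0}^\infty \prob(M_k))$, its law under $\mu$ is $\mu$ itself. Combining the last two facts, $\law(\pp^\infty(\fp X^\mu)) = \mu = \law(\pp^\infty(\fp X))$, which is by definition the statement $\fp X \approx_\infty \fp X^\mu$. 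This proves the first assertion.

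For the second assertion it remains to note that $\fp X^\mu$ is carried by the probability space $(M_\infty, \mathcal{B}_{M_\infty}, \mu)$. The space $M_\infty$ is Lusin: $\prob(\cdot)$ and $D(\cdot)$ preserve the Lusin property (Remark~\ref{rem:P} and Proposition~\ref{prop:pseudo-paths}), countable products of Lusin spaces are Lusin, and $M_\infty$ is a Borel subset of $D(\prod_{k=0}^\infty \prob(M_k))$ by its very definition. Hence $(M_\infty, \mathcal{B}_{M_\infty})$ is a standard Borel space, so $\fp X^\mu$ is a representative of the $\HK$-class of $\fp X$ that lives on a standard Borel probability space.

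I do not expect a genuine obstacle; the proof is pure bookkeeping on top of Proposition~\ref{prop:canonRep}. The two points to be careful about are that Proposition~\ref{prop:canonRep} is indeed formulated to cover the case $n = \infty$ (it is, since it is stated for $\mu \in \prob(M_n)$ with $n \in \N \cup \{\infty\}$), and the small observation that the canonical process $Z^\infty$ on $M_\infty$ has law $\mu$ under $\mu$ --- both trivial but worth stating explicitly.
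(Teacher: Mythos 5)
Your proposal is correct and takes essentially the same route as the paper: the paper states the corollary as an immediate consequence of Proposition~\ref{prop:canonRep}, and your argument just makes the bookkeeping explicit (that $\mu=\law(\pp^\infty(\fp X))$ is a consistently terminating martingale law via the easy implication of Theorem~\ref{thm:CharPpLaw}, that the canonical process $Z^\infty$ is the identity on $M_\infty$ so $\law(\pp^\infty(\fp X^\mu))=\mu$, and that $M_\infty$ is Lusin, hence standard Borel). No gaps.
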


\begin{proof}[Proof of Proposition~\ref{prop:canonRep}]
We show $Z^k=\pp^k(\fp X)$ a.s.\ by induction on $k \le r$. Indeed, the claim is trivial for $k=0$ because $ \pp^0(\fp X^\mu) = X =Z^0$. 

Assume that the claim is true for $k \le r-1$. As $Z^{k+1}$ is a measure-valued martingale w.r.t.\ the filtration generated by $Z^r$ and terminates at $Z^k$, we have for all $t \in [0,1]$ 
$$
\law(\pp^{k}(\fp X^\mu)|\G_t) =  \law(Z^k|Z^r_s : s \le t) = Z^{k+1}_t \quad \text{a.s.}
$$
Lemma~\ref{lem:MVMconv} yields that for $t <1 $ we have
$$
\pp^{k+1}_t(\fp X^\mu) = \law(\pp^{k}(\fp X^\mu)|\G_{t+}) = \lim_{n \to \infty} \law(\pp^{k}(\fp X^\mu)|\G_{t+1/n}) = \lim_{n \to \infty}  Z^{k+1}_{t+1/n} = Z^{k+1}_t\quad \text{a.s.}, 
$$
where the last equality holds because $Z^{k+1}$ is \cadlag. As $\F_1=\G_1$, we have $\pp^{k+1}_1(\fp X^\mu) =Z^{k+1}_1$ as well. Hence, we conclude $Z^{k+1} = \pp^{k+1}(\fp X^\mu)$ by Remark~\ref{rem:TermMVM}.

In the case $r=\infty$, it remains to check the claim for $k = \infty$. Indeed, we have for all $t \in [0,1]$ 
$$Z^\infty_t = (Z^j_t)_{j \in \N} = (\pp^j_t(\fp X^\mu))_{j \in \N} = \pp^\infty_t(\fp X^\mu) \quad\text{ a.s.}$$
As both sides are \cadlag{} $\pp^{\infty}(\fp X^\mu) = Z^\infty$ a.s.
\end{proof}

\subsection{Compactness}
The main goal of this section is to prove the following compactness result for filtered random variables:
\begin{theorem}\label{thm:Compactness}
	A set $\A \subset \uFR(S)$ is relatively compact w.r.t.\ $\HK_r$ if and only if  $\{  \law(X) : \fp X \in A \}$ is relatively compact in $\prob(S)$.
\end{theorem}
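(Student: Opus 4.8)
I would prove the two implications separately, reducing the ``if'' part to the case $n=\infty$. For the ``only if'' direction: when $n\ge 1$ the composition $\uFR(S)\to\prob(M_n)\to\prob(S)$ sending $\fp X\mapsto\law(\pp^n(\fp X))\mapsto\prob(R^{n,0})(\law(\pp^n(\fp X)))=\law(X)$ is continuous, the first map by the definition of $\HK_n$ as an initial topology and the second because $R^{n,0}$ is continuous (Lemma~\ref{lem:R}); for $n=0$ the map $\fp X\mapsto\law(X)$ is itself the defining map of $\HK_0$. Since continuous images of relatively compact sets are relatively compact, relative compactness of $\mathcal A$ in $\HK_n$ forces relative compactness of $\{\law(X):\fp X\in\mathcal A\}$ in $\prob(S)$.

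For the ``if'' direction, since $\HK_n\subset\HK$ and all spaces in sight are (pseudo)metrizable, it suffices to treat $n=\infty$: an $\HK$-relatively compact set is a fortiori $\HK_n$-relatively compact because an $\HK$-convergent subsequence is $\HK_n$-convergent (cf.\ Lemma~\ref{lem:ppninfty}). So assume $\{\law(X):\fp X\in\mathcal A\}$ is relatively compact and fix a sequence $(\fp X^j)_j$ in $\mathcal A$. First I would show by induction on $n\in\N_0$ that $\{\law(\pp^n(\fp X^j)):j\in\N\}$ is relatively compact in $\prob(M_n)$. The base case $n=0$ is the hypothesis; for the step, recall (Remark~\ref{rem:afterDefPp}) that $\pp^{n+1}(\fp X^j)$ is a \cadlag\ measure-valued martingale terminating at $\pp^n(\fp X^j)$, so Corollary~\ref{cor:MVMcompTermin} with the Lusin space $M_n$ in place of $S$ turns relative compactness of $\{\law(\pp^n(\fp X^j))\}$ into relative compactness of $\{\law(\pp^{n+1}(\fp X^j))\}$ in $\mathcal{M}_0(M_n)$, which (as $M_{n+1}=e_1^{-1}(\delta(M_n))$ is a closed subspace of $D(\prob(M_n))$) amounts to relative compactness in $\prob(M_{n+1})$. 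A diagonal argument over $n$ (each $\prob(M_n)$ being separable metrizable) then produces a subsequence, still written $(\fp X^j)_j$, along which $\law(\pp^n(\fp X^j))\to\nu^n$ in $\prob(M_n)$ for every $n$; continuity of the maps $R^{n,k}$ gives $\prob(R^{n,k})(\nu^n)=\nu^k$ for $k\le n$, and Lemma~\ref{lem:product_map} together with Lemma~\ref{lem:LusinProdComp} yields $\nu\in\prob(M_\infty)$ with $\prob(R^{\infty,n})(\nu)=\nu^n$ for all $n$ and $\law(\pp^\infty(\fp X^j))\to\nu$.

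It remains to check that $\nu$ is a consistently terminating martingale law; then Theorem~\ref{thm:CharPpLaw} (i.e.\ Proposition~\ref{prop:canonRep}) furnishes $\fp X^\nu\in\FR$ with $\law(\pp^\infty(\fp X^\nu))=\nu$, and since $\fp X\mapsto\law(\pp^\infty(\fp X))$ is an embedding of $(\uFR,\HK)$ into $\prob(M_\infty)$ (Remark~\ref{rem:FP}) we get $\fp X^j\to[\fp X^\nu]$ in $\HK$, so $\mathcal A$ is sequentially, hence (by metrizability) relatively, $\HK$-compact. To verify that $\nu$ satisfies Definition~\ref{def:consTermin} at each level $k$: for the martingale requirement, each $\law(\pp^\infty(\fp X^j))$ is an adapted distribution, so under it the coordinate process $Z^{k+1}=\pr_k\circ Z^\infty$ is a measure-valued martingale in the filtration of $Z^\infty$; testing against bounded continuous $g:M_k\to[0,1]$, the real process $t\mapsto g^\ast(Z^{k+1}_t)=(g^\ast\circ\pr_k)(Z^\infty_t)$ is a genuine martingale in the filtration of $Z^\infty$ with bounded continuous integrand, so Lemma~\ref{lem:MartFClosed} (with $\prod_j\prob(M_j)$ in the role of $S$, and using that $\nu$ and all the $\law(\pp^\infty(\fp X^j))$ are concentrated on $M_\infty\subset D(\prod_j\prob(M_j))$) keeps it a martingale under $\nu$, and Lemma~\ref{lem:TestMVM} then gives that $Z^{k+1}$ is a measure-valued martingale under $\nu$. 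For the termination requirement, ``$Z^{k+1}$ terminates at $Z^k$'' reads $\pr_k(e_1(Z^\infty))=\delta_{R^{\infty,k}(Z^\infty)}$, an equality of two continuous maps $M_\infty\to\prob(M_k)$ (evaluation at $t=1$ is continuous, as are $\delta$ and $R^{\infty,k}$), hence it cuts out a closed subset of $M_\infty$ on which every $\law(\pp^\infty(\fp X^j))$ is concentrated, so by the portmanteau theorem $\nu$ is concentrated on it as well.

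The main obstacle I expect is this last paragraph: showing that the set of consistently terminating martingale laws is closed in $\prob(M_\infty)$. Its martingale half rests on the Meyer--Zheng-type closedness result Lemma~\ref{lem:MartFClosed} and on correctly identifying $Z^{k+1}$ as a continuous coordinate function of $Z^\infty$, so that ``martingale in the filtration of $Z^\infty$'' is a condition stable under weak limits; its termination half requires noticing that, although the terminating constraint couples the value at time $1$ with the entire lower-order path, it is nonetheless a closed condition because evaluation at the right endpoint is continuous in the Meyer--Zheng topology. Everything else — the induction via Corollary~\ref{cor:MVMcompTermin}, the diagonal extraction, and the passage to the canonical representative $\fp X^\nu$ — is routine once these closedness facts are in place.
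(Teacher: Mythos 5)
Your proposal is correct and takes essentially the same route as the paper: the converse via continuity of $\fp X \mapsto \law(X)$ (Lemma~\ref{lem:R}), and the forward direction by propagating relative compactness through the iterated prediction processes via Corollary~\ref{cor:MVMcompTermin} (the paper's Lemma~\ref{lem:ppnRelComp}), then showing the limit is a consistently terminating martingale law using Lemma~\ref{lem:MartFClosed} together with continuity of $e_1$ and $\delta$ (the paper's Lemma~\ref{lem:ppClosed}), and finally invoking Theorem~\ref{thm:CharPpLaw} and the embedding of $\uFR$ into $\prob(M_\infty)$. The only cosmetic differences are that you reduce the finite ranks to $n=\infty$ via Lemma~\ref{lem:ppninfty} and inline the two auxiliary lemmas (with a diagonal extraction in place of a direct appeal to Lemma~\ref{lem:LusinProdComp}(b)), whereas the paper treats every $n$ uniformly.
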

Using Prohorov's theorem (see Section~\ref{sec:ProbaLusin}) Theorem~\ref{thm:Compactness} implies: If $ \{ \law( X ) : \fp X \in \A \}$ is tight, then $\A\subset \uFR$ is relatively compact. This is an equivalence when the space $S$ is Polish.

The proof of Theorem~\ref{thm:Compactness} consists of two steps:
\begin{itemize}
	\item Preservation of compactness: We will show that $\{ \law(\pp^r(\fp X)) : \fp X \in \A \}$ is relatively compact in $\prob(M_r(S))$ if and only if $\{  \law(X) : \fp X \in \A \}$ is relatively compact in $\prob(S)$.
	\item Closedness of laws of prediction processes: Using Theorem~\ref{thm:CharPpLaw} we show that these form a closed subset of $\prob(M_r(S))$.
\end{itemize}

\begin{lemma}\label{lem:ppClosed}
The set of consistently terminating martingale measures is closed in $\prob(M_r)$.
\end{lemma}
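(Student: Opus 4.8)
The plan is to recognise the set in question as, essentially, the closed set produced by Lemma~\ref{lem:MartFClosed} applied to the stacked process $\mathbf Z := (Z^1,\dots,Z^n)$. First I would observe that the termination condition in Definition~\ref{def:consTermin} is automatic: for $z\in M_n$ and $k<n$ we have $R^{n,k+1}(z)\in M_{k+1}$, hence $Z^{k+1}_1(z)=R^{n,k+1}(z)(1)\in\delta(M_k)$, and since $R^{n,k}=\delta^{-1}\circ e_1\circ R^{n,k+1}$ (Lemma~\ref{lem:R}) this forces $Z^{k+1}_1=\delta_{Z^k}$ identically on $M_n$. Consequently $\mu\in\prob(M_n)$ is a consistently terminating martingale law if and only if, under $\mu$, the process $Z^{k+1}$ is a measure-valued martingale with respect to the filtration generated by $Z^n$ for every $k<n$.

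Next I would replace the filtration generated by $Z^n$ by the filtration $(\mathcal H_t)_t$ generated by $\mathbf Z=(Z^1,\dots,Z^n)$, which we view as a random variable with values in $D\bigl(\prod_{j=1}^n\prob(M_{j-1})\bigr)$ via Lemma~\ref{lem:product_map}, the stacking map $\Lambda\colon M_n\to D\bigl(\prod_{j=1}^n\prob(M_{j-1})\bigr)$, $z\mapsto\mathbf Z(z)$, being continuous by Lemmas~\ref{lem:R} and~\ref{lem:product_map}. I claim that for $\mu\in\prob(M_n)$ the property ``$Z^{k+1}$ is an MVM w.r.t.\ the filtration generated by $Z^n$ for all $k<n$'' is equivalent to ``$Z^{k+1}$ is an MVM w.r.t.\ $(\mathcal H_t)$ for all $k<n$''. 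For ``$\Rightarrow$'', Proposition~\ref{prop:canonRep} realises a consistently terminating martingale law as $\law(\pp^n(\fp X^\mu))$ with $Z^j=\pp^j(\fp X^\mu)$ $\mu$-a.s., so every $Z^j$ is adapted to the (right-continuously augmented) filtration generated by $Z^n$; thus $(\mathcal H_t)$ is contained in that filtration, and the tower property upgrades the martingale property from the larger to the smaller filtration $(\mathcal H_t)$. For ``$\Leftarrow$'', I would argue as in the proof of the nesting of the filtrations generated by prediction processes: if $Z^{k+1}$ is an $(\mathcal H_t)$-MVM terminating at $Z^k$, then $Z^{k+1}_t=\law(Z^k\mid\mathcal H_t)$ by Remark~\ref{rem:TermMVM}, and pushing forward by the restriction map $r_{[0,t]}$ gives ${r_{[0,t]}}_\#Z^{k+1}_t=\delta_{Z^k|_{[0,t]}}$ (here using that $Z^k$ is a coordinate of $\mathbf Z$), so $Z^k|_{[0,t]}$, in particular $Z^k_t$, is a measurable function of $Z^{k+1}_t$; a descending induction on $k$ (started from the trivial fact that $Z^n$ generates its own filtration) then shows that each $Z^j$, $1\le j\le n$, is adapted to the filtration generated by $Z^n$, and the tower property again yields the martingale property with respect to that filtration.

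To conclude, fix for each $k<n$ a countable point-separating family $\{f^k_j:j\in\N\}\subset C_b(M_k)$ of $[0,1]$-valued functions (Lemma~\ref{lem:convergence.determining}). By Lemma~\ref{lem:TestMVM} and the previous two steps, the set of consistently terminating martingale laws equals the intersection over $k<n$ and $j\in\N$ of the sets $\Lambda_\#^{-1}(\mathcal D_{k,j})$, where $\mathcal D_{k,j}$ is the set of $\nu\in\prob\bigl(D(\prod_{j}\prob(M_{j-1}))\bigr)$ under which the process obtained by applying the continuous $[0,1]$-valued function $\phi:=(f^k_j)^\ast\circ\pr_{k+1}$ to the canonical process is a martingale in the filtration generated by the canonical process. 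By Lemma~\ref{lem:MartFClosed} each $\mathcal D_{k,j}$ is closed; since $\mu\mapsto\Lambda_\#\mu$ is continuous, each $\Lambda_\#^{-1}(\mathcal D_{k,j})$ is closed in $\prob(M_n)$, and a countable intersection of closed sets is closed.

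The \emph{main obstacle} is the filtration bookkeeping of the second step. Definition~\ref{def:consTermin} refers to the filtration generated by $Z^n$ alone, whereas Lemma~\ref{lem:MartFClosed} requires the tested process to be a \emph{pointwise} continuous image of a process generating the relevant filtration; but $Z^{k+1}=R^{n,k+1}(Z^n)$ is not a pointwise function of $Z^n$ (the maps $R^{n,k+1}$ use evaluation at time $1$). This forces the passage to $(\mathcal H_t)$ and hence the verification that this enlargement does not change the class of consistently terminating martingale laws, which is precisely where Proposition~\ref{prop:canonRep} and the restriction identity from the nesting lemma enter. One also keeps in mind that for \cadlag\ processes the martingale property is insensitive to completion and right-continuous augmentation of the filtration, so that one may freely switch between raw and augmented filtrations in these arguments. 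The case $n=\infty$ is handled by the same scheme with the countable stack $(Z^1,Z^2,\dots)$ and the countable-product versions of Lemmas~\ref{lem:product_map} and~\ref{lem:MartFClosed}, or else by reducing it to the finite cases via the continuous maps $R^{\infty,m}$.
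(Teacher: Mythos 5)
Your finite-$n$ argument is correct, but it takes a genuinely different route from the paper. The paper's proof is a one-shot application of Lemma~\ref{lem:MartFClosed}: for $g \in C_b(M_k)$ it tests with $\phi := g^\ast \circ R^{n,k+1}$, treating $Z^{k+1}[g]$ directly as a test process of $Z^n$, and it handles the termination constraint $Z^{k+1}_1 = \delta_{Z^k}$ by noting it is preserved under weak limits because $e_1$ and $\delta$ are continuous. You instead pass to the stacked process $(Z^1,\dots,Z^n)$, prove (via Proposition~\ref{prop:canonRep} in one direction, and the restriction/termination argument with Remark~\ref{rem:TermMVM} in the other) that the martingale requirement of Definition~\ref{def:consTermin} with respect to the filtration of $Z^n$ is equivalent to the same requirement with respect to the filtration of the stack, and then apply Lemma~\ref{lem:MartFClosed} coordinatewise with the pointwise continuous test functions $(f^k_j)^\ast \circ \pr_{k+1}$. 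The obstacle you isolate --- that $Z^{k+1}_t = R^{n,k+1}(Z^n)(t)$ is not a pointwise function of the state $Z^n_t$, so Lemma~\ref{lem:MartFClosed} does not literally apply with the filtration of $Z^n$ alone --- is a real subtlety which the paper's short proof passes over; your detour through the stacked filtration is a legitimate and more explicit way to deal with it, at the price of the extra equivalence-of-filtrations step. For finite $n$ your observation that termination is automatic on $M_n$ (via $R^{n,k} = R^{k+1,k} \circ R^{n,k+1}$) is also correct.

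There is, however, a genuine gap at $n=\infty$, a case in which the lemma is actually used (e.g.\ for Theorem~\ref{thm:Compactness} and Corollary~\ref{cor:FPPolish}). Your claim that termination is automatic rests on the identity $R^{n,k} = R^{k+1,k}\circ R^{n,k+1}$, which Lemma~\ref{lem:R} provides only for $n<\infty$; on $M_\infty$ one has $R^{\infty,k} = \Psi(\pr_{k-1})$, and for a generic $z \in M_\infty$ the path $t \mapsto \pr_{k-1}(z(t))$ (that is, $Z^k$) is in no way determined by $\pr_k(z(1))$ (that is, by $Z^{k+1}_1$), since the definition of $M_\infty$ imposes no pathwise consistency between the coordinates. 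Hence for $k \ge 1$ the condition $Z^{k+1}_1 = \delta_{Z^k}$ is a genuine constraint, not a pathwise identity. Your fallback suggestions do not close this: reducing to finite $n$ via the maps $R^{\infty,m}$ only yields martingale properties with respect to the smaller filtrations generated by the individual $Z^m$, not with respect to the filtration generated by $Z^\infty$ as required by Definition~\ref{def:consTermin}, and running ``the same scheme'' for the countable stack leaves the termination conditions untreated. The repair is easy, though: for $n=\infty$ the filtration in Definition~\ref{def:consTermin} is that of the canonical process itself and $Z^{k+1}_t = \pr_k(Z^\infty_t)$ is a pointwise continuous function of the state, so Lemma~\ref{lem:MartFClosed} applies directly without any stacking; and the termination conditions determine a closed subset of $M_\infty$ because $e_1$, $\delta$ and $R^{\infty,k}$ are continuous, so they pass to weak limits by Portmanteau --- which is precisely how the paper argues the termination part.
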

\begin{proof}
We stick to Notation~\ref{not:Z}. Let $(\mu^n)_n$ be a sequence of consistently terminating martingale laws that converge to $\mu \in \prob(M_r)$. Fix $k<r$, let $g \in C_b(M_{k})$ and write $ \phi := g^\ast \circ R^{r,k+1} : M_r \to \R$. Then $\phi(Z^r_t)_{t \in [0,1]} = Z^{k+1}[g]$ is a martingale under $\mu^n$ w.r.t.\ the filtration generated by $Z^r$. Hence, we are precisely in the regime of Lemma~\ref{lem:MartFClosed} and conclude that the same is true under $\mu$, i.e.\ under $\mu$, $ Z^{k+1}[g]$ is a martingale w.r.t.\ the filtration generated by $Z^r$ for every $g \in C_b(M_{k})$. Hence $Z^{k+1}$ is a measure-valued martingale w.r.t.\ the filtration generated by $Z^r$.

The property that $Z^{k+1}$ terminates at $Z^k$ is exactly $Z^{k+1}_1 = \delta(Z^k)$ a.s. This is preserved by the limit $\mu^n \to \mu$ because evaluation at time $1$ and the mapping $\delta$ are both continuous.
\end{proof}

\begin{lemma}\label{lem:ppnRelComp}
Let $\A \subset \uFR(S)$. If $\{  \law(X) : \fp X \in \A \}$ is relatively compact in $\prob(S)$, then $\{ \law(\pp^r(\fp X)) : \fp X \in \A \}$ is relatively compact in $\prob(M_r(S))$. 
\end{lemma}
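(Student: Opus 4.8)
The plan is to argue by induction on $n$, the essential step being an application of the compactness criterion for terminating measure-valued martingales, Corollary~\ref{cor:MVMcompTermin}. Throughout we keep the standing hypothesis that $\{\law(X) : \fp X \in \A\}$ is relatively compact in $\prob(S)$. For $n=0$ there is nothing to prove, since $\pp^0(\fp X)=X$. Suppose now the claim holds for some $n<\infty$, i.e.\ $\{\law(\pp^n(\fp X)) : \fp X \in \A\}$ is relatively compact in $\prob(M_n(S))$. For every $\fp X \in \FR$ the process $\pp^{n+1}(\fp X)$ is a \cadlag\ measure-valued martingale with values in $\prob(M_n(S))$ that terminates at $\pp^n(\fp X)$ (cf.\ Remark~\ref{rem:afterDefPp}(a)); in particular its paths lie in $M_{n+1}(S)=\{f\in D(\prob(M_n(S))):f(1)\in\delta(M_n(S))\}$, so that $\law(\pp^{n+1}(\fp X))\in\mathcal{M}_0(M_n(S))\subset\prob(M_{n+1}(S))$. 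Applying Corollary~\ref{cor:MVMcompTermin} with $S$ replaced by the Lusin space $M_n(S)$, to the family $\{\pp^{n+1}(\fp X):\fp X\in\A\}$, the induction hypothesis yields that $\{\law(\pp^{n+1}(\fp X)):\fp X\in\A\}$ is relatively compact in $\mathcal{M}_0(M_n(S))$. Now $M_{n+1}(S)$ is the preimage of the closed set $\delta(M_n(S))$ under the continuous map $e_1$, hence a closed subspace of $D(\prob(M_n(S)))$; consequently $\prob(M_{n+1}(S))$, viewed inside $\prob(D(\prob(M_n(S))))$ as the set of measures concentrated on $M_{n+1}(S)$ (an identification of topologies, since $\prob$ preserves topological embeddings by Remark~\ref{rem:P}), is closed there by the portmanteau theorem, and $\mathcal{M}_0(M_n(S))$ is closed in turn. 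Therefore relative compactness in $\mathcal{M}_0(M_n(S))$ coincides with relative compactness in $\prob(M_{n+1}(S))$, which completes the induction step.

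For $n=\infty$ we exploit the product structure of $M_\infty(S)$. By Lemma~\ref{lem:product_map} the map $\pi$ identifies $D\big(\prod_{k=0}^\infty\prob(M_k(S))\big)$ homeomorphically with $\prod_{k=0}^\infty D(\prob(M_k(S)))$, and under the induced homeomorphism $\prob(\pi)$ (Remark~\ref{rem:P}) the measure $\law(\pp^\infty(\fp X))$ corresponds to $\law\big((\pp^{k+1}(\fp X))_{k\ge0}\big)$. By Lemma~\ref{lem:LusinProdComp}(b) the family $\{\law((\pp^{k+1}(\fp X))_{k\ge0}):\fp X\in\A\}$ is relatively compact in $\prob\big(\prod_k D(\prob(M_k(S)))\big)$ if and only if, for every $k$, the family of its $k$-th coordinate marginals $\{\law(\pp^{k+1}(\fp X)):\fp X\in\A\}$ is relatively compact in $\prob(D(\prob(M_k(S))))$; the latter holds by the finite-rank case already established, since relative compactness in the subspace $\prob(M_{k+1}(S))$ entails (as a continuous image of a relatively compact set) relative compactness in the ambient space $\prob(D(\prob(M_k(S))))$. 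Pulling this back along $\prob(\pi)$ shows that $\{\law(\pp^\infty(\fp X)):\fp X\in\A\}$ is relatively compact in $\prob\big(D(\prod_k\prob(M_k(S)))\big)$, and since all these measures are concentrated on the closed subspace $M_\infty(S)$ (again the $e_1$-preimage of a closed set), they are relatively compact in $\prob(M_\infty(S))$.

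The substantive input is entirely Corollary~\ref{cor:MVMcompTermin}, i.e.\ the Meyer--Zheng-type compactness Theorem~\ref{thm:M_compactness}; the only point that requires care is the bookkeeping of ambient spaces, namely that relative compactness is not lost when transported along the chain $\prob(S)\to\mathcal{M}_0(M_0(S))\to\prob(M_1(S))\to\mathcal{M}_0(M_1(S))\to\cdots$ and, for $n=\infty$, through the product identification. This is handled by the closedness assertions above (compact subsets of metrizable spaces are closed, and weak limits preserve being concentrated on a closed set). I do not anticipate a genuine obstacle here beyond keeping these identifications straight.
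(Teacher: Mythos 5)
Your proof is correct and follows essentially the same route as the paper: induction on $n$ via Corollary~\ref{cor:MVMcompTermin} applied to the terminating martingale $\pp^{n+1}(\fp X)$ for the finite ranks, and Lemma~\ref{lem:LusinProdComp}(b) together with the product identification of Lemma~\ref{lem:product_map} for $n=\infty$. The extra closedness bookkeeping you supply (transferring relative compactness between $\mathcal{M}_0(M_n)$, $\prob(M_{n+1})$ and the ambient spaces) is exactly what the paper's terser wording implicitly relies on.
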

\begin{proof}
	
Let $\A \subset \uFR(S)$ such that $\{  \law(X) : \fp X \in \A \}$ is relatively compact in $\prob(S)$.

We first prove the result for $r \in \N$ by induction.  The claim is trivial for $r=0$. Assume it is true for $r$, i.e.\ $\{ \law(\pp^r(\fp X)) : \fp X \in \A \}$ is relatively compact in $\prob(M_r(S))$. As $\pp^{r+1}(\fp X)$ terminates at $\pp^{r}(\fp X)$, Corollary~\ref{cor:MVMcompTermin} implies that $\{ \law(\pp^{r+1}(\fp X)) : \fp X \in \A \}$ is relatively compact in $\mathcal{M}_0(M_r)$. Recall that $\mathcal{M}_0(M_r)$ is the set of laws of terminating $\prob(M_r)$-valued \cadlag{} martingales. We have $\mathcal{M}_0(M_r) \subset \prob(M_{r+1})$ and  relative compactness in $\mathcal{M}_0(M_r)$ implies relative compactness in $\prob(M_{r+1})$.

Let $r=\infty$. Due to our previous considerations, we already know that $\law(\pp^r(\fp X)) : \fp X \in \A \}$ is relatively compact in $\prob(M_r(S))$ for all $r \in \N$. By Lemma~\ref{lem:LusinProdComp}(b) we can conclude relative compactness of $\{\law( (\pp^r(\fp X))_{r\in \N} ) : \fp X \in \A \}$ in $\prob(\prod_{j \in \N} M_j)$, and by Lemma~\ref{lem:product_map} this is equivalent to relative compactness of $\{ \law(\pp^\infty(\fp X)) : \fp X \in \A \}$ in $\prob(M_\infty)$.
\end{proof}
%

\begin{proof}[Proof of Theorem~\ref{thm:Compactness}]
		Let $\A \subset \uFR$ be such that $\{ \law(\fp X) : \fp X \in \A \} $ is relatively compact. 	 Lemma~\ref{lem:ppnRelComp} yields the relative compactness of $\{ \law(\pp^r(\fp X)) : \fp X \in \A \}$ in $\prob(M_r)$. Let $(\fp X^n)_n$ be a sequence in $\A$ and denote $\mu^n:= \law(\pp^r(\fp X^n))$. By relative compactness, there are a subsequence $(\mu^{n_k})_k$  and  $\mu \in \prob(M_r)$ such that $\mu^{n_k} \to \mu$. By Lemma~\ref{lem:ppClosed}, $\mu$ is a consistently terminating martingale law, hence by Theorem~\ref{thm:CharPpLaw}, there is  $\fp X \in \uFR$ such that $\mu= \law(\pp^{r}(\fp X))$. Therefore, $\fp X^{n_k} \to \fp X$ w.r.t.\ $\HK_r$. We conclude that  $\A$ is relatively compact in $(\uFR,\HK_r)$.
	
Conversely, assume that $\A$ is relatively compact in $(\uFR,\HK_r)$ for some $r \in \N \cup \{ \infty \}$. As the mapping $(\uFR,\HK) \ni \fp X \mapsto \law(X) \in \prob(S)$ is continuous (see Lemma~\ref{lem:R}), $\{  \law(X) : \fp X \in \A \}$ is relatively compact as well. 
\end{proof}

\begin{remark}
Theorem~\ref{thm:Compactness} was already established by Hoover in \cite[Theorem 4.3]{Ho91}. Depending on the reader's taste, our proof might be considered more rigorous or just more pedantic than the one given by Hoover.\footnote{While the work of Hoover appears extraordinarily innovative, some results are represented in an unusual framework and their proofs are not always entirely rigorous. E.g.\ the amalgamation theorem \cite[Theorem~3.2]{Ho92} is not true and a counterexample to the uniqueness result for SDEs in \cite[Theorem~3.3]{Ho87} can be found in  \cite{En02}.}
 Specifically, we aim to rigorously define the 
prediction processes as well as  the spaces where they take their values, we take some care to work out not only the case $r=1$, but also the inductive step $r \mapsto r+1$, as well as the limit step $r=\infty$.

\end{remark}

We close this section with two useful corollaries of the compactness result.

\begin{corollary}\label{cor:FPPolish}
If $S$ is Polish (Lusin), then $(\uFR(S),\HK)$ is Polish (Lusin) as well.
\end{corollary}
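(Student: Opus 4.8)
The plan is to realise $(\uFR(S),\HK)$ as a closed subspace of $\prob(M_\infty(S))$, which immediately settles the Lusin case, and then---in the Polish case, where this is not enough---to upgrade ``Lusin'' to ``Polish'' by combining the extended Prohorov theorem with a general principle, exactly as was done for $\mathcal M(S)$.

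\textbf{Step 1 (the path spaces are Lusin).} I would first record, by induction on $n\in\N$, that $M_n(S)$ is Lusin whenever $S$ is: the base case is $M_0(S)=S$; if $M_n(S)$ is Lusin then $\prob(M_n(S))$ is Lusin by Remark~\ref{rem:P}, hence $D(\prob(M_n(S)))$ is Lusin by Proposition~\ref{prop:pseudo-paths}\ref{it:pseudo-paths.cadlag_Borel}, and $M_{n+1}(S)=e_1^{-1}(\delta(M_n(S)))$ is a closed---hence Borel---subset of it, thus Lusin by Proposition~\ref{prop:lusin_facts}. For $n=\infty$ one uses that a countable product of Lusin spaces is Lusin (being a Borel subset of the product of the ambient Polish spaces), so $\prod_{k\ge0}\prob(M_k(S))$ and then $D(\prod_{k\ge0}\prob(M_k(S)))$ are Lusin, and $M_\infty(S)=e_1^{-1}(\prod_{k\ge0}\delta(M_k(S)))$ is a closed subspace of the latter.

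\textbf{Step 2 ($\uFR$ is a closed subspace; Lusin case).} By Remark~\ref{rem:FP}, the map $\fp X\mapsto\law(\pp^\infty(\fp X))$ is a topological embedding of $\uFR(S)$ into $\prob(M_\infty(S))$; by Theorem~\ref{thm:CharPpLaw} its image is precisely the set of consistently terminating martingale laws, which is closed by Lemma~\ref{lem:ppClosed}. Hence $(\uFR(S),\HK)$ is homeomorphic to a closed subspace of $\prob(M_\infty(S))$. If $S$ is Lusin, Step 1 and Remark~\ref{rem:P} give that $\prob(M_\infty(S))$ is Lusin, and a closed subspace of a Lusin space is Lusin, which proves the Lusin case.

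\textbf{Step 3 (Polish case).} Here lies the only real difficulty: $M_\infty(S)$ carries the Meyer--Zheng topology and is \emph{never} Polish, so the argument of Step 2 cannot be repeated with ``Polish'' in place of ``Lusin''. Instead I would apply Lemma~\ref{lem:Lusin2Polish}. By Step 2, $(\uFR(S),\HK)$ is Lusin; the map $\pi\colon\fp X\mapsto\law(X)=\prob(R^{\infty,0})(\law(\pp^\infty(\fp X)))$ is continuous (Lemma~\ref{lem:R}, Remark~\ref{rem:P}) into $\prob(S)$, which is Polish because $S$ is; and for any compact $K\subset\prob(S)$, the set $\pi^{-1}(K)=\{\fp X:\law(X)\in K\}$ is relatively compact in $(\uFR(S),\HK)$ by Theorem~\ref{thm:Compactness} (applied with $n=\infty$) and closed since $\pi$ is continuous, hence compact as $\uFR(S)$ is metrizable. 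Thus $\pi$ is a continuous map from a Lusin space into a Polish space under which preimages of compact sets are compact, and Lemma~\ref{lem:Lusin2Polish} concludes that $(\uFR(S),\HK)$ is Polish.
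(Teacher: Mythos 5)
Your proposal is correct and follows essentially the same route as the paper: the Lusin case via the closed embedding $\fp X\mapsto\law(\pp^\infty(\fp X))$ into $\prob(M_\infty(S))$ (Remark~\ref{rem:FP}, Theorem~\ref{thm:CharPpLaw}, Lemma~\ref{lem:ppClosed}), and the Polish upgrade via Lemma~\ref{lem:Lusin2Polish} applied to the continuous map $\fp X\mapsto\law(X)$ together with Theorem~\ref{thm:Compactness}. Your Step 1 and the compactness-of-preimages justification only spell out background the paper's proof takes for granted.
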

\begin{proof}
By Remark~\ref{rem:FP}\ref{it:remHKc}, $\uFR(S)$ is homeomorphic to a subset of the Lusin space $\prob(M_\infty(S))$. By Theorem~\ref{thm:CharPpLaw} and Lemma~\ref{lem:ppClosed}, this subset is closed. As closed subspaces of Lusin spaces are Lusin, we conclude that $\uFR(S)$ is Lusin.

If $S$ is assumed to be Polish, 
$$
\uFR(S) \to \prob(S)  : \fp X \mapsto \law(X) 
$$
is a continuous (cf.\ Lemma~\ref{lem:R}) map into a Polish space. Theorem~\ref{thm:Compactness} states precisely that preimages of compact sets are compact, so Lemma~\ref{lem:Lusin2Polish} implies that $(\uFR(S),\HK)$ is Polish.
\end{proof}

%
%

\subsection{Fixed points}
Tychonoff's fixed point theorem  states that if $V$ is a locally convex topological vector space and $K \subset V$ is a compact convex set, then every continuous function $f : K \to K$ has a fixed point. Theorem~\ref{thm:CharPpLaw} implies that $\uFP$ can be seen as a convex subset of $\prob( M_\infty)$, which is itself a convex subset of the  vector space of signed measures on $M_\infty$. Hence, we obtain the following fixed point result on the space of filtered processes $\uFP$

\begin{theorem}\label{thm:fixedpoint}
Let $f : \uFP \to \uFP$ be continuous and assume that $\{ \law(X) : \fp X \in f(\uFP) \}$ is tight. Then $f$ has a fixed point. 
\end{theorem}
\begin{proof}

Let $\widehat{M}_\infty$ be a compactification of $M_\infty$ (e.g.\ embed the Lusin space $M_\infty$ into $[0,1]^\N$ and take the closure therein). We denote the space of signed measures on $\widehat{M}_\infty$ as $\mathscr{M}(\widehat{M}_\infty)$ and equip it with the weak-$\ast$-topology as dual of the Banach space $C(\widehat{M}_\infty)$. 

In order to apply Tychonoff's fixed point theorem, we embed $\uFP$ into the locally convex vector space $\mathscr{M}(\widehat{M}_\infty)$ using the map
\[
\phi : \uFP \to \mathscr{M}(\widehat{  M }_\infty) : \fp X \mapsto \law(\pp^\infty(\fp X)). 
\]   
Note that $\phi$ is indeed an embedding because the adapted weak topology is the initial topology w.r.t.\ the map $\uFP \to \prob(  M _\infty) : \fp X \mapsto \law(\pp^\infty(\fp X))$, the weak topology on $\prob( M_\infty)$ is the subspace topology of the weak topology on $\prob(\widehat{ M}_\infty)$, cf.\ Remark~\ref{rem:P}, which is in turn the subspace topology of the weak-$\ast$-topology on $\mathscr M( \widehat{ M}_\infty )$. Further note that $\prob(M_\infty)$ is a convex subset of $\mathscr{M}(\widehat{  M }_\infty)$.

 By Theorem~\ref{thm:CharPpLaw} we can characterize the measures $\mu \in \prob(M_\infty)$ that are in the range of $\phi$ by equations that are linear in $\mu$, namely 
 \begin{align*}
    \int g_n(Z^r_t)f_1(Z_{s_1}^\infty) \cdots f_k(Z^\infty_{s_k}) d\mu &= \int g_n(Z^r_t)f_1(Z_{s_1}^\infty) \cdots f_k(Z^\infty_{s_k}) d\mu,\\
    \int g_r(Z^r_1) d\mu &= \int g_r(\delta_{Z^{r-1}}) d\mu, 
\end{align*}
for all $k,r \in \N$, all $s_1 \le \dots \le s_k \le s \le t$,  all $f_1, \dots, f_k \in C_b(\prod_{j=0}^\infty \prob(M_j))$ and  all $g_r \in C_b(\prob(M_{r-1}))$. Hence, $\phi(\uFP)$ is convex.

The assumption that $\{ \law(X) : \fp X \in f(\uFP) \}$ is tight, implies that there is a sequence $(K_n)_n$ of compact subsets of $S$ such that $\P^{\fp X}(X \in K_n) \ge 1-1/n$ for all $\fp X \in f(\uFP)$. The set  
\[
\A := \{ \fp X \in \uFP : \P^{\fp X}(X \in K_n) \ge 1-1/n \text{ for all } n \in \N \}
\]
is relatively compact in $\uFP$ by Theorem~\ref{thm:Compactness} and closed because $K_n \subset S$ is compact and hence closed. Therefore, $\A$ is compact and, as $\phi$ is continuous, $\phi(\A)$ is compact as well. Moreover, $\phi(\A)$ is convex because $\phi(\uFP)$ is convex and a measure $\mu \in \phi(\uFP)$ is in $\phi(\A)$ if and only if $\mu((Z^0)^{-1}(K_n)) \ge 1-1/n$ for all $n \in \N$ and the latter conditions are linear in $\mu$. 

Next, we apply Tychonoff's fixed point theorem to the map 
\[
F := \phi \circ f \circ \phi^{-1}  :  \phi(\A) \to \phi(\A).  
\]
Note that $F$ is continuous, because $\phi$ being an embedding implies that $\phi^{-1}: \phi(\uFP) \to \uFP$ is continuous. Further note that $F(\phi(\A)) \subset \phi(\A)$ because the range of $f$ is contained in $\A$. Therefore, Tychonoff's fixed point theorem yields the existence of $\mu \in \phi(\A)$ such that $F(\mu) = \mu$. Clearly, $\fp X := \phi^{-1}(\mu)$ is a fixed point of $f$.
\end{proof}

\section{Discretization}
\label{sec:Discr}
A natural way to discretize  filtered random variables in time is to  restrict the filtration $(\F_t)_{t \in [0,1]}$ to a finite set $T \subset [0,1]$. Throughout this section we fix a finite set of times $T = \{t_1,\ldots,t_N\}$ with $0 \le t_1 < \ldots < t_N = 1$. We consider the discretization operator
\begin{align}\label{eq:def.discretisation.FP}
	D_T : \FR(S) \to \FR_N(S) : \fp X \mapsto \fp X_T := (\Omega^{\fp X},\F^{\fp X}, \P^{\fp X}, (\F_{t_i}^{\fp X})_{i=1}^N , X).
\end{align}
The main result of this section is that this operator can be well defined on the factor spaces $\uFR$ and $\uFR_N$ and that we can  give conditions for continuity of $D_T$ at a point $\fp X$:

\begin{theorem}\label{thm:discr}
	The discretisation operator 
	$$
	D_T : (\uFR,\HK_r) \to (\uFR_N,\HK_r) :  \fp X \mapsto \fp X_T
	$$
	is well-defined and Borel measurable. Moreover, it is continuous at all $\fp X \in \uFR$ satisfying $T \subset \cont(\fp X)$, i.e.\ if $\fp X^n \to \fp X$ in $(\uFR,\HK_r)$ and if $\fp X$ satisfies $T \subset \cont(\fp X)$, then $\fp X^n_T \to \fp X_T$ in $(\uFR_N,\HK_r)$.
\end{theorem}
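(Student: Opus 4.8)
I would first record a pathwise formula expressing the discretised prediction processes $\pp^k(\fp X_T)$ through $\pp^k(\fp X)$; this immediately yields well-definedness on the factor spaces and Borel measurability. The continuity assertion is then obtained by an induction on the order $k$ that propagates convergence of finite-dimensional distributions, with the martingale compactness theory of Section~\ref{sec:MVM} as the engine. Throughout one may assume $n$ finite, since $\HK$-convergence implies $\HK_k$-convergence for every $k$ and in discrete time $\approx_\infty$ coincides with $\approx_{N-1}$ (cf.\ Remark~\ref{rem:afterDefPp}).

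\textbf{A pathwise formula.} By induction on $k$ I would construct Borel maps $\Theta^k_T\colon M_k(S)\to M^{(N)}_k(S)$ with $\Theta^0_T=\id_S$ and $\Theta^{k+1}_T(z)=\big(\prob(\Theta^k_T)(z(t_i))\big)_{i=1}^N$, and prove that $\pp^k(\fp X_T)=\Theta^k_T(\pp^k(\fp X))$ $\P^{\fp X}$-almost surely for every $\fp X\in\FR$. The inductive step uses that $\pp^{k+1}(\fp X)$ is the \cadlag\ modification of $(\law(\pp^k(\fp X)\mid\F^{\fp X}_t))_t$, so that $\pp^{k+1}_{t_i}(\fp X)=\law(\pp^k(\fp X)\mid\F^{\fp X}_{t_i})$ a.s., together with functoriality of conditional laws and the fact that the $i$-th $\sigma$-algebra of $\fp X_T$ equals $\F^{\fp X}_{t_i}$. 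Consequently $\law(\pp^k(\fp X_T))=\prob(\Theta^k_T)\big(\law(\pp^k(\fp X))\big)$, so $D_T$ descends to the Borel map $\prob(\Theta^k_T)$ on laws of prediction processes; in particular it respects $\approx_n$ and is Borel measurable.

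\textbf{Continuity.} Assume $\fp X^m\to\fp X$ in $\HK_n$ with $T\subseteq\cont(\fp X)$. Since each $R^{\infty,k}$ is continuous, $\law(\pp^k_s(\fp X))$ depends continuously on $\law(\pp^\infty_s(\fp X))$, hence $T\subseteq\cont(\fp X)\subseteq\cont(\pp^k(\fp X))$ for all $k\le n$; applying the same observation after conditioning (and the Lemma preceding Theorem~\ref{thm:weakFDD}), each conditional law $\pp^{k+1}_{t_i}(\fp X)=\law(\pp^k(\fp X)\mid\F^{\fp X}_{t_i})$ is $\P^{\fp X}$-a.s.\ a consistently terminating martingale law on $M_k$ whose continuity points contain $T$. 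I would then prove by induction on $k\le n$: (a) if $\fp X^m\to\fp X$ in $\HK_k$ then $\pp^k(\fp X^m_T)\to\pp^k(\fp X_T)$ in law, and (b) the restriction of $\prob(\Theta^k_T)$ to the (closed, by Lemma~\ref{lem:ppClosed}) set of consistently terminating martingale laws on $M_k$ is continuous at every law whose continuity points contain $T$. For $k\in\{0,1\}$, (a) is $\law(X^m)\to\law(X)$ resp.\ Corollary~\ref{cor:MVMMargConv2} applied to $\pp^1(\fp X^m)\to\pp^1(\fp X)$, and (b) is trivial for $k=0$ and is exactly Theorem~\ref{thm:weakFDD} (convergence of finite-dimensional distributions of measure-valued martingales at continuity points) for $k=1$. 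For the step $k\rightsquigarrow k+1$: Corollary~\ref{cor:MVMMargConv2} applied to the measure-valued martingales $\pp^{k+1}(\fp X^m)\to\pp^{k+1}(\fp X)$ gives joint convergence in law of $(\pp^{k+1}_{t_i}(\fp X^m))_i$ to $(\pp^{k+1}_{t_i}(\fp X))_i$; since $\pp^{k+1}_i(\fp X_T)=\prob(\Theta^k_T)(\pp^{k+1}_{t_i}(\fp X))$ by the pathwise formula and the random measures involved are a.s.\ supported in the closed set of (b) with continuity points containing $T$, the continuous mapping theorem together with the inductive case (b) yields (a) at level $k+1$. For (b) at level $k+1$, one unfolds $\Theta^{k+1}_T$: a sequence converging within the consistently terminating martingale laws on $M_{k+1}$ converges in particular in $\mathcal M(M_k)$, so Theorem~\ref{thm:weakFDD} gives convergence of the time-$t_i$ marginals; these marginals are again supported in the consistently terminating martingale laws on $M_k$ with continuity points containing $T$, where $\prob(\Theta^k_T)$ acts continuously by the inductive case (b), and a further application of the continuous mapping theorem completes (b). Taking $k=n$ in (a) finishes the proof.

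\textbf{Main obstacle.} The genuine difficulty — and the reason the hypothesis $T\subseteq\cont(\fp X)$ is indispensable — is that the evaluation maps $e_t$ underlying $\Theta^k_T$ are nowhere continuous in the Meyer--Zheng topology when $t<1$, so no single continuous-mapping step is available; continuity can only be recovered by restricting to the subspaces of (iterated) measure-valued martingale laws, where Theorem~\ref{thm:weakFDD} reinstates continuity of evaluation precisely at continuity points. I expect the bulk of the work to be the bookkeeping inside the induction: checking that every object produced — the discrete prediction processes, the conditional laws $\pp^{k+1}_{t_i}(\fp X)$, and all their time-$t_i$ marginals — lands in the appropriate closed subspace, that these subspaces are stable under the time-marginal operation, and that $T\subseteq\cont(\fp X)$ propagates to containment of $T$ in the continuity points of all these laws. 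The analytic inputs (Corollary~\ref{cor:MVMMargConv2}, Theorem~\ref{thm:weakFDD}, Lemma~\ref{lem:ppClosed}, Lemma~\ref{lem:condLawSelfAware}) are already in place.
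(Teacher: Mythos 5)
Your pathwise formula and the resulting well-definedness and Borel measurability are exactly the paper's Lemma~\ref{lem:FTn_vs_XT} (your $\Theta^k_T$ is the paper's $F^k_T$), and the outer inductive skeleton, including the use of Corollary~\ref{cor:MVMMargConv2} to obtain joint convergence of $\pp^{k+1}(\fp X^m)$ with its $T$-marginals, also matches the paper. The continuity argument, however, has a genuine gap at its central step. You assert that the conditional laws $\pp^{k+1}_{t_i}(\fp X)=\law(\pp^k(\fp X)\mid\F^{\fp X}_{t_i})$ (and likewise the time-$t_i$ marginals that appear when you ``unfold'' $\Theta^{k+1}_T$ in (b)) are $\P^{\fp X}$-a.s.\ consistently terminating martingale laws on $M_k$, so that Lemma~\ref{lem:ppClosed} and Theorem~\ref{thm:weakFDD} make $\prob(\Theta^k_T)$ continuous at these points and the continuous mapping theorem applies. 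This is false: since $\pp^k(\fp X)$ is adapted, conditioning on $\F^{\fp X}_{t_i}$ freezes the path up to time $t_i$; under $\mu_\omega:=\law(\pp^k(\fp X)\mid\F^{\fp X}_{t_i})(\omega)$ the canonical process agrees a.s.\ on $[0,t_i]$ with the deterministic, generally non-constant path $(\pp^k_s(\fp X)(\omega))_{s\le t_i}$, and a deterministic non-constant process is not a martingale (already $\sfE_{\mu_\omega}[Z_{t_i}]\neq\sfE_{\mu_\omega}[Z_{0}]$ in general). Hence $\mu_\omega$ lies outside the set of consistently terminating martingale laws, Theorem~\ref{thm:weakFDD} says nothing about continuity of $\prob(\Theta^k_T)$ at the points actually charged by the limiting random measures, and the continuous-mapping step in (a) and in the inductive step of (b) collapses. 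Even for the correct class (``frozen up to $t_i$, martingale afterwards'') no such continuity statement is available, and the mapping theorem would anyway require continuity in the full topology of $\prob(M_k)$ at a.e.\ point of the limit, not merely sequential continuity along a restricted subset.

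This is precisely the difficulty the paper resolves by a different device: instead of making the pushforward by the (nowhere continuous) evaluation maps continuous at conditional laws, it proves Proposition~\ref{prop:D}, which upgrades $(Y^m,f(Y^m))\inlaw(Y,f(Y))$ together with $\law(Y^m\mid\G^m)\inlaw\law(Y\mid\G)$ to $\law(Y^m,f(Y^m)\mid\G^m)\inlaw\law(Y,f(Y)\mid\G)$ for an arbitrary Borel $f$, via tightness (through the intensity operator) and identification of the limit by the graph-concentration Lemma~\ref{lem:ConcOnGraph}; combined with Lemmas~\ref{lem:CondMarginal} and~\ref{lem:contGluing} this produces the joint convergence $(\pp^{k}(\fp X^m),F^k_T(\pp^k(\fp X^m)))\inlaw(\pp^{k}(\fp X),F^k_T(\pp^k(\fp X)))$, which is also the stronger induction hypothesis one must carry (your (a) only tracks the marginal law of $\pp^k(\fp X^m_T)$). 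To repair your proof you would have to replace the appeal to Theorem~\ref{thm:weakFDD}/Lemma~\ref{lem:ppClosed} at the level of conditional laws by an argument of this compactness-plus-identification type.
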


Recall that Corollary~\ref{cor:ContBig} ensures that $\cont(\fp X)$ is co-countable for all $\fp X \in \FR$, so Theorem~\ref{thm:discr} states that $D_T$ is  continuous at $\fp X$ for ``typical'' sets $T$. The following example shows that we cannot expect continuity of the discretization operator without assumptions on $T$:

\begin{example}
	Let $(\Omega,\F,\P) =( \{0,1\}, 2^{\{0,1\}}, \frac{1}{2}(\delta_0+\delta_1) )$ and let $X=\id$. For $s \in [0,1)$ define the filtration $(\F^s_t)_{t \in [0,1]}$ such that $\F_t^s$ is trivial for $t<s$ and $\F_t^s := \F$ for $t \ge s$. Then for each $s \in [0,1]$ we have a filtered random variable
	$$
	\fp X^s := (\Omega, \F,\P, (\F^s_t)_{t \in [0,1]}, X). 
	$$
	It is easy to see that $\cont(\fp X^s)=[0,1]\setminus \{s\}$ and $\fp X^{s+1/n} \to \fp X^s$ in $(\uFR,\HK)$. However, we have $\pp^1_s(\fp X^{s+1/n})(\omega)=\P$ for all $n \in \N$ and $\omega \in \Omega$, whereas $\pp^1_s(\fp X^{s})(\omega)=\delta_\omega$ for $\omega \in \Omega$. Hence, when setting $T= \{s,1\}$ we do not have $\fp X_T^{s+1/n} \to \fp X_T^s$ in $(\uFR_2,\HK)$. 
\end{example}

The rest of this section is dedicated to the proof of Theorem~\ref{thm:discr}. The idea is to recursively construct  mappings 
$$
F^r_T : M_r \to M_r^{(N)}
$$
that map the rank $r$ prediction processes of continuous-time filtered random variable $\fp X$ to the rank $r$ prediction process of its discretization $\fp X_T$ and investigate their continuity properties. We first set $F_T^0 := \id : S \to S$. Assuming that $F^r_T : M_r \to M_r^{(N)}$ is already defined, we define $F^{r+1}_T$ by
 \begin{align*}
	F_T^{r+1} : M_{r+1} \to M_{r+1}^{(N)} : z \mapsto ( {F^r_T}_\# z(t_1),\dots, {F^r_T}_\# z(t_N) ).
\end{align*}
This is indeed welldefined: First we evaluate the path $z \in M_{r+1} \subset D(P(M_r))$ at the times $T = \{ t_1,\dots, t_N \}$, that is a map from $M_{r+1} \subset D(P(M_r))$ to $\prob(M_r)^N$. Then at every time $t_i$ we apply the pushforward w.r.t.\ $F_T^r$, that is the map
 $\prob(F_T^r) : \prob(M_r) \to \prob(M_r^{(N)})$. Hence, the target space of $F_T^{r+1}$ is $\prob(M_r^{(N)})^N = M_{r+1}^{(N)}$. 

We first show that $F^r_T$ indeed maps the rank $r$ prediction process of $\fp X \in \mathcal{FP}$ to the rank $r$ prediction process of its discretization: 
\begin{lemma} \label{lem:FTn_vs_XT}
	For all $\fp X \in \mathcal{FR}$ it holds
	$$
	F_T^r(\pp^r(\fp X)) = \pp^r(\fp X_T).
	$$
\end{lemma}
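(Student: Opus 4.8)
The plan is to argue by induction on $n$, starting the induction at $n=0$ even though the statement is phrased for $n\in\N$: for $n=0$ we have $F^0_T=\id$ and $\pp^0(\fp X)=X=\pp^0(\fp X_T)$, so there is nothing to prove. For the inductive step I would assume $F^n_T(\pp^n(\fp X))=\pp^n(\fp X_T)$ $\P^{\fp X}$-a.s.\ and deduce the statement for $n+1$.

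The only external ingredient is the elementary fact that for a Borel map $g\colon S_1\to S_2$, an $S_1$-valued random variable $Y$ and a sub-$\sigma$-algebra $\G$ one has $\law(g(Y)\mid\G)=\prob(g)(\law(Y\mid\G))=g_\#\law(Y\mid\G)$ a.s.; this follows by testing against bounded Borel $h$ on $S_2$ and using $\E[h(g(Y))\mid\G]=\E[(h\circ g)(Y)\mid\G]$ together with a countable point‑separating family on the Lusin space $S_2$. I would apply this with $g=F^n_T$ (which is Borel by an immediate induction: $F^0_T=\id$, and $F^{n+1}_T$ is a composition of the evaluation map $e_T$, the functor $\prob(\cdot)$, and a product, all of which preserve Borel measurability by Remark~\ref{rem:P}) and $Y=\pp^n(\fp X)$. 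Using that $\pp^{n+1}_{t_i}(\fp X)=\law(\pp^n(\fp X)\mid\F^{\fp X}_{t_i})$ a.s.\ (the defining property of the \cadlag\ version, evaluated at the fixed time $t_i$), that $\F^{\fp X_T}_i=\F^{\fp X}_{t_i}$ by the definition \eqref{eq:def.discretisation.FP} of $D_T$, and the induction hypothesis, I would compute, for each $i\in\{1,\dots,N\}$ and $\P^{\fp X}$-a.s.,
\[
{F^n_T}_\#\,\pp^{n+1}_{t_i}(\fp X)
={F^n_T}_\#\,\law\big(\pp^n(\fp X)\mid\F^{\fp X}_{t_i}\big)
=\law\big(F^n_T(\pp^n(\fp X))\mid\F^{\fp X}_{t_i}\big)
=\law\big(\pp^n(\fp X_T)\mid\F^{\fp X_T}_i\big)
=\pp^{n+1}_i(\fp X_T),
\]
where the last equality is the definition of the discrete-time prediction process $\pp^{n+1}(\fp X_T)=\pp(\bpp^n(\fp X_T))$ together with the observation that iterating $\bpp$ does not alter the stochastic basis, so $\bpp^n(\fp X_T)$ still carries the filtration $(\F^{\fp X}_{t_i})_{i=1}^N$. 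Since $T$ is finite, these a.s.\ identities hold simultaneously for all $i$, and hence $F^{n+1}_T(\pp^{n+1}(\fp X))=\big({F^n_T}_\#\pp^{n+1}_{t_1}(\fp X),\dots,{F^n_T}_\#\pp^{n+1}_{t_N}(\fp X)\big)=\big(\pp^{n+1}_1(\fp X_T),\dots,\pp^{n+1}_N(\fp X_T)\big)=\pp^{n+1}(\fp X_T)$ a.s., closing the induction.

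This computation is essentially the whole argument, and there is no genuine obstacle — only bookkeeping. The one point that needs care is to keep straight that the filtration attached to $\bpp^n(\fp X_T)$ is always $(\F^{\fp X}_{t_i})_{i=1}^N$ and that of $\bpp^n(\fp X)$ is always $(\F^{\fp X}_t)_{t\in[0,1]}$ (the operator $\bpp$ only replaces the underlying random variable, not the base), since this is exactly what makes the conditional laws in continuous and in discrete time agree once one restricts to the grid $T$; the secondary technical point is the Borel measurability of $F^n_T$, needed so that $\prob(F^n_T)$ is defined and the pushforward-of-conditional-law identity applies, which, as noted, follows by a routine induction.
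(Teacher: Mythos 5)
Your proposal is correct and follows essentially the same route as the paper: induction on $n$, with the inductive step given by exactly the chain ${F^n_T}_\#\law(\pp^n(\fp X)\mid\F^{\fp X}_{t_i})=\law(F^n_T(\pp^n(\fp X))\mid\F^{\fp X}_{t_i})=\law(\pp^n(\fp X_T)\mid\F^{\fp X}_{t_i})=\pp^{n+1}_i(\fp X_T)$. The extra details you supply (the pushforward-of-conditional-law identity and Borel measurability of $F^n_T$) are routine and left implicit in the paper's proof.
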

\begin{proof}
	We show this by induction on the rank $r$. For $r=0$ it is trivial. 	Assume the result is true for $r$. Then we find for every $\fp X \in \FR$
	\begin{align*}
		F^{r+1}_T(\pp^{r+1}(\fp X)) &= ( {F_T^r}_\# \law(\pp^r(\fp X)| \F_{t_1}^{\fp X} )), \dots, {F_T^r}_\# \law(\pp^r(\fp X)| \F_{t_N}^{\fp X} ))  \\
		&=( \law(F_T^r(\pp^r(\fp X))| \F_{t_1}^{\fp X} )), \dots,  \law(F_T^r(\pp^r(\fp X))| \F_{t_N}^{\fp X} ))   \\
		&=(\law(\pp^r(\fp X_T)|\F^{\fp X}_{t_1}),\dots , \law(\pp^r(\fp X_T)|\F^{\fp X}_{t_N}))\\
		&=\pp^{r+1}(\fp X_T). \qedhere
	\end{align*}
\end{proof}
We give some calculation rules for convergence in law which are useful to prove the main theorem of this section:
\begin{lemma}[{\cite[Theorem~4.1]{Ed19}}]\label{lem:contGluing}
	Let $Y^n_i, Y_i$ be $S_i$-valued random variables, $i \in \{1,2,3\}$,  and let $f : S_2 \to S_3$ be Borel. Then we have 
\begin{align*}
\begin{rcases}
	(Y_1^n,Y_2^n) \inlaw (Y_1,Y_2) \\
	(Y_2^n,Y^n_3) \inlaw (Y_2,Y_3)  \\
	Y_3 =f(Y_2) \text{ a.s.} 
\end{rcases}
\implies (Y_1^n,Y_2^n,Y_3^n) \inlaw (Y_1,Y_2,Y_3) .
\end{align*}
\end{lemma}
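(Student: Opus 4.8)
The plan is to prove Lemma~\ref{lem:contGluing} by a relative-compactness argument, identifying every subsequential limit of $\bigl(\law(Y_1^m,Y_2^m,Y_3^m)\bigr)_m$, rather than by an explicit coupling construction.

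First I would observe that each of the sequences $\law(Y_1^m)$, $\law(Y_2^m)$, $\law(Y_3^m)$ converges in $\prob(S_i)$: the first two arise by applying the continuous coordinate projections to the convergence $(Y_1^m,Y_2^m)\inlaw(Y_1,Y_2)$, and $\law(Y_3^m)\to\law(Y_3)$ arises likewise from $(Y_2^m,Y_3^m)\inlaw(Y_2,Y_3)$. By Theorem~\ref{thm:prohorov_lusin}(b) a convergent sequence in $\prob(S_i)$ is tight, so all three sequences are tight; since the joint laws $\law(Y_1^m,Y_2^m,Y_3^m)$ have these tight marginals, they form a tight family in $\prob(S_1\times S_2\times S_3)$, hence a relatively compact one by Theorem~\ref{thm:prohorov_lusin}(a). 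As $\prob(S_1\times S_2\times S_3)$ is metrizable, it therefore suffices to show that every weak limit $\nu$ of a subsequence of $\bigl(\law(Y_1^m,Y_2^m,Y_3^m)\bigr)_m$ equals $\law(Y_1,Y_2,Y_3)$.

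So let $\nu$ be such a subsequential limit. Applying the continuous coordinate projections and using uniqueness of weak limits, the $(1,2)$-marginal of $\nu$ is $\law(Y_1,Y_2)$ and the $(2,3)$-marginal of $\nu$ is $\law(Y_2,Y_3)=\law(Y_2,f(Y_2))$. The latter measure is concentrated on $\graph(f)=\{(b,f(b)):b\in S_2\}$, which is a Borel subset of $S_2\times S_3$ because $f$ is Borel and $S_3$ is metrizable (the map $(b,c)\mapsto d_3(c,f(b))$ is Borel and $\graph(f)$ is its zero set). Hence $\nu$ is concentrated on the Borel set $G:=\{(a,b,c)\in S_1\times S_2\times S_3:c=f(b)\}$, the preimage of $\graph(f)$ under $(a,b,c)\mapsto(b,c)$.

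Finally I would identify $\nu$ as a pushforward. The map $\Phi\colon S_1\times S_2\to S_1\times S_2\times S_3$, $(a,b)\mapsto(a,b,f(b))$, is injective and Borel with range exactly $G$; since $S_1\times S_2$ and $S_1\times S_2\times S_3$ are Lusin and therefore standard Borel spaces, the Lusin--Souslin theorem shows that $\Phi$ is a Borel isomorphism onto $G$ with inverse the restriction of $\pr_{12}$ to $G$. As $\nu$ is concentrated on $G$ and ${\pr_{12}}_\#\nu=\law(Y_1,Y_2)$, we conclude $\nu=\Phi_\#\law(Y_1,Y_2)=\law(Y_1,Y_2,f(Y_2))=\law(Y_1,Y_2,Y_3)$, which proves the lemma. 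The only points that require care are the repeated appeals to Prohorov's theorem in the merely Lusin (not Polish) setting — which is exactly what Theorem~\ref{thm:prohorov_lusin} supplies — and the soft measure-theoretic bookkeeping around the Borel graph and the standard Borel structure; there is no genuine analytic difficulty. An alternative route would build a common probability space via two applications of the Skorokhod representation theorem and a gluing of the couplings, but the subsequence argument above is shorter and sidesteps having to match the two couplings along their shared $S_2$-coordinate.
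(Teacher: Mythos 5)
Your argument is correct, but note that the paper does not prove this lemma at all: it is imported verbatim as \cite[Theorem~4.1]{Ed19}, so there is no in-house proof to match. Your route is a clean, self-contained alternative that only uses tools already present in the paper: tightness of the three marginal sequences via Theorem~\ref{thm:prohorov_lusin}(b), tightness and hence relative compactness of the joint laws via Theorem~\ref{thm:prohorov_lusin}(a), and then identification of every subsequential limit $\nu$ through its $(1,2)$- and $(2,3)$-marginals together with the fact that $\law(Y_2,f(Y_2))$ charges the Borel set $\graph(f)$. All steps check out: the marginal-tightness-implies-joint-tightness step is standard, metrizability of $\prob(S_1\times S_2\times S_3)$ justifies the subsequence principle, and the measurability of $\graph(f)$ via $(b,c)\mapsto d_3(c,f(b))$ is fine for separable metrizable $S_3$. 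One small remark: the appeal to the Lusin--Souslin theorem in the last step is heavier than necessary; once you know $\nu(G)=1$ with $G=\{(a,b,c):c=f(b)\}$ and ${\pr_{12}}_\#\nu=\law(Y_1,Y_2)$, you can conclude $\nu=\Phi_\#\law(Y_1,Y_2)$ directly by testing against bounded Borel $h$, since $\int h\,d\nu=\int h(a,b,f(b))\,d\nu(a,b,c)=\int h(a,b,f(b))\,d\law(Y_1,Y_2)(a,b)$; this is essentially the same observation the paper itself uses in Lemma~\ref{lem:ConcOnGraph}. So your proof is valid and arguably makes the paper more self-contained, at the cost of invoking Prohorov-type compactness where a direct couplings-free identity argument (as in \cite{Ed19}) suffices.
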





\begin{corollary}\label{cor:contGluing}
Let $Y^n,Y$ be $S$-valued random variables and for $i \in \{ 1,\dots, k \}$ let  $f_i : S \to S_i$ be Borel. Then we have
\begin{align*}
	\begin{rcases}
		(Y^n,f_1(Y^n)) \inlaw (Y,f_1(Y)) \\
	\qquad \qquad \qquad \,\,\,	\vdots  \\
		(Y^n,f_k(Y^n)) \inlaw (Y,f_k(Y))
	\end{rcases}
	\implies (Y^n,f_1(Y^n),\dots, f_k(Y^n)) \inlaw (Y,f_1(Y),\dots, f_k(Y)).
\end{align*}
\end{corollary}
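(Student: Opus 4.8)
The statement is Corollary~\ref{cor:contGluing}, which I would prove by induction on $k$ using Lemma~\ref{lem:contGluing} as the engine. The base case $k=1$ is exactly the hypothesis. For the inductive step, suppose the claim holds for $k-1$, so that from the first $k-1$ hypotheses we obtain
\[
(Y^m, f_1(Y^m), \dots, f_{k-1}(Y^m)) \inlaw (Y, f_1(Y), \dots, f_{k-1}(Y)).
\]
I want to append the $k$-th coordinate. The plan is to apply Lemma~\ref{lem:contGluing} with the following identification of spaces: let $S_1^{\mathrm{glue}} := S_1 \times \dots \times S_{k-1}$ carry the random variables $(f_1(Y^m), \dots, f_{k-1}(Y^m))$ (this plays the role of ``$Y_1$'' in the lemma), let $S_2^{\mathrm{glue}} := S$ carry $Y^m$ (this is ``$Y_2$''), and let $S_3^{\mathrm{glue}} := S_k$ carry $f_k(Y^m)$ (this is ``$Y_3$''). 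The three hypotheses of Lemma~\ref{lem:contGluing} are then: first, $((f_1(Y^m),\dots,f_{k-1}(Y^m)), Y^m) \inlaw ((f_1(Y),\dots,f_{k-1}(Y)), Y)$, which is just a permutation of coordinates of the inductive hypothesis; second, $(Y^m, f_k(Y^m)) \inlaw (Y, f_k(Y))$, which is the $k$-th hypothesis of the corollary; and third, $f_k(Y) = f_k(Y)$ a.s., trivially, with the Borel map being $f_k$ itself. The conclusion of Lemma~\ref{lem:contGluing} then gives
\[
\big((f_1(Y^m),\dots,f_{k-1}(Y^m)), Y^m, f_k(Y^m)\big) \inlaw \big((f_1(Y),\dots,f_{k-1}(Y)), Y, f_k(Y)\big),
\]
and after reordering coordinates this is precisely the desired statement for $k$.

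The only genuinely delicate point is bookkeeping: one must be careful that the ``$Y_2$''-slot in Lemma~\ref{lem:contGluing} is occupied by $Y^m$ itself (not by the tuple of the $f_i(Y^m)$), since it is $Y^m$ that appears in the $k$-th hypothesis alongside $f_k(Y^m)$, and it is $f_k$ that realizes $Y_3$ as a Borel function of $Y_2$. The joint convergence of $(f_1(Y^m),\dots,f_{k-1}(Y^m))$ together with $Y^m$ is available precisely because the inductive hypothesis keeps $Y^m$ as one of its coordinates — this is why the corollary is stated with $Y^m$ retained throughout rather than just the images. Reordering coordinates of a weakly convergent sequence is harmless since it corresponds to applying a fixed homeomorphism (a permutation of product factors) and the continuous mapping theorem. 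I expect no real obstacle here; the argument is a routine packaging of the gluing lemma, and the main care is simply in naming which random variable sits in which slot.

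\begin{proof}
We argue by induction on $k$. For $k=1$ there is nothing to prove. Assume the statement holds for $k-1 \geq 1$. Given the $k$ hypotheses, the first $k-1$ of them yield, by the induction hypothesis,
\[
(Y^m, f_1(Y^m), \dots, f_{k-1}(Y^m)) \inlaw (Y, f_1(Y), \dots, f_{k-1}(Y)).
\]
We now apply Lemma~\ref{lem:contGluing} with $Y_1^m := (f_1(Y^m), \dots, f_{k-1}(Y^m))$ taking values in $S_1 \times \dots \times S_{k-1}$, with $Y_2^m := Y^m$, with $Y_3^m := f_k(Y^m)$, and with the Borel map $f := f_k$. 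The first premise of Lemma~\ref{lem:contGluing} is the displayed convergence above (up to the harmless permutation of coordinates placing $Y^m$ last), the second premise is the $k$-th hypothesis $(Y^m, f_k(Y^m)) \inlaw (Y, f_k(Y))$, and the third premise $Y_3 = f_k(Y_2)$ a.s.\ holds since $f_k(Y) = f_k(Y)$. Lemma~\ref{lem:contGluing} then gives
\[
\big( (f_1(Y^m), \dots, f_{k-1}(Y^m)), Y^m, f_k(Y^m) \big) \inlaw \big( (f_1(Y), \dots, f_{k-1}(Y)), Y, f_k(Y) \big).
\]
Permuting coordinates so that $Y^m$ (respectively $Y$) comes first, this reads
\[
(Y^m, f_1(Y^m), \dots, f_k(Y^m)) \inlaw (Y, f_1(Y), \dots, f_k(Y)),
\]
which completes the induction.
\end{proof}
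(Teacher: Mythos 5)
Your proof is correct and follows exactly the route the paper intends: the paper's own proof is the one-line remark that the corollary follows from Lemma~\ref{lem:contGluing} by induction on $k$, and your argument is precisely that induction spelled out, with the correct assignment of $Y^m$ to the ``$Y_2$''-slot so that $f_k$ realizes the third coordinate as a Borel function of the second. The coordinate-permutation bookkeeping you flag is indeed the only point of care, and you handle it correctly.
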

\begin{proof}
This follows from Lemma~\ref{lem:contGluing} by induction on $k$.
\end{proof}

\begin{lemma}\label{lem:ConcOnGraph}
Let $Z, Z'$ be $\prob(S_1 \times S_2)$-valued random variables and $ f : S_1 \to S_2$ be Borel. If $I(\law(Z))(\graph(f))=I(\law(Z'))(\graph(f))=1$ and  $\law({\pr_1}_\# Z ) = \law( {\pr_1}_\# Z' )$, then $\law(Z)=\law(Z')$.
\end{lemma}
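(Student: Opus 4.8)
The plan is to reduce the statement to two elementary observations: (i) a random probability measure whose intensity is concentrated on a Borel graph is itself almost surely concentrated on that graph, and (ii) a probability measure concentrated on $\graph(f)$ is recovered from its first marginal by pushing forward under $(\id_{S_1},f)$.

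First I would record that $\graph(f)$ is a Borel subset of $S_1\times S_2$: since $S_2$ is separable metrizable its diagonal is Borel, and $\graph(f)$ is the preimage of that diagonal under the Borel map $(x,y)\mapsto(f(x),y)$. Applying the defining identity \eqref{eq:DefI} of the intensity operator to the Borel function $\mathbf 1_{\graph(f)}$, the hypothesis $I(\law(Z))(\graph(f))=1$ becomes $\E[Z(\graph(f))]=1$; as $Z(\graph(f))\le 1$ pointwise this forces $Z(\graph(f))=1$ almost surely, and likewise $Z'(\graph(f))=1$ almost surely.

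Next I would establish the elementary fact that every $\mu\in\prob(S_1\times S_2)$ with $\mu(\graph(f))=1$ satisfies $\mu=(\id_{S_1},f)_\#\big((\pr_1)_\#\mu\big)$: for bounded Borel $g$ one has $g(x,y)=g(x,f(x))$ on $\graph(f)$, so testing both measures against $g$ and using $\mu(\graph(f))=1$ yields equal integrals. Writing $h:=\prob((\id_{S_1},f)):\prob(S_1)\to\prob(S_1\times S_2)$ — which is Borel by Remark~\ref{rem:P}, since $(\id_{S_1},f)$ is Borel — this identity reads $\mu=h\big((\pr_1)_\#\mu\big)$. Applying it pointwise on the almost sure events found above gives $Z=h({\pr_1}_\#Z)$ a.s.\ and $Z'=h({\pr_1}_\#Z')$ a.s.

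Finally, since random variables that agree almost surely have the same law, $\law(Z)=h_\#\law({\pr_1}_\#Z)$ and $\law(Z')=h_\#\law({\pr_1}_\#Z')$; the hypothesis $\law({\pr_1}_\#Z)=\law({\pr_1}_\#Z')$ then gives $\law(Z)=\law(Z')$, as desired. I do not expect a genuine obstacle here; the only points needing a little care are the Borel-measurability of $\graph(f)$ and of the pushforward map $h$, both of which are routine under the standing Lusin assumption together with Remark~\ref{rem:P}.
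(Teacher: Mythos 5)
Your proposal is correct and follows essentially the same route as the paper's proof: use the intensity identity to get $Z(\graph(f))=1$ a.s., deduce $Z=\prob(\id,f)({\pr_1}_\# Z)$ a.s., and conclude from the equality of the laws of the first marginals. The extra care you take about Borel-measurability of $\graph(f)$ and of the pushforward map is fine but not a substantive difference.
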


\begin{proof}
    Observe that
    \[
        \mathbb E[Z(\graph(f))] = \int p(\graph(f)) \law(Z)(dp) = I(\law(Z))(\graph(f)) = 1.
    \]
    Since $Z(\graph(f)) \in [0,1]$, we deduce $Z(\graph(f)) = 1$ a.s., which entails $Z = \prob(\id,f)(\pr_{1\#}Z)$ a.s.
    We obtain
    \begin{equation}
        \label{eq:alternative}
        \law(Z) = \law(\prob(\id,f)({\pr_1}_\# Z)) = \prob(\id,f)_\#\law({\pr_{1}}_\#Z).
    \end{equation}
    Analogously, we find that \eqref{eq:alternative} holds when $Z$ is replaced by $Z'$.
    Consequently, using that $\law({\pr_{1}}_\# Z) = \law({\pr_{1}}_\#Z')$, it follows from \eqref{eq:alternative} that $\law(Z) = \law(Z')$.
\end{proof}

\begin{proposition}\label{prop:D}
Let $X^n, X$ be $S$-valued random variables on probability spaces $(\Omega^n,\F^n,\P^n)$ and $(\Omega,\F,\P)$ resp. and let $\G^n \subset \F^n$ and $\G \subset \F$ resp. be sub-$\sigma$-algebras. Then we have
\begin{align*}
\begin{rcases}
	(Y^n,f(Y^n)) \inlaw (Y,f(Y)) \\
	\law( Y^n|\G^n) \inlaw \law(Y|\G)  \\ 
\end{rcases}
\implies \law( Y^n, f(Y^n)|\G^n)) \inlaw \law(Y,f(Y)|\G)).
\end{align*}
\end{proposition}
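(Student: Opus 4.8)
The plan is to combine the two hypotheses into a single convergence of joint laws and then apply Lemma~\ref{lem:ConcOnGraph} together with the gluing Lemma~\ref{lem:contGluing}. The key observation is that $\law(Y,f(Y)\mid\G)$ is a $\prob(S\times S')$-valued random variable whose first marginal is $\law(Y\mid\G)$, and which is $I$-a.s.\ concentrated on $\graph(f)$; analogously for the prelimit objects. Indeed, writing $Z^m := \law(Y^m,f(Y^m)\mid\G^m)$ and $Z := \law(Y,f(Y)\mid\G)$, we have ${\pr_1}_\#Z^m = \law(Y^m\mid\G^m)$, ${\pr_1}_\#Z = \law(Y\mid\G)$, and $Z^m(\graph(f)) = 1$ a.s., $Z(\graph(f)) = 1$ a.s., so that $I(\law(Z^m))(\graph(f)) = \E^m[Z^m(\graph(f))] = 1$ and likewise for $Z$.

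First I would establish convergence of the joint law of $(Z^m, {\pr_1}_\#Z^m)$, i.e.\ of $(\law(Y^m,f(Y^m)\mid\G^m),\,\law(Y^m\mid\G^m))$. By the second hypothesis, $\law(Y^m\mid\G^m)\inlaw\law(Y\mid\G)$; and since $\prob(\id,f)\colon\prob(S)\to\prob(S\times S')$ restricted to measures concentrated on $\graph(f)$ sends ${\pr_1}_\#Z^m$ to $Z^m$, we have $Z^m = \prob(\id,f)({\pr_1}_\#Z^m)$ a.s., and similarly $Z = \prob(\id,f)({\pr_1}_\#Z)$ a.s. Hence $(Z^m,{\pr_1}_\#Z^m)$ is the a.s.\ image of ${\pr_1}_\#Z^m$ under the Borel map $q\mapsto(\prob(\id,f)(q),q)$, and the continuous-mapping-type statement for convergence in law (or simply Lemma~\ref{lem:contGluing} applied with the deterministic coupling through this Borel map) yields $(Z^m,{\pr_1}_\#Z^m)\inlaw(Z,{\pr_1}_\#Z)$. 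In particular $Z^m\inlaw Z$, which is exactly the desired conclusion $\law(Y^m,f(Y^m)\mid\G^m)\inlaw\law(Y,f(Y)\mid\G)$.

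Wait — I should double-check whether the first hypothesis $(Y^m,f(Y^m))\inlaw(Y,f(Y))$ is actually needed, or whether it is there only to guarantee that $\law(Y,f(Y)\mid\G)$ is well-defined as a regular conditional law on a standard Borel space and that $f$ is the ``same'' map in the limit. In fact the role of the first hypothesis is to pin down that the limit of ${\pr_1}_\#Z^m$, namely $\law(Y\mid\G)$, together with the graph of $f$, reconstructs $Z$ in the way Lemma~\ref{lem:ConcOnGraph} demands; without it one could not identify the limit point. So the cleanest route is: extract an arbitrary subsequential limit $W$ of $(Z^m)_m$ (relative compactness follows since ${\pr_1}_\#Z^m=\law(Y^m\mid\G^m)$ converges, hence is tight by Theorem~\ref{thm:prohorov_lusin}(b), and $Z^m=\prob(\id,f)({\pr_1}_\#Z^m)$ transports tightness along the Borel map — here one must be slightly careful in the Lusin setting, using that $\graph(f)$ carries a Polish topology and $\prob(\id,f)$ is an embedding there); then ${\pr_1}_\#W = \law(Y\mid\G)$ and, by Lemma~\ref{lem:MartFClosed}-style passage to the limit combined with the first hypothesis, $I(\law(W))(\graph(f))=1$; finally Lemma~\ref{lem:ConcOnGraph} applied to $W$ and $Z=\law(Y,f(Y)\mid\G)$ gives $W=\law(Z)$, so the full sequence converges.

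The main obstacle I anticipate is the tightness/relative-compactness bookkeeping in the Lusin (non-Polish) category: one cannot directly say ``$\prob(\id,f)$ is continuous'' since $f$ is only Borel, so the transport of relative compactness from $\{\law({\pr_1}_\#Z^m)\}$ to $\{\law(Z^m)\}$ needs the observation that all the relevant measures live on $\graph(f)$, which — by Remark~\ref{rem:PGamma} and Proposition~\ref{prop:lusin_facts} — can be given a Polish topology finer than the subspace topology on which $(\id,f)$ becomes a genuine homeomorphism onto its image; alternatively, one invokes \cite[Theorem 13.11]{Ke95} to refine the topology on $S'$ making $f$ continuous without changing Borel structure. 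Once that technical point is handled, the rest is a routine application of Lemma~\ref{lem:ConcOnGraph} and Lemma~\ref{lem:contGluing}.
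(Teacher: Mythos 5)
Your second, ``cleanest'' route has the same architecture as the paper's proof (relative compactness of the laws of $Z^m:=\law(Y^m,f(Y^m)|\G^m)$, then identification of every subsequential limit via Lemma~\ref{lem:ConcOnGraph}), and the identification step is essentially correct. The genuine gap is in how you obtain relative compactness: you try to deduce it from the \emph{second} hypothesis alone, by transporting tightness of the laws of ${\pr_1}_\#Z^m=\law(Y^m|\G^m)$ through the map $\prob(\id,f)$. This transport fails, because pushforward under a merely Borel map preserves neither convergence in law nor tightness. Concretely, take $S=[0,1]$, $S'=\R$, $f(x)=1/x$ for $x>0$, $f(0)=0$, $Y^m\equiv 1/m$, $Y\equiv 0$ and $\G^m=\F^m$, $\G=\F$. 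Then $\law(Y^m|\G^m)=\delta_{1/m}\to\delta_0=\law(Y|\G)$, so your input holds, yet $Z^m=\delta_{(1/m,m)}$ and the laws $\delta_{\delta_{(1/m,m)}}$ have no convergent subsequence in $\prob(\prob(S\times S'))$ (apply the continuous intensity $I$ to any purported limit point). So relative compactness of $(\law(Z^m))_m$ simply does not follow from the second hypothesis; it must use the first one. The proposed remedies do not rescue the transport: to make $f$ continuous one refines the topology on the \emph{domain} $S$ (not on $S'$, as you write), and doing so destroys compactness of the sets in $\prob(S)$ witnessing the tightness you want to push forward; likewise a finer Polish topology on $\graph(f)$ does not make $\prob(\id,f)$ compactness-preserving from the original topology on $\prob(S)$. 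The paper's one-line fix is precisely the ingredient you set aside: $I(\law(Z^m))=\law(Y^m,f(Y^m))$ converges by the \emph{first} hypothesis, so Proposition~\ref{prop:intensity_tightness} (the equivalence of (iii) and (iv)) gives relative compactness of $(\law(Z^m))_m$ directly.

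Two smaller points. Your first, ``direct'' route is also not valid: convergence in law is not preserved by the Borel map $q\mapsto(\prob(\id,f)(q),q)$, and Lemma~\ref{lem:contGluing} cannot substitute for a continuous mapping theorem here, since its hypotheses already include convergence in law of the pair $(Y_2^m,Y_3^m)$ --- exactly the joint convergence you are trying to establish. And in the limit identification, the appeal to a ``Lemma~\ref{lem:MartFClosed}-style passage'' is off target: since $\mathbf{1}_{\graph(f)}$ is not continuous one cannot pass to the limit in $I(\law(Z^m))(\graph(f))=1$; what is used is continuity of $I$ together with the first hypothesis, namely $I(\law(W))=\lim_j I(\law(Z^{m_j}))=\lim_j\law(Y^{m_j},f(Y^{m_j}))=\law(Y,f(Y))$, which is concentrated on $\graph(f)$; after that, Lemma~\ref{lem:ConcOnGraph} applies as you say.
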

\begin{proof}
Since $I(\law(  \law( Y^n, f(Y^n)|\G^n)   ) ) = \law( Y^n,f(Y^n)  )$ converges for $n \to \infty$, Proposition~\ref{prop:intensity_tightness} implies that  $(\law(  \law( Y^n, f(Y^n)|\G^n)   ) )_{n \in \N}$ is relatively compact. Hence, it suffices to show that $\law( \law(Y,f(Y)|\G) )$ is the only possible limit point of this sequence.

To that end, write $Z := \law(Y,f(Y)|\G)$ and observe that $Z$ has the properties
\begin{align}\label{eq:prf:propD}
\begin{aligned}
&I(\law(Z))(\graph(f)) = \law(Y,f(Y))(\graph(f))=1,  \\
&{\pr_1}_\# Z = {\pr_1}_\# \law(Y,f(Y)|\G) = \law(Y|\G).
\end{aligned}
\end{align}
Consider a subsequence $(\law( Y^{n_j}, f(Y^{n_j})|\G^{n_j}))_{j}$ such that
$$
\law( Y^{n_j}, f(Y^{n_j})|\G^{n_j}) \inlaw Z'.
$$
Our aim is to check that $Z'$ also has the properties listed in \eqref{eq:prf:propD} and to invoke Lemma~\ref{lem:ConcOnGraph} to conclude that $\law(Z) = \law(Z')$. Indeed, we have 
$$
I(\law(Z')) = \lim_{j \to \infty} I(\law( \law( Y^{n_j}, f(Y^{n_j})|\G^{n_j}) )) =\lim_{j \to \infty} \law(Y^{n_j},f(Y^{n_j})) = \law(Y,f(Y))
$$
and hence $I(\law(Z'))(\graph(f))= 1$.  Moreover, we have in distribution
$$
{\pr_1}_\# Z' = \lim_{j \to \infty} {\pr_1}_\#\law( Y^{n_j}, f(Y^{n_j})|\G^{n_j}) = \lim_{j \to \infty} {\pr_1}_\#\law( Y^{n_j}|\G^{n_j}) = \law(Y|\G).
$$
Hence, we can use Lemma~\ref{lem:ConcOnGraph} to conclude that $\law(Z)=\law(Z')$.
\end{proof}

\begin{lemma}\label{lem:CondMarginal}
	Let $Y_1^n,Y_1$ be $S_1$-valued random variables and $Y_2^n, Y_2$ be $S_2$-valued random variables on probability spaces $(\Omega^n,\F^n,\P^n)$ and $(\Omega,\F,\P)$ resp. and let $\G^n \subset \F^n$ and $\G \subset \F$ resp. be sub-$\sigma$-algebras. Then we have
	\begin{align*}
		\law( Y_1^n,Y_2^n |\G^n ) \inlaw \law(Y_1,Y_2|\G) \implies ( \law(Y^n_1|\G^n),  \law(Y^n_2|\G^n) ) \inlaw ( \law(Y_1|\G),  \law(Y_2|\G) ). 
	\end{align*}
\end{lemma}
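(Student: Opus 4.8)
The plan is to express the pair of marginal conditional laws as a fixed continuous image of the joint conditional law, and then invoke stability of convergence in law under continuous maps. The one input that requires a (routine) measure-theoretic check is the following: for $i \in \{1,2\}$ one has, $\P$-a.s.,
$$
\prob(\pr_i)\big(\law(Y_1,Y_2\mid\G)\big) = \law(Y_i\mid\G),
$$
and analogously on $(\Omega^m,\F^m,\P^m)$. Indeed, for Borel $A\subseteq S_1$ we have $\prob(\pr_1)(\law(Y_1,Y_2\mid\G))(A)=\law(Y_1,Y_2\mid\G)(A\times S_2)=\P[Y_1\in A\mid\G]=\law(Y_1\mid\G)(A)$ a.s.; since $S_1$ is Lusin, hence separable metric, this can be verified on a countable algebra generating the Borel $\sigma$-algebra, so the exceptional null set does not depend on $A$. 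The case $i=2$ is identical.

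Next I would note that the projections $\pr_i:S_1\times S_2\to S_i$ are continuous, so by Remark~\ref{rem:P} the pushforward maps $\prob(\pr_i):\prob(S_1\times S_2)\to\prob(S_i)$ are continuous, and hence so is
$$
g:\prob(S_1\times S_2)\to\prob(S_1)\times\prob(S_2),\qquad g(p):=\big(\prob(\pr_1)(p),\prob(\pr_2)(p)\big).
$$
Writing $Z^m:=\law(Y_1^m,Y_2^m\mid\G^m)$ and $Z:=\law(Y_1,Y_2\mid\G)$, the displayed identity gives $g(Z^m)=(\law(Y_1^m\mid\G^m),\law(Y_2^m\mid\G^m))$ a.s.\ and $g(Z)=(\law(Y_1\mid\G),\law(Y_2\mid\G))$ a.s. By hypothesis $Z^m\inlaw Z$ in the Lusin space $\prob(S_1\times S_2)$, so applying the Skorohod representation theorem to $Z^m,Z$ regarded as random variables with values in $\prob(S_1\times S_2)$ (exactly as in the proof of Corollary~\ref{cor:MVMMargConv2}) and using the continuity of $g$ yields $g(Z^m)\inlaw g(Z)$, which is precisely the assertion.

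There is no serious obstacle here; the only delicate point is the commutation of ``taking a regular conditional distribution'' with ``taking a marginal'', i.e.\ checking that the a.s.-exceptional set in the first display can be chosen independently of the test set $A$, which is handled by reducing to a countable generating algebra via separability of the $S_i$.
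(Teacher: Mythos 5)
Your argument is correct and is essentially the paper's own proof: the paper also writes the pair of conditional marginals as the image of the joint conditional law under the continuous map $\mu \mapsto ({\pr_1}_\#\mu,{\pr_2}_\#\mu)$ and concludes from the a.s.\ identity together with continuity (your detour through the Skorohod representation is just a slightly heavier way of invoking the continuous mapping theorem). The extra care you take with the exceptional null set via a countable generating algebra is a detail the paper leaves implicit, but the route is the same.
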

\begin{proof}
	The map 
	$$
	\phi : \prob(S_1 \times S_2 ) \to \prob(S_1) \times \prob(S_2) : \mu \mapsto ({\pr_1}_\#\mu,{\pr_2}_\#\mu)
	$$
	is continuous and we have for all $m \in \N$ (and resp.\ for $Y_1,Y_2$)   
	\begin{align*}
		\phi_\# \law( Y_1^n,Y_2^n |\G^n  ) &= ( \law(Y^n_1|\G^n),  \law(Y^n_2|\G^n) )\quad \text{a.s.} \qedhere 
	\end{align*}
\end{proof}

We are now ready to prove the main result of this section.
\begin{proof}[Proof of Theorem~\ref{thm:discr}]
Let $\fp X^n \to \fp X$ in $\HK_r$ and $T \subset \cont(\fp X)$. We prove by induction on $k \le r$
\begin{align}\label{eq:prf:dInd}
	(\pp^{k}(\fp X^n), F_T^k(\pp^k(\fp X^n)) ) \inlaw  (\pp^{k}(\fp X), F_T^k (\pp^k(\fp X))) .
\end{align}	
Indeed, this is trivial for $k=0$. Assume it is true for some $k<n$. Since $k+1 \le n$ we have $\pp^{k+1}(\fp X^n) \inlaw \pp^{k+1}(\fp X)$. Since $ T \subset \cont(\fp X) \subset  \cont(\pp^{k+1}(\fp X))$.  Corollary~\ref{cor:MVMMargConv2} yields 
\begin{align}\label{eq:prf:d1}
(\pp^{k+1}(\fp X^n), \pp^{k+1}_T(\fp X^n)) \inlaw (\pp^{k+1}(X), \pp^{k+1}_T(\fp X)).
\end{align}
Fix $t \in T$. Since $\pp^{k+1}_t( \fp X^n ) = \law(\pp^{k} (\fp X^n)  |\F_t^{\fp X^n} )$, equation \eqref{eq:prf:d1} implies  
\begin{align}\label{eq:prf:d2}
\law( \pp^{k}(\fp X^n) |\F_t^{\fp X^n }  ) \inlaw \law( \pp^k (\fp X) | \F_t^{\fp X} ).
\end{align}
So, \eqref{eq:prf:dInd} and \eqref{eq:prf:d2} put us precisely in the setting of Proposition~\ref{prop:D} and we conclude that
\begin{align*}
	\law( \pp^k(\fp X^n), F_T^k( \pp^k(\fp X^n) )  | \F_t^{\fp X^n} ) \inlaw \law( \pp^k(\fp X), F_T^k( \pp^k(\fp X) )  | \F_t^{\fp X} ).
\end{align*}
Using Lemma~\ref{lem:CondMarginal},  the fact that $\law(\pp^k(\fp X^n |\F_t^{\fp X^n} )) = \pp_t^{k+1}(\fp X^n)$ and that $\law( F_T^k( \pp^k(\fp X^n)| \F_t^{\fp X} ) ) = {F_T^k}_\#\pp_t^{k+1}(\fp X^n)$, 
we obtain 
\begin{align}\label{eq:prf:d3}
	(\pp^{k+1}_t(\fp X^n), {F_T^k}_\#\pp_t^{k+1}(\fp X^n) ) \inlaw 	(\pp^{k+1}_t(\fp X),  {F_T^k}_\#\pp_t^{k+1}(\fp X)).
\end{align}
We apply Lemma~\ref{lem:contGluing} with $Y_1^n = \pp^{k+1}(\fp X^n)$, $Y_2^n = \pp^{k+1}_t(\fp X^n)$, $Y_3^n = {F_T^k}_\#\pp_t^{k+1}(\fp X^n)$ and $f = \prob(F_T^k)$  to \eqref{eq:prf:d1} and \eqref{eq:prf:d3}  and  conclude
\begin{align}\label{eq:prf:d4}
	(\pp^{k+1}(\fp X^n), {F_T^k}_\#\pp_t^{k+1}(\fp X^n) ) \inlaw 	(\pp^{k+1}(\fp X),  {F_T^k}_\#\pp_t^{k+1}(\fp X)).
\end{align}
Noting that $F_T^{k+1}( \pp^{k+1}(\fp X^n) ) = ({F_T^k}_\#\pp_t^{k+1}(\fp X^n))_{t \in T}$, we derive from \eqref{eq:prf:d4} for all $t \in T$ using Corollary~\ref{cor:contGluing} with $f_t = \prob(F_T^k) \circ e_{t}$ for all $t\in T$ 
\begin{align}\label{eq:prf:dIndplus1}
	(\pp^{k+1}(\fp X^n), F_T^{k+1}(\pp^{k+1}(\fp X^n)) ) \inlaw  (\pp^{k+1}(\fp X), F_T^{k+1} (\pp^{k+1}(\fp X))) .
\end{align}	
This is exactly \eqref{eq:prf:dInd} for $k+1$, hence the induction is complete.

From \eqref{eq:prf:dInd} with $k=r$ and $F_T^r(\pp^r(\fp Y)) = \pp^r(\fp Y_T)$ (cf. Lemma~\ref{lem:FTn_vs_XT}) we obtain $\pp^r(\fp X^n) \inlaw \pp^r(\fp X)$. This proves Theorem~\ref{thm:discr} for $r \in \N$. Note that there is no extra work to be done in the case $r=\infty$ as on $\uFR_N$  all topologies $\HK_r$, where  $r>N-1$, coincide with $\HK_{N-1}$, cf. Remark~\ref{rem:afterDefPp}\ref{it:afterDefPpC}.
\end{proof}

\section{Adapted functions}
\label{sec:AF}

We recall the concept of adapted functions from \cite{HoKe84,BaBePa21}. Loosely speaking, adapted functions are operations that take a filtered random variables as an argument and return a random variable defined on the underlying probability space of the respective filtered random variable. The rank of an adapted function is a measure for its complexity. More precisely,  it is the maximum number of nested conditional expectations appearing when evaluating this adapted function, as we will see below.
\begin{definition}\label{def:AF}
	An adapted function is a function\footnote{Strictly speaking, an adapted function is an element of a term algebra: (AF1) defines for every $f \in C_b(S)$ an operation symbol of arity 0. (AF2) defines for every $g \in C_b(\R^n)$ an operation symbol of arity $n$. (AF3) defines for every $t \in [0,1]$ an operation symbol of arity 1. Definition~\ref{def:AFvalue} below states how to interpret these terms. Note that two different terms can lead to the same map $\fp X \mapsto f[\fp X]$. The rank is a property of the term, not of the function $\fp X \mapsto f[\fp X]$, as a given function $\fp X \mapsto f[\fp X]$ can also be represented by `unnecessarily complicated' terms.
	 } that can be built using the following rules:
	\begin{enumerate}
		\item[(AF1)] Every $f \in C_b(S)$ is an adapted function and $\rank(f)=0$.
		\item[(AF2)] If $f_1, \dots, f_n$ are adapted functions and $g \in C_b(\R^n)$, then $g(f_1,\dots,f_n)$ is an adapted function with $\rank(g(f_1,\dots,f_n)) = \max \{ \rank(f_i) \colon i = 1,\ldots,n\}$.
		\item[(AF3)] If $f$ is an adapted function and $t \in [0,1]$, then $(f|t)$ is an adapted function and we set $\rank((f|t)) = \rank(f)+1$.
	\end{enumerate}
	We write $\AF$ for the set of adapted functions and $\AF_r$ for the set of adapted functions of rank at most $r$.
\end{definition}
Note that $\AF_0=C_b(S)$. The value of an adapted function at a filtered random variable is  again  random variable (on the same probability space). Formally it is defined through the following induction: 
\begin{definition}\label{def:AFvalue}
	Let ${\fp X} \in \FR$.
	\begin{enumerate}
		\item[(AF1)] If $f \in \AF_0$, then its value at ${\fp X}$ is $f[{\fp X}] := f(X)$.
		\item[(AF2)] The value of $g(f_1,\dots,f_n)$ at ${\fp X}$ is $g(f_1,\dots,f_n)[{\fp X}] := g(f_1[{\fp X}], \dots, f_n[{\fp X}])$.
		\item[(AF3)] The value of $(f|t)$ at ${\fp X}$ is $(f|t)[{\fp X}] = \E[f[{\fp X}]|\F_t^{\fp X}]$.
	\end{enumerate}
\end{definition}

One can define equivalence relations on $\FR$ using adapted functions.

\begin{definition}
	Let ${\fp X}, {\fp Y} \in \FR$. We write ${\fp X} \sim_r {\fp Y}$, if $\E[f[{\fp X}]] = \E[f[{\fp Y}]]$ for every $f \in \AF_r$. Moreover, we write ${\fp X} \sim_\infty {\fp Y}$, if $\E[f[{\fp X}]] = \E[f[{\fp Y}]]$ for every $f \in \AF$.
\end{definition}

Our aim is to show that the equivalence relation $\sim_r$ defined in terms of adapted functions coincides with the equivalence relation $\approx_r$ defined in terms of prediction processes, cf. Definition~\ref{def:HK}.
The next lemma implies that the relation $\approx_r$ is stronger than the relation $\sim_r$, that is, ${\fp X} \approx_r {\fp Y}$ implies ${\fp X} \sim_r {\fp Y}$.

\begin{definition}
	
	Let $f \in \AF$ and $F : M_{\rank(f)} \to \R$ be bounded and Borel. We say that $F$ represents $f$, if we have for all $\fp X \in \mathcal{FR}$
	\begin{equation*}
		f[\fp X] = F(\pp^{\rank(f)}(\fp X)). 
	\end{equation*}
\end{definition}

\begin{lemma}\label{lem:af_as_func_of_pp}
	All adapted functions are representable.
\end{lemma}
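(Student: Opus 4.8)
The plan is to prove the statement by induction on the rank of the adapted function, following the recursive structure of Definition~\ref{def:AF}. The induction hypothesis is: for every $f \in \AF[n]$ there exists a bounded Borel map $F \colon M_n \to \R$ with $f(\fp X) = F(\pp^n(\fp X))$ for all $\fp X \in \FR$.

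For the base case $n = 0$, an adapted function of rank $0$ is some $f \in C_b(S)$ built via (AF1) or a composition of such via (AF2) that does not increase rank; since $\pp^0(\fp X) = X$, the function $F := f$ (respectively $F := g(f_1, \dots, f_m)$ with $f_i \in C_b(S)$) represents $f$, and it is bounded Borel (indeed continuous). For the inductive step I would handle the two constructors separately. If $f = g(f_1, \dots, f_m)$ with $g \in C_b(\R^m)$ and each $f_i$ of rank at most $n$ represented by $F_i \colon M_{n_i} \to \R$ where $n_i = \rank(f_i) \le n$, then I first lift each $F_i$ to a map on $M_n$ by precomposing with the continuous projection $R^{n, n_i} \colon M_n \to M_{n_i}$ from Lemma~\ref{lem:R}, using $R^{n, n_i}(\pp^n(\fp X)) = \pp^{n_i}(\fp X)$; the representative of $f$ is then $F := g(F_1 \circ R^{n,n_1}, \dots, F_m \circ R^{n,n_m})$, which is bounded Borel. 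If $f = (h \mid t)$ where $h$ has rank $n-1$ and is represented by $H \colon M_{n-1} \to \R$ bounded Borel, then by Definition~\ref{def:AFvalue}, $f(\fp X) = \E[H(\pp^{n-1}(\fp X)) \mid \F_t^{\fp X}]$. The key point is formula \eqref{eq:ppCondIndep}: since $\sigma(\pp^n_t(\fp X)) \subset \F_t^{\fp X}$ and $\pp^n_t(\fp X) = \law(\pp^{n-1}(\fp X) \mid \F_t^{\fp X})$, we get $\E[H(\pp^{n-1}(\fp X)) \mid \F_t^{\fp X}] = H^\ast(\pp^n_t(\fp X))$ where $H^\ast(\mu) = \int H \, d\mu$. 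Thus the representative of $f$ is $F := H^\ast \circ e_t \circ R^{n, \text{(index)}}$ — more precisely, viewing $\pp^n(\fp X)$ as a path in $D(\prob(M_{n-1}))$, set $F(z) := H^\ast(z(t))$, which is bounded (by $\|H\|_\infty$) and Borel since evaluation $e_t$ is Borel (Remark~\ref{rem:et}) and $H^\ast$ is Borel when $H$ is bounded Borel.

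The main obstacle I anticipate is bookkeeping rather than deep mathematics: one must be careful that the constructors (AF2) and (AF3) combine adapted functions of \emph{different} ranks, so the representatives live on different spaces $M_{n_i}$ and must be transported to a common $M_n$ via the maps $R^{n,k}$; here the consistency relation $R^{n,\ell} = R^{k,\ell} \circ R^{n,k}$ from Lemma~\ref{lem:R} ensures this is coherent. A second subtle point is the measurability of $\mu \mapsto H^\ast(\mu) = \int H \, d\mu$ on $\prob(M_{n-1})$ when $H$ is merely bounded Borel (not continuous); this follows from a standard monotone class / functional monotone class argument, starting from continuous $H$ (where $H^\ast$ is continuous by definition of the weak topology) and extending to bounded Borel $H$ by noting the class of $H$ for which $H^\ast$ is Borel is closed under bounded monotone limits and contains $C_b$. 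The validity of interchanging $e_t$ and conditioning, i.e.\ that $z \mapsto z(t)$ is well-defined on the relevant (equivalence classes of) paths, is handled because prediction processes are genuinely \cadlag\ — one works with the canonical \cadlag\ version, cf.\ the conventions in Remark~\ref{rem:et}. Once these measurability facts are in place, the induction goes through directly.
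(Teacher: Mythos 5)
Your proof is correct and essentially identical to the paper's: both proceed by structural induction on the constructors, transporting the representatives $F_i$ to a common space $M_n$ via the maps $R^{n,n_i}$ of Lemma~\ref{lem:R} for (AF2), and representing $(f|t)$ by $z \mapsto \int F(w)\, e_t(z)(dw)$ for (AF3). The only quibble is that the key identity $\E[H(\pp^{n-1}(\fp X))\mid \F_t^{\fp X}] = H^\ast(\pp^{n}_t(\fp X))$ follows directly from the definition of the prediction process as the \cadlag{} version of $(\law(\pp^{n-1}(\fp X)\mid \F_t^{\fp X}))_{t\in[0,1]}$ rather than from \eqref{eq:ppCondIndep}, but this does not affect the argument.
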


\begin{proof}
	All adapted functions of rank 0 are represented by themselves. Hence, it suffices to show that the set of representable adapted functions is closed under the operations (AF2) and (AF3).
	
	Let $g \in C_b(\R^n)$ and let $f_1, \dots, f_n$ be adapted functions with $\rank(f_i)=r_i$ that are represented by $F_i : M_{r_i} \to \R$. Set $r := \max_i r_i$. Recalling the mappings $R^{r, k}, r\geq k$ from Lemma \ref{lem:R} we   define the bounded Borel function
	\[
	G \colon M_r \to \R \colon z \mapsto g( F_1( R^{r,r_1}(z) ), \dots,  F_n( R^{r,r_n}(z) )  ).
	\] 
	Then we have for every ${\fp X} \in \mathcal{FR}$
	\begin{align*}
		G(\pp^r({\fp X})) &= g( F_1( R^{r,r_1}(\pp^r({\fp X})) ), \dots,  F_n( R^{r,r_n}(\pp^r({\fp X})) )  ) \\
		&= g(F_1(\pp^{r_1}({\fp X})), \dots, F_n(\pp^{r_n}({\fp X})))\\
		&= g(f_1[{\fp X}],\dots,f_n[{\fp X}]) = g(f_1,\dots,f_n)[{\fp X}].
	\end{align*}
	Next, let $f \in \AF$ be represented by $F$ and $t \in [0,1]$. Then
	\[
	G \colon M_{r+1} \to \R \colon z \mapsto  \int F(w)  \, e_t(z)(dw)
	\]
	represents $(f|t)$ because we have
	\begin{align*}
		G(\pp^{r+1}({\fp X})) 
		&= \int F(w) \law(\pp^r({\fp X})|\F_t^{\fp X})(dw) 
		\\
		&= \E[F(\pp^r({\fp X})) |\F_t^{\fp X} ] = \E[f[{\fp X}]|\F_t^{\fp X}] = (f|t)[{\fp X}],
	\end{align*}
	for every $\fp X \in \FR$.
\end{proof}

We introduce a notion to keep track  which time evaluations were used in a given adapted function:
\begin{definition}
	For every $f \in \AF$ we define $T(f) \subset [0,1]$ inductively via
	\begin{itemize}
		\item[(AF1)] $T(f) = \emptyset$ for all $f \in \AF_0$;
		\item[(AF2)] $T(g(f_1,\dots,f_n)) =\bigcup_{i=1}^n T(f_i)$;
		\item[(AF3)] $T((f|t)) = T(f) \cup \{t\}$.
	\end{itemize}
\end{definition}
Note that $T(f)$ is finite for all $f \in \AF$. 

\begin{remark}\label{rem:af_as_func_of_pp}
Let  $f \in \AF_r$ and write $N:= |T(f)|$. By minor modifications in the proof of Lemma~\ref{lem:af_as_func_of_pp} one can show that for every $f \in \AF$ there exists a continuous bounded function $F: M_r^{(N)} \to \R$ such that 
\begin{equation*}
	f[\fp X] = F(\pp^{r}(\fp X_{T(f)})). 
\end{equation*}
Indeed, the construction is the same as in the proof above and the continuity of $F$ is due to the continuity of evaluations at time points in the discrete time setting.
\end{remark}

Next, we  prove that ${\fp X} \sim_r {\fp Y}$ implies ${\fp X} \approx_r {\fp Y}$. To that end, we inductively construct sufficiently large families $\mathcal{A}_r$ consisting of bounded Borel functions on $M_r$ which represent adapted functions.

\begin{lemma}\label{lem:sep_alg}
	Let $T \subset [0,1]$ be a dense set that contains 1. For every $r \in \N$, there is a family $\mathcal{A}_r$ consisting of bounded measurable functions $M_r \to \R$ such that 
	\begin{enumerate}[label = (\roman*)]
		\item\label{it:lem.sep_alg.1} $\A_r$ is countable, closed under multiplication, and separates points in $\prob(M_r)$;
		\item\label{it:lem.sep_alg.2} Every $F \in \A_r$ represents some $f \in \AF_r$ satisfying $T(f) \subset T$.
	\end{enumerate} 
\end{lemma}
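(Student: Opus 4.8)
The plan is to construct the families $\A_n$ by induction on $n$, at each stage building the generating functions out of two kinds of building blocks: those coming from the previous stage $\A_{n-1}$ (lifted via time-evaluations at points of $T$ and the representability machinery), and those coming from a fixed countable convergence-determining family on the relevant space. The base case $n=0$ is: pick by Lemma~\ref{lem:convergence.determining} a countable family $\{g_j : j\in J\}\subset C_b(S)$, closed under multiplication, that is convergence determining on $S$, and set $\A_0 := \{g_j : j\in J\}$; these are adapted functions of rank $0$ with $T(g_j)=\emptyset$, and by Lemma~\ref{lem:P(S)_ptsep_convdet}(c) the family $\{g_j^\ast\}$ is convergence determining, in particular point separating, on $\prob(M_0)=\prob(S)$.

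For the inductive step, suppose $\A_{n}$ has been constructed. First I would observe that for each $F\in\A_{n}$ representing $f\in\AF[n]$ with $T(f)\subset T$, and for each $t\in T$, the function
\[
M_{n+1}\to\R:\ z\mapsto \int F(w)\, e_t(z)(dw)
\]
is bounded Borel and, by the argument in the proof of Lemma~\ref{lem:af_as_func_of_pp}, represents the adapted function $(f\mid t)\in\AF[n+1]$ with $T((f\mid t))=T(f)\cup\{t\}\subset T$. Call the collection of all such functions (over $F\in\A_n$, $t\in T$) the set $\mathcal B_{n+1}$; it is countable. I would also throw in a fixed countable convergence-determining family $\{h_i : i\in I\}\subset C_b(\prob(M_{n}))$ on $\prob(M_n)$, closed under multiplication (Lemma~\ref{lem:convergence.determining}); composing with the evaluation $e_1:M_{n+1}\to\prob(M_n)$ (which, unlike $e_t$ for $t<1$, is continuous, cf.\ Remark~\ref{rem:et}) gives continuous bounded functions $h_i\circ e_1$ on $M_{n+1}$. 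Each $h_i$ is in $\AF[1]\subset \AF[n+1]$-representable in the appropriate sense, but more to the point $h_i\circ e_1$ represents the rank-$1$ adapted function obtained from $h_i$ applied at time $1$; actually it is cleaner to note $h_i\circ e_1(\pp^{n+1}(\fp X)) = h_i(\delta_{\pp^n(\fp X)})$, so we get functions of $\pp^n(\fp X)$ evaluated at the terminal time, which are rank $n+1$ with time set $\{1\}\subset T$. Finally I would define $\A_{n+1}$ to be the multiplicative closure of $\mathcal B_{n+1}\cup\{h_i\circ e_1 : i\in I\}$ (together with constants if needed); this keeps it countable and closed under multiplication, so \ref{it:lem.sep_alg.1}'s algebraic part and \ref{it:lem.sep_alg.2} hold by construction (a product of representable adapted functions is representable via (AF2), using $g$ = multiplication, and $T$ of a product is the union, still $\subset T$).

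The main work, and the step I expect to be the obstacle, is verifying that $\A_{n+1}$ \emph{separates points in} $\prob(M_{n+1})$. The idea is: a measure $\nu\in\prob(M_{n+1})$ is a law of a $\prob(M_n)$-valued \cadlag{} path, and to pin it down we must (a) recover the terminal marginal and the "projected down" lower-order data, and (b) recover the finite-dimensional distributions along a dense set of times. For (a), the functions $h_i\circ e_1$ separate the push-forward $(e_1)_\#\nu$ because $\{h_i\}$ is convergence determining hence point separating on $\prob(M_n)$ and Lemma~\ref{lem:P(S)_ptsep_convdet} lifts this to point-separation on $\prob(\prob(M_n))$; by consistency/terminating structure this also controls $R^{n+1,k}$-images. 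For (b), I would argue that iterating the construction the family $\A_{n+1}$ contains, for every finite $T_0=\{t_1<\dots<t_m=1\}\subset T$, enough functions of the form $z\mapsto \prod_j F_j(w_j)$-type cylinder functionals (obtained by repeatedly applying the $\mathcal B$-step) whose $\nu$-expectations determine $\law_\nu(Z^{n+1}_{t_1},\dots,Z^{n+1}_{t_m})$; since $T$ is dense and contains $1$ and the canonical process is \cadlag, these finite-dimensional distributions determine $\nu$ (this is exactly the kind of density argument used in the proof of Lemma~\ref{lem:MartFClosed}). Combining, any two measures on which all of $\A_{n+1}$ agree must coincide, giving \ref{it:lem.sep_alg.1}. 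The delicate point to get right is that the adapted-function formalism only lets us \emph{condition} and \emph{evaluate}, so recovering a joint law $\law_\nu(Z_{t_1},\dots,Z_{t_m})$ rather than just marginals requires carefully composing conditional expectations at nested times $t_1\le\dots\le t_m$ with multiplicative test functions — precisely the structure (AF2)+(AF3) provides, but one must check the bookkeeping that the resulting $F$ still lies in (the multiplicative closure generating) $\A_{n+1}$ and still has time set inside $T$.
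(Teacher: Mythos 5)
Your construction has one genuinely false step: the inclusion of the functions $h_i\circ e_1$ for an \emph{arbitrary} countable convergence-determining family $\{h_i\}\subset C_b(\prob(M_n))$ is incompatible with property (ii). You argue that $h_i\circ e_1(\pp^{n+1}(\fp X))=h_i(\delta_{\pp^n(\fp X)})$ and that this therefore "represents a rank-$(n+1)$ adapted function with time set $\{1\}$", but there is no adapted-function operation that applies an arbitrary continuous function on $\prob(M_n)$; the only way the formalism produces functions of a conditional law is integration of a lower-rank representative, followed by a $C_b(\R^m)$-composition. Indeed, every $f\in\AF[n]$ satisfies $f(\fp X)=F(\pp^n(\fp X_{T(f)}))$ (Remark~\ref{rem:af_as_func_of_pp}), i.e.\ it factors through the discretization at the \emph{finite} set $T(f)$, whereas $w\mapsto h(\delta_w)$ is a general continuous function of the whole path; for example, for $n=1$ one can take $h\in C_b(\prob(M_1))$ with $h(\delta_w)=\int_0^1\phi^\ast(w(s))\,\lambda(ds)$, a time-average that is not a function of finitely many time-evaluations and hence is represented by no adapted function. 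So property (ii) fails for these elements. The repair is simply to delete them: since $1\in T$, the time-$1$ information is already carried by the elements $z\mapsto\int F\,de_1(z)$, $F\in\A_n$, of your $\mathcal B_{n+1}$, whose multiplicative closure together with the induction hypothesis and Lemma~\ref{lem:P(S)_ptsep_convdet}(b) pins down the time-$1$ marginal. (A second, minor, point: when closing under multiplication you should keep all functions $[0,1]$-valued, or truncate, so that the outer function $g$ in (AF2) can be chosen bounded; the paper does this by composing with $\psi\in\mathcal C_m\subset C_b(\R^m;[0,1])$ and a truncation $\psi_0$.)

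Beyond that, your route to point separation in $\prob(M_{n+1})$ — recovering all finite-dimensional distributions along finite subsets of $T$ and then invoking the c\`adl\`ag/density/$1\in T$ argument — can be made to work, but it is exactly the part you leave as "bookkeeping", and it is heavier than needed. The paper short-circuits it: one only has to check that $\A_{n+1}$ separates points of the path space $M_{n+1}$ itself. Two distinct paths $z\neq z'$ differ at some $t\in T$ (density of $T$, $1\in T$, right-continuity), the induction hypothesis that $\A_n$ separates points of $\prob(M_n)$ gives $F\in\A_n$ with $\int F\,de_t(z)\neq\int F\,de_t(z')$, and composing with $\psi_0$ keeps this function inside $\A_{n+1}$; then a single application of Lemma~\ref{lem:P(S)_ptsep_convdet}(b) (countable, multiplicative, Borel, point separating on $M_{n+1}$) upgrades this to point separation on $\prob(M_{n+1})$. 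Recognizing that separation on $M_{n+1}$, rather than identification of measures through their finite-dimensional distributions, is all that is needed is the observation that makes the induction go through in a few lines.
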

\begin{proof}
	For each $n \in \N$, there exists a point separating family $\mathcal{C}_n \subset C_b(\R^n; [0,1])$ which is countable and closed under multiplication by Lemma~\ref{lem:convergence.determining}. W.l.o.g.\ we can require that $\mathcal{C}_1$ contains the function $$\psi_0: \R \to [0,1]: \, x \mapsto (x \vee 0) \wedge 1.$$
	Let us prove the result by induction on the rank $r$. For $r = 0$, we can choose $\A_0 = \mathcal{C}_1$.       
	
	Assume that we have defined an algebra $\A_r$ with the desired properties. We define $\A_{r+1}$ as the collection of all functions of type
	\begin{align}\label{eq:algebra}
		z \mapsto \psi\left( \int F_1(w) \,  e_{t_1}(z)(dw), \dots, \int F_n(w)  \, e_{t_n}(z)(dw)    \right), 
	\end{align}
	where $n \in \N$, $\psi \in \mathcal{C}_n$, $F_1,\dots, F_n \in \A_r$ and $t_1,\dots, t_n \in T$. Clearly, $\A_{r+1}$ is countable and closed under multiplication. 
	
	We show that $\A_{r+1}$ separates points on $M_{r+1}$. Let $z \neq z' \in M_{r+1} \subseteq D(\prob(M_r))$. Then there is some $t \in T$ such that $e_t(z) \neq e_t(z')$. Since $\A_r$ separates points in $\prob(M_r)$, there is some $F \in \A_r$ such that
	\begin{align*}
		\psi_0\left( \int F(w) \, e_t(z)(dw)\right) &= \int F(w) \, e_t(z)(dw) \\
		& \neq \int F(w) \, e_t(z')(dw)=\psi_0\left( \int F(w) \, e_t(z')(dw)\right),
	\end{align*}
	and we have $\psi_0\left( \int F(w) \, e_t(\cdot )(dw)\right) \in \A_{r+1}$. By Lemma~\ref{lem:P(S)_ptsep_convdet}(b),  $\A_{r+1}$ separates points on $\prob(M_{r+1})$.
	
	Let $G \in \A_{r+1}$ be composed of  $\psi \in C_b(\R^n)$, $F_1,\dots, F_n \in \A_r$ and $t_1,\dots, t_n \in T$ as in \eqref{eq:algebra} and let $f_1, \dots, f_n \in \AF_r$ such that $F_i$ represents $f_i$. Then $G$ represents
	$
	g:= \psi( (f_1|t_1),\dots, (f_n|t_n)  ) \in \AF_{r+1}.
	$   
\end{proof}

\begin{proposition}\label{prop:simapprox}
	For $\fp X, \fp Y \in \FR$ the following are equivalent:
	\begin{enumerate}[label = (\roman*)]
		\item ${\fp X} \approx_r {\fp Y}$, that is $\law(\pp^r(\fp X)) = \law( \pp^r (\fp Y))$; 
		\item ${\fp X} \sim_r {\fp Y}$, that is $\E [f[\fp X]] = \E[f[ \fp Y ]]$ for every $f \in \AF_r$; 
		\item There exists a dense set $T \subset [0,1]$ that contains 1 such that $\E [f[\fp X]] = \E[f[ \fp Y ]]$ for all $f \in \AF_r$ satisfying $T(f) \subset T$.
	\end{enumerate}
\end{proposition}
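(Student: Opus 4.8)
The plan is to prove Proposition~\ref{prop:simapprox} by establishing the cycle of implications $(i) \implies (ii) \implies (iii) \implies (i)$. The first two implications are essentially soft; the real work is in the last one.

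\textit{The implication $(i) \implies (ii)$.} Suppose $\fp X \approx_n \fp Y$, i.e.\ $\law(\pp^n(\fp X)) = \law(\pp^n(\fp Y))$. Let $f \in \AF[n]$. By Lemma~\ref{lem:af_as_func_of_pp} there is a bounded Borel $F : M_{\rank(f)} \to \R$ with $f(\fp X) = F(\pp^{\rank(f)}(\fp X))$, and since $\rank(f) \le n$, Lemma~\ref{lem:R} gives a continuous $R^{n,\rank(f)}$ with $\pp^{\rank(f)}(\fp X) = R^{n,\rank(f)}(\pp^n(\fp X))$. Thus $f(\fp X) = (F \circ R^{n,\rank(f)})(\pp^n(\fp X))$ and similarly for $\fp Y$; since both prediction processes have the same law and $F \circ R^{n,\rank(f)}$ is bounded Borel, $\E[f(\fp X)] = \E[f(\fp Y)]$. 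Hence $\fp X \sim_n \fp Y$.

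\textit{The implication $(ii) \implies (iii)$.} This is immediate: take any dense $T \subset [0,1]$ containing $1$ (e.g.\ $([0,1]\cap\Q)\cup\{1\}$); then the class of $f \in \AF[n]$ with $T(f)\subset T$ is a subclass of $\AF[n]$, so the equality of expectations for all $f \in \AF[n]$ in particular holds for this subclass.

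\textit{The implication $(iii) \implies (i)$.} This is the heart of the argument. Fix a dense $T\subset[0,1]$ containing $1$ witnessing $(iii)$. I would invoke Lemma~\ref{lem:sep_alg} to obtain, for this $T$, a countable family $\A_n$ of bounded measurable functions on $M_n$ which is closed under multiplication, separates points in $\prob(M_n)$, and whose every member represents some $f \in \AF[n]$ with $T(f)\subset T$. For each $F \in \A_n$ let $f_F \in \AF[n]$ be the adapted function it represents, so $F(\pp^n(\fp X)) = f_F(\fp X)$ and $F(\pp^n(\fp Y)) = f_F(\fp Y)$; by hypothesis $(iii)$, $\E[F(\pp^n(\fp X))] = \E[f_F(\fp X)] = \E[f_F(\fp Y)] = \E[F(\pp^n(\fp Y))]$. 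In the notation of Section~\ref{sec:prelim}, this says $F^\ast(\law(\pp^n(\fp X))) = F^\ast(\law(\pp^n(\fp Y)))$ for every $F \in \A_n$. Since $\A_n$ is a countable family of bounded Borel functions on $M_n$ that is closed under multiplication and point separating, Lemma~\ref{lem:P(S)_ptsep_convdet}(b) implies that $\{F^\ast : F \in \A_n\}$ is point separating on $\prob(M_n)$, and hence $\law(\pp^n(\fp X)) = \law(\pp^n(\fp Y))$, i.e.\ $\fp X \approx_n \fp Y$.

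\textit{Main obstacle.} The delicate point is the case $n = \infty$, where $M_\infty$ and $\AF$ (rather than $\AF[n]$ for finite $n$) are involved, and Lemma~\ref{lem:sep_alg} as stated produces families $\A_n$ only for finite $n$. To handle $n=\infty$ one argues that $\fp X \approx_\infty \fp Y$ iff $\fp X\approx_n\fp Y$ for all finite $n$ (Remark~\ref{rem:FP}\ref{it:remHKb}), that $\fp X \sim_\infty \fp Y$ iff $\fp X \sim_n \fp Y$ for all finite $n$ (since $\AF = \bigcup_n \AF[n]$), and likewise for the $T$-restricted version, so the infinite case follows from the finite cases. I would make this reduction explicit at the start of the proof so that the remaining argument can be run for fixed finite $n$, where Lemma~\ref{lem:sep_alg} applies directly. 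Aside from this bookkeeping, all steps are routine applications of results already established.
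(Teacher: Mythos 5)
Your proposal is correct and follows essentially the same route as the paper: reduce to finite $n$ via Remark~\ref{rem:FP}\ref{it:remHKb}, get $(i)\implies(ii)$ from Lemma~\ref{lem:af_as_func_of_pp} (with Lemma~\ref{lem:R}), note $(ii)\implies(iii)$ is trivial, and deduce $(iii)\implies(i)$ from the separating family of Lemma~\ref{lem:sep_alg}. The only cosmetic differences are that the paper argues $(iii)\implies(i)$ by contraposition and that your extra appeal to Lemma~\ref{lem:P(S)_ptsep_convdet}(b) is redundant, since Lemma~\ref{lem:sep_alg} already asserts separation of points in $\prob(M_n)$.
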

\begin{proof}
	It suffices to show the claim for $r \in \N$, cf. Remark~\ref{rem:FP}\ref{it:remHKb}. Fix $r \in \N$.
	
	$(i) \implies (ii)$: follows from Lemma~\ref{lem:af_as_func_of_pp}. 
	
	$(ii) \implies (iii)$: is trivial.
	
	$(iii) \implies (i)$: Assume that ${\fp X} \not \approx_r {\fp Y}$, that is $\law(\pp^r({\fp X})) \neq \law(\pp^r({\fp Y}))$.
	By Lemma~\ref{lem:sep_alg} there is a point separating family $\A_r$ of bounded Borel functions $M_r \to \R$ such that  every $F \in \A_r$ represents some $f \in \AF_r$ satisfying $T(f) \subset T$.	As $\A_r$ is point separating, there is some $F \in \A_r$ such that $\E [F(\pp^r({\fp X}))] \neq \E [F(\pp^r({\fp Y}))]$. 
	 By the definition of $\A_r$, there is some $f\in \AF_r$ satisfying $T(f) \subset T$ that is represented by $F$. For this $f$ we have $\E[f[{\fp X}]] \neq \E[f[{\fp Y}]]$.
\end{proof}

The techniques developed in this section allow us to show that the Hoover--Keisler topology defined via the prediction processes is equal to the topology defined via adapted functions. This result can be seen as an extension of Theorem~\ref{thm:weakFDD}, in fact in the case $r=1$ it is just a slight reformulation of that theorem.
\begin{theorem}
Let $(\fp X^n)_n$ be a sequence in $\uFR$ and $\fp X \in \uFR$. Then the following are equivalent:
\begin{enumerate}[label = (\roman*)]
	\item $\fp X^n \to \fp X$ in $\HK_r$.
	\item For every $f \in \AF_r$ satisfying  $T(f) \subset \cont(\fp X) \cup \{ 1 \}$ we have  $\E[ f[\fp X^n] ] \to \E[ f[\fp X]]$. 
	\item There exists a dense set $T \subset [0,1]$ that contains 1 such that for all $f \in \AF_r$ satisfying  $T(f) \subset T$ it holds  $\E[ f[\fp X^n] ] \to \E[ f[\fp X]]$.
\end{enumerate}
\end{theorem}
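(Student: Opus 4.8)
The plan is to prove the cycle $(i)\Rightarrow(ii)\Rightarrow(iii)\Rightarrow(i)$. The case $n=0$ is trivial since $\HK_0$-convergence is convergence of $\law(X^m)$ and every $f\in\AF[0]=C_b(S)$ has $T(f)=\emptyset$. The equivalences for $n=\infty$ follow once the finite cases are settled: by Lemma~\ref{lem:ppninfty}, $(i)$ for $n=\infty$ is equivalent to $(i)$ for all $n\in\N$, and the set $T=\cont(\fp X)\cup\{1\}$ used below to witness $(iii)$ does not depend on $n$, so $(ii)$ and $(iii)$ also decouple over $n$. Hence it suffices to fix $n\in\N$.

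For $(i)\Rightarrow(ii)$, fix $f\in\AF[n]$ with $T(f)\subset\cont(\fp X)\cup\{1\}$ and set $T_0:=T(f)\cup\{1\}$, $N:=|T_0|$. Since $T_0\setminus\{1\}\subset\cont(\fp X)$ and evaluation at time $1$ is continuous (Remark~\ref{rem:et}), Theorem~\ref{thm:discr} applies and gives $\fp X^m_{T_0}\to\fp X_{T_0}$ in $(\uFR_N,\HK_n)$, i.e.\ $\law(\pp^n(\fp X^m_{T_0}))\to\law(\pp^n(\fp X_{T_0}))$ in $\prob(M_n^{(N)})$. By Remark~\ref{rem:af_as_func_of_pp} (applied with the time set $T_0\supset T(f)$, the extra point being harmless, and using continuity of discrete-time evaluations) there is $F\in C_b(M_n^{(N)})$ with $f(\fp Y)=F(\pp^n(\fp Y_{T_0}))$ for all $\fp Y\in\FR$, whence $\E[f(\fp X^m)]=\int F\,d\law(\pp^n(\fp X^m_{T_0}))\to\int F\,d\law(\pp^n(\fp X_{T_0}))=\E[f(\fp X)]$. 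The implication $(ii)\Rightarrow(iii)$ is immediate with $T:=\cont(\fp X)\cup\{1\}$, which is dense (co-countable by Corollary~\ref{cor:ContBig}) and contains~$1$.

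For $(iii)\Rightarrow(i)$, let $T\ni1$ be the dense set from $(iii)$. Applying $(iii)$ to rank-$0$ functions gives $\law(X^m)\to\law(X)$, so by Theorem~\ref{thm:Compactness} the set $\{\law(\pp^n(\fp X^m))\}_m$ is relatively compact in $\prob(M_n)$; the same reasoning level by level together with the Doob-type bound of Lemma~\ref{lem:MVM_tight} (realised in the pseudo-path spaces, exactly as in the proof of Theorem~\ref{thm:M_compactness}) shows that for each finite $T_0\subset T$ the sequence $\{\law((\pp^n_t(\fp X^m))_{t\in T_0})\}_m$ is relatively compact in $\prob(\prob(M_{n-1})^{|T_0|})$. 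Since $\pp^n(\fp X^m)$ and $\pp^n(\fp X)$ are $\prob(M_{n-1})$-valued \cadlag\ measure-valued martingales, Theorem~\ref{thm:weakFDD} ($(iii)\Rightarrow(i)$, with $S=M_{n-1}$ and the dense set $T$) reduces $(i)$ to the finite-dimensional convergence $(\pp^n_t(\fp X^m))_{t\in T_0}\inlaw(\pp^n_t(\fp X))_{t\in T_0}$ for every finite $T_0\subset T$; by the relative compactness just noted, it suffices to identify the limit. Test against products $\prod_{t\in T_0}G_t^\ast$ with $G_t$ ranging over the countable, multiplicatively closed, point-separating family $\A_{n-1}$ of Lemma~\ref{lem:sep_alg} (these products separate points of $\prob(M_{n-1})^{|T_0|}$ coordinatewise, hence by Lemma~\ref{lem:P(S)_ptsep_convdet} the induced functionals separate points of $\prob(\prob(M_{n-1})^{|T_0|})$). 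Writing each $G_t$ as the value $g_t(\cdot)$ of some $g_t\in\AF[n-1]$ with $T(g_t)\subset T$ (Lemma~\ref{lem:sep_alg}(ii)) gives $G_t^\ast(\pp^n_t(\fp Y))=\E[g_t(\fp Y)\mid\F_t^{\fp Y}]=(g_t|t)(\fp Y)$; since these values lie in $[0,1]$, the product $\prod_{t\in T_0}(g_t|t)$ is the value of an adapted function $h\in\AF[n]$ with $T(h)\subset T$, and therefore $\E[\prod_{t\in T_0}G_t^\ast(\pp^n_t(\fp X^m))]=\E[h(\fp X^m)]\to\E[h(\fp X)]=\E[\prod_{t\in T_0}G_t^\ast(\pp^n_t(\fp X))]$ by $(iii)$. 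This identifies the limit and completes the cycle.

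I expect the main obstacle to be the $(iii)\Rightarrow(i)$ step, and specifically the discipline of keeping every evaluation time inside the given set $T$: one must resist first passing to a subsequential limit $\fp Y$ and exploiting continuity of point evaluation at the continuity points of $\fp Y$, since those need not meet $T$. Instead one establishes finite-dimensional convergence on all of $T$ and feeds it into Theorem~\ref{thm:weakFDD}, where the Meyer--Zheng machinery (convergence of finite-dimensional distributions on a dense set forces convergence of martingale laws) does the real work. The remaining delicate point is the relative-compactness input, which in the Meyer--Zheng topology must be obtained through the pseudo-path spaces rather than in $\prob(M_n)$ directly; this is routine given Section~\ref{sec:MVM} and entirely parallel to the arguments already carried out there.
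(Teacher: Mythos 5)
Your treatment of $(i)\Rightarrow(ii)\Rightarrow(iii)$ is essentially the paper's argument (discretization via Theorem~\ref{thm:discr} plus the continuous representation of Remark~\ref{rem:af_as_func_of_pp}; the bookkeeping with the extra time point $1$ is harmless), and the reduction of $n=\infty$ to finite $n$ is fine. The genuine gap is in $(iii)\Rightarrow(i)$, at the words ``this identifies the limit''. The functions in the family $\A_{n-1}$ of Lemma~\ref{lem:sep_alg} are only bounded \emph{Borel}: for $n-1\ge 1$ they are built from the evaluations $e_t$, which are discontinuous for $t<1$ in the Meyer--Zheng topology, so $G_t^\ast$ and your product functionals $\Phi=\prod_{t\in T_0}G_t^\ast$ are not continuous on $\prob(M_{n-1})^{|T_0|}$. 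Hence, if $\law\bigl((\pp^n_t(\fp X^{m_k}))_{t\in T_0}\bigr)\to\nu$ weakly along a subsequence, you cannot pass to the limit in $\E\bigl[\Phi\bigl((\pp^n_t(\fp X^{m_k}))_{t\in T_0}\bigr)\bigr]$ to compute $\int\Phi\,d\nu$; the convergence of these expectations to $\E\bigl[\Phi\bigl((\pp^n_t(\fp X))_{t\in T_0}\bigr)\bigr]$, which you correctly extract from $(iii)$ through the adapted function $h$, therefore gives no information about $\nu$. Point-separation (i.e.\ measure-determination, Lemma~\ref{lem:P(S)_ptsep_convdet}) of the test family is not enough here; one would need a \emph{convergence-determining}, hence essentially continuous, family with evaluation times confined to $T$, and no such family exists -- this is exactly the obstruction the whole construction is fighting. (Your argument does close for $n=1$, where $\A_0\subset C_b(S)$, consistent with the case $n=1$ being a reformulation of Theorem~\ref{thm:weakFDD}.) Two smaller, repairable defects sit in the same step: $\{\prod_t G_t^\ast\}$ is not closed under multiplication, since $G^\ast H^\ast\neq (GH)^\ast$, and products of point-separating families do not automatically separate points of a product unless one may freeze all but one coordinate (e.g.\ by adjoining the constant function $1$); but these are cosmetic next to the continuity problem.

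The paper's proof of $(iii)\Rightarrow(i)$ does precisely what you resolved to avoid: it uses Theorem~\ref{thm:Compactness} to extract a subsequential limit $\fp Y\in\uFR$, applies the already-proven $(i)\Rightarrow(ii)$ along that subsequence to adapted functions $f$ with $T(f)\subset (T\cap\cont(\fp Y))\cup\{1\}$, and concludes $\fp X\approx_n\fp Y$ from Proposition~\ref{prop:simapprox}. Your worry -- that $\cont(\fp Y)$ need not contain enough of $T$, i.e.\ that $(T\cap\cont(\fp Y))\cup\{1\}$ might fail to be dense when $T$ is merely countable -- points at a real subtlety in that argument (it is harmless when $T$ is co-countable or uncountable in every interval, and otherwise calls for an additional approximation using right-continuity of the prediction processes), but the replacement you propose trades a delicate-but-patchable point for a step that does not go through. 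To repair your proof you must either return to the subsequential-limit argument at the level of $\uFR$, identifying the limit with Proposition~\ref{prop:simapprox}, or produce continuous test functions at fixed times, which the Meyer--Zheng topology does not afford.
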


\begin{proof}
	$(i) \implies (ii)$: Let  $\fp X^n \to \fp X$ in $\HK$. As $T(f) \subset \cont(\fp X)$ Theorem~\ref{thm:discr} yields the convergence discrete time filtered random variables $\fp X^n_{T(f)}$ to $\fp X_{T(f)}$ in $\HK$, i.e.\ $\law(\pp^r( \fp X^n_{T(f)})) \to \law(\pp^r( \fp X_{T(f)}))$. By Remark~\ref{rem:af_as_func_of_pp} there is continuous bounded function $F$ such that for all $\fp Y \in \uFR$
	\begin{equation*}
	\E	[f[\fp Y]] = \E[ F(\pp^{r}(\fp Y_{T(f)}))]. 
	\end{equation*}
	Hence, we can conclude $\E [f[\fp X^n] ]\to \E [f[\fp X]]$.
	
	$(ii) \implies (iii)$ is trivial.
	
	$(iii) \implies (i)$: We assume that there is $T \subset [0,1]$ dense, containg 1, such that  $\E[ f[\fp X^n] ] \to \E[ f[\fp X]]$ for all  $f \in \AF_r$ satisfying  $T(f) \subset T$. 	Recall that for $f \in \AF_0$ we have  $T(f) = \emptyset$ and $\E[f[\fp Y]] = \E[f(Y)]$ for all $\fp Y \in \uFR$. So, we have  $\law(X^n) \to \law(X)$, in particular, the sequence $(\law(X^n))_n$ is relatively compact in $\prob(S)$. Theorem~\ref{thm:Compactness} yields that the sequence $(\fp X^n)_n$ is relatively compact in $(\uFR,\HK)$. So, it suffices to show that every limit point of this sequence is equal to $\fp X$.  
	
	Let $\fp Y$ be a limit point and let  $(\fp X^{n_k})_k$ be a subsequence such that $\fp X^{n_k} \to \fp Y$ in $(\uFR,\HK_r)$. Denote $T':= (T \cap \cont(\fp Y)) \cup \{1\}$. For every $f \in \AF$ satisfying $T(f) \subset T'$ we have 
	$$
	\E [f[\fp X]] = \lim_{k \to \infty} \E [f[\fp X^{n_k}]] = \E [f[\fp Y]],
	$$ 
	where the second equality is due to  $\fp X^{n_k} \to \fp Y$ in $\HK_r$ and the already proven implication $(i) \implies (ii)$. By Proposition~\ref{prop:simapprox} we conclude $\fp X = \fp Y$.
\end{proof}


\section{Adapted filtered processes}\label{sec:adapted}
\subsection{Continuous time}
Every continuous-time filtered process can be seen as a $D(S)$-valued filtered random variable, hence $\FP(S) \subset \FR(D(S))$. This inclusion is strict because the definition of a $D(S)$-valued filtered random variable contains no adaptedness constraint. For given $\fp X \in \FR(D(S))$, the process $X$ is adapted to $(\F_t^{\fp X})_{t \in [0,1]}$ if and only if $\law(X_t|\F^{\fp X}_t) = \delta_{X_t}$ for every $t \in [0,1]$. Note that $\law(X_t |\F_t^{\fp X}) = {e_t}_\# \pp^1_t(\fp X)$, where $e_t : D(S) \to S : f \mapsto f(t)$ is the evaluation at time $t$. Hence, $\fp X \in \FP(S)$ if and only if for every $t \in [0,1]$
\begin{equation}\label{eq:adapted}
    {e_t}_\# \pp^1_t(\fp X) = \delta_{X_t}. 
\end{equation}
This shows that being adapted is a property of  the law of the prediction process of rank 1 and in particular a well-defined notion in the factor spaces, i.e.\ we have  $\uFP(S) \subset \uFR(D(S))$. Moreover, using \eqref{eq:adapted} we will show that $\uFP(S)$ is a closed subset of $\uFR(D(S))$. 

Let $\uFPc(S)$ denote the set of $S$-valued filtered processes with continuous paths. Note that $\uFPc(S)$ is $\HK_r$-closed in $\uFP(S)$ for all $r \in \N_0 \cup \{\infty\}$ if $D(S)$ is equipped with the $J_1$-topology. 

\begin{proposition}\label{prop:adaptClosed}
 $\uFP(S)$ is $\HK_1$-closed in $\uFR(D(S))$ if $D([0,1],S)$ is equipped with either the Meyer--Zheng or the $J_1$-topology. Moreover, $\uFPc(S)$ is closed in $\uFR(C(S))$ if $C(S)$ is equipped with the supremum distance.
\end{proposition}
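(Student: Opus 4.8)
The plan is to reduce the claim to the characterisation \eqref{eq:adapted} of adaptedness in terms of the first-order prediction process and then run a Portmanteau argument. First I record some reductions. Since $\HK_1$ is pseudometrizable (Remark~\ref{rem:FP}), it suffices to show that whenever $\fp X^m\to\fp X$ in $\HK_1$ with $\fp X^m\in\uFP(S)$ for all $m$, then $\fp X\in\uFP(S)$; note that $\fp X\in\FR(D(S))$ has a right-continuous filtration by definition, and adaptedness depends only on $\law(\pp^1(\fp X))$ by \eqref{eq:adapted}. Because the filtration of $\fp X$ is right-continuous and $X$ has \cadlag\ paths, $\fp X\in\FP(S)$ as soon as $X_s$ is $\F^{\fp X}_s$-measurable for $s$ in a dense subset of $[0,1]$ (approximate a general time from the right along the dense set). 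Finally, as the underlying sets coincide and the $J_1$-topology on $D(S)$ is finer than the Meyer--Zheng topology, the $J_1$-version of $\HK_1$ on $\uFR(D(S))$ is finer than the Meyer--Zheng one, so it suffices to treat the Meyer--Zheng case.

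The difficulty is that for $t<1$ the evaluation $e_t\colon D(S)\to S$ is \emph{nowhere} continuous in the Meyer--Zheng topology --- a short spike may be grafted onto any path without changing it off a set of small $\lambda$-measure --- so \eqref{eq:adapted} cannot be transferred to a weak limit directly: the natural candidate set $\{(\nu,g):{e_t}_\#\nu=\delta_{g(t)}\}$ fails to be closed. I circumvent this by testing adaptedness with restriction maps instead. For $t\in[0,1)$ let $r_t\colon D([0,1];S)\to L_0([0,t],\mathrm{Leb};S)$ be restriction of a path to $[0,t]$, viewed in the \emph{atomless} $L_0$-space; since convergence in $\lambda$-measure on $[0,1]$ restricts to convergence in Lebesgue measure on $[0,t]$, the map $r_t$ is continuous, hence so is $\prob(r_t)$. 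Arguing as in the discussion leading to \eqref{eq:adapted}, one checks that $\fp X\in\FP(S)$ if and only if
\[
\prob(r_t)\big(\pp^1_t(\fp X)\big)=\delta_{r_t(X)}\quad\text{a.s.}
\]
for $t$ in a dense subset of $[0,1)$; the nontrivial implication uses the dense-set reduction above, since $r_t(X)$ being $\F^{\fp X}_t$-measurable forces $X_s$ to be $\F^{\fp X}_s$-measurable for Lebesgue-a.e.\ $s\le t$.

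Now fix $t$ in the co-countable (hence dense) set $\cont(\pp^1(\fp X))\cap[0,1)$; co-countability is Lemma~\ref{lem:TBig}. Convergence in $\HK_1$ means $\pp^1(\fp X^m)\inlaw\pp^1(\fp X)$ as $\prob(D(S))$-valued \cadlag\ martingales, so Corollary~\ref{cor:MVMMargConv2} (with inner space $D(S)$ and $T=\{t\}$) gives $(\pp^1(\fp X^m),\pp^1_t(\fp X^m))\inlaw(\pp^1(\fp X),\pp^1_t(\fp X))$; appending the continuous map $\delta^{-1}\circ e_1$ (recall $e_1$ \emph{is} continuous) to recover $X^m=(\delta^{-1}\circ e_1)(\pp^1(\fp X^m))$ yields $(\pp^1_t(\fp X^m),X^m)\inlaw(\pp^1_t(\fp X),X)$ in $\prob(D(S))\times D(S)$. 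Applying the continuous map $(\nu,g)\mapsto(\prob(r_t)(\nu),\delta_{r_t(g)})$ into $\prob(L_0([0,t],\mathrm{Leb};S))^2$ and using that each $\fp X^m$ is adapted, the pre-limit law is supported on the diagonal --- which is closed, $\prob(L_0([0,t],\mathrm{Leb};S))$ being metrizable --- so by the Portmanteau theorem the limit law is supported there too, i.e.\ $\prob(r_t)(\pp^1_t(\fp X))=\delta_{r_t(X)}$ a.s. Ranging over $t\in\cont(\pp^1(\fp X))\cap[0,1)$, the criterion of the previous paragraph gives $\fp X\in\uFP(S)$.

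For $\uFPc(S)\subset\uFR(C(S))$ the same argument applies, with $D(S)$ replaced by $C(S)$, and is simpler: on $C([0,1];S)$ with the supremum metric $e_t$ is continuous, so $\{(\nu,g)\in\prob(C(S))\times C(S):{e_t}_\#\nu=\delta_{g(t)}\}$ is already closed; at $t\in\cont(\pp^1(\fp X))\cap[0,1)$ one obtains $(\pp^1_t(\fp X^m),X^m)\inlaw(\pp^1_t(\fp X),X)$ as above, the pre-limit lies in this closed set by adaptedness of the $\fp X^m$, and Portmanteau concludes. The main obstacle is exactly the pathology of the Meyer--Zheng topology that forced the detour through $r_t$; beyond the continuity of $r_t$ and the equivalence of $\fp X\in\FP(S)$ with the displayed condition for a dense set of times, everything is the routine combination of the measure-valued martingale convergence of Section~\ref{sec:MVM} with the continuous-mapping and Portmanteau theorems.
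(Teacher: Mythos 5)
Your overall route is genuinely different from the paper's and is largely viable: instead of the paper's continuous cutoff functionals $h \mapsto (f\circ h)\cdot g_{t,n}$ and the closedness of measurability constraints (Lemma~\ref{lem:adapt}), you test adaptedness with the restriction maps $r_t$ into $L_0([0,t];S)$ and transfer the Dirac identity to the limit by Corollary~\ref{cor:MVMMargConv2} plus a Portmanteau/diagonal argument at times $t\in\cont(\pp^1(\fp X))$, which is co-countable by Lemma~\ref{lem:TBig}. The continuity of $r_t$ (since $\mathrm{Leb}|_{[0,t]}\le 2\lambda$), the recovery $X^m=\delta^{-1}(e_1(\pp^1(\fp X^m)))$, the reduction from $J_1$ to Meyer--Zheng, and the whole convergence part are fine.

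The gap is in the criterion you rely on, namely that $\prob(r_t)(\pp^1_t(\fp X))=\delta_{r_t(X)}$ a.s.\ for $t$ in a dense subset of $[0,1)$ implies $\fp X\in\FP(S)$. Your justification, ``$r_t(X)$ being $\F_t^{\fp X}$-measurable forces $X_s$ to be $\F_s^{\fp X}$-measurable for Lebesgue-a.e.\ $s\le t$'', is false as stated: from $\F_t$-measurability of the restricted path one can at best infer $\F_t$-measurability of $X_s$, not $\F_s$-measurability (take $X_s:=B_t$ for $s\le t$ on a Brownian filtration; $r_t(X)$ is $\F_t$-measurable while no $X_s$, $s<t$, is $\F_s$-measurable). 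Moreover, even the correct implication ``$r_t(X)$ is $\F_t$-measurable $\Rightarrow$ $X_s$ is $\F_t$-measurable for $s<t$'' is not automatic, because point evaluation is not even well defined, let alone Borel, on $L_0([0,t];S)$; it needs an argument, e.g.\ writing $f(X_s)=\lim_{\delta\searrow 0}\frac{1}{\delta}\int_s^{s+\delta} f(X_u)\,du$ for $f$ in a countable convergence-determining family (right-continuity of the paths), each average being a Borel functional of $r_t(X)$ and hence $\F_t$-measurable. With that lemma in hand, the conclusion does follow: for arbitrary $s$ choose $t\downarrow s$ through your dense set to get $X_s\in\F_{s+}=\F_s$ by right-continuity of the filtration, while $X_1$ is trivially $\F_1$-measurable. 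So the plan can be completed, but as written the key equivalence is backed by an incorrect step and omits exactly the measurability-transfer argument that constitutes the technical heart of the proof --- the same difficulty the paper addresses with the cutoffs $g_{t,n}$, the right-continuity of paths, and the right-continuity of the filtration in its proof of Proposition~\ref{prop:adaptClosed}.
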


As $\HK_1$ is the weakest topology among the topologies $\HK_r$,  $r \in \N \cup \{ \infty\}$, $\HK_1$-closedness readily implies  closedness in $\HK_r$ for all $r \in \N \cup \{ \infty\}$. In order to prove Proposition~\ref{prop:adaptClosed}, we provide  two auxiliary lemmas: 
\begin{lemma}\label{lem:cond_law_dirac}
Let $X,Y$ be $S$-valued random variables on a probability space $(\Omega,\F,\P)$ and $\G$ a sub-$\sigma$-algebra of $\F$. If $\law(X|\G) = \delta_Y$ a.s.\ then $X=Y$ a.s.
\end{lemma}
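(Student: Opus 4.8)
\textbf{Proof plan for Lemma~\ref{lem:cond_law_dirac}.}
The statement is that if $\law(X\mid\G)=\delta_Y$ a.s., then $X=Y$ a.s. The plan is to test against a countable point‑separating family and reduce to the scalar case. Concretely, fix a countable point‑separating family $\{f_k : k\in\N\}$ of bounded Borel (indeed, continuous bounded) functions on $S$, which exists since $S$ is a separable metric (Lusin) space, cf.\ Lemma~\ref{lem:convergence.determining}. Since $S$ is separable metrizable, the equality $X=Y$ a.s.\ is equivalent to $f_k(X)=f_k(Y)$ a.s.\ for every $k\in\N$ (the set $\{X\ne Y\}$ is the countable union over $k$ of $\{f_k(X)\ne f_k(Y)\}$). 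So it suffices to prove $f_k(X)=f_k(Y)$ a.s.\ for each fixed $k$.

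The main computation is then scalar. Write $f=f_k$, a bounded Borel function. From $\law(X\mid\G)=\delta_Y$ a.s.\ we get, for every bounded Borel $g:\R\to\R$,
\begin{align}
\E[g(f(X))\mid\G] &= \int g(f(s))\,\law(X\mid\G)(ds) = \int g(f(s))\,\delta_Y(ds) = g(f(Y))\quad\text{a.s.}
\end{align}
Here the last expression is $\G$-measurable since $Y$ is (because $\delta_Y=\law(X\mid\G)$ is $\G$-measurable, hence $Y=\delta^{-1}(\law(X\mid\G))$ is $\G$-measurable, using that $\delta^{-1}$ is Borel on $\delta(S)$, cf.\ the discussion before Remark~\ref{rem:et}). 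Applying this with $g=\id$ gives $\E[f(X)\mid\G]=f(Y)$ a.s., and applying it with $g(x)=x^2$ (valid since $f$ is bounded, so $f(X)$ is bounded) gives $\E[f(X)^2\mid\G]=f(Y)^2$ a.s. Taking expectations,
\begin{align}
\E[(f(X)-f(Y))^2] &= \E[f(X)^2] - 2\,\E[f(X)f(Y)] + \E[f(Y)^2] \\
&= \E\big[\E[f(X)^2\mid\G]\big] - 2\,\E\big[f(Y)\,\E[f(X)\mid\G]\big] + \E[f(Y)^2] \\
&= \E[f(Y)^2] - 2\,\E[f(Y)^2] + \E[f(Y)^2] = 0,
\end{align}
where in the middle line we used that $f(Y)$ is $\G$-measurable and bounded to pull it out of the conditional expectation. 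Hence $f(X)=f(Y)$ a.s.

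There is essentially no hard part here: the only point requiring a moment of care is the measurability of $Y$ with respect to $\G$ (so that $f(Y)$ can be pulled out of the conditional expectation), which follows from $\delta^{-1}$ being Borel on the closed set $\delta(S)\subset\prob(S)$, together with $\law(X\mid\G)$ being $\G$-measurable by construction. Everything else is the standard $L^2$ argument for conditional expectations combined with the reduction to a countable separating family. One could alternatively phrase the final step as: $\law(X\mid\G)=\delta_Y$ forces $\P(X\in A\mid\G)=\mathbf 1_{\{Y\in A\}}$ a.s.\ for each Borel $A$, and running this over a countable generating algebra of the Borel $\sigma$-algebra of $S$ yields $X=Y$ a.s.; I would use whichever is more consistent with the surrounding notation.
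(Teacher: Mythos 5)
Your proposal is correct and follows essentially the same route as the paper: test against a countable point-separating family $\{\phi_n\}$, derive $\E[\phi_n(X)|\G]=\phi_n(Y)$ and $\E[\phi_n(X)^2|\G]=\phi_n(Y)^2$, use the tower property to get $\E[|\phi_n(X)-\phi_n(Y)|^2]=0$, and conclude $X=Y$ a.s.\ from countability and point separation. Your extra remark on the $\G$-measurability of $Y$ (via $\delta^{-1}$ being Borel on $\delta(S)$) is a detail the paper leaves implicit, but it does not change the argument.
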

\begin{proof}
Let $\{ \phi_j : j \in \N\}$ be a point separating family on $S$. Then we have for all $j \in \N$
\[
\E[\phi_j(X) | \G] = \phi_j(Y) \quad \mbox{and} \quad \E[\phi_j(X)^2|\G] = \phi_j(Y)^2.
\]
Using these equalities and the tower property we get $\E[|\phi_j(X)-\phi_j(Y)|^2] = 0$, so  $\phi_j(X) = \phi_j(Y)$ a.s.\ for all $j \in \N$. As the family $\{\phi_j : j \in \N\}$ is countable and point separating, we conclude that $X = Y$ a.s.\ 
\end{proof}

\begin{lemma}\label{lem:adapt}
Let $\phi : S \to S'$ be continuous and $t \in [0,1]$. Then  
\begin{align}\label{eq:prf:adapt}
\{ \fp X \in \uFP(S) : \phi(X) \textup{ is } \F_t^{\fp X} \textup{-measurable} \}
\end{align}
is  closed in $\uFP(S)$ w.r.t.\ $\HK_1$.
\end{lemma}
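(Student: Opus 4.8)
## Proof Proposal for Lemma~\ref{lem:adapt}

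The plan is to express the set in \eqref{eq:prf:adapt} as the preimage of a closed set under a continuous map into a space of the form $\prob(M_1^{(N)})$, using the discretization machinery of Section~\ref{sec:Discr} together with the characterization of adaptedness via the first-order prediction process. First I would observe that $\phi(X)$ being $\F_t^{\fp X}$-measurable is equivalent, by the same reasoning as in \eqref{eq:adapted}, to the condition $\law(\phi(X)\,|\,\F_t^{\fp X}) = \delta_{\phi(X)}$ a.s.; one direction is trivial, and the converse follows from Lemma~\ref{lem:cond_law_dirac}. Writing $\psi := \phi \circ e_t : D(S) \to S'$ (the evaluation-then-$\phi$ map, which is \emph{not} continuous for $t<1$ in the Meyer--Zheng topology, so some care is needed), one has $\law(\phi(X)\,|\,\F_t^{\fp X}) = \prob(\psi)(\pp_t^1(\fp X))$, so membership in \eqref{eq:prf:adapt} is the requirement that $\prob(\psi)(\pp^1_t(\fp X))$ be a Dirac measure concentrated at $\psi(X)$, for the distinguished time $t$.

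The key device to handle the discontinuity of $e_t$ is to reduce to a discrete-time statement where evaluation at time points \emph{is} continuous. Fix $t \in [0,1]$. If $t \in \cont(\fp X)$, Theorem~\ref{thm:discr} tells us that the discretization operator $D_{\{t,1\}}$ is continuous at $\fp X$ into $\uFR_2$, and in $\uFR_2$ the time-$t$ evaluation of the prediction process is a continuous function of $\law(\pp^1(\fp X_{\{t,1\}}))$. The condition ``$\phi(X)$ is $\F_t$-measurable'' is preserved under passing to $\fp X_{\{t,1\}}$ and, in the discrete picture, translates to: the first-order prediction measure at time~$t$, pushed forward by $\psi$, lies on the graph of $\psi$ composed with the terminal coordinate — a closed condition in $\prob(M_1^{(2)}(D(S)))$, because the set of measures on a product concentrated on the graph of a continuous function is closed (here we use continuity of $\phi$ and that $e_1$ is continuous). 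Thus for limit points $\fp X^m \to \fp X$ with all $\fp X^m$ in the set \eqref{eq:prf:adapt}, if additionally $t \in \cont(\fp X)$ we immediately get $\fp X \in \eqref{eq:prf:adapt}$.

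The remaining obstacle — and I expect it to be the main one — is the case $t \notin \cont(\fp X)$, i.e.\ $t$ is an atom of the limit process. Here $D_{\{t,1\}}$ need not be continuous at $\fp X$, so the direct argument fails. The fix is to approximate: since $\cont(\fp X)$ is co-countable (Corollary~\ref{cor:ContBig}), pick $s_k \searrow t$ with $s_k \in \cont(\fp X)$. Measurability of $\phi(X)$ w.r.t.\ $\F_t^{\fp X} \subset \F_{s_k}^{\fp X}$ implies $\fp X^m \in \{\,\cdot\, : \phi(X) \text{ is } \F_{s_k}^{\fp X}\text{-measurable}\}$ for every $m$ and every $k$, and by the already-treated continuity-point case the limit $\fp X$ lies in each of these sets; hence $\phi(X)$ is $\bigcap_k \F_{s_k}^{\fp X} = \F_{t+}^{\fp X} = \F_t^{\fp X}$-measurable, using right-continuity of the filtration of $\fp X$. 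This recovers $\fp X \in \eqref{eq:prf:adapt}$ and closes the argument. For the $J_1$ and sup-norm variants mentioned in the surrounding text the evaluation maps $e_t$ are already continuous, so the proof simplifies and the approximation step is unnecessary; I would remark on this rather than redo it. The one point demanding genuine care is checking that ``$\phi(X)$ is $\F_t$-measurable'' is truly a function of $\law(\pp^1(\fp X))$ alone (so that it descends to $\uFR$ and the statement is well-posed on the factor space), which follows from the representation $\law(\phi(X)\,|\,\F_t^{\fp X}) = \prob(\phi\circ e_t)(\pp^1_t(\fp X))$ combined with Lemma~\ref{lem:cond_law_dirac}.
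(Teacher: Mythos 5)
Your plan has a genuine gap at its central step, the ``continuity--point case''. Discretization does not remove the obstruction you identified at the outset: the map $\psi=\phi\circ e_t$ acts \emph{inside} the conditional laws, i.e.\ on the path space $D(S)$ on which the measures $\pp^1_t(\fp X)$ live, and this inner discontinuity survives the passage to $\fp X_{\{t,1\}}$. What Theorem~\ref{thm:discr} buys you is that $\law(\pp^1_t(\fp X^m))\to\law(\pp^1_t(\fp X))$ (jointly with the terminal component); but membership in your set is the requirement that $\law(\pp^1_t(\fp X))$ be concentrated on $G:=\{p\in\prob(D(S)):(\phi\circ e_t)_\# p\in\delta(S')\}$, and $G$ is \emph{not} closed in $\prob(D(S))$ for $t<1$: with $S=S'=\R$, $\phi=\id$, take $p_m=\tfrac12\big(\delta_{h^1}+\delta_{h^2_m}\big)$ with $h^1\equiv 0$ and $h^2_m=\mathbf{1}_{[t+1/m,1]}$; then $(e_t)_\#p_m=\delta_0\in\delta(\R)$ for every $m$, while $p_m\to p=\tfrac12\big(\delta_{h^1}+\delta_{\mathbf{1}_{[t,1]}}\big)$ in the Meyer--Zheng topology and $(e_t)_\#p=\tfrac12(\delta_0+\delta_1)\notin\delta(\R)$. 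Consequently ``measures concentrated on $G$'' is not a closed subset of $\prob(M_1^{(2)}(D(S)))$, and your justification (``graph of a continuous function \dots continuity of $\phi$ and of $e_1$'') does not apply: the relevant map is $\phi\circ e_t$, not $\phi\circ e_1$, and it is exactly the map whose discontinuity the lemma must circumvent. Your step (b) (approximation through $s_k\searrow t$ in $\cont(\fp X)$ plus right-continuity of the filtration) is sound, but it only reduces matters to step (a), where the gap sits; a further minor point is that Theorem~\ref{thm:discr} as stated requires $T\subset\cont(\fp X)$ with $1\in T$, so you would also need $1\in\cont(\fp X)$ or the (easy) strengthening to $T\subset\cont(\fp X)\cup\{1\}$.

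There is also a mismatch with the statement as it is actually used: in the paper's proof and in its application (Proposition~\ref{prop:adaptClosed}, with $\phi=\phi_{f,t,n}:D(S)\to D([0,1])$), $\phi$ acts on the random variable $X$ itself, so that $\law(\phi(X)\mid\F_t^{\fp X})=\prob(\phi)(\pp^1_t(\fp X))$; your reduction $\law(\phi(X)\mid\F_t^{\fp X})=\prob(\phi\circ e_t)(\pp^1_t(\fp X))$ replaces $\phi(X)$ by $\phi(X_t)$, and inside $\uFP(S)$ that condition is vacuously satisfied by adaptedness, which should have been a warning sign. The paper's proof avoids every evaluation at a fixed interior time: $\phi(X)$ is $\F_t^{\fp X}$-measurable iff it is $\F_s^{\fp X}$-measurable for all $s\in[t,1]$, which by Lemma~\ref{lem:cond_law_dirac} says that $\pp^1(\phi\diamond\fp X)$ takes values in $\delta(S')$ on all of $[t,1]$; since $\fp X\mapsto\phi\diamond\fp X$ is $\HK_1$-continuous (Proposition~\ref{prop:diamond}), restriction of paths to $[t,1]$ is continuous for the Meyer--Zheng topology, and $D([t,1];\delta(S'))$ is closed, closedness follows at once, with no discretization, no continuity points and no case distinction. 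If you insist on a single-time argument, you would have to prove a continuous-mapping statement for $e_t$ under the additional structure (martingale property, $t\in\cont(\fp X)$), which is considerably more delicate than the interval trick above.
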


\begin{proof}
Note that $\phi(X)$ is $\F_t^{\fp X}$-measurable if and only if $\phi(X)$ is $\F_s^{\fp X}$-measurable for all $s \ge t$. By Lemma~\ref{lem:cond_law_dirac}, the latter is equivalent to $\pp^1_s( \phi \diamond \fp X)$
taking values in $\delta(S')$ for all $s \in [t,1]$.  As the operation $\fp X \mapsto \phi \diamond \fp X$ is continuous (cf. Proposition~\ref{prop:diamond}), the restriction to $[t,1]$ is continuous, and $D([t,1]; \delta(S'))$ is closed in $D([t,1]; \prob(S'))$, the set \eqref{eq:prf:adapt} is $\HK_1$-closed.
\end{proof}

\begin{proof}[Proof of Proposition~\ref{prop:adaptClosed}]
First observe that it suffices to prove the claim for the case of $D(S)$ with the Meyer--Zheng topology. This readily implies the claim when $D(S)$ is equipped with the $J_1$-topology because the $J_1$-topology is stronger than the Meyer--Zheng-topology\footnote{and hence for every $r$, the topology $\HK_r$ w.r.t.\ Meyer--Zheng on the path space $D(S)$ is weaker than $\HK_r$ w.r.t.\ $J_1$ on $D(S)$. To see this, apply Proposition~\ref{prop:diamond} the identity map from $D(S)$ with Meyer--Zheng to $D(S)$ with $J_1$.}. Moreover, the result for $(D(S),J_1)$ implies the result for $C(S)$ because the $C(S)$ is $J_1$-closed subset of $D(S)$ and the subspace topology of $J_1$ on $C(S)$ is precisely the uniform topology. 

In order to  overcome the issue that point evaluation is not continuous w.r.t.\ the Meyer--Zheng topology, we need to construct a family of continuous functions which allows us to characterize adaptedness. 

First note that there is a metric $d$ that induces the topology of $S$ and a set $F \subset C(S)$ which has the following properties: $F$ is convergence determining, countable, closed under multiplication, every $f \in F$ is Lipschitz w.r.t.\ $d$ and satisfies $0 \le f  \le 1$.  To see this, embed $S$ into $[0,1]^\N$, set $d((x_n)_n,(y_n)_n) := \sum_{n \in \N} 2^{-n}|x_n-y_n|$ and let $F$ be the collection of all finite products of projections $[0,1]^\N \to [0,1]$.

For $t \in [0,1]$ and $n \in \N$, let $g_{t,n}(s) = (1- n(t-s)_+)_+$, i.e.\ we set $g_{t,n}(s)=1$ for $s \le t$, $g_{t,n}(s) = 0$ for $s \ge t+1/n$ and interpolate linearly between $t$ and $t+1/n$. For  $f \in F$, $t \in [0,1]$ and $n \in \N$ we define the map
$$
\phi_{f,t,n} : D(S) \to D([0,1]) : h \mapsto (f \circ h) \cdot g_{t,n}.
$$
Note that  $d_1(h_1,h_2) := \int d(h_1(s),h_2(s)) \lambda(ds)$ is a metric for the Meyer--Zheng topology on $D(S)$ and $d_2(h_1,h_2) := \int |h_1(s)-h_2(s)| \lambda(ds)$ is a metric for the Meyer--Zheng topology on $D([0,1])$. A straightforward calculation shows that $\phi_{f,t,n} : (D(S),d_1) \to (D(S),d_2)$ is Lipschitz and hence continuous.

It remains to show that a filtered random variable $\fp X \in \uFR(D(S))$ is adapted if and only if $\phi_{f,t,n}(X)$ is $\F_{t+1/n}^{\fp X}$-measurable for all $t \in [0,1], f\in F$ and $n \in \N$. Then Lemma~\ref{lem:adapt} yields the claim.

Assume that $\fp X$ is adapted. As $g_{t,n}(s)=0$ for $s >t+1/n$, $\phi_{f,t,n}(X)$ only depends on $X|_{[0,t+1/n]}$, so $\phi_{f,t,n}(X)$ is $\F_{t+1/n}^{\fp X}$-measurable. 

Conversely, assume that $\phi_{f,s,n}(X)$ is $\F_{t+1/n}^{\fp X}$-measurable for all $s \in [0,1], f\in F$ and $n \in \N$. 
Fix $t \in [0,1]$. By the right-continuity of the paths of $X$ and as $g_{t,n}(s)>0$ for $s<t+1/n$, the assumption implies that $f(X_t)$ is $\F_{t+1/n}^{\fp X}$-measurable for all $n \in \N$ and all $f \in F$.  By the right-continuity of the filtration, $f(X_t)$ is $\F_{t}^{\fp X}$-measurable for all $f \in F$. As $F$ is convergence determining, the Borel-$\sigma$-algebra on $S$ is the initial $\sigma$-algebra w.r.t.\ $F$. Hence, we conclude that $X_t$  is $\F_{t}^{\fp X}$-measurable.
\end{proof}

The most important application of Proposition~\ref{prop:adaptClosed} is 
to extend the compactness result Theorem~\ref{thm:Compactness} from filtered random variables to filtered processes. 
\begin{theorem}
Let $r \in \N_0 \cup \{ \infty \}$.
\begin{enumerate}[label=(\alph*)]
    \item Let $C(S)$ be equipped with the supremum distance. Then $\A \subset \uFPc(S)$ is relatively compact w.r.t.\ $\HK_r$ if and only if $ \{ \law(X) : \fp X \in \A \}$ is relatively compact in $\prob(C(S))$.
    \item Let $D(S)$ be either equipped with the $J_1$-topology or with the Meyer--Zheng topology. Then $\mathcal{A} \subset \uFP(S)$ is relatively compact w.r.t.\ $\HK_r$ if and only if $ \{ \law(X) : \fp X \in \A \}$ is relatively compact in $\prob(D(S))$.
\end{enumerate}
\end{theorem}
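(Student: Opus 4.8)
The plan is to deduce both items from the already-established compactness theorem for filtered random variables (Theorem~\ref{thm:Compactness}) together with the closedness results of Proposition~\ref{prop:adaptClosed}. Recall that via the inclusions $\uFPc(S)\subset\uFP(S)\subset\uFR(D(S))$ (and $\uFPc(S)\subset\uFR(C(S))$) we may view a filtered process as a filtered random variable whose state space is the appropriate path space. Theorem~\ref{thm:Compactness} applied with the Lusin space $D(S)$ (resp.\ $C(S)$) in place of $S$ says that $\A\subset\uFR(D(S))$ is relatively $\HK_n$-compact if and only if $\{\law(X):\fp X\in\A\}$ is relatively compact in $\prob(D(S))$, and likewise with $C(S)$. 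So the only thing left to check is that relative compactness \emph{inside} the subspace $\uFP(S)$ (resp.\ $\uFPc(S)$) is the same as relative compactness in the ambient space $\uFR(D(S))$ (resp.\ $\uFR(C(S))$); this is exactly where closedness of the subspace enters.

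First I would fix $n\in\N_0\cup\{\infty\}$ and treat item (b), say with $D(S)$ carrying the Meyer--Zheng topology (the $J_1$ case is identical, using that $D(S)$ with $J_1$ is also Lusin and that $\uFP(S)$ is $\HK_n$-closed in $\uFR(D(S))$ by Proposition~\ref{prop:adaptClosed}). Suppose $\{\law(X):\fp X\in\A\}$ is relatively compact in $\prob(D(S))$. By Theorem~\ref{thm:Compactness}, $\A$ is relatively compact in $(\uFR(D(S)),\HK_n)$; let $\overline{\A}$ denote its closure there, which is compact. Since $\A\subset\uFP(S)$ and $\uFP(S)$ is $\HK_n$-closed in $\uFR(D(S))$ (Proposition~\ref{prop:adaptClosed}, noting $\HK_1$-closedness implies $\HK_n$-closedness for all $n$), we get $\overline{\A}\subset\uFP(S)$, so $\A$ is relatively compact \emph{in} $\uFP(S)$. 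Conversely, if $\A$ is relatively $\HK_n$-compact in $\uFP(S)$, it is a fortiori relatively compact in $\uFR(D(S))$, and Theorem~\ref{thm:Compactness} gives relative compactness of $\{\law(X):\fp X\in\A\}$ in $\prob(D(S))$.

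For item (a) the argument is the same with $C(S)$ (equipped with the supremum metric, which is Polish when $S$ is Polish and Lusin when $S$ is Lusin) in place of $D(S)$: apply Theorem~\ref{thm:Compactness} over $\uFR(C(S))$, and use that $\uFPc(S)$ is closed in $\uFR(C(S))$, again by Proposition~\ref{prop:adaptClosed}. One should also record the compatibility remark that a continuous process, viewed in $D(S)$ with $J_1$, has the same adapted distribution whether one regards its paths in $C(S)$ or in $D(S)$, and that the trace of the $J_1$-topology on $C(S)$ is the uniform topology (this is stated in the proof of Proposition~\ref{prop:adaptClosed}); this is what lets us pass freely between the two viewpoints in item (a).

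I do not expect a serious obstacle here: the statement is essentially a corollary of Theorem~\ref{thm:Compactness} and Proposition~\ref{prop:adaptClosed}. The one point that deserves care is making sure the hypotheses of Theorem~\ref{thm:Compactness} are met for the relevant path space, i.e.\ that $D(S)$ (with Meyer--Zheng or with $J_1$) and $C(S)$ are genuinely Lusin (Polish when $S$ is Polish) --- this is covered by Proposition~\ref{prop:pseudo-paths}\ref{it:pseudo-paths.cadlag_Borel} for Meyer--Zheng, and is classical for $J_1$ and for $C(S)$ --- and that ``relatively compact in a closed subspace'' coincides with ``relatively compact in the whole space, with limit points landing in the subspace.'' Both are routine, so the proof is short.

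\begin{proof}
We prove (b) in the case that $D(S)$ is equipped with the Meyer--Zheng topology; the case of the $J_1$-topology is completely analogous, and (a) follows by the same argument with $C(S)$ (equipped with the supremum distance) in place of $D(S)$, using that $\uFPc(S)$ is closed in $\uFR(C(S))$ by Proposition~\ref{prop:adaptClosed} and that on $C(S)$ the $J_1$-topology coincides with the uniform topology.

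Recall from the discussion preceding Proposition~\ref{prop:adaptClosed} that $\uFP(S)\subset\uFR(D(S))$, and that by Proposition~\ref{prop:pseudo-paths}\ref{it:pseudo-paths.cadlag_Borel} the space $D(S)$ is Lusin (and Polish if $S$ is Polish). Hence Theorem~\ref{thm:Compactness} applies with $D(S)$ in the role of $S$: a set $\A\subset\uFR(D(S))$ is relatively compact w.r.t.\ $\HK_n$ if and only if $\{\law(X):\fp X\in\A\}$ is relatively compact in $\prob(D(S))$.

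Assume now that $\A\subset\uFP(S)$ and that $\{\law(X):\fp X\in\A\}$ is relatively compact in $\prob(D(S))$. By the above, $\A$ is relatively compact in $(\uFR(D(S)),\HK_n)$; let $\overline{\A}$ be its closure in $(\uFR(D(S)),\HK_n)$, which is compact. By Proposition~\ref{prop:adaptClosed}, $\uFP(S)$ is $\HK_1$-closed in $\uFR(D(S))$, and since $\HK_1$ is the weakest of the topologies $\HK_n$, $\uFP(S)$ is $\HK_n$-closed in $\uFR(D(S))$ for every $n\in\N_0\cup\{\infty\}$. As $\A\subset\uFP(S)$, we conclude $\overline{\A}\subset\uFP(S)$, so $\A$ is relatively compact in $(\uFP(S),\HK_n)$.

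Conversely, if $\A$ is relatively compact in $(\uFP(S),\HK_n)$, then it is relatively compact in $(\uFR(D(S)),\HK_n)$ as well, so Theorem~\ref{thm:Compactness} yields that $\{\law(X):\fp X\in\A\}$ is relatively compact in $\prob(D(S))$.
\end{proof}
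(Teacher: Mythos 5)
Your proposal follows exactly the paper's route: the paper proves this theorem in one line as an immediate consequence of Theorem~\ref{thm:Compactness} (applied with the path space $D(S)$ resp.\ $C(S)$ in the role of the state space) and Proposition~\ref{prop:adaptClosed}, which is precisely the argument you spell out. One small correction: your claim that $\HK_1$-closedness of $\uFP(S)$ in $\uFR(D(S))$ implies $\HK_n$-closedness for \emph{every} $n\in\N_0\cup\{\infty\}$ is wrong for $n=0$, since $\HK_0$ is \emph{coarser} than $\HK_1$ (closedness passes from coarser to finer topologies, not the other way), and indeed $\uFP(S)$ is not $\HK_0$-closed: any non-adapted $\fp Y\in\uFR(D(S))$ is an $\HK_0$-limit of the adapted canonical process carrying the law $\law(Y)$. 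The case $n=0$ of the theorem is nevertheless immediate without any closedness argument, because $\HK_0$-convergence only sees $\law(X)$ and every element of $\prob(D(S))$ is the law of an adapted process (the canonical process with its natural right-continuous completed filtration); so your proof is repaired by restricting the closedness step to $n\ge 1$ and treating $n=0$ separately, in line with the paper's phrasing of Proposition~\ref{prop:adaptClosed}, which asserts the implication only for $n\in\N\cup\{\infty\}$.
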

\begin{proof}
This is an immediate consequence of Theorem~\ref{thm:Compactness} and Proposition~\ref{prop:adaptClosed}.
\end{proof}

The strict inclusion $\FP(S) \subset \FR(D(S))$ suggests that the concept of filtered random variables is more general than the concept of filtered processes. However, it turns out that these concepts are equally general in a specific sense. One can ``simulate'' a filtered random variable by a  filtered process that is constant on $[0,1)$ and equal to the given random variable at $t=1$. 

To make this precise, fix $s_0 \in S$ and define $\iota : S \to D(S)$ by $\iota(s)(t) = s_0$ if $t<1$ and $\iota(s)(1)=s$. Note that $\iota : S \to D(S)$ is a topological embedding with closed range if $D(S)$ is equipped with  Meyer--Zheng (or $J_1$) topology. 

\begin{proposition}
For every $r \in \N_0 \cup \{ \infty \} $, the map
\[
\boldsymbol{\iota} : (\uFR(S),\HK_r) \to (\uFP(S),\HK_r) : (\Omega,\F,\P, (\F_t)_{t \in [0,1]}, X) \mapsto (\Omega,\F,\P, (\F_t)_{t \in [0,1]}, \iota(X))
\]
is a topological embedding with closed range.
\end{proposition}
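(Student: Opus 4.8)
The plan is to exploit the functorial map $\diamond$ from Definition~\ref{def:diamond} applied to the embedding $\iota : S \to D(S)$. Since $\iota$ is a topological embedding (with closed range, when $D(S)$ carries Meyer--Zheng or $J_1$), Proposition~\ref{prop:diamond}(c) immediately tells us that the induced map $\fp X \mapsto \iota \diamond \fp X$ on the factor spaces $\uFR(S) \to \uFR(D(S))$ is a topological embedding. The key additional observations are: (i) $\iota \diamond \fp X$ is in fact an \emph{adapted} filtered process, i.e.\ it lands in $\uFP(S) \subset \uFR(D(S))$, so that $\boldsymbol{\iota}$ is well-defined; and (ii) the range of $\boldsymbol{\iota}$ is closed in $\uFP(S)$. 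Once these are in place, $\boldsymbol{\iota}$ is a topological embedding $\uFR(S) \to \uFP(S)$ with closed range because restricting the codomain of an embedding to a subspace containing the range keeps it an embedding, and a closed subset of $\uFP(S)$ is also closed there.

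First I would verify that $\boldsymbol{\iota}$ is well-defined, i.e.\ that $\iota(X)$ is adapted to $(\F_t)_{t\in[0,1]}$. For $t<1$ the path $\iota(X)$ equals the constant $s_0$ on $[0,t]$, hence $\iota(X)_t = s_0$ is deterministic and trivially $\F_t$-measurable; for $t=1$, $\iota(X)_1 = X$ is $\F_1 = \F$-measurable by hypothesis. So $\iota(X)$ is adapted and $\boldsymbol{\iota}(\fp X) \in \FP(S)$; since the definition of $\boldsymbol{\iota}$ only changes the fifth coordinate by post-composition with the fixed Borel map $\iota$, it factors through $\approx_\infty$ by Proposition~\ref{prop:diamond}(a)--(b), giving a well-defined map on $\uFR(S)$. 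Moreover $\boldsymbol{\iota}(\fp X) = \iota \diamond \fp X$ in the notation of Definition~\ref{def:diamond}, so Proposition~\ref{prop:diamond}(c) (applied to the embedding $\iota$) shows $\boldsymbol{\iota}$ is a topological embedding into $\uFR(D(S))$; since its image lies in $\uFP(S)$ and $\uFP(S)$ carries the subspace topology, $\boldsymbol{\iota} : \uFR(S) \to \uFP(S)$ is a topological embedding.

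It remains to show the range of $\boldsymbol{\iota}$ is closed in $\uFP(S)$. The range consists exactly of those $\fp Y \in \uFP(S)$ whose underlying $D(S)$-valued process $Y$ is a.s.\ of the form $\iota(Y_1)$, i.e.\ is a.s.\ constantly $s_0$ on $[0,1)$. I would characterize this via the first-order prediction process: $\fp Y$ lies in the range iff $Y_t = s_0$ a.s.\ for all $t \in [0,1)$, equivalently (using Lemma~\ref{lem:cond_law_dirac} or directly the adaptedness relation \eqref{eq:adapted}) iff ${e_t}_\#\pp^1_t(\fp Y) = \delta_{s_0}$ for a dense set of $t<1$, say $t$ rational. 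For each fixed rational $t$, the map $\uFP(S) \ni \fp Y \mapsto \law({e_t}_\#\pp^1_t(\fp Y)) \in \prob(S)$ need not be continuous (as $e_t$ is not Meyer--Zheng continuous), so the cleaner route is to mimic the proof of Proposition~\ref{prop:adaptClosed}: using the Lipschitz test functions $\phi_{f,t,n}$ built there, the condition ``$Y$ is a.s.\ constantly $s_0$ on $[0,1)$'' can be rephrased as ``$\int f(Y_s) g_{t,n}(s)\,\lambda(ds) = \big(\int g_{t,n}\,d\lambda\big) f(s_0)$ a.s.'' for all $f\in F$, $t$ rational, $n\in\N$, and each such condition is $\HK_1$-closed because $\fp Y \mapsto \phi_{f,t,n}\diamond\fp Y$ is continuous and the set of laws supported on the relevant closed affine subset of $D([0,1])$ is closed. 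Intersecting over the countably many $(f,t,n)$ gives a closed set, and letting $t\uparrow 1$ along rationals together with right-continuity of paths recovers exactly the range of $\boldsymbol{\iota}$.

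The main obstacle I expect is the last step — precisely pinning down the range of $\boldsymbol{\iota}$ as a closed set, since point evaluation $e_t$ is discontinuous in the Meyer--Zheng topology and one cannot naively say ``$\{\fp Y : Y_t = s_0 \text{ a.s.}\}$ is closed.'' The workaround, as above, is to reuse the smoothed evaluation functionals $\phi_{f,t,n}$ from the proof of Proposition~\ref{prop:adaptClosed} and argue closedness through them; for the $J_1$ and uniform topologies this is easier since evaluation is continuous there, but for uniformity of exposition it is cleanest to run the Meyer--Zheng argument and deduce the others, exactly as done in the proof of Proposition~\ref{prop:adaptClosed}. Everything else (well-definedness, the embedding property) is a direct application of Proposition~\ref{prop:diamond}.
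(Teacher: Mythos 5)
Your argument coincides with the paper's proof on the first two points: the adaptedness check for well-definedness and the appeal to Proposition~\ref{prop:diamond}(c) applied to $\iota$ for the embedding property are exactly what the paper does. You diverge on the closed-range step, where the paper is much shorter: it uses the fact, recorded in the sentence right before the proposition, that $\iota(S)$ is closed in $D(S)$ for the Meyer--Zheng (and $J_1$) topology. Since the range of $\boldsymbol{\iota}$ is precisely $\{\fp Y \in \uFP(S) : \law(Y)(\iota(S))=1\}$ (for the converse inclusion take $\iota^{-1}\diamond \fp Y$ as a preimage), and since probability measures concentrated on a closed set form a weakly closed set while $\fp Y \mapsto \law(Y)$ is $\HK_0$-, hence $\HK_n$-, continuous, closedness is immediate; the discontinuity of $e_t$ that worries you is already absorbed into the closedness of $\iota(S)$ itself (if $\iota(x_k)\to h$ in $\lambda$-measure, the atom of $\lambda$ at $1$ forces $x_k \to h(1)$, and $h=s_0$ Lebesgue-a.e.\ on $[0,1)$, hence everywhere by right continuity). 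Your substitute argument via the smoothed functionals $\phi_{f,t,n}$ does work, but needs one small repair: for $\fp Y$ in the range the identity $\int f(Y_s)\,g_{t,n}(s)\,\lambda(ds) = f(s_0)\int g_{t,n}\,d\lambda$ fails whenever $t+1/n>1$, because then $g_{t,n}(1)>0$ and the left-hand side involves $f(Y_1)$; so you must restrict to pairs with $t+1/n\le 1$ (or pass to the limit $n\to\infty$ first), after which both inclusions go through as you describe. In short, your route re-proves closedness of the range from scratch in the spirit of Proposition~\ref{prop:adaptClosed} and avoids invoking closedness of $\iota(S)$ as a black box, while the paper's route reduces the whole proposition to a two-line application of Proposition~\ref{prop:diamond}.
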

\begin{proof}
Note that $\boldsymbol{\iota}(\fp X)$ is adapted because $\iota(X)_t$ is constant and hence $\F_t^{\fp X}$-measurable for $t<1$ and $\iota(X)_1=X$ is $\F_1^{\fp X}$-measurable. The remaining claims follow by applying Proposition~\ref{prop:diamond} to $\iota$.
\end{proof}

In Section~\ref{sec:Space} we proved the characterisation of adapted distributions in the framework of filtered random variables. We translate this result into the framework of filtered processes as introduced in Sections~\ref{sec:IntroInteratedPP} and~\ref{ssec:adap_dist}. Recall in particular the definition of $\mathsf{M}_\infty$ given in \eqref{eq:Minfty} and Remark~\ref{rem:afterDefPp}(b). 
\begin{proof}[Proof of Theorem~\ref{thm:introAdaptedDistrChar}]
Let $\mu \in \prob(\mathsf{M}_\infty)$ and assume  that there exists a filtered process $\fp X \in \FP$ satisfying $\mu = \law(\pp^\infty(\fp X))$.  By Theorem~\ref{thm:CharPpLaw}, $\mu$ is a martingale measure and $Z^{r}_1 = \delta_{Z^{r-1}}$ under $\mu$ for every $r\in \N$. As $\fp X$ is adapted, we have ${e_t}_\# Z^1_t = \delta_{Z^0_t}$ $\mu$-a.s. for all $t \in [0,1]$ by \eqref{eq:adapted}. 

Conversely, assume that $\mu$ is a martingale measure such that under $\mu$ it holds $Z_1^{r}= \delta_{Z^{r-1}}$ for every $r \in \N$ and ${e_t}_\# Z^1_t = \delta_{Z^0_t}$ for all $t \in [0,1]$. By Proposition~\ref{prop:canonRep} there is an $\fp X \in \FR(D(S))$ such that $\mu = \law(\pp^\infty(\fp X))$ and $\pp^\infty(\fp X) = Z$ a.s. As we have ${e_t}_\# Z^1_t = \delta_{Z^0_t}$ 
$\mu$-a.s.\ for all $t \in [0,1]$, we have $\law(X_t |\F_t^{\fp X}) = \delta_{X_t}$ a.s. for all $t$, so $\fp X$ is adapted, cf.\ \eqref{eq:adapted}. Hence, $\fp X \in \FP(S)$. 
\end{proof}

We close this section with a proof of the version of the fixed point result that is stated in the introduction.
\begin{proof}[Proof of Theorem~\ref{thmm:FixedPointIntro}]
In view of the proof of Theorem~\ref{thm:fixedpoint} it suffices to observe that the adaptedness condition \eqref{eq:adapted} is linear in $\law(\pp^\infty(\fp X))$. Moreover, the assumption in Theorem~\ref{thm:fixedpoint} that $\{ \law(X) : \fp X \in f(\uFP) \}$ is tight simplifies to $f(\uFP)$ being relatively compact in the setting of the introduction because the space $S=D([0,1],\R^d)$ with Skorohod's $J_1$-distance is Polish. 
\end{proof}

\subsection{Discrete time}
The aim of this section is to reconcile the notions of filtered random variable and filtered process in discrete time. All results in this section can be proven analogously to the continuous-time case (or obtained as a corollary by considering piecewise constant processes), so we omit the proofs. 

Throughout this section $N \in \N$ always denotes the number of time steps. Recall that an $S$-valued filtered process is a 5-tupel $ \fp X = (\Omega,\F,\P,(\F_t)_{t=1}^N, (X_t)_{t=1}^N)$, where $(X_t)_{t=1}^N$ is an $S$-valued process (i.e.\ $X_t$ takes values in $S$ for all $t$) that is adapted to $(\F_t)_{t=1}^N$; whereas an $S$-valued filtered random variable is a 5-tupel $\fp X = (\Omega,\F,\P,(\F_t)_{t=1}^N, X)$, where $X$ is an $S$-valued $\F_N$-measurable random variable.

Clearly, every $S$-valued filtered process is also an $S^N$-valued filtered random variable, i.e.\ $\FP_N(S) \subset \FR_N(S^N)$. If $N>1$, this inclusion is strict: For $\fp X \in \FR(S^N)$ we only require that $X$ is $\F_N$-measurable but no further adaptedness conditions.
 
Conversely, every $S$-valued random variable with filtration can be seen as an $S$-valued filtered process. To that end, fix some $s_0 \in S$ and set $\iota(s) := (s_0, \dots, s_0,s)$. Consider the mapping
\begin{align}\label{eq:embed1}
\boldsymbol{\iota} : \FR_N(S) \to \FP_N(S) : (\Omega,\F,\P,(\F_t)_{t=1}^N,X) \mapsto  (\Omega,\F,\P,(\F_t)_{t=1}^N,\iota(X)).
\end{align}
Note that $\iota(X)_t$ is  constant for $t<N$ and hence $\F_t$-measurable, so $\iota( X)$ is indeed adapted to $(\F_t)_{t=1}^N$. In \cite{BaBePa21} the space of filtered processes $\uFP_N(S)$ was defined as the factor space $\FP_N(S)$ modulo Hoover--Keisler equivalence and equipped  with the Hoover--Keisler topologies $\HK_r$. One can perform the very same constructions for $\FR_N(S)$ to define the factor space of $\uFR_N(S) := \FR_N(S)/_{\approx_\infty}$ and Hoover--Keisler toplogies $\HK_r$ on it. 
\begin{proposition}\label{prop:FPvsRF1}
The following statements hold true:
\begin{enumerate}[label=(\alph*)]
	\item $\uFP(S)$ is a closed subset of $\uFR_N(S^N)$ w.r.t.\ the topology $\HK_r$ for all $r \in \N \cup \{ \infty \}$.
	\item For every $r \in \N_0 \cup \{ \infty \}$, the map
	\begin{align}\label{eq:embed2}
		\boldsymbol{\iota} : (\uFR_N(S), \HK_r) \to (\uFP_N(S), \HK_r) : (\Omega,\F,\P,(\F_t)_{t=1}^N,X) \mapsto  (\Omega,\F,\P,(\F_t)_{t=1}^N,\iota(X))
	\end{align}
is a topological embedding with closed range.
\end{enumerate}
\end{proposition}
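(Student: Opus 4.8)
The plan is to mirror the continuous-time arguments --- Proposition~\ref{prop:adaptClosed}, its preceding embedding statement, and Proposition~\ref{prop:diamond} --- noting that in discrete time everything is easier, since coordinate evaluations $f \mapsto f(t)$ are continuous on the finite product path spaces and no Meyer--Zheng bump-function construction is required.

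For part (a) I would first record the discrete-time analogue of \eqref{eq:adapted}: for $\fp X \in \FR_N(S^N)$ the process $X = (X_1,\dots,X_N)$ is adapted, i.e.\ $\fp X \in \FP_N(S)$, if and only if $\pr_t(X)$ is $\F_t^{\fp X}$-measurable for every $t = 1,\dots,N$, where $\pr_t : S^N \to S$ is the $t$-th coordinate projection. By Lemma~\ref{lem:cond_law_dirac} this is equivalent to requiring that $\law(\pr_t(X)\,|\,\F_t^{\fp X}) = \prob(\pr_t)\big(\pp^1_t(\fp X)\big)$ take values in $\delta(S)$ almost surely. Since $\pp^1(\fp X)$ takes values in $M_1^{(N)}(S^N) = \prob(S^N)^N$ and extracting the $t$-th component and then applying $\prob(\pr_t)$ is a continuous operation, this says exactly that $\law(\pp^1(\fp X))$ is concentrated on a fixed \emph{closed} subset $A \subset M_1^{(N)}(S^N)$ (using that $\delta(S)$ is closed in $\prob(S)$). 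In particular adaptedness depends only on the $\approx_1$-class, hence on the $\approx_n$-class for every $n$, so $\uFP_N(S)$ is a well-defined subset of $\uFR_N(S^N)$, and it coincides with the preimage of $\{\mu : \mu(A) = 1\}$ under the $\HK_1$-continuous map $\fp X \mapsto \law(\pp^1(\fp X))$. Upper semicontinuity of $\mu \mapsto \mu(A)$ for closed $A$ makes this set $\HK_1$-closed; and since $\HK_1$ is coarser than $\HK_n$ for every $n \in \N \cup \{\infty\}$ (the discrete-time analogue of Lemma~\ref{lem:R} handles finite $n$, and $\HK_\infty = \HK_{N-1}$ in discrete time by Remark~\ref{rem:afterDefPp}\ref{it:afterDefPpC}), $\HK_n$-closedness follows.

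For part (b) I would observe that $\iota : S \to S^N$, $s \mapsto (s_0,\dots,s_0,s)$, is a topological embedding whose range $\{s_0\}^{N-1}\times S$ is closed in $S^N$, and that $\boldsymbol{\iota}(\fp X) = \iota \diamond \fp X$ in the sense of Definition~\ref{def:diamond}. Proposition~\ref{prop:diamond} then gives at once that $\fp X \mapsto \iota \diamond \fp X$ descends to the factor spaces and is a topological embedding of $(\uFR_N(S),\HK_n)$ into $(\uFR_N(S^N),\HK_n)$; its image lies in $\uFP_N(S)$ because the constant coordinates $\iota(X)_t \equiv s_0$ for $t < N$ are $\F_t^{\fp X}$-measurable and $\iota(X)_N = X$ is $\F_N^{\fp X}$-measurable, so $\boldsymbol{\iota}(\fp X)$ is adapted; this establishes \eqref{eq:embed2} as a topological embedding. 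For the closed-range claim I would identify the image as $\{\fp Y \in \uFR_N(S^N) : \law(Y)(\iota(S)) = 1\}$ --- any such $\fp Y$ equals $\iota \diamond \fp X$ for the $\fp X$ obtained by replacing $Y$ with $\pr_N(Y)$ --- and conclude, via upper semicontinuity of $\mu \mapsto \mu(\iota(S))$ (here $\iota(S)$ is closed) and continuity of $\fp Y \mapsto \law(Y) = \law(\pp^0(\fp Y))$, that this image is closed in $\uFR_N(S^N)$, hence closed in the subspace $\uFP_N(S)$ as well.

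I do not expect a genuine obstacle here. The only points demanding a little care are checking that ``being adapted'' is invariant under $\approx_1$ (so that the inclusion $\uFP_N(S) \subset \uFR_N(S^N)$ of \emph{factor} spaces is even meaningful) and correctly identifying the image of $\boldsymbol{\iota}$ when proving its range is closed; both reduce to the observation that the relevant constraints are ``concentrated-on-a-closed-set'' conditions on the law of a low-order prediction process.
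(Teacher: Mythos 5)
Your proposal is correct and follows precisely the route the paper intends: the paper omits the discrete-time proof, deferring to the continuous-time arguments (Proposition~\ref{prop:adaptClosed} and the analogous embedding statement via Proposition~\ref{prop:diamond} and Lemma~\ref{lem:cond_law_dirac}), and you carry these over with the natural discrete-time simplification that coordinate evaluations are continuous, so no bump-function construction is needed. The only implicit ingredient is the discrete-time analogue of Proposition~\ref{prop:diamond}, which holds with the same (simpler) proof and is covered by the paper's blanket remark that the discrete-time results are proven analogously.
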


Using this result, one can translate topological statements about $\uFR_N$ into statements about $\uFP_N$ and vice versa. In particular, we have:

\begin{corollary}
A set $\A \subset \uFR_N(S)$ is relatively compact w.r.t. $\HK_r$ if and only if $\{\law(X) : \fp X \in \A\}$ is relatively compact in $\prob(S)$.
\end{corollary}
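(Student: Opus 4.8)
The plan is to transcribe the two-step proof of Theorem~\ref{thm:Compactness} to discrete time, where everything is simpler because the path spaces $M^{(N)}_n$ are iterated finite products of spaces of probability measures, so no \cadlag\ modifications and no Meyer--Zheng subtleties appear. By Remark~\ref{rem:afterDefPp}\ref{it:afterDefPpC} the map $\fp X\mapsto\law(\pp^n(\fp X))$ factors continuously through $\fp X\mapsto\law(\pp^{N-1}(\fp X))$ for $n\ge N-1$ (and conversely, by the discrete version of Lemma~\ref{lem:R}), so the topologies $\HK_n$ all coincide for $n\ge N-1$; in particular the case $n=\infty$ reduces to $n=N-1$ and we may assume $n\in\N_0$. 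Write $q_n\colon\uFR_N(S)\to\prob(M^{(N)}_n)$, $\fp X\mapsto\law(\pp^n(\fp X))$, so that $\HK_n$ is by definition the initial topology with respect to the single map $q_n$.

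\emph{Step 1 (preservation of compactness).} I would prove by induction on $n$ that relative compactness of $\{\law(X):\fp X\in\A\}$ in $\prob(S)$ implies relative compactness of $q_n(\A)$ in $\prob(M^{(N)}_n)$. The case $n=0$ is trivial. For the step, note that $\pp^{n+1}(\fp X)$ is a discrete-time measure-valued martingale in its own filtration that terminates at $\pp^n(\fp X)$, so its terminal intensity is $\law(\pp^n(\fp X))$, which is relatively compact by the inductive hypothesis. One then invokes the discrete-time analogue of Theorem~\ref{thm:M_compactness}: for a family of laws of discrete-time $N$-step measure-valued martingales, relative compactness of the laws (as measures on $\prob(\cdot)^N$) is equivalent to relative compactness of the family of terminal intensities. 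Its proof is the discrete, and much shorter, version of the one in Section~\ref{sec:MVMComp}: the set of such laws is closed in $\prob(\prob(\cdot)^N)$ because the martingale identities $\E_\mu[Z_{t+1}\,g(Z_1,\dots,Z_t)]=\E_\mu[Z_t\,g(Z_1,\dots,Z_t)]$, $g$ continuous bounded, pass to weak limits directly; and, given a sequence of such laws, one first passes to a subsequence along which the terminal intensities converge, hence are tight by Theorem~\ref{thm:prohorov_lusin}(b), and then Lemma~\ref{lem:MVM_tight}, applied to the finite time set $\{1,\dots,N\}$ (it is a Doob-inequality statement, insensitive to discreteness), yields tightness and hence, by Theorem~\ref{thm:prohorov_lusin}(a), relative compactness of the whole sequence. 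Using this with $\prob(M_n)^N=M^{(N)}_{n+1}$ gives relative compactness of $q_{n+1}(\A)$. For the limit step one only needs $n\le N-1$, which we have already arranged.

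\emph{Step 2 (closedness of the laws of prediction processes) and conclusion.} As in Lemma~\ref{lem:ppClosed}, the set of consistently terminating martingale laws on $M^{(N)}_n$ (Definition~\ref{def:consTermin}) is closed in $\prob(M^{(N)}_n)$: the martingale part is closed by the same weak-limit argument as above (here, unlike in continuous time, no passage to a subsequence and no choice of a good time set are needed, since evaluation at the finitely many times $1,\dots,N$ is continuous), and the terminating part $Z^{k+1}_N=\delta(Z^k)$ is preserved because projection onto the last coordinate and $\delta$ are continuous. Combined with the discrete analogue of Theorem~\ref{thm:CharPpLaw} --- every consistently terminating martingale law $\mu$ on $M^{(N)}_n$ equals $\law(\pp^n(\fp X))$ for the canonical $\fp X=(M^{(N)}_n,\mathcal B_{M^{(N)}_n},\mu,(\sigma^\mu(Z^n_1,\dots,Z^n_t))_{t=1}^N,\delta^{-1}(\,\text{terminal value of }Z^1))$, which is the straightforward simplification of Proposition~\ref{prop:canonRep} with no right-continuous augmentation necessary, see also \cite[Lemma~4.8]{BaBePa21} --- this shows that $q_n(\uFR_N(S))$ is closed in $\prob(M^{(N)}_n)$. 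The corollary then follows as in the proof of Theorem~\ref{thm:Compactness}: if $\{\law(X):\fp X\in\A\}$ is relatively compact, Step~1 makes $q_n(\A)$ relatively compact, and since $q_n(\uFR_N(S))$ is closed, the $\prob(M^{(N)}_n)$-limit of any convergent subsequence of $q_n(\A)$ lies in $q_n(\uFR_N(S))$, hence pulls back to a $\HK_n$-limit in $\uFR_N(S)$; conversely $\fp X\mapsto\law(X)$ is $\HK_n$-continuous (it factors through $q_n$), so relative compactness of $\A$ descends to $\{\law(X):\fp X\in\A\}$.

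A more economical alternative avoids re-deriving Steps~1--2: sending $(\Omega,\F,\P,(\F_i)_{i=1}^N,X)$ to the continuous-time filtered random variable with the piecewise-constant right-continuous filtration $\G_t:=\F_i$ on $[(i-1)/N,i/N)$, $\G_1:=\F_N$, descends to a $\HK_n$-topological embedding $\uFR_N(S)\hookrightarrow\uFR(S)$ with closed range --- its iterated prediction processes are the piecewise-constant interpolations of the discrete ones (an easy induction), and a continuous left inverse is given by local time-averages of the $\eta_\delta$-type in \eqref{eq:def_eta_delta} --- whence the statement follows from Theorem~\ref{thm:Compactness} because the embedding leaves $\law(X)$ unchanged. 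In either approach the point that requires care, and which I regard as the main obstacle rather than a routine calculation, is precisely the closedness assertion (Step~2, or equivalently the closedness of the range of the embedding): since for finite $n$ the topology $\HK_n$ is only pseudometric (Remark~\ref{rem:FP}), ``relatively compact'' must be read as ``every sequence has a $\HK_n$-convergent subsequence'', and it is exactly this closedness that guarantees the limit of such a subsequence can be realized inside $\uFR_N(S)$.
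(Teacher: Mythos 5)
Your proof is correct, but it takes a genuinely different route from the paper. The paper disposes of this corollary in one line: by Proposition~\ref{prop:FPvsRF1}, the map $\boldsymbol{\iota}$ embeds $(\uFR_N(S),\HK_n)$ into $(\uFP_N(S),\HK_n)$ as a closed subspace without essentially changing $\law(X)$, so the statement is pulled back from the discrete-time Prohorov-type theorem for filtered processes established in \cite[Theorem~1.7]{BaBePa21}. You instead re-run, inside discrete time, the two-step proof of Theorem~\ref{thm:Compactness}: preservation of relative compactness along the iteration $\pp^k \mapsto \pp^{k+1}$ via a discrete analogue of Theorem~\ref{thm:M_compactness} (where Lemma~\ref{lem:MVM_tight} and the Lusin-space Prohorov statements carry over verbatim and the Meyer--Zheng/\cadlag\ subtleties disappear), plus closedness of the set of consistently terminating martingale laws combined with the discrete canonical representation of \cite[Lemma~4.8]{BaBePa21}; the reduction of $n\ge N-1$ and $n=\infty$ to $n=N-1$ via Remark~\ref{rem:afterDefPp}\ref{it:afterDefPpC} is also the right move, and the converse direction (continuity of $\fp X\mapsto\law(X)$) is handled as in the paper. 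What the paper's route buys is brevity, since all the work is outsourced to the already available discrete-time theory; what yours buys is a self-contained argument within the present paper's machinery, which makes explicit that the discrete case is strictly easier than the continuous one. Two small points: in your closedness argument the martingale identity should be tested in the scalarized form $\E_\mu[f^\ast(Z_{t+1})\,g(Z_1,\dots,Z_t)]=\E_\mu[f^\ast(Z_t)\,g(Z_1,\dots,Z_t)]$ for $f\in C_b$, cf.\ Lemma~\ref{lem:TestMVM}, rather than with the measure-valued $Z_t$ itself; and your ``more economical alternative'' (embedding $\uFR_N$ into $\uFR$ by piecewise-constant filtrations and invoking Theorem~\ref{thm:Compactness}) is the closest in spirit to the paper's argument, but its key claims --- closedness of the range, the identification of the prediction processes of the embedded objects as piecewise-constant interpolations, and the continuous left inverse --- are only sketched, so it is your first route that actually carries the proof.
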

\begin{proof}
This is an immediate consequence of Proposition~\ref{prop:FPvsRF1} and \cite[Theorem~1.7]{BaBePa21}.
\end{proof}

\begin{corollary}
Let $\fp X^n, \fp X \in \uFP(S)$. Let $T \subset \cont(\fp X)$ and denote the cardinality of $T$ by $N$. If $\fp X^n \to \fp X$ in $(\uFP(S),\HK_r)$, then  $\boldsymbol{\iota}(D^T(\fp X^n)) \to \boldsymbol{\iota}(D^T(\fp X))$ in $(\uFP_N(D(S)),\HK_r)$. 
\end{corollary}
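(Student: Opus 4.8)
The plan is to obtain this as a concatenation of continuity statements that have already been established, so essentially no new work is required. First, note that $D^T$ here denotes the discretisation operator $D_T$ of \eqref{eq:def.discretisation.FP} applied to $\fp X$ regarded as an element of $\FR(D(S))$ (this is legitimate since $\uFP(S)\subset\uFR(D(S))$), and that $N=|T|$ is precisely the number of time steps of the target space $\uFP_N(D(S))$. The $\HK_n$-topology on $\uFP(S)$ coincides with the subspace topology inherited from $\uFR(D(S))$ (both are defined as the initial topology with respect to $\fp X\mapsto\law(\pp^n(\fp X))$), and by Proposition~\ref{prop:adaptClosed} $\uFP(S)$ is even a closed subspace. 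Hence the hypothesis $\fp X^m\to\fp X$ in $(\uFP(S),\HK_n)$ immediately gives $\fp X^m\to\fp X$ in $(\uFR(D(S)),\HK_n)$ when the processes are viewed as $D(S)$-valued filtered random variables.

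Next I would invoke Theorem~\ref{thm:discr} with the Lusin space $D(S)$ in place of $S$. The continuity hypothesis there is $T\subset\cont(\fp X)$, where for a filtered random variable one has $\cont(\cdot)=\cont(\pp^\infty(\cdot))$ by Definition~\ref{def:contPtFP}; since $\pp^\infty(\fp X)$ is the same object regardless of whether $\fp X$ is seen as a filtered process or as a $D(S)$-valued filtered random variable, the notation $\cont(\fp X)$ is unambiguous and the assumption $T\subset\cont(\fp X)$ of the corollary is exactly the hypothesis of Theorem~\ref{thm:discr}. Therefore $D_T\colon(\uFR(D(S)),\HK_n)\to(\uFR_N(D(S)),\HK_n)$ is continuous at $\fp X$, and hence $D_T(\fp X^m)\to D_T(\fp X)$ in $(\uFR_N(D(S)),\HK_n)$.

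Finally, Proposition~\ref{prop:FPvsRF1}(b), applied with $D(S)$ in place of $S$, states that $\boldsymbol{\iota}\colon(\uFR_N(D(S)),\HK_n)\to(\uFP_N(D(S)),\HK_n)$ is a topological embedding, in particular continuous, so $\boldsymbol{\iota}(D_T(\fp X^m))\to\boldsymbol{\iota}(D_T(\fp X))$ in $(\uFP_N(D(S)),\HK_n)$, which is the assertion. I do not expect a genuine obstacle: all the substance was already spent in proving Theorem~\ref{thm:discr} (that time-discretisation is continuous precisely at those continuous-time processes whose chosen evaluation times are continuity points), and the only points requiring any care are the two identifications just made — that the $\HK_n$-topologies on $\uFP$ and $\uFR(D(S))$ are compatible, and that $\cont(\fp X)$ agrees across the two viewpoints.
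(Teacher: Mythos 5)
Your proof is correct and is essentially the paper's own argument: the paper also proves this corollary by combining Theorem~\ref{thm:discr} (continuity of the discretisation operator at processes whose evaluation times are continuity points, applied with path space $D(S)$) with Proposition~\ref{prop:FPvsRF1} (continuity of the embedding $\boldsymbol{\iota}$). The identifications you spell out — compatibility of the $\HK_n$-topologies on $\uFP(S)$ and $\uFR(D(S))$, and the unambiguity of $\cont(\fp X)$ — are exactly what makes the paper's one-line proof legitimate.
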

\begin{proof}
This is an immediate consequence of Proposition~\ref{prop:FPvsRF1} and Theorem~\ref{thm:discr}.
\end{proof}

\appendix
\section{Omitted proofs}\label{app:proofs}

\begin{proof}[Proof of Proposition~\ref{prop:lusin_facts}]
	(a) For the direct implication let $(S',\T)$ be a Polish space and $\iota : S \to S'$ a topological embedding such that $\iota(S) \subset S'$ is Borel. By \cite[Theorem 13.1]{Ke95} there is a stronger Polish topology $\T'$ on $S'$ such that $\iota(S)$ is clopen in $(S',\T')$. Then  $\iota^{-1} : (\iota(S),\T'|_{\iota(S)}) \to (S,\T|_S)$ is a continuous bijection.
	
	Conversely, let $(S,\T)$ be a metrizable space such that there is a stronger Polish topology $\T'$ on $S$. As $(S,\T')$ is separable, $(S,\T)$ is separable as well, so there is an embedding $\iota : (S,\T') \to [0,1]^\N$ (see \cite[Theorem 4.14]{Ke95}). Then $\iota : (S,\T) \to [0,1]^\N$ is a continuous injection from a Polish space to Polish space, so it maps Borel sets to Borel sets (cf.\ \cite[Theorem 15.1]{Ke95}). In particular, $\iota(S)$ is a Borel subset of $[0,1]^\N$.
	
	(b) is an easy modification of (a).
\end{proof}
\begin{proof}[Proof of Lemma~\ref{lem:LusinProdComp}]
(a)	The collection of $f \in C_b(S)$ that depend on only on finitely many coordinates is convergence determining for the product topology on $S$ and closed under multiplication. Hence, Lemma~\ref{lem:P(S)_ptsep_convdet} yields that testing against these functions is convergence determining on $\prob(S)$.   
	
(b) Note that we have to be a bit careful because compactness of a set of measures does not imply tightness in Lusin spaces (see Section~\ref{sec:ProbaLusin}). 	Let $(\mu^n)_n$ be a sequence in $\A$. As ${\pr_1}_\#(\A)$ is relatively compact, there is a sub sequence $(\mu^{n_\ell})_\ell$  and some $\nu^1 \in \prob(S_1)$ such that ${\pr_1}_\#(\mu^{n_\ell}) \to \nu_1$. By inductively in $j$ choosing further subsequences and then passing to a diagonal sequence, we find a subsequence  $(\mu^{n_k})_k$ and measures $\nu^j \in \prob(S^j)$ such that for all $j \in \N$ we have ${ \pr_j }_\#(\mu^{n_k}) \to \nu^j$. 
	
As convergent sequences are tight, see Theorem~\ref{thm:prohorov_lusin}(b), for every $j \in \N$ there is a compact set $K_j \subset S_j$ such that $\prob( \pr_j )(\mu^{n_k})(K_j^c) \le \epsilon 2^{-j}$ for all $k \in \N$. The set $K := \prod_{j \in \N} K_j$ is compact by Tychonoff's theorem and we have for all $k \in \N$
	$$
	\mu^{n_k}(K^c) \le \sum_{j \in \N} {\pr_j}_\#\mu^{n_k}(K_j^c) \le \sum_{j \in \N} \epsilon 2^{-j} = \epsilon,
	$$
	so the sequence $(\mu^{n_k})_k$ is tight. By Prohorohov's Theorem~\ref{thm:prohorov_lusin}(a), a further subsequence converges to some $\mu \in \prob(S)$. Hence, $\A$ is relatively compact.
	
(b) This is easy consequence of (a). 	
\end{proof}

\begin{proof}[Proof of Lemma~\ref{lem:convinprob_monotone_pw}]
	As $f$ is increasing, $\cont(f)$ is co-countable and $\cont(f) \cup \{1 \}$ is $\lambda$-full.
	Therefore, it remains to show that convergence in probability implies pointwise convergence for all $t \in \cont(f) \cup \{ 1 \}$. Since $\lambda(\{1\})>0$, we have $f_n(1) \to f(1)$, thus,
	\[
	f_n(t) \le f_n(1) \le \sup_k f_k(1) < \infty,
	\]
	for all $n \in \N$ and $t \in [0,1]$.	
	Assume that there is some $t_0 \in (0,1)$ such that $(f_n(t_0))_{n \in \N}$ is not bounded from below. Then there is a subsequence $f_{n_k}$ such that $f_{n_k}(t_0) \le -k$ and hence $f_{n_k}(t) \le -k$ for all $t \in [0,t_0]$, which is a contradiction to $f_n \to f$ in measure.
	
	Fix some $t_0 \in \cont(f)$. 
	By the above considerations, $(f_n(t_0))_{n \in \N}$ is a bounded real sequence. Therefore, it suffices to show that any convergent subsequence converges to $f(t_0)$. 
	Assume for the sake of contradiction that there is a subsequence $f_{n_k}(t_0) \to a > f(t_0)$. The case $f_{n_k}(t_0) \to a < f(t_0)$ can be treated similarly.
	
	Let $\epsilon := (a-f(t_0))/3$. Then $f_{n_k}(t_0) \ge f(t_0) +2\epsilon$ for sufficiently large $k$. 
	Since $f$ is continuous in $t_0$, there is some $\delta>0$ such that $t_0+\delta \le 1$ and $f(t) \le f(t_0) + \epsilon$ for all $t \in [t_0,t_0+\delta]$. By the monotonicity of $f_{n_k}$ this implies $f_{n_k}(t) - f(t) \ge f_{n_k}(t_0) - f(t_0) - \epsilon \ge \epsilon$ for $t \in [t_0,t_0+\delta]$. 
	We find 
	$
	\lambda( |f_{n_k} - f| \ge \epsilon ) \ge \lambda([t_0,t_0+\delta] ) >0,
	$
	for sufficiently large $k$ contradicting $f_n \to f$ in measure.
\end{proof}

\begin{lemma}\label{lem:ContBigApp}
	Let $S$ be a separable metric space and $f : [0,1] \to S$ such that for all $t \in [0,1]$ the right-limit $\lim_{s \searrow t} f(s)$ exists. Then $f$ is continuous except for at most countable many points. 
\end{lemma}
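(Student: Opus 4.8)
The plan is to show that for each $\epsilon > 0$ the set $D_\epsilon$ of points $t$ where the oscillation of $f$ "from the right" exceeds $\epsilon$ is finite (or at least that $\bigcup_{k} D_{1/k}$ is countable), and then observe that the discontinuity set is contained in $\bigcup_k D_{1/k}$. Concretely, since the right limit $g(t) := \lim_{s \searrow t} f(s)$ exists at every $t$, the function $f$ fails to be continuous at $t$ precisely when either $f(t) \neq g(t)$, or $f$ oscillates on the left near $t$ more than $g(t)$ would allow. So I will work with the auxiliary function $g$ and a "right-oscillation" quantity.

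First I would fix a compatible metric $\rho$ on $S$ and, for $t \in [0,1)$, define
\[
\omega(t) := \limsup_{s \searrow t}\rho\big(f(s), g(t)\big) = 0,
\]
which is $0$ by the definition of $g(t)$; the point is instead to control the left behaviour. For $t \in [0,1]$ set $v(t) := \rho\big(f(t-), \ldots\big)$ — more cleanly, define for $\delta > 0$ the modulus $\mathrm{osc}_f(t,\delta) := \sup\{\rho(f(u),f(v)) : u,v \in (t-\delta, t]\}$ and let $V(t) := \inf_{\delta > 0}\mathrm{osc}_f(t,\delta)$, the left-oscillation of $f$ at $t$. A discontinuity of $f$ at $t$ forces either $V(t) > 0$ or $\rho(f(t),g(t)) > 0$; the latter points, where $f(t) \neq g(t)$, I would handle by a separate (easy) countability argument since the right limit exists everywhere. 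So it suffices to prove that $E_\epsilon := \{t : V(t) \geq \epsilon\}$ is finite for each $\epsilon > 0$. Suppose it were infinite; then it has an accumulation point $t^\ast$, and we can extract a monotone sequence $t_n \to t^\ast$ in $E_\epsilon$. If the $t_n$ decrease to $t^\ast$, then near each $t_n$ there are $u_n, w_n \in (t_n - \delta_n, t_n]$ with $\rho(f(u_n),f(w_n)) \geq \epsilon/2$, and by choosing $\delta_n$ small we can arrange $u_n, w_n \to t^\ast$ from the right, contradicting the existence of $g(t^\ast)$ (a Cauchy-type criterion: if $\lim_{s\searrow t^\ast} f(s)$ exists, then $\rho(f(u),f(w)) \to 0$ as $u,w \searrow t^\ast$). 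If the $t_n$ increase to $t^\ast$, the same extraction produces points approaching some $t^{\ast\ast} \le t^\ast$ from the right — indeed one shows the relevant points can be taken just to the left of each $t_n$, hence converging up to $t^\ast$, and one again gets pairs $u_n,w_n$ with $u_n,w_n \downarrow$ to a common point from the right with $\rho(f(u_n),f(w_n))$ bounded below, contradicting existence of the right limit there. Thus $E_\epsilon$ is finite.

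The main obstacle, and the step requiring care, is the second case (the $t_n$ increasing to $t^\ast$): one must argue that the left-oscillation witnesses near the $t_n$ can be relocated so that they cluster at a single point \emph{from the right}, so that existence of a right limit there gives the contradiction. This is a standard but slightly fiddly compactness/pigeonhole argument on $[0,1]$, exploiting that $[0,1]$ is compact so the discontinuity points of size $\geq \epsilon$, if infinite, must accumulate somewhere, and that "right limit exists everywhere" is a strong Cauchy condition. Once $E_\epsilon$ is finite for every $\epsilon$, the set where $V(t) > 0$ is $\bigcup_{k\in\N} E_{1/k}$, hence countable; adding the (countable) set where $f(t)\neq g(t)$, we conclude that $f$ is continuous off a countable set. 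Note separability of $S$ is not strictly needed for this argument, though it is harmless to assume it; the essential input is the metrizability of $S$ and the everywhere-existence of right limits.
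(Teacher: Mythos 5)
There is a genuine gap, and it is exactly at the step you flag as ``the main obstacle'': your claim that $E_\epsilon=\{t: V(t)\ge\epsilon\}$ is \emph{finite} is false, and the proposed treatment of the increasing case cannot be repaired. If $t_n\nearrow t^\ast$ with $t_n\in E_\epsilon$, the oscillation witnesses $u_n,w_n$ lie in $(t_n-\delta_n,t_n]$ and therefore converge to $t^\ast$ \emph{from the left}; there is no point $t^{\ast\ast}$ which they approach from the right, so the everywhere-existence of right limits (a condition on right-hand behaviour only) gives no contradiction. A concrete counterexample: let $f$ be the right-continuous step function on $[0,1]$ taking values $0$ and $1$ alternately on the intervals $[\tfrac12-\tfrac1k,\tfrac12-\tfrac1{k+1})$, $k\ge 2$, and constant on $[\tfrac12,1]$. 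Right limits exist at every point, yet the left-oscillation is $1$ at each of the infinitely many points $\tfrac12-\tfrac1k$, so $E_\epsilon$ is infinite for $\epsilon\le 1$. Thus no argument can deliver finiteness; only the exclusion of accumulation \emph{from the right} (your Case 1, which is correct) is available.

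Fortunately, that is all you need, and it is essentially what the paper does: the paper works with the two-sided oscillation sets $A_n=\{t:\mathrm{osc}_f(t)\ge 1/n\}$, shows that every $t$ has a right-neighbourhood $(t,t+\epsilon)$ disjoint from $A_n$ (the analogue of your Case 1, using the Cauchy criterion for the right limit at $t$), and then deduces countability of $A_n$ -- not finiteness -- via the auxiliary set $B_n=\{t: A_n\cap[0,t]\text{ is countable}\}$. To salvage your write-up, drop Case 2 entirely, record only that no point of $[0,1]$ is a limit from the right of points of $E_\epsilon$, and then invoke a countability argument for right-isolated sets (e.g.\ assign to each $a\in E_\epsilon$ a rational in the right-neighbourhood $(a,a+\delta_a)$ disjoint from $E_\epsilon$; this assignment is injective), or the paper's $B_n$ argument. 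The same remark applies to your set $\{t: f(t)\ne g(t)\}$: it too is countable only because it has no right-accumulation points, not because it is finite. With these corrections your decomposition (discontinuity implies $V(t)>0$ or $f(t)\ne g(t)$, plus the reduction to a compatible metric) does yield the lemma, and your observation that separability of $S$ is not needed for this route is correct -- the paper uses separability only to embed $S$ into $[0,1]^{\N}$ and reduce to real-valued $f$.
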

\begin{proof}
Assume w.l.o.g.\ that $S \subset [0,1]^\N$, cf.\ Section \ref{sec:ProbaLusin}. As a function $f : [0,1] \to [0,1]^\N$ is continuous if and only if every component is continuous, it suffices to show the claim for $f :[0,1] \to [0,1]$. We define the oscillation of $f$ at $t$ by
\[
\textup{osc}_f(t) := \limsup_{\delta \to 0} \sup_{t_1,t_2 \in (t-\delta,t+\delta) \cap [0,1]}  |f(t_1)-f(t_2)|.
\]
Observe that $f$ is continuous at $t$ if and only if $\textup{osc}_f(t) =0$. In order to show that $f$ is continuous except for countable many points, it suffices to show that 
$$
A_n := \{ t \in [0,1] : \textup{osc}_f(t) \ge 1/n  \}
$$
is countable for all $n \in \N$. We observe that 
\begin{align}\label{eq:prf:contBig}
\forall t \in [0,1]\, \exists \epsilon >0 : A_n \cap (t,t+\epsilon) = \emptyset.
\end{align}
Otherwise, there would be a sequence $(t_k)_k$ strictly decreasing to $t$ such that $|f(t_k)-f(t_{k+1})| \ge 1/(2n)$ for all $k \in \N$, which contradicts the existence of the right limit in $t$. To see that this already implies that $A_n$ is countable, consider the set 
$$
B_n = \{ t \in [0,1] : A_n \cap [0,t]  \text{ is  countable} \}.
$$
Clearly, $ 0 \in B_n$.  If $s \le s'$ and $s' \in B_n$ then $s \in B_n$. So $B_n$ is of the form $[0,s)$ or $[0,s]$ for some $s \in [0,1]$. Indeed, if $A_n \cap [0,s)$ is countable, then $A_n \cap [0,s]$ is countable as well, so $B_n = [0,s]$ for some $s \in [0,1]$. Assume for the sake of contradiction that $s<1$. By \eqref{eq:prf:contBig} there is $\epsilon >0$ such that $A_n\cap (s,s+\epsilon) =\emptyset$, so $A_n \cap [0,s+\epsilon]$ is countable as well, i.e.\ $s +\epsilon \in B_n$.
\end{proof}
\begin{lemma}
	\label{lem:Lusin2Polish}
	Let $S_1$ be separable metrizable and $S_2$ be Polish. If there exists a continuous map $f : S_1 \to S_2$ such that  the preimages of compact sets are compact, then $S_1$ is Polish.
\end{lemma}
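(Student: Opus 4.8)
The plan is to exhibit a complete metric on $S_1$ compatible with its topology; since $S_1$ is separable by assumption, this immediately gives that $S_1$ is Polish. The idea is to ``import'' completeness from $S_2$ through $f$: the properness condition (preimages of compact sets are compact) is precisely what allows one to upgrade Cauchyness in the target to relative compactness in the source.

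Concretely, I would fix a bounded metric $d_1 \le 1$ compatible with the topology of $S_1$ (possible since $S_1$ is metrizable) and a bounded complete metric $d_2 \le 1$ on the Polish space $S_2$, and set
\[
\rho(x,x') := d_1(x,x') + d_2\bigl(f(x),f(x')\bigr), \qquad x,x' \in S_1.
\]
One checks readily that $\rho$ is a metric on $S_1$. It induces the topology of $S_1$: since $d_1 \le \rho$, the metric $\rho$ is at least as fine as $d_1$, while if $x_n \to x$ in $d_1$ then $f(x_n) \to f(x)$ by continuity of $f$, hence $\rho(x_n,x) \to 0$; so the two metrics have the same convergent sequences and therefore the same topology.

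The key step, and the only place where the properness of $f$ is used, is completeness of $\rho$. Let $(x_n)_n$ be $\rho$-Cauchy. Then $(f(x_n))_n$ is $d_2$-Cauchy, hence converges to some $q \in S_2$ by completeness of $d_2$. The set $K := \{f(x_n) : n \in \N\} \cup \{q\}$ is compact in $S_2$, so by hypothesis $f^{-1}(K)$ is a compact — hence sequentially compact — subset of $S_1$ that contains every $x_n$; therefore $(x_n)_n$ has a subsequence converging in $S_1$. A Cauchy sequence with a convergent subsequence converges, so $(x_n)_n$ converges in $(S_1,\rho)$. Hence $(S_1,\rho)$ is a complete metric space inducing the topology of $S_1$, and $S_1$ is Polish.

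I expect the only point requiring genuine care to be this completeness verification: one must keep the metrics bounded so that $\rho$ is well defined, and carry out both the topology comparison and the completeness argument through sequences, which is legitimate in metrizable spaces. No deeper obstacle is anticipated; the proof is short once the right metric is written down.
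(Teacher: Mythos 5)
Your proof is correct and follows essentially the same route as the paper: both define the metric $d_1(x,x') + d_2(f(x),f(x'))$ with $d_2$ complete, check it induces the original topology, and derive completeness by mapping a Cauchy sequence forward, using completeness of $d_2$ and properness of $f$ to extract a convergent subsequence. The only cosmetic difference is that you bound the metrics by $1$, which is harmless but unnecessary since the sum of two metrics is a metric regardless of boundedness.
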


\begin{proof}
	Let $d_1$ and $d_2$ denote compatible metrics with the topologies on $S_1$ and $S_2$, respectively, and let $d_2$ be complete. We define a metric on $S_1$ by 
	\[
	d(s_1,s_1') := d_1(s_1,s_1') + d_2(f(s_1),f(s_1')).  
	\]
	As $f$ is continuous from $(S_1,d_1)$ to $(S_2,d_2)$, the metrics $d$ and $d_1$ induce the same topology on $S_1$. 
	It remains to show that $d$ is a complete metric. To that end,  let $(s_1^k)_{k \in \N}$ be a $d$-Cauchy sequence. The definition of $d$ implies that $(f(s_1^k))_{k \in \N}$ is  $d_2$-Cauchy as well. As $d_2$ is complete, $(f(s_1^k))_{k \in \N}$ is convergent in $S_2$ and hence relatively compact. Since $f$-preimages of compact sets are compact, $(s_1^k)_{k \in \N}$ is relatively compact in $S_1$, i.e.\ it has a limit point in $S_1$. As $(s_1^k)_{k \in \N}$ is $d$-Cauchy, it can have at most one limit point, so it converges.
\end{proof}

\bibliographystyle{abbrv}
\bibliography{joint_biblio}

\end{document}